\newglossaryentry{sample}{name={sample},description={an example}}
\newglossaryentry{bn}{name={\ensuremath{B_n}},
  description={The hyperoctahedral group, i.e. the free group generated by $r_1, \dots, r_{n-1},r_{n}$ subject to the relations
  \ref{hyper1}-\ref{hyper4}  }}
\newglossaryentry{symptab}{name={\ensuremath{\textsf{KN}(\lambda,n)}},
  description={The $U(\mathfrak{sp}(2n,\mathbb{C}))$ crystal of Kashiwara--Nakashima tableaux of shape $\lambda$ in the alphabet
  $\mathcal{C}_n$}}
\newglossaryentry{r3}{name={\ensuremath{\bf R3}},
  description={The symplectic contraction/dilation relation in the symplectic plactic monoid $\mathcal{C}_{n}^\ast/\sim$}}
  \newglossaryentry{branchedsymptab}{name={\ensuremath{
\textsf{KN}_J(\lambda, n)}},
  description={The Levi branched crystal of Kashiwara--Nakashima tableaux
obtained by deleting in $\textsf{KN}(\lambda,n)$ all the arrows not
labeled in $J \subset I $}}
\newglossaryentry{Jn}{name={\ensuremath{J_n}},
  description={The cactus group $J_{\mathfrak{sl}(n,\mathbb{C})}$}}
  \newglossaryentry{vJ2n}{name={\ensuremath{ \widetilde{J}_{2n}}},
  description={The virtual symplectic cactus group}}
\newglossaryentry{Jsp}{name={\ensuremath{J_{\mathfrak{sp}(2n,\mathbb{C})}
}},  description={The symplectic cactus group  with generators $s_J$ for $J$
any connected sub-diagram of the $C_n$ Dynkin diagram subject to the
relations in Lemma~\ref{symplecticact}}}
\newglossaryentry{ssyt}{name={\ensuremath{ \textsf{SSYT}(\lambda,n)}},
  description={The $U_{q}(\mathfrak{sl}(n,\mathbb{C}))$-crystal of
semi-standard Young tableaux of shape $\lambda$ and entries in $[n]$}}
\newglossaryentry{ssytv}{name={\ensuremath{\textsf{SSYT}(\lambda^{A},n,\bar
n) }},  description={The $U_{q}(\mathfrak{sl}(2n,\mathbb{C}))$-crystal
of semi-standard Young tableaux of shape $\lambda^{A}$ and entries in
$\mathcal{C}_{n}$}}
\newglossaryentry{B}{name={\ensuremath{\textsf{B}}}, description={A normal crystal}}
  \newglossaryentry{cncrystal}{name={\ensuremath{\textsf{B}(\lambda)}},
  description={The normal $\mathfrak{g}$-crystal with highest weight
$\lambda$}}
   \newglossaryentry{Bj}{name={\ensuremath{\textsf{B}_J}},
  description={The Levi branched normal crystal $\textsf{B}_J$, the
restriction of $\textsf{B}$ to the sub-diagram $J$ of $I$}}
\newglossaryentry{Corderedletters}{name={\ensuremath{\mathcal{C}_{n}}},
  description={$\{1<\cdots<n<\bar n<\cdots<\bar 1\}$}}
    \newglossaryentry{Cstar}{name={\ensuremath{\mathcal{C}^{*}_{n}}},
  description={The monoid of words in the alphabet $\mathcal{C}_{n}$}}
\newglossaryentry{branchedssyt}{name={\ensuremath{\textsf{SSYT}_{J}(\lambda,
n) }},
  description={The Levi branched crystal, the restriction of $\textsf{SSYT}(\lambda, n)$  to  $J \subseteq
[n-1] $}}
\newglossaryentry{BK}{name={\ensuremath{ \mathcal{BK}}},
  description={The Berenstein--Kirillov group (or Gelfand--Tsetlin group)
\cite{bk95}}}
\newglossaryentry{BKc}{name={\ensuremath{ \mathcal{BK}^{C_n}}},
description={The type $C_n$ symplectic Berenstein--Kirillov group}}
\newglossaryentry{BKn}{name={\ensuremath{ \mathcal{BK}_n}},
  description={The subgroup of $\mathcal{BK}$ generated by the first
$n-1$ Bender--Knuth involutions $t_1,\dots,t_{n-1}$}}
\newglossaryentry{vBK2n}{name={\ensuremath{ \widetilde
{\mathcal{BK}}_{2n}}},
  description={The virtual symplectic Berenstein--Kirillov group, a
subgroup of $\mathcal{BK}_{2n}$ satisfying the relations of the virtual
symplectic cactus group $\widetilde{J}_{2n}$}}
  \newglossaryentry{E}{name={\ensuremath{E}},
  description={The virtualization map defined   by Baker
\cite[Proposition 2.2, Proposition 2.3]{ba00a} on type $C_n$ Kashiwara--Nakashima tableaux}}
  \newglossaryentry{xib}{name={\ensuremath{\xi_{\textsf{B}}}},
  description={The Sch\"utzenberger--Lusztig involution on the normal
crystal $\textsf{B}$ }}
  \newglossaryentry{xij}{name={\ensuremath{\xi_{\textsf{J}}}},
  description={The partial Sch\"utzenberger--Lusztig involution to the
sub-diagram $J\subseteq I$ is the Sch\"utzenberger--Lusztig involution
$\xi_{B_J}$ on the normal crystal $\gls{Bj}$}}
   \newglossaryentry{xi}{name={\ensuremath{\xi}},
  description={The  Sch\"utzenberger--Lusztig involution on
$\gls{cncrystal}$}}
\newglossaryentry{g}{name={\ensuremath{\mathfrak{g}}}, description={Finite dimensional, complex, semi-simple Lie algebra}}
\newglossaryentry{rev}{name={\ensuremath{\textsf{reversal}}}, description={Combinatorial procedure to compute the Sch\"utzenberger involution
$\xi$  on $\textsf{SSYT}(\lambda,n)$}}
\newglossaryentry{symprev}{name={\ensuremath{\textsf{reversal}^{C_n}}}, description={Combinatorial procedure to compute the Sch\"utzenberger
involution
$\xi$  on $\textsf{KN}(\lambda,n)$}}
\newglossaryentry{jrev}{name={\ensuremath{\textsf{reversal}_J}}, description={$J$-partial reversal, the reversal on
$\textsf{SSYT}_J(\lambda,n)$ with $J\subseteq [n-1]$}}
\newglossaryentry{jsymprev}{name={\ensuremath{\textsf{reversal}^{C_n}_J}}, description={$J$-partial symplectic reversal, the symplectic
reversal $\textsf{KN}_J(\lambda,n)$ with $J\subseteq [n]$ a connected sub-diagram containing the node $n$}}
\newacronym{ex}{EX}{example}
\newtheorem{theorem}{Theorem}
\newtheorem{proposition}{Proposition}
\newtheorem{remark}{Remark}
\newtheorem{corollary}{Corollary}
\newtheorem{example}{Example}
\newtheorem{definition}{Definition}
\newtheorem{lemma}{Lemma}
\newcommand{\op}{\operatorname}
\newcommand{\re}{\color{red}}
\newcommand{\pur}{\color{violet}}
\newcommand{\green}{\color{green}}
\newcommand{\mt}{\color{red}}
\newcommand{\oo}{\color{brown}}
\newcommand{\YT}[3]{
	\vcenter{\hbox{
			\begin{tikzpicture}[x={(0in,-#1)},y={(#1,0in)}] % matrix coordinate
			\foreach \rowi [count=\i] in {#3} {
				\foreach \e [count=\j] in \rowi {
					\draw (\i,\j) rectangle +(-1,-1);
					\draw (\i-0.5,\j-0.5) node {$#2\e$};
				}
			}
			\end{tikzpicture}
	}}
}
 \gdef\Young#1{\hbox{$\vcenter
 {\mathcode`,="8000\mathcode`|="8000
  \def,{\global\advance\cols by 1 &}%
  \def|{\cr
        \multispan{\the\cols}\hrulefill\cr
        &\global\cols=2 }%
  \offinterlineskip\everycr{}\tabskip=0pt
  \dimen0=\ht\strutbox \advance\dimen0 by \dp\strutbox
  \halign
   {\vrule height \ht\strutbox depth \dp\strutbox##
    &&\hbox to \dimen0{\hss$##$\hss}\vrule\cr
    \noalign{\hrule}&\global\cols=2 #1\crcr
    \multispan{\the\cols}\hrulefill\cr%
   }
 }$}}
\gdef\Skew(#1:#2){\hbox{$\vcenter
{\mathcode`,="8000\mathcode`|="8000
  \dimen0=\ht\strutbox \advance\dimen0 by \dp\strutbox
  \def\boxbeg{\vbox
    \bgroup\hrule\kern-0.4pt\hbox to\dimen0\bgroup\strut\vrule\hss$}%
  \def\boxend{$\hss\egroup\hrule\egroup}%
  \def,{\boxend\boxbeg}%
  \def|##1:{\boxend\vrule\egroup\nointerlineskip\kern-0.4pt
    \moveright##1\dimen0\hbox\bgroup\boxbeg}%
  \def\\##1\\##2:{\boxend\vrule\egroup\nointerlineskip\kern-0.4pt
    \kern ##1\dimen0\moveright##2\dimen0\hbox\bgroup\boxbeg}%
  \moveright#1\dimen0\hbox\bgroup\boxbeg#2\boxend\vrule\egroup
 }$}}
\title[Symplectic cacti, virtualization and Berenstein--Kirillov
groups  ]{Symplectic cacti, virtualization and Berenstein--Kirillov
groups  }
\author{Olga Azenhas}
\address{University of Coimbra, CMUC, Department of Mathematics}
\email{\href{oazenhas@mat.uc.pt}{oazenhas@mat.uc.pt}}
\author{Mojdeh Tarighat Feller}
\address{Department of Mathematics, University of Virginia}
\email{\href{mt3cb@virginia.edu}{mt3cb@virginia.edu}}
\author{Jacinta Torres}
\address{Institute of Mathematics, Jagiellonian University in Krakow}
\email{\href{jacinta.torres@uj.edu.pl}{jacinta.torres@uj.edu.pl}}
\begin{document}

\begin{abstract}
We explicitly realize an internal action of the \textit{symplectic}
cactus group, recently defined by Halacheva for any complex, reductive, finite-dimensional Lie algebra, on crystals of Kashiwara--Nakashima
tableaux. Our methods include a symplectic version of jeu de taquin due to Sheats
and Lecouvey, symplectic reversal, and virtualization due to Baker. As
an application, we define and study a symplectic version of the
Berenstein--Kirillov group and show that it is a
quotient of the symplectic cactus group. In addition two relations for symplectic
Berenstein--Kirillov group are given that do not follow from the defining relations of the symplectic cactus group.
\end{abstract}

\subjclass[2000]{05E10, 05E05, 17B37}
\keywords{cactus group, normal crystals, Kashiwara--Nakashima tableaux, Baker virtualization, Sch\"utzenberger--Lusztig involution, symplectic
Berenstein--Kirillov group. }

\maketitle
\tableofcontents

\section{Introduction}
The \textit{cactus group} was originally defined by Henriques--Kamnitzer
  \cite{hk06} in the context of coboundary categories defined  by
Drinfeld \cite{Drinfeld1990}.  It has appeared in connection with the study of moduli spaces of rational curves with $n+1$ marked
points \cite{dev99,ehkr10,w18,khwi19} and has been generalized to other Coxeter types under the name
 \textit{mock reflection group} in \cite{djs03}. Coboundary categories are monoidal
categories equipped with a \emph{commutor}, that is, a collection of
natural isomorphisms $\sigma_{A,B}: A\otimes B \rightarrow B\otimes A$
satisfying certain properties. The idea of studying the cactus group was
originally due to A. Berenstein and was taken up by Henriques--Kamnitzer
in \cite{hk06}, who defined it and further showed that it can be
realized as the fundamental group of the moduli space of marked real
genus zero stable curves. The original idea of Berenstein was to
construct a commutor in the category of crystals of a complex, reductive,
finite-dimensional Lie algebra, by first defining  an involution
$\gls{xib}: \gls{B} \rightarrow \gls{B}$ for each crystal
$\gls{B}$ which flips the crystal by exchanging highest weight
elements with lowest weight elements.  In the case of
$ \mathfrak{sl}(n,\mathbb{C})$  with the tableau model for the highest weight
crystal $\gls{cncrystal}$ it was known that
$\xi_{\gls{cncrystal}}$ coincides with the Sch\"utzenberger
involution on semi-standard Young tableaux of shape $\lambda$
\cite{bz96}.   See  \cite[Sections 4.3, 14.3.3]{BSch17} and the
references therein.

Let $\gls{g}$ be a complex, semi-simple Lie
algebra with Dynkin diagram $X$. There is a Dynkin diagram automorphism
$\theta: X \rightarrow X$ defined by $\alpha_{\theta(i)} =
-w_{0}\alpha_{i}$, where $w_{0}$ is the longest element of the Weyl
group  $W$ of $\gls{g}$. The \textit{cactus group $ J_{\gls{g}}$}, defined by Halacheva in  \cite{ha20, ha16}, is
the group generated by $\sigma_{I}$, where $I$ runs over all  connected
sub-Dynkin diagrams of $X$, subject to the following relations:
\begin{align}
\label{cactusrelations}
\sigma_{I}^{2} &= 1,\\ \label{cactusrelations2}
\sigma_{I} \sigma_{J} & = \sigma_{J}\sigma_{I} \hbox{ if }  J\subseteq
X,\, J\sqcup I \hbox{ is disconnected } \\ \label{cactusrelations3}
\sigma_{I} \sigma_{J} & = \sigma_{\theta_{I}(J)}\sigma_{I} \hbox{ if }
J\subset I
\end{align}
\noindent
where $\theta_{I}$ is the automorphism on $I$ defined by the longest
element of the parabolic group $W^I$. Halacheva has defined an
internal action of the cactus group $J_{\gls{g}}$ on a normal
$\gls{g}$--crystal by partial Sch\"utzenberger--Lusztig
involutions $\xi_I$.  From this action we know that partial
Sch\"utzenberger--Lusztig involutions satisfy the cactus group $
J_{\gls{g}}$ relations \cite{hakarywe20}.  Halacheva \cite{ha20}
initiated a combinatorial study of the cactus group for $\gls{g}=
\mathfrak{sl}(n,\mathbb{C})$ by comparing the action of $
\gls{Jn}=J_{\mathfrak{sl}(n,\mathbb{C})}$ on a normal $
\mathfrak{sl}(n,\mathbb{C})$-crystal  with that of the
Berenstein--Kirillov group on Gelfand--Tsetlin patterns (or semi-standard
Young tableaux) \cite{bk95}. Using a different approach, Chmutov, Glick
and Pylyavskyy \cite{cgp16} have also found relationships between those
two groups.

Our results compose a combinatorial  study of the cactus group for the
symplectic Lie algebra $\gls{g}=\mathfrak{sp}(2n,\mathbb{C})$.
There are many combinatorial models for $\mathfrak{sp}(2n,\mathbb{C})$-crystals: De Concini tableaux
\cite{deconcini}, King  tableaux \cite{King75},  Lakshmibai–-Seshadri
\cite{lakseshadri} and  Littelmann paths \cite{lit95,lit97}, the alcove path model of Gaussent--Littelmann \cite{GL05}
and the one of Lenart--Postnikov \cite{lenpos08}, but we work
with Kashiwara--Nakashima tableaux, for which a rich combinatorial
structure exists \cite{kn91,honkang,lec02,lec07}.  We review the basics in  Sections \ref{sec:basics} and \ref{sec:basicsKN}. For
each connected sub-Dynkin diagram $I$ of $X$, we define the explicit action of $\xi_{I}$ on a given Kashiwara--Nakashima tableau. The
algorithmic procedure for that action is given by virtualization. In the case when $I$ forms a Dynkin diagram of type $C_{n-k}$, it is also
given by the \emph{$I$-partial symplectic reversal}, a symplectic analogue of partial reversal on $A_{n-1}$ semi-standard Young tableaux.
Thereby we provide a
combinatorial action of the cactus generators  $\sigma_{I}$ on the set of Kashiwara--Nakashima tableaux on the alphabet
$\gls{Corderedletters}$. This is addressed in Sections \ref{sec:internalpartialschutzaction} and \ref{sec:partialschutzalg}.  The case of $I =
X$ has already been developed by Santos in \cite{sa21a}, where he defines an operation
on  straight shaped  Kashiwara--Nakashima tableaux which is a
\textit{symplectic analogue} of the Sch\"utzenberger involution
operation, also known as evacuation, on straight shaped $A_{n-1}$
semi-standard Young tableaux. This procedure includes the symplectic
jeu de taquin defined by Sheats in \cite{she99}, and  further developed
by Lecouvey \cite{lec02} using crystal isomorphisms. This is the content
of Section  \ref{sec:fullschutz}.

For $\ I \subseteq X$ such that $I$  forms a Dynkin diagram of type
$C_{n-k}$, we define an algorithm for \emph{ $I$-partial symplectic
reversal} which generalizes Santos' algorithm in the sense that, when $I
= X$, our algorithm is exactly the same. The symplectic $C_{n-k}$
reversal extends symplectic $C_{n-k}$ evacuation to arbitrary
semi-standard skew tableaux on the alphabet $\mathcal{C}_{n-k}$ whose
shift of the entries by $k$  are   admissible on the alphabet
$\gls{Corderedletters}$.  The $C_{n-k}$ reversal of such a
semi-standard skew tableau $P$ on the alphabet $\mathcal{C}_{n-k}$,
is  characterized to be the unique skew tableau coplactic equivalent to
$P$ and plactic  equivalent to the $C_{n-k}$ evacuation of the
symplectic rectification of $P$.

An important inspiration behind our generalization is the operation of \textit{tableau-switching} \cite{bss} of Benkart, Sottile and Stroomer
on $A_{n-1}$ semi-standard Young tableaux. Given an admissible tableau on the alphabet $\gls{Corderedletters}$, we start off by freezing the
entries corresponding to nodes not appearing in $I$, creating at the same time a new Young tableau $U$ with shape defined by the positive
frozen entries as well as a skew tableau $P$ consisting of the non-frozen entries.  The  tableau pair $(U,P)$, sharing a common border,  pass
through each other via \emph{symplectic jeu de taquin} (SJDT for short).
After performing this procedure, a new pair $(R,V)$ arises with $R$ the symplectic rectification of $P$ and $V$ consisting of the entries of
$U$ as well as some new, \emph{colored} letters. Each color records a precise instance of the symplectic rectification of $P$. Our
\emph{symplectic  colorful tableau switching} is  reversible since SJDT is reversible. It reduces to the $A_{n-1}$  tableau switching on
tableaux in the alphabet $[n]$. This work is carried out in detail in Subsection \ref{partialsymplecticreversal0} of this paper, yielding
Formula \eqref{formula}, and illustrated in Subsection \ref{ex:partialsymplecticreversal}.

For the general case we use the virtualization map defined by Baker
\cite{ba00a}, that is, an injective map
  \begin{align*}
E: \gls{symptab} \longrightarrow \gls{ssytv}
  \end{align*}
\noindent which assigns to the $\mathfrak{sp}(2n,\mathbb{C})$-crystal $\gls{symptab}$, a
subset of the $\mathfrak{sl}(2n, \mathbb{C})$-crystal $\textsf{SSYT}( \lambda^A,n,\,\bar
n)$ in a reversible way. This is discussed in Section
\ref{virtualization}. We show that one may apply the map $\gls{E}$, then
perform a certain partial Sch\"utzenberger--Lusztig involution in the
type $\mathfrak{sl}(2n, \mathbb{C})$-crystal without leaving the image of $\gls{E}$, reverse the virtualization map $\gls{E}$ and obtain our
desired result. In Subsection \ref{sec:embedsymplecticreversal} we show Theorem \ref{goingandbacR1} and Theorem \ref{goingandbacR2} using such
algorithmic procedures. Additionally, in Definition \ref{foldAcact},  Section \ref{sec:cacti}, we define the \textit{virtual symplectic cactus
group} $\gls{vJ2n}$  and show that it is a subgroup of $J_{2n}$ isomorphic to the symplectic cactus group $\gls{Jsp}$. In Theorem
\ref{cactusj2n}, Section {\ref{sec:internalpartialschutzaction}}, an action of the virtual symplectic cactus group on the set  $\gls{ssytv}$
is defined. The subset $\gls{E}(\gls{symptab})$ is preserved under this action as shown in Subsections \ref{sec:embedsymplecticreversal} and
\ref{sec:virtaction}. In particular, in Subsection \ref{sec:virtaction}, we realize such an action of the virtual symplectic cactus group on the
virtual images of Kashiwara--Nakashima tableaux and show that it \textit{virtualizes} the action of the symplectic cactus group on
Kashiwara--Nakashima tableaux.
This work is  illustrated in Section \ref{ex:virt}.

As an application, in Section \ref{sec:symplecticbk}, we define
\textit{symplectic} Bender--Knuth involutions combinatorially
(Definition  \ref{def:symplecticbenderknuth}). We start off by defining the type $C_n$
Berenstein--Kirillov group  $\gls{BKc}$ as the free group
generated by the  partial symplectic  \linebreak 
Sch\"utzenberger--Lusztig
involutions with respect to connected sub-diagrams of the type $C_n$
Dynkin diagram of the form $I=[n]$ modulo the relations they satisfy on
Kashiwara--Nakashima tableaux of any straight shape.  These generators of $\gls{BKc}$ satisfy the
relations of the symplectic cactus group (Theorem \ref{symplecticbk}). We show that symplectic
Bender--Knuth involutions are also generators of $\gls{BKc}$.

We study relations for $\gls{BKc}$ under the virtualization map $\gls{E}$. More precisely, for each generator of the symplectic Berenstein--Kirillov group, one can associate a corresponding product of generators in the Berenstein--Kirillov group of type $A_{2n-1}$ in a way analogous to the definition of the virtual symplectic cactus group. We call the group generated by them the \emph{virtual symplectic Berenstein--Kirillov group} $\gls{vBK2n}$ (see Definition \ref{def:virtualbk}). 
%More precisely, we consider the relations satisfied by the embedding of generators of $\gls{BKc}$ in $\gls{E}(\gls{symptab})$ $\subseteq \textsf{SSYT}(\lambda^A,n,\,\bar n)$; we call this group (Definition \ref{def:virtualbk}) the \emph{virtual symplectic Berenstein--Kirillov group} $\gls{vBK2n}$. 
It is a subgroup of the type
$A_{2n-1}$ Berenstein--Kirillov group $\mathcal{BK}_{2n}$ satisfying, in particular, the relations of the virtual cactus group $\gls{vJ2n}$
(Theorem \ref{virtualsymplecticbk}). In Proposition \ref{prop:virtualbender-knuth} the  virtual symplectic Bender--Knuth involutions are defined, in an analogous way to how the 
generators of $\gls{vBK2n}$ are defined, as products of certain Bender--Knuth involutions in the type $A_{2n-1}$ Berenstein--Kirillov group. These are shown to be themselves also generators of $\gls{vBK2n}$. In Theorem \ref{thm:virtualbender-knuth2}, they are shown to be the virtualization of the symplectic Bender--Knuth
involutions, that is, they commute with the virtualization map $E$. The  virtual image of the group $ \gls{BKc}$ satisfies the relations of $\gls{vBK2n}$. More precisely, it is shown that the corresponding map $\gls{BKc} \rightarrow \gls{vBK2n}$ is a group isomorphism. Some of the relations listed in
Proposition \ref{rels:bkc} are obtained by applying the partial inverse to the virtualization map. Relations
\eqref{eq:symplecticbkrelationspecial} and \eqref{eq:symplecticbkrelationspecialbn} in Proposition \ref{rels:bkc} are the only ones that do
not follow from the defining relations of the symplectic cactus group $\gls{Jsp}$. They are instead equivalent to the braid relations of the
type $C_n$ Weyl group.

\section{Acknowledgements}
 This collaboration was undertaken within  the project \emph{The A, C, shifted Berenstein--Kirillov groups and cacti} in the framework of the
 ICERM program ``Research Community in Algebraic Combinatorics." All three authors were supported by the aforementioned ICERM program.  O.A.
 was also supported by the Centre for Mathematics of the University of Coimbra- UIDB/00324/2020, funded by the Portuguese Government through
 FCT/MCTES. M.T.F. was also supported by the grant NSF/DMS 1855804. J.T. was also supported by the grant SONATA NCN  UMO-2021/43/D/ST1/02290
 and partially supported by the grant MAESTRO NCN UMO-2019/34/A/ST1/00263. This  work benefited from computations using SageMath \cite{sagemath}.

The authors are grateful to the anonymous referees for their   
 careful reading.

\section{Basics}\label{sec:basics}
Let $\gls{g}$ be a finite-dimensional, complex, semi-simple Lie algebra. Let $I$ be the Dynkin diagram associated to the root system of
$\gls{g}$,  $\Delta=\{\alpha_i:i\in I\}$ the set of simple roots, $W$ its Weyl group, generated by the simple reflections $\{ r_{i}: i \in I
\}$, and $w_{0} \in W$ the longest Weyl group element. We will use the numbering of the vertices of $I$ given by \cite{bourbakirootsystems}.
The Dynkin diagram has an automorphism, a permutation of its nodes which leaves the diagram invariant,  $\theta:I\rightarrow I$   defined by
$\alpha_{\theta(i)} = -w_0\alpha_i$, for any node $i\in I$, where $w_0$ is the longest element of $W$. We will also denote by $\Lambda$ the
integral weight lattice associated to the root system of $\gls{g}$. It is generated by the fundamental weights $\omega_{i}$, $i \in I$. For a
connected sub-diagram of $I$, $J\subseteq I$, denote by $\theta_J:J\rightarrow J$ the Dynkin diagram automorphism that satisfies
$\alpha_{\theta_J (j)} = -w_0^J\alpha_j$, for any node $j\in J$, where $w_0^J$ is the longest element of the parabolic subgroup $W^J\subseteq
W$  (the Weyl group for  $\gls{g}$ restricted to $J$) \cite{BB05}. When $J=I$ one has the original notation $\theta_I=\theta$. We focus on the
cases where $\gls{g}=\mathfrak{sl}(n,\mathbb{C}), \mathfrak{sp}(2n, \mathbb{C})$. We will often abuse notation and write a Dynkin diagram $I$
with $n$ nodes as the interval $[n] =\{1<\dots <n\}$. The corresponding Weyl groups are the symmetric group $\mathfrak{S}_n$ on $n$
letters and the
hyperoctahedral group $\gls{bn}$ respectively, where  $\gls{bn}$ is the free group generated by $r_1, \dots, r_{n-1},r_{n}$ subject to the
relations
\begin{align} r_i^2 &= 1, \, 1 \le i \le n,\label{hyper1}\\
 (r_ir_j)^2 &= 1, \,1 \le i < j \le n,\, |i - j| > 1,\label{hyper2}\\
   (r_ir_{i+1})^3 &= 1, 1 \le i \le n - 2,\label{hyper3}\\
 (r_{n-1}r_n)^4&=1.\label{hyper4}
 \end{align}
The free group generated by $r_1, \dots, r_{n-1}$, subject to the relations above, for $1\le i, j<n$, is $\mathfrak{S}_n$ realized  by the
simple transpositions $r_i=(i,i+1)$ on the set $[n]$. The group $\gls{bn}$ has $2^nn!$ elements and is realized by the signed transpositions
$r_i=(i,i+1)(\overline{i},\overline{i+1})$, $i=1,\ldots,n-1$, and $r_n=(n,\overline{n})$ on the set $\{1<\cdots <n<\bar n<\cdots<\bar 2<\bar
1\}$.  That is, we may see $\gls{bn}$  embedded in  $\mathfrak{S}_{2n}$  by folding $\{1<\cdots <n<\bar n<\cdots<\bar 2<\bar 1\}$ through a
central
symmetry. The long element of $\gls{bn}$, $(r_n \cdots r_2 r_1)^n = (r_1 r_2 \cdots r_n)^n$, has length $n^2$, while the long element of
$\mathfrak{S}_n$ has length $n(n-1)/2$ \cite{BB05}.

Occasionally, for the sake of clarity, we write $w_0^A$ and $w_0^C$ for the corresponding longest elements of $\mathfrak{S}_n$ and $\gls{bn}$
respectively, or simply $w_{0}$ when there is no room for confusion. Given a vector $v\in \mathbb{Z}^n$, we have that $r_i$, with $i\in
[n-1]$, acts on $v$ by swapping the $i$-th and the $(i+1)$-st entries, and $r_n$ acts on $v$ by changing the sign of the last
entry. Henceforth, $w_0^A$ reverses $v$, $w_0^A(v_1,\ldots, v_n)=(v_n,\dots,v_1)$, and $w_0^C$ changes the sign of the entries of $v$,
$w_0^Cv=-v$.

Recall the $ \mathfrak{sl}(n,\mathbb{C})$ simple roots $\alpha_i=\textbf{e}_i-\textbf{e}_{i+1}$, $i\in [n-1]$, and the
$\mathfrak{sp}(2n,\mathbb{C})$ simple roots $\alpha_i=\textbf{e}_i-\textbf{e}_{i+1}$, $i\in [n-1]$ and $\alpha_n=2\textbf{e}_n$, where
$\textbf{e}_i$, $i\in [n]$, is the $\mathbb{R}^n$ standard basis. The $A_{n-1}$ Dynkin diagram automorphism above is given by $\theta
(i)=n-i$, since $-w_0\alpha_i=-(-\alpha_{n-i})=\alpha_{n-i}$, with $i\in I=[n-1]$.
 For instance,
 $$n=5\qquad A_4\qquad
\begin{tikzpicture}[xscale=0.6,yscale=0.6]
\node at (0,0) (1) {$\bullet$};
\node at (0,-0.4) {1};
\node at (2,0) (2) {$\bullet$};
\node at (2, -0.4) {2};
\node at (4,0) (3) {$\bullet$};
\node at (4, -0.4) {3};
\node at (6,0) (4) {$\bullet$};
\node at (6, -0.4) {4};

\draw[thick] (0,0) to (6,0);

\draw[stealth-stealth, thick] (1) to[bend left=22] node[above] {} (4);
\draw[stealth-stealth, thick] (2) to[bend left=22] node[above] {} (3);

\end{tikzpicture}$$
 $$n=6\qquad A_5\qquad
\begin{tikzpicture}[xscale=0.6,yscale=0.6]
\node at (0,0) (1) {$\bullet$};
\node at (0,-0.4) {1};
\node at (2,0) (2) {$\bullet$};
\node at (2, -0.4) {2};
\node at (4,0) (3) {$\bullet$};
\node at (4, -0.4) {3};
\node at (6,0) (4) {$\bullet$};
\node at (6, -0.4) {4};
\node at (8,0) (5) {$\bullet$};
\node at (8, -0.4) {5};

\draw[thick] (0,0) to (8,0);

\draw[stealth-stealth, thick] (1) to[bend left=22] node[above] {} (5);
\draw[stealth-stealth, thick] (2) to[bend left=22] node[above] {} (4);

\end{tikzpicture}$$

The $C_{n}$ Dynkin  diagram automorphism above is given by $\theta(i)=i$, since for $ w_0$ $\in \gls{bn}$, $-w_0 \alpha_i=
-(-\alpha_i)=\alpha_i$, with $i\in I=[n]$. The weight lattices are $\Lambda= \mathbb{Z}^n$ for $\mathfrak{sp}(2n,\mathbb{C})$ and $\Lambda =
\mathbb{Z}^n /
\langle(1,...,1)\rangle
$ for  $\mathfrak{sl}(n, \mathbb{C})$.  We will often work with representatives in the case of  $\mathfrak{sl}(n, \mathbb{C})$. The
fundamental weights are $\omega_{i} = \sum_{j=1}^{i}e_{i}$, $1 \leq
i \leq n$ and respectively have representatives $\omega_{i}$, $1 \leq i \leq n-1$.

\subsection{Levi sub-diagrams}

Let $I$ be a finite Dynkin diagram. A Levi sub-diagram $J$ of $I$ obtained by deleting from $I$ a subset of its nodes is the Dynkin diagram of
a semi-simple Lie algebra $\mathfrak{g}_{J} \subset \gls{g}$ known as a \textit{Levi sub-algebra} which is the Levi component of  the
parabolic Lie sub-algebra of $\gls{g}$ generated by the Chevalley generators associated to the nodes of $J$.

\begin{example}
If we remove the last node (the one labeled by $n$) from the Dynkin diagram of type $C_{n}$, we obtain a Dynkin diagram of type $A_{n-1}$
 which corresponds to the Levi sub-algebra $\mathfrak{sl}(n,\mathbb{C})$ of $\mathfrak{sp}(2n,\mathbb{C})$.

$$C_n \qquad  \dynkin[edge length=1cm,labels*={1,2,3,n-1,n}] C{***.**}$$

$$A_{n-1} \qquad  \dynkin[edge length=1cm,labels*={1,2,3,n-1}] A{***.*}$$

\end{example}

\begin{example}
 The semi-simple Lie algebra $\mathfrak{sl}(3,\mathbb{C} ) \times \mathfrak{sp}(4,\mathbb{C})$ is a Levi sub-algebra of
 $\mathfrak{sp}(12,\mathbb{C} )$. Note that the semi-simple Lie algebra $\mathfrak{sl}(n,\mathbb{C}) \times \mathfrak{sl}(2,\mathbb{C})$ is not
 a Levi sub-algebra of $\mathfrak{sp}(2n,\mathbb{C})$, as its Dynkin diagram of type $A_{n-1}\times A_1$ cannot be obtained from the type
 $C_n$ diagram by deleting some of its vertices.
\end{example}

\section{Normal crystals and Levi restrictions}\label{sec:basicsKN}
 Crystals corresponding to finite-dimensional (quantum group) $U_q(\gls{g})$-representa-\break tions belong to a family of crystals called
 \textit{normal crystals} \cite{BSch17,hakarywe20}. In classical types, these crystals may be  realized  by a tableau model  \cite{kn91} and
 have nice combinatorial properties.
 %Normal crystals arise as the crystals associated to the finite-dimensional representations of a quantum
%group $U_q(\gls{g})$ \cite{BSch17}. 
 These crystals decompose into connected components, one for each
 irreducible component to the representation at hand. The Levi restriction of a normal crystal is still a normal crystal, and the union of
 some connected components of a normal crystal is also a normal crystal \cite{BSch17,hakarywe20}. The crystals that we deal with are tableau
 crystals for finite-dimensional representations of $ U_q(\mathfrak{sl}(n,\mathbb{C}))$ and $U_q({\mathfrak{sp}(2n,\mathbb{C})})$.

  A \emph{$\gls{g}$-crystal} is a finite set $\gls{B}$ along with maps
$$ \textsf{wt} :\gls{B}\rightarrow \Lambda, \quad e_i, f_i :\gls{B}\rightarrow \gls{B} \sqcup \{0\},\,
\varepsilon_i,\varphi_i:\gls{B}\rightarrow \mathbb{Z},\,$$
satisfying the following axioms for any $b, b'\in \gls{B}$ and $i \in I$,

\begin{itemize}	
\item $b' = e_i(b)$ if and only if $b = f_i(b')$,
\item if $f_i(b) \neq 0 $ then $\textsf{wt}(f_i(b)) = \textsf{wt}(b)-\alpha_i$;\\
if $e_i(b) \neq 0$, then
$\textsf{wt}(e_i(b)) = \textsf{wt}(b)+\alpha_i$, and
\item $\varepsilon_i(b)=\max\{a \in \mathbb{Z}_{\geq 0} :e_i^a(b)\neq 0\}$ and
$\varphi_i(b)=\max\{a \in \mathbb{Z}_{\geq 0 }:f_i^a(b)\neq 0\},$
\item  $\varphi_i(b)-\varepsilon_i(b)=  \langle \textsf{wt}(b),\alpha_i^\vee  \rangle$,
\end{itemize}

\noindent
where $\alpha_i^\vee= \frac{2\alpha_i}{ \langle \alpha_i,\alpha_i \rangle}$ are the simple coroots.

\begin{remark}
Our abstract $\gls{g}$-crystals are defined with the additional condition that they are seminormal \cite{BSch17}.
\end{remark}

The \emph{crystal graph} of $\gls{B}$  is the  directed graph with vertices in $\gls{B}$ and edges labeled by $i\in I$. If $f_i(b)=b'$ for
$b, b'\in \gls{B} $, then we draw an edge $b\stackrel{i}{\rightarrow} b'$.  See Example \ref{graph}. Given an arbitrary subset $J\subseteq I$, $\gls{Bj}$ is
defined to be the crystal $\gls{B}$ restricted to the sub-diagram $J$ of $I$, the \emph{Levi branched crystal}. The crystal graph  of $\gls{Bj}$ has
the same vertices as $\gls{B}$, but the arrows are only those labeled in $J$; that is, we forget the maps $e_i,f_i, \varphi_i,$ and
$\varepsilon_i$, for $i\not\in J$ \cite{BSch17}. The weight map, which we denote by $\textsf{wt}_J$, is $\gls{B}\overset
{\textsf{wt}}\rightarrow \Lambda \overset{can}{\rightarrow} \Lambda_{J}$, where $\textsf{wt}$ is the weight map of $\gls{B}$, $\Lambda$ is the
weight lattice of $\gls{g}$, $\Lambda_{J} = \Lambda/ \langle\omega_{i} : i \notin J \rangle$ is the weight lattice of $\mathfrak{g}_{J}$, and $ \Lambda
\overset{can}{\rightarrow} \Lambda_{J}$ is the canonical projection. If $\gls{g} = \mathfrak{sp}(2n,\mathbb{C})$ and we restrict to $J =
[n-1]$, then we obtain an $\mathfrak{sl}(n,\mathbb{C})$-crystal. Given $b\in \gls{B}$, $\gls{B}(b)$ denotes the connected component of
$\gls{B}$ containing $b$.

A $\gls{g}$-crystal is \emph{normal} if it is isomorphic to a disjoint union of the crystals $\gls{cncrystal}$, where $\gls{cncrystal}$ is the
crystal associated to an irreducible, finite-dimensional \linebreak $\gls{g}$-representation \emph{of highest weight} $\lambda$, where
$\lambda \in \Lambda$ is a \emph{dominant weight}. In this work, where we focus on $\gls{g} = \mathfrak{sp}(2n,\mathbb{C})$, respectively
$\gls{g} = \mathfrak{sl}(n,\mathbb{C})$, dominant weights in $\mathbb{Z}^{n}$, respectively in $ \mathbb{Z}^{n}/\langle(1,...,1)\rangle$, correspond
precisely to \emph{partitions} with at most $n$ parts, that is, weakly decreasing vectors in $\mathbb{Z}^{n}$ with non-negative entries,
respectively to weakly
decreasing vectors in $\mathbb{Z}^{n}$, and each such representative is equivalent to a unique partition in $\mathbb{Z}^{n-1} \hookrightarrow
\mathbb{Z}^{n}$, where the last entry is fixed as zero. An important property of normal crystals \gls{B} is the existence of a unique highest
weight vertex for each connected component of \gls{B}, that is, an element which is a source in the corresponding crystal graph, whose weight
is dominant. Note that we work solely with
\emph{highest weight crystals}, namely, crystals $\gls{B}$ such that for each $b\in \gls{B}$, there exists a finite sequence $a_1, a_2,\dots,
a_l\in I$ and a highest weight element $u_{b} \in \gls{B}(b)$  such that $b = f_{a_l}\cdots f_{a_2}f_{a_1}(u_{b})$. For $b, b' \in \gls{B}$,
we have $B(b) = B(b')$ if and only if $u_{b} = u_{b'}$.
From now on, we will refer to $\mathfrak{sp}(2n,\mathbb{C})$-crystals by $C_n$-crystals, and $\mathfrak{sl}(n,\mathbb{C})$-crystals by
$A_{n-1}$-crystals.

\subsection{Kashiwara--Nakashima tableaux}
Let \gls{cncrystal} be the irreducible $C_n$-crystal  with highest weight a partition $\lambda$ of at most $n$ parts.
We realize \gls{cncrystal} as the crystal \gls{symptab} of Kashiwara--Nakashima tableaux \cite{kn91} of shape $\lambda$ on the alphabet

\[ \gls{Corderedletters} =\{1<\cdots<n<\bar n<\cdots<\bar 1\}. \]

The irreducible $A_{n-1}$-crystal  with highest weight a partition $\lambda$ of at most $n$ parts is realized as the crystal $\gls{ssyt}$ of
semi-standard tableaux of  shape $\lambda$ on the alphabet $ [n]$. We also will refer to these tableaux as the $A_{n-1}$ tableaux of shape
$\lambda$. The crystal $\gls{ssyt}$ is a connected sub-crystal of  $\gls{symptab}$. The \emph{weight} of an $A_{n-1}$ tableau $T$,
respectively a Kashiwara--Nakashima tableau $U$, is represented by, respectively is, the vector $(\mu_{1},...,\mu_{n}) \in \mathbb{Z}^{n}$,
where $\mu_{i}$ denotes the number of $i$'s in $T$, respectively the number of $i$'s minus the number of $\bar i$'s in $U$.

Kashiwara--Nakashima tableaux (KN for short) are semi-standard Young tableaux in the alphabet \gls{Corderedletters} which satisfy some extra
conditions. They are a variation of De Concini symplectic tableaux \cite{deconcini}. A semi-standard Young tableau of any shape (skew or
straight) with entries in \gls{Corderedletters} is KN if and only if the following two conditions hold:

\begin{itemize}
\item Each one of its columns is admissible.
\item Its splitting is a semi-standard Young tableau.
\end{itemize}

\begin{definition} \label{def:admissible}
Let $C$ be a semi-standard column in the alphabet \gls{Corderedletters} of length at most $n$. Let $Z = \left\{z_{1} > ... > z_{m} \right\}$
be the set of non-barred letters $z$ in $\gls{Corderedletters}$ such that both $z$ and $\bar z$ appear in $C$. We say that the column $C$ is
admissible if there exists a set $T = \left\{t_{1} > ... > t_{m}  \right\}$ of unbarred letters $t$ that satisfies:
\begin{itemize}
    \item $t, \bar t \notin C$;
    \item $t_{1} < z_{1}$ and is maximal with this property;
    \item $t_{i} < min(t_{i-1}, z_{i})$ and is maximal with this property.
\end{itemize}
The split of a column is the two-column tableau $lC rC$ where $lC$ is the column obtained from $C$ by replacing $ z_{i}$ by $ t_{i}$ and
possibly re-ordering, and $rC$ is obtained from $C$ by replacing $ \bar z_{i}$ by $\bar t_{i}$ and possibly re-ordering. The splitting of a
tableau consisting of admissible columns is the concatenation of the splits of its columns.
\end{definition}
Given $\mu\subseteq \lambda$ partitions with at most $n$ parts,  $\textsf{KN}(\lambda/\mu,n)$ denotes the normal $C_n$-crystal  of KN
tableaux of skew shape $\lambda/\mu$ on the alphabet \gls{Corderedletters}  \cite[Lemma 6.1.3, Corollary 6.3.9]{lec02}.

\begin{example}
Let $n = 2$. The column $\Skew(0:\hbox{\tiny{$2$}}|0: \hbox{\tiny{$\bar 2$}})$ is admissible, however, $\Skew(0:\hbox{\tiny{$1$}}|0:
\hbox{\tiny{$\bar 1$}})$ is not. Notice that although each one of its columns is admissible, the tableau
$\Skew(0:\hbox{\tiny{$2$}},\hbox{\tiny{$2$}}|0: \hbox{\tiny{$\bar 2$}}, \hbox{\tiny{$\bar 2$}} )$ is not KN, because its split,
$\Skew(0:\hbox{\tiny{$1$}},\hbox{\tiny{$2$}}, \hbox{\tiny{$1$}},\hbox{\tiny{$2$}} |0: \hbox{\tiny{$\bar 2$}}, \hbox{\tiny{$\bar 1$}},
\hbox{\tiny{$\bar 2$}}, \hbox{\tiny{$\bar 1$}} )$, is not semi-standard.
\end{example}

 We will mostly use the notation and definitions from \cite{lec02,lec07}. We also refer the reader to the references therein.

\begin{remark}\cite[Remark 2.2.2]{lec02}
\label{lecouvey2.2.2}
The maximal height of an admissible column is $n$. Moreover, a column $C$ is admissible if and only if, for any $m \in [n]$, the number $N(m)$
of letters $x$ in $C$ such that either $x \leq m$ or $x \geq \bar m$ satisfies $N(m) \leq m$. Moreover, if there exists in $C$
a letter $m \leq n$ such that $N(m) >m$, then $C$ contains a pair $(z, \bar z)$ satisfying $N(z) > z$.
%\end{remark}

%\begin{remark}
In \cite{lec02}, \textbf{coadmissible} columns are defined as well (see \cite[p.301]{lec02}). We will not delve into the details here,
however, we remark that there exists a bijection between admissible and coadmissible columns given by filling in the shape of the given
admissible column $C$ with the unbarred letters of $lC$ from top to bottom in increasing order, followed by the barred letters of $rC$ in the
same fashion. We will denote this bijection by $\Phi$ and use it in Section \ref{sjdt}.
\end{remark}
The \textit{reading word} of a KN tableau is the word in the alphabet  $\gls{Corderedletters}$ given by  the Chinese/Japanese reading of its columns (from
 top to bottom and right to left). Recall the $C_n$ \textit{signature rule} \cite{kn91,lec02,BSch17} to compute the action of the Kashiwara crystal operators on a word in the alphabet
$\gls{Corderedletters}$. (We refer to Section \ref{sec:solenodeweyl} for more details.)
\begin{example}\label{graph} The $C_2$ crystal $\textsf{KN}(\lambda,2)$ of shape $\lambda = (2,1)$. Each node in the graph represents an
 element of the crystal. There is a blue, respectively red, arrow  connecting an element $a$ to an element $b$ whenever $f_{1}(a) = b$,
 respectively $f_{2}(a) = b$ where $b$ is computed by $f_1$ respectively $f_2$ according to the $C_2$ signature rule on the reading word of $a$.
$${ \begin{array}{cccc}
 \begin{tikzpicture}
  [scale= 1.4,  auto=left]
\node (n0) at (4,10) {  {$\Skew(0:\mbox{\tiny{$1$}},\mbox{\tiny{$1$}} | 0:\mbox{\tiny{$2$}})$}};
  \node (n1l) at (3,9)  { {$ \Skew(0:\mbox{\tiny{$1$}},\mbox{\tiny{$2$}} | 0:\mbox{\tiny{$2$}})$}};
\node (n1r) at (5,9)  { {$ \Skew(0:\mbox{\tiny{$1$}},\mbox{\tiny{$1$}} | 0:\mbox{\tiny{$\bar 2$}})$}};
\node (n2l) at (3,7.5)  { {$ \Skew(0:\mbox{\tiny{$1$}},\mbox{\tiny{$\bar 2$}} | 0:\mbox{\tiny{$2$}})$}};
 \node (n2r) at (5,7.5) { {$ \Skew(0:\mbox{\tiny{$1$}}, \mbox{\tiny{$2$}} | 0: \mbox{\tiny{$\bar 2$}})$}};
\node (n3r) at (5,6)  { {$ \Skew(0:\mbox{\tiny{$2$}}, \mbox{\tiny{$2$}} | 0: \mbox{\tiny{$\bar 2$}})$}};
 \node (n4rr) at (6,5) { {$\Skew(0:\mbox{\tiny{$2$}},\mbox{\tiny{$2$}}|0:\mbox{\tiny{$\bar 1$}})$}};
\node (n4r) at (4,5) { {$\Skew(0: \mbox{\tiny{$2$}},\mbox{\tiny{$\bar 2$}} |0: \mbox{\tiny{$\bar 2$}})$}};
\node (n5r) at (5,4) { {\Skew(0:\mbox{\tiny{$2$}},\mbox{\tiny{$\bar 2$}}|0: \mbox{\tiny{$\bar 1$}})}};
\node (n6r) at (5,2.5) { {\Skew(0:\mbox{\tiny{$\bar 2$}},\mbox{\tiny{$\bar 2$}}|0:\mbox{\tiny{$\bar 1$}})}};
\node (n7) at (4,1.5) { { \Skew(0:\mbox{\tiny{$\bar 2$}}, \mbox{\tiny{$\bar 1$}} | 0: \mbox{\tiny{$\bar 1$}}) }};
\node (n3ll) at (2,6.5)  { {\Skew(0:\mbox{\tiny{$1$}}, \mbox{\tiny{$\bar2$}} | 0: \mbox{\tiny{$\bar 2$}})}};
\node (n4l) at (3,5.5) { { \Skew(0:\mbox{\tiny{$1$}}, \mbox{\tiny{$\bar 1$}} | 0: \mbox{\tiny{$\bar 2$}})}};
\node (n3l) at (4,6.5)  { { \Skew(0:\mbox{\tiny{$1$}}, \mbox{\tiny{$\bar 1$}} | 0: \mbox{\tiny{$2$}})}};
\node (n5l) at (3,4) { {\Skew(0:\mbox{\tiny{$2$}}, \mbox{\tiny{$\bar 1$}} | 0: \mbox{\tiny{$\bar 2$}})}};
\node (n6l) at (3,2.5)  { { \Skew(0:\mbox{\tiny{$2$}}, \mbox{\tiny{$\bar 1$}} | 0: \mbox{\tiny{$\bar 1$}}) }};

    \foreach \from/\to in {n0/n1r,n6l/n7,n3l/n4l,  n2l/n3ll,n1l/n2l,n3r/n4r,n4rr/n5r,n5r/n6r}
    \draw[->] [draw=red,  thick](\from) -- (\to);

    \foreach \from/\to in {n0/n1l,n1r/n2r,n2r/n3r,n3r/n4rr,n4r/n5r,n6r/n7,n3ll/n4l,n5l/n6l,n4l/n5l,n2l/n3l}
 \draw [->] [draw=blue,  thick](\from) -- (\to);
\end{tikzpicture}
\end{array}}$$
\end{example}

\subsubsection{Levi branching of KN tableau crystals }\label{subsubsec:signature}
For $J\subseteq I, \hbox{   } \gls{branchedsymptab}$  is the restriction of \gls{symptab} to the sub-diagram $J$ of $I$: as a crystal graph it
has the same set of vertices as \gls{symptab} but only contains the arrows labeled by $J$, and it is also a normal crystal. The highest
weight elements  of \gls{branchedsymptab} are those $C_n$ tableaux in \gls{symptab} where the only incoming edges are colored in $[n]\setminus
J$.

If $ J=[p, q]$, $1\le p\le q \leq n$, the crystal graph  \gls{branchedsymptab} consists of the KN tableaux of \gls{symptab}  with arrows
colored
in $J$.
%Recall the $C_n$ \textit{signature rule} \cite{kn91,lec02,BSch17} to compute the action of the crystal operators on a word in the alphabet
%$\gls{Corderedletters}$. 

 If $q<n$,  the Levi branched crystal $\textsf{KN}_{[p,q]}(\lambda,n)$  is a type
 $A_{q-p+1}$ normal crystal. The Weyl group  is  $W^J=\mathfrak{S}_{[p,q+1]}$,  the symmetric group on the letters $\{p,\dots,q+1\}$ and
 generators $r_j=(j, j+1) (\bar j ,\overline {j+1}), \hbox{   } j\in J.$
We say that the entries outside of
 \begin{align*}[\pm p, q+1] := \{p<\dots<q+1\} \sqcup \{ \overline{q+1}<\cdots<\overline {p}\} \end{align*} are \emph{frozen}, which amounts to saying that the KN
 tableaux of the set \gls{symptab} in the same connected component of $\textsf{KN}_{[p,q]}(\lambda,n)$ are stable in the entries over $
 \gls{Corderedletters}
 \setminus [\pm p, q+1]$ under the action of the Kashiwara operators $f_i$, $e_i$, $i\in [p,q]$. That is, if $q<n$, in the same
 connected component of $\textsf{KN}_{[p,q]}(\lambda,n)$, the subtableaux consisting of the letters $[1,p-1]=\{1<\cdots<p-1\}$, $[\overline{p-1},\bar 1] =\{
 \overline{p-1}<\cdots<\overline {1}\}$ or  $ [\pm (q+2),n]=\{ {q+2}<\cdots <n<\bar n<\cdots<\overline {q+2}\}$ are the same.

 If $q = n,$ the Levi
 branched crystal
 $\textsf{KN}_{[p,n]}(\lambda,n)$  is isomorphic to a type $C_{n-p+1}$ normal crystal. The Weyl group  is  $W^J=B_{[ p,n]}$ generated
 by the signed permutations on the subset $\{p<\dots<n<\bar n<\dots< \bar p\}$. The entries outside of
 \begin{align*}[\pm p, n] = \{p<\dots<n< \overline{n}<\cdots<\overline {p}\} \end{align*} are \emph{frozen}; within the same connected component of
 \gls{branchedsymptab}, the subtableaux either consisting of the letters $\{1<\cdots<p-1\}$ or $ \{
 \overline{p-1}<\cdots<\overline {1}\}$ are the same.
 In Example \ref{ex:levi}, since $\mathfrak{sl}(2,\mathbb{C}) =
 \mathfrak{sp}(2,\mathbb{C})$, we get two crystals of types $A_{1} = C_{1}$.

\begin{example}\label{ex:levi}
\begin{figure}[ht]
  \centering
$$\begin{array}{cccc}
 \begin{tikzpicture}
  [scale= 1.35,  auto=left]
\node (n0) at (4,10) { {$\Skew(0:\mbox{\tiny{$1$}},\mbox{\tiny{$1$}} | 0:\mbox{\tiny{$2$}})$}};
  \node (n1l) at (3,9)  {{$ \Skew(0:\mbox{\tiny{$1$}},\mbox{\tiny{$2$}} | 0:\mbox{\tiny{$2$}})$}};
\node (n1r) at (5,9)  {{$ \Skew(0:\mbox{\tiny{$1$}},\mbox{\tiny{$1$}} | 0:\mbox{\tiny{$\bar 2$}})$}};
\node (n2l) at (3,7.5)  {{$ \Skew(0:\mbox{\tiny{$1$}},\mbox{\tiny{$\bar 2$}} | 0:\mbox{\tiny{$2$}})$}};
 \node (n2r) at (5,7.5) { {$ \Skew(0:\mbox{\tiny{$1$}}, \mbox{\tiny{$2$}} | 0: \mbox{\tiny{$\bar 2$}})$}};
\node (n3r) at (5,6)  { {$ \Skew(0:\mbox{\tiny{$2$}}, \mbox{\tiny{$2$}} | 0: \mbox{\tiny{$\bar 2$}})$}};
 \node (n4rr) at (6,5) { {$\Skew(0:\mbox{\tiny{$2$}},\mbox{\tiny{$2$}}|0:\mbox{\tiny{$\bar 1$}})$}};
\node (n4r) at (4,5) { {$\Skew(0: \mbox{\tiny{$2$}},\mbox{\tiny{$\bar 2$}} |0: \mbox{\tiny{$\bar 2$}})$}};
\node (n5r) at (5,4) { {\Skew(0:\mbox{\tiny{$2$}},\mbox{\tiny{$\bar 2$}}|0: \mbox{\tiny{$\bar 1$}})}};
\node (n6r) at (5,2.5) { {\Skew(0:\mbox{\tiny{$\bar 2$}},\mbox{\tiny{$\bar 2$}}|0:\mbox{\tiny{$\bar 1$}})}};
\node (n7) at (4,1.5) { { \Skew(0:\mbox{\tiny{$\bar 2$}}, \mbox{\tiny{$\bar 1$}} | 0: \mbox{\tiny{$\bar 1$}}) }};
\node (n3ll) at (2,6.5)  { {\Skew(0:\mbox{\tiny{$1$}}, \mbox{\tiny{$\bar2$}} | 0: \mbox{\tiny{$\bar 2$}})}};
\node (n4l) at (3,5.5) { { \Skew(0:\mbox{\tiny{$1$}}, \mbox{\tiny{$\bar 1$}} | 0: \mbox{\tiny{$\bar 2$}})}};
\node (n3l) at (4,6.5)  { { \Skew(0:\mbox{\tiny{$1$}}, \mbox{\tiny{$\bar 1$}} | 0: \mbox{\tiny{$2$}})}};
\node (n5l) at (3,4) { {\Skew(0:\mbox{\tiny{$2$}}, \mbox{\tiny{$\bar 1$}} | 0: \mbox{\tiny{$\bar 2$}})}};
\node (n6l) at (3,2.5)  { { \Skew(0:\mbox{\tiny{$2$}}, \mbox{\tiny{$\bar 1$}} | 0: \mbox{\tiny{$\bar 1$}}) }};

  \foreach \from/\to in {n0/n1r,n6l/n7,n3l/n4l,  n2l/n3ll, n1l/n2l,n3r/n4r, n4rr/n5r,n5r/n6r}
    \draw[->] [draw=red,  thick](\from) -- (\to);
\end{tikzpicture}
\vline
&
 \begin{tikzpicture}
  [scale= 1.35,  auto=left]
\node (n0) at (4,10) {  {$\Skew(0:\mbox{\tiny{$1$}},\mbox{\tiny{$1$}} | 0:\mbox{\tiny{$2$}})$}};
  \node (n1l) at (3,9)  { {$ \Skew(0:\mbox{\tiny{$1$}},\mbox{\tiny{$2$}} | 0:\mbox{\tiny{$2$}})$}};
\node (n1r) at (5,9)  { {$ \Skew(0:\mbox{\tiny{$1$}},\mbox{\tiny{$1$}} | 0:\mbox{\tiny{$\bar 2$}})$}};
\node (n2l) at (3,7.5)  { {$ \Skew(0:\mbox{\tiny{$1$}},\mbox{\tiny{$\bar 2$}} | 0:\mbox{\tiny{$2$}})$}};
 \node (n2r) at (5,7.5) { {$ \Skew(0:\mbox{\tiny{$1$}}, \mbox{\tiny{$2$}} | 0: \mbox{\tiny{$\bar 2$}})$}};
\node (n3r) at (5,6)  { {$ \Skew(0:\mbox{\tiny{$2$}}, \mbox{\tiny{$2$}} | 0: \mbox{\tiny{$\bar 2$}})$}};
 \node (n4rr) at (6,5) { {$\Skew(0:\mbox{\tiny{$2$}},\mbox{\tiny{$2$}}|0:\mbox{\tiny{$\bar 1$}})$}};
\node (n4r) at (4,5) { {$\Skew(0: \mbox{\tiny{$2$}},\mbox{\tiny{$\bar 2$}} |0: \mbox{\tiny{$\bar 2$}})$}};
\node (n5r) at (5,4) { {\Skew(0:\mbox{\tiny{$2$}},\mbox{\tiny{$\bar 2$}}|0: \mbox{\tiny{$\bar 1$}})}};
\node (n6r) at (5,2.5) { {\Skew(0:\mbox{\tiny{$\bar 2$}},\mbox{\tiny{$\bar 2$}}|0:\mbox{\tiny{$\bar 1$}})}};
\node (n7) at (4,1.5) { { \Skew(0:\mbox{\tiny{$\bar 2$}}, \mbox{\tiny{$\bar 1$}} | 0: \mbox{\tiny{$\bar 1$}}) }};
\node (n3ll) at (2,6.5)  { {\Skew(0:\mbox{\tiny{$1$}}, \mbox{\tiny{$\bar2$}} | 0: \mbox{\tiny{$\bar 2$}})}};
\node (n4l) at (3,5.5) { { \Skew(0:\mbox{\tiny{$1$}}, \mbox{\tiny{$\bar 1$}} | 0: \mbox{\tiny{$\bar 2$}})}};
\node (n3l) at (4,6.5)  { { \Skew(0:\mbox{\tiny{$1$}}, \mbox{\tiny{$\bar 1$}} | 0: \mbox{\tiny{$2$}})}};
\node (n5l) at (3,4) { {\Skew(0:\mbox{\tiny{$2$}}, \mbox{\tiny{$\bar 1$}} | 0: \mbox{\tiny{$\bar 2$}})}};
\node (n6l) at (3,2.5)  { { \Skew(0:\mbox{\tiny{$2$}}, \mbox{\tiny{$\bar 1$}} | 0: \mbox{\tiny{$\bar 1$}}) }};

 \foreach \from/\to in {n0/n1l,n1r/n2r,n2r/n3r,n3r/n4rr,n4r/n5r,n6r/n7,n3ll/n4l,n5l/n6l,n4l/n5l,n2l/n3l}
 \draw [->] [draw=blue,  thick](\from) -- (\to);
\end{tikzpicture}
\end{array}
$$
\caption{The Levi branched crystals $\textsf{KN}_{\left\{
2\right\}}(\lambda,2)$ and $\textsf{KN}_{\left\{ 1\right\}}(\lambda,2)$ respectively from left to right for $\lambda = (2,1)$. \label{fig:levicrystal}}
\end{figure}

 In Figure \ref{fig:levicrystal}, we have the Levi branched crystals $\textsf{KN}_{\left\{
2\right\}}(\lambda,2)$ and \linebreak$\textsf{KN}_{\left\{ 1\right\}}(\lambda,2)$ respectively from left to right for $\lambda = (2,1)$. Both are
$A_1$-crystals.

The highest weights, with multiplicity, in the LHS have representatives $(2,1),$ $ (1,2),$ $  (1,0)$, $(0,1),$ $ (0,1),$ $(-1,2)$,
$  (-1,0),$ $
(-2,1) $. In the quotient of $\mathbb{Z}^{2}$ by the fundamental weight $\omega_{1} = (1,0)$, these are equivalent to  the vectors
$(0,1),(0,2), (0,0)$, $(0,1), (0,1), (0,2),$ $ (0,0), (0,1)$, respectively. In practice, this means that we have ignored the multiplicity of
the
letters $\{1, \bar 1\}$ in the tableaux of the LHS to compute the highest weights. On the RHS, we consider another embedding of $\mathbb{Z}
\hookrightarrow \mathbb{Z}^{2}$ given by the quotient $ \mathbb{Z}^{2}/ \langle (1,1)\rangle$,  since $\omega_{2} = (1,1)$. The computation of the highest
weights on the RHS is similar to that of the LHS, and we thus leave it as an exercise for the reader.

\end{example}

\section{Virtualization}
\label{virtualization}
In this section we closely follow Baker \cite[Section 2]{ba00a} and adopt the notation used there. In Example \ref{ex:virt}, we present a
detailed example of the content in this section. We include it later rather than earlier because it includes some more information which is
not
yet presented up to the end of this section.
\subsection{Baker embedding and Baker recording tableau}\label{bakerembedding} Let
$\lambda = \lambda_{1}\omega_{1}+ \cdots +\lambda_{n}\omega_{n} \in \mathbb{Z}^{n}$  with $ \omega_{j} =  \sum^{j}_{i=1} e_{i} \in
\mathbb{Z}^{n}$, $1 \leq j \leq n,$ the fundamental weights of type $C_n$. Let
\begin{align*}
\omega^{A}_{j} &= \sum^{j}_{i=1} e_{i} \in \mathbb{Z}^{2n} \hbox{ for } 1 \leq j \leq n, \hbox{ and  }\\
\omega^{A}_{\bar j}  = \omega^{A}_{2n-j+1} &= \sum^{2n - j +1}_{i=1} e_{i} \in \mathbb{Z}^{2n} \hbox{ for } 1 < j \leq n,
\end{align*}
be the $A_{2n-1}$ fundamental weights, and consider as well the $\mathbb{Z}^{2n}$ partition
\begin{align}\label{eq:lambdaA}
    \lambda^{A} = 2 \lambda_{n} \omega^{A}_{n} + \sum_{i = 1}^{n-1} \lambda_{i} (\omega^{A}_{i} + \omega^{A}_{\overline {i+1}}).
\end{align}
 Let \gls{ssytv} be the $A_{2n-1}$ crystal of semi-standard Young tableaux in the alphabet $\gls{Corderedletters}$ of shape
 $\lambda^{A}$. We will denote the corresponding crystal operators
   by $f^{A}_{i}$ for $i \in \gls{Corderedletters}$ and consider, for $1 \leq i \leq n$, the operators
    $f^{E}_{i} = f^{A}_{i}    f^{A}_{\overline{i+1}}$, $i< n$, and $f^{E}_{n} =(f^{A}_{n})^{2} $.
    Let $\gls{E}$ denote the \emph{virtualization map} defined on type $C_n$ Kashiwara--Nakashima tableaux defined by Baker  \cite[Proposition
    2.2, Proposition 2.3]{ba00a}.
    More precisely, $\gls{E}$ is an injective map
 \begin{align}  \label{ssytv}
 \gls{E}: \gls{symptab} \hookrightarrow  \gls{ssytv}
 \end{align}

 \noindent
 such that $\gls{E}(f_{i}(T)) =f^{E}_{i}(\gls{E}(T)) $ for $T \in   \gls{symptab}, 1\le i\le n$. We will denote by $E^{-1}$ the restriction of
 any left inverse of $\gls{E}$ to the image of $\gls{symptab}$ under $\gls{E}$.

\noindent
 Given an admissible column  $C$ in the alphabet $\gls{Corderedletters}$ of shape $\omega_i$, $1\le i\le n$, denote by $\psi(C)$ its
  \textit{Baker virtual split} \cite[Proposition 2.2]{ba00a}, a two column type $A_{2n-1}$ tableau of shape $\omega^{A}_i+\omega^{A}_{2n-i}$.
  The map $\psi$ is injective and embeds admissible columns of length $i$,  in the alphabet $\gls{Corderedletters}$, into
  $\textsf{SSYT}(\omega^{A}_{i} + \omega^{A}_{2n-i}, n, \bar n), 1 \leq i \leq n$. We define $\psi^{-1}$ analogously to $E^{-1}$. From
  \cite[Proposition 2.3]{ba00a} we know that, if we write $T$ as a concatenation of its columns, that is, $T = C_{k}\cdots C_{1}$,
  then
\[ \gls{E}(T)=[\emptyset \leftarrow w(\psi(C_{1})) \leftarrow \cdots \leftarrow w(\psi(C_{k}))], \]

  \noindent
 where the word  $w(\psi(C))$ of the type $A_{2n-1}$ two-column tableau $\psi(C)$  is given by  the Chinese/Japanese reading of its two columns (from
 top to bottom and right to left), and
$  P \leftarrow w$ is the Schensted column insertion of a word $w$ into a type $A_{2n-1}$ semi-standard Young tableau $P$  in the alphabet
$\gls{Corderedletters}$ \cite{Fu,Sta}.

Let $T_{\lambda} \in \gls{symptab}$ be the highest weight element; that is, $T_\lambda$ is the Yamanouchi tableau of shape and weight
$\lambda$ on the alphabet $[n]$ (each row $i$ is  solely filled with the letter $i$). Then $\gls{E}(T_{\lambda})=T_{\lambda^A}$ is the highest
weight element of \gls{ssytv}, that is, the $A_{2n-1}$ Yamanouchi tableau of shape and weight $\lambda^A$ in the alphabet
\gls{Corderedletters}. The  image  of \gls{symptab} under $\gls{E}$ in \gls{ssytv} is the crystal  generated by acting with the lowering
operators $f^{E}_{i}$ on the highest weight element $T_\lambda^{A}$ of \gls{ssytv}. For $T \in \gls{symptab}$, where $T = C_{k} \cdots C_{1}$,
we write
\[w_{T} = w(\psi(C_{1})) \cdots  w(\psi(C_{k})).\]

\noindent Then $w_{T}$ is a word in  $\gls{Cstar}$, the monoid of words in the alphabet $\gls{Corderedletters}$, and $\gls{E}(T) = [ \emptyset
\leftarrow w_{T}]$. We will call the \emph{recording
tableau of the column insertion of $w_{T}$}, $ Q(w_T)$, the \textit{Baker recording tableau} associated to $T$.

\begin{proposition}
For $T \in \gls{symptab}$, the Baker recording tableau  $Q(w_{T})$ depends only on $\lambda$. From now on, we will denote by $Q_{\lambda}$ the
Baker recording tableau associated to $\lambda$.
\end{proposition}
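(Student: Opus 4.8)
The plan is to identify $Q(w_T)$ as an invariant of the connected component of $w_T$ in the type $A_{2n-1}$ word crystal, and then to show that, as $T$ ranges over $\textsf{KN}(\lambda,n)$, all the words $w_T$ lie in a single such component. First I would recall the standard fact (see \cite{BSch17}) that column Schensted insertion realizes an isomorphism of $A_{2n-1}$-crystals between the words of a fixed length $L$ in the alphabet $\mathcal{C}_n$ and $\bigsqcup_\mu \textsf{SSYT}(\mu,\mathcal{C}_n)\times \mathrm{SYT}(\mu)$, under which $w\mapsto (P(w),Q(w))$, the crystal operators $f^A_j,e^A_j$ act only on the insertion tableau $P(w)$, and the recording tableau $Q(w)$ labels the connected components. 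In particular $Q(w)$ is constant on each connected component. Since each $w(\psi(C))$ has length $2n$ and $T$ has $\lambda_1$ columns, every $w_T$ has the same length $L=2n\lambda_1$, and $P(w_T)=E(T)$ has the fixed shape $\lambda^A$; so it suffices to prove that the words $\{w_T : T\in \textsf{KN}(\lambda,n)\}$ all lie in one connected component of this $A_{2n-1}$ word crystal.

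To this end I would establish the word-level virtualization identity
\[ w_{f_i(T)} = f^E_i(w_T), \qquad 1\le i\le n,\ f_i(T)\neq 0, \]
where on the right $f^E_i=f^A_if^A_{\overline{i+1}}$ (and $f^E_n=(f^A_n)^2$) is applied to $w_T$ viewed as an element of the word crystal. Reading words of $A_{2n-1}$ tableaux are crystal embeddings of $\textsf{SSYT}(\nu,\mathcal{C}_n)$ into $\mathcal{C}_n^{\otimes|\nu|}$, so Baker's column-level result \cite[Proposition 2.2]{ba00a} that $\psi$ intertwines $f_j$ with $f^E_j$ yields $w(\psi(f_j C))=f^E_j\,w(\psi(C))$ for each admissible column $C$. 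Writing $T=C_k\cdots C_1$, the element $w_T=w(\psi(C_1))\cdots w(\psi(C_k))$ is the tensor product of these column words, while the column reading $T\mapsto C_k\otimes\cdots\otimes C_1$ is a $C_n$-crystal embedding of $\textsf{KN}(\lambda,n)$ into the tensor product of column crystals. Because Baker's virtualization is uniform across tensor factors, it is compatible with the tensor product rule for Kashiwara operators; hence $f_i$ acting on $T$ (on a single column, by the $C_n$ signature rule) matches $f^E_i$ acting on $w_T$ (on the corresponding factor, by the $A_{2n-1}$ signature rule), giving the displayed identity. As an internal check, applying $P$ to both sides recovers the tableau-level intertwining $E(f_i(T))=f^E_i(E(T))$ recorded in Section~\ref{virtualization}.

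Granting the identity, the conclusion is immediate. The crystal $\textsf{KN}(\lambda,n)$ is connected with highest weight element $T_\lambda$, so every $T$ is obtained from $T_\lambda$ by a sequence of operators $f_i$ and $e_i$; applying the identity repeatedly, $w_T$ is obtained from $w_{T_\lambda}$ by the corresponding sequence of operators $f^E_i,e^E_i$. Each $f^E_i$, $e^E_i$ is a product of type $A_{2n-1}$ Kashiwara operators and so preserves the connected component of the word crystal. Thus all $w_T$ lie in the component of $w_{T_\lambda}$, and by the first paragraph $Q(w_T)=Q(w_{T_\lambda})$ for every $T\in\textsf{KN}(\lambda,n)$. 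Setting $Q_\lambda:=Q(w_{T_\lambda})$, which manifestly depends only on $\lambda$ since $w_{T_\lambda}$ does, completes the argument. The main obstacle is the word-level identity of the second paragraph: although the tableau-level relation $E\circ f_i=f^E_i\circ E$ is given, upgrading it to the actual words requires the compatibility of Baker's virtualization with the tensor/column decomposition, i.e.\ that the $C_n$ and $A_{2n-1}$ signature rules select corresponding tensor factors. This is the point I would verify most carefully, invoking \cite[Propositions 2.2, 2.3]{ba00a} together with the tensor-functoriality of virtual crystals, and noting that the genuine dependence of recording tableaux on entries (not merely shapes) means the statement is not formal and truly relies on this crystal compatibility.
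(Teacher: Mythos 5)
Your proposal is correct, and its skeleton coincides with the paper's: both arguments reduce the statement to showing that all words $w_T$, $T\in \textsf{KN}(\lambda,n)$, lie in a single connected component of the type $A_{2n-1}$ word crystal, and then invoke the RSK/column-insertion decomposition, under which the crystal operators move only the insertion tableau while the recording tableau labels the component. The difference lies in how the component statement is obtained. The paper asserts the word-level identity $f^{E}_{i_k}\cdots f^{E}_{i_1}(w_{T_{\lambda}})=w_{T}$ and justifies it by noting that the components generated by $w_{T_\lambda}$ and by the reading word $w(T_{\lambda^A})$ have the same highest weight $\lambda^A$ and are hence isomorphic; as literally written, this only shows that $f^{E}_{i_k}\cdots f^{E}_{i_1}(w_{T_{\lambda}})$ has insertion tableau $E(T)$, i.e.\ is plactically equivalent to $w_T$ --- pinning down the word itself is exactly what is at issue. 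You instead prove the same identity bottom-up: Baker's column-level intertwining $\psi(f_i C)=f^{E}_i\psi(C)$ from \cite[Proposition 2.2]{ba00a}, combined with compatibility of the virtualization with the tensor/concatenation structure, namely that the $C_n$ signature rule on the column factors of $T$ selects the same factor as the $A_{2n-1}$ signature rule for the colors $\{i,\overline{i+1}\}$ (respectively $\{n\}$ doubled) on the column words of $w_T$. That compatibility is precisely what Baker's length-function relations $\varepsilon_i^{C}(b)=\varepsilon_i^{A}(E(b))=\varepsilon_{\overline{i+1}}^{A}(E(b))$ and $\varepsilon_n^{C}(b)=\tfrac12\varepsilon_n^{A}(E(b))$ (quoted in Subsection \ref{branchvirt}) supply, so the lemma you flag as the main obstacle is indeed available from the cited results rather than being an open verification. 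In short, the two proofs use the same two ingredients, but your treatment of the crucial word-level step is the tighter of the two, at the cost of invoking the tensor-functoriality of virtual crystals explicitly; the paper's shortcut is briefer but glosses over the distinction between equality of words and equality of their insertion tableaux.
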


  \begin{proof}
By abuse of notation, we will denote by the same symbols the type $A_{2n-1}$ crystal operators on  the $A_{2n-1}$  crystal
$\gls{Cstar}$ of words and those on semi-standard Young tableaux in the same alphabet. Now, we know that there exists
a sequence $1 \leq i_{1},..., i_{k} \leq n$ such that $f_{i_{k}}    \cdots    f_{i_{1}}(T_{\lambda}) = T$. Therefore  $f^{E}_{i_{k}}    \cdots
f^{E}_{i_{1}}(\gls{E}(T_{\lambda})) = \gls{E}(T)$,
where $\gls{E}(T_{\lambda})=T_{\lambda^A}$, the highest weight element of $\gls{ssytv}$, and so
\[f^{E}_{i_k}    \cdots    f^{E}_{i_1}  (w_{T_{\lambda}}) = w_{T}\]
 because the connected components of the
crystal $\gls{Cstar}$ of words of type $A_{2n-1}$ with highest weight elements $w_{T_\lambda}$ and $w(\gls{E}(T_\lambda))=w(T_{\lambda^A})$
have the same highest weight $\lambda^A$ and are hence isomorphic. In particular, both $w_T$ and  $w_{T_\lambda}$ belong to the same connected
component of the crystal $\gls{Cstar}$ of words of type $A_{2n-1}$, namely, the connected component containing the Yamanouchi word
$w_{T_{\lambda}}$ of weight $\lambda^A$ (recall that all words $w_{T}$ have the same rectification shape $\lambda^{A}$ and that all $A_{2n-1}$
crystal operators commute with \textit{jeu de taquin}). Now, we consider a version of the RSK correspondence \cite{Fu,Sta,hoon,BSch17}
which is a bijection
\begin{align}
\label{rsk}
\gls{Cstar}  &\overset{1:1}{\longleftrightarrow} \underset{\begin{smallmatrix} \mu \\ \ell(\mu)\leq 2n\end{smallmatrix}}{\bigsqcup}
\textsf{SSYT}(\mu,n,\bar n) \times \textsf{SYT}(\mu),&
w &\overset{\operatorname{RSK}}{\mapsto} (P(w), Q(w)).
\end{align}
\noindent
where $\textsf{SYT}(\mu) $ is the set of standard Young tableaux of shape $\mu, P(w) = [ \emptyset \leftarrow w]$ and $Q(w)$ is the
corresponding recording tableau which encodes the sequence of shapes produced by the column insertion of $w$. In particular for each standard
Young tableau $Q$ of shape $\mu$ the pre-image $\operatorname{RSK}^{-1}(\textsf{SSYT}(\mu,n,\bar n) \times \{Q\})$ is a crystal isomorphic to
$\textsf{SSYT}(\mu,n,\bar n)$, and all of these pre-images are disjoint and cover $\gls{Cstar}$. In particular this means that all the words
$w_{T}$ for $T \in \gls{symptab}$ are contained in the same connected component of $\gls{Cstar}$ defined by:
\[\operatorname{RSK}^{-1}( \hbox{\gls{ssytv}} \times \{Q(w_{T_{\lambda}})\}).\]

 \noindent Thereby, $Q(w_{T}) = Q(w_{T_{\lambda}}) $ for all $T \in \gls{symptab}$.
 \end{proof}

\begin{corollary}  Let  $\lambda=\omega_{m_1}+\cdots+\omega_{m_k}$, $1\le m_1\le\cdots\le m_k\le n$, and let
\[
\lambda^A=\omega^A_{2n-m_1}+\omega^A_{m_1}+\cdots+\omega^A_{2n-m_k}+\omega^A_{m_k}\in \mathbb{Z}^{2n}.
\]
Then  $Q_\lambda$ can be written out of the shape $\lambda^A$ as a sequence of shapes by adding successively the columns
$\omega^A_{m_1},\omega^A_{2n-m_1},\dots, \omega^A_{m_k},\omega^A_{2n-m_k}$, whose boxes are filled along columns, top to bottom with
consecutive
numbers from 1 to $|\lambda^A|$:
\begin{align*} \emptyset
&\subset
 \omega^A_{m_1}\\
 &\subset\omega^A_{2n-m_1}+{\omega^A_{m_1}}\\
 &\subset \omega^A_{m_{2}}+\omega^A_{2n-m_1}+\omega^A_{m_1}\\
 &\subset
 \omega^A_{2n-m_{2}}+\omega^A_{m_{2}}+\omega^A_{2n-m_1}+\omega^A_{m_1
 }\\
 & \;\;\vdots\\
 &\subset \omega^A_{m_k}+\cdots+\omega^A_{2n-m_{2}}+\omega^A_{m_{2}}+\omega^A_{2n-m_1}+\omega^A_{m_1} \\
  &\subset \omega^A_{2n-m_k}+\omega^A_{m_k}+\cdots+\omega^A_{2n-m_1}+\omega^A_{m_1} =\lambda^A.
  \end{align*}
 \end{corollary}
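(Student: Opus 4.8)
The plan is to use the preceding Proposition to replace an arbitrary $T$ by the highest weight element $T_\lambda$, and then to identify the recording tableau $Q(w_{T_\lambda}) = Q_\lambda$ with the chain of shapes produced by the column insertion $[\emptyset \leftarrow w_{T_\lambda}]$. Recall that $Q(w_{T_\lambda})$ is by definition the standard Young tableau recording, in each box, the step at which that box is created; equivalently it encodes the chain $\emptyset = \mu^{(0)} \subset \mu^{(1)} \subset \cdots \subset \mu^{(|\lambda^A|)} = \lambda^A$ of shapes $\mu^{(\ell)} = \mathrm{sh}\,[\emptyset \leftarrow w_1\cdots w_\ell]$, where $w_{T_\lambda} = w_1 w_2 \cdots$. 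So the statement amounts to determining this chain of shapes together with the order in which the boxes are added.

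The key observation is that, for $T = T_\lambda$, every intermediate insertion tableau is Yamanouchi, which makes the chain of shapes trivial to read off. Indeed, by Baker's theorem $E(T_\lambda) = T_{\lambda^A}$, so $[\emptyset \leftarrow w_{T_\lambda}] = T_{\lambda^A}$ is the Yamanouchi tableau of shape $\lambda^A$; equivalently $w_{T_\lambda}$ is a lattice (Yamanouchi) word in the alphabet $\mathcal{C}_n$. Since every prefix of a lattice word is again a lattice word, each $[\emptyset \leftarrow w_1 \cdots w_\ell]$ is the Yamanouchi tableau of its shape. For a Yamanouchi tableau shape equals content, and the column insertion preserves content, so $\mu^{(\ell)} = \mathrm{content}(w_1 \cdots w_\ell)$ for every $\ell$. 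Thus the entire shape chain is determined by the prefix contents of $w_{T_\lambda}$, and the box added at step $\ell$ lies in the row indexed by the $\ell$-th letter $w_\ell$.

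It remains to write $w_{T_\lambda}$ explicitly and read off its prefix contents. Each column $C_j$ of $T_\lambda$ is the highest weight column $(1, 2, \ldots, m_j)$, whose Baker virtual split $\psi(C_j)$ is the highest weight (Yamanouchi) two-column tableau of shape $\omega^A_{m_j} + \omega^A_{2n-m_j}$; its Japanese reading word is therefore the initial segment $(1,\ldots,m_j)$ of $\mathcal{C}_n$ (the shorter, right column) followed by the initial segment $(1,\ldots,2n-m_j)$ (the taller, left column). Concatenating over $j = 1, \ldots, k$ presents $w_{T_\lambda}$ as a sequence of $2k$ such initial segments, inserted in the order $m_1, 2n-m_1, m_2, 2n-m_2, \ldots, m_k, 2n-m_k$. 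Reading off prefix contents at the boundaries between successive column words, the content runs through exactly the partial sums $\omega^A_{m_1}$, $\omega^A_{2n-m_1}+\omega^A_{m_1}$, $\omega^A_{m_2}+\omega^A_{2n-m_1}+\omega^A_{m_1}, \ldots$ displayed in the statement; and while inserting a single word $(1, \ldots, c)$ the content grows by $e_q$ at its $q$-th letter, so its $c$ boxes are added one per row down rows $1, \ldots, c$, i.e. the increment $\omega^A_c$ filled with consecutive integers top to bottom. This is precisely the description in the statement, whence $Q_\lambda$ is the claimed standard tableau.

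The main obstacle is the second step: verifying that the intermediate insertion tableaux are Yamanouchi, i.e. that $w_{T_\lambda}$ is a lattice word and that Baker's virtual split sends each highest weight column to the Yamanouchi two-column tableau. Both follow from $E(T_\lambda) = T_{\lambda^A}$ together with the fact that $\psi$ intertwines the crystal structure (Baker, Propositions 2.2--2.3), but one must be careful that this holds with respect to the $A_{2n-1}$ order on all of $\mathcal{C}_n$, including the barred letters appearing in the taller columns. Once this is in place, the principle that shape equals content for Yamanouchi tableaux reduces the remaining work to the elementary bookkeeping of prefix contents carried out above, and the only further care needed is matching the within-strip insertion order to the phrase ``filled along columns, top to bottom.''
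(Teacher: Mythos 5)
Your proposal is correct and follows essentially the same route the paper intends: the paper states this corollary without proof, as an immediate consequence of the preceding Proposition (which gives $Q_\lambda = Q(w_{T_\lambda})$) together with the direct computation of the column insertion of $w_{T_\lambda}$, and your argument is exactly that computation carried out carefully. Your organizing device --- that every prefix of $w_{T_\lambda}$ has partition content, hence every intermediate insertion tableau is Yamanouchi, so shape equals content and the new box at each step sits in the row indexed by the inserted letter --- is a clean and valid way to make the paper's implicit bookkeeping rigorous, and it matches the worked example of $Q_\lambda$ in the virtualization section.
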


Given a partition $\lambda$ with at most $n$ parts, and $T=C_k\cdots C_1\in \gls{symptab}$,  let $\displaystyle\Psi(T)=(w(\psi(C_1)),$
$\ldots, w(\psi(C_k)))\in \gls{Cstar}$ (here the word is presented as a $k$-tuple) and $\Psi^{-1}=\underbrace{(\psi^{-1},\dots,\psi^{-1})}_k$.
Then $(\gls{E}(T) ,Q_\lambda)=\textrm{RSK}\Psi(T)=(P(w_T),Q_\lambda)$ and
$$E^{-1}=\Psi^{-1}\textrm{RSK}^{-1}_{\mid \gls{E}(\gls{symptab}) \times \{Q_\lambda\}}$$
where $\textrm{RSK}^{-1}_{\mid\gls{symptab} \times \{Q_\lambda\}}$ denotes the inverse of $\textrm{RSK}$ restricted to $
\gls{E}(\gls{symptab}) \times \{Q_\lambda\}$.

 The computation of $\textrm{RSK}^{-1}_{\mid\gls{symptab} \times
\{Q_\lambda\}}$ uses $Q_\lambda$ to perform the inverse of column Schensted insertion. See  example on Section \ref{ex:virt}.

\subsection{The Levi branched crystal and virtualization} \label{branchvirt}
Recall that a  Levi branched crystal $\gls{Bj}$, $J\subseteq
I$, $I$ a Dynkin diagram,
is obtained by ignoring the maps $f_i,e_i,\varphi_i,\varepsilon_i$, for $i \notin J$.  Let $I$ be the $ A_{2n-1}$ Dynkin diagram with nodes
$\{1,\dots,n,\bar n,\dots, \bar 2\}$.
 $$\\~\\ \dynkin[edge length=1cm,fold,labels={ 1,2,3,n-1,n,    \overline{n},  \overline{4},   \overline{3},   \overline{2}}]A{***.***.***}$$

 For each connected sub-diagram
$J=[p,q]$ or $[k,n]$ with $1\le p\le q <n$ and  $k\le n$, of $[n]$,
let
 $\bar {J}=[\overline{q+1},\overline{ p+1}]$ respectively $[ \bar n,\overline{k+1}]$, if $k< n$, and $\bar J=\emptyset$ if $k=n$ be the
 corresponding connected
 sub-diagram of $[\bar n,\bar 2]$.

 Each connected component  of the Levi branched crystal $\textsf{KN}_{J\sqcup \bar J}(\lambda, n)$ with $J=[p,q]$, $[k,n]$,  $1\le p \le q<n$,
 $k\le n$, is embedded via $\gls{E}$ into a connected component of the Levi branched  crystal $\textsf{SSYT}_{J\sqcup \bar J}(\lambda, n)$
 such
 that
  $J\sqcup \bar J$ is a disconnected diagram of $[1,\dots,n,\bar n,\dots \bar 2]$ if $q<n$, and otherwise, $J\sqcup \bar J=[k,\overline{k+1}]$
or  $\bar{J}= J$ if $J = \{ n\}$.
  Consider the Levi branching of the  type $C_n$ crystal \gls{symptab} to $A_{q-p+1}, 1\le p\le q<n$, and $C_{n-k+1}$, $ k\le n$.
  The Levi type $A_{q-p+1}$ Dynkin diagram  is obtained via folding from the Levi subtype  $A_{q-p+1}\times A_{q-p+1}$ of $A_{2n-1}$ which is
  obtained
   by removing the nodes $ 1,\dots,p-1,q+1,\dots,n,\bar n,\dots,\overline{q+2}, \overline{ p+2},\dots,\bar 2 $ from the $A_{2n-1}$ Dynkin
   diagram.
The Levi type $C_{n-k+1}, k\le n$, is obtained via folding from the Levi subtype $A_{2n-2k+1}$ of $A_{2n-1}$ obtained by removing the nodes
$1,\dots,k-1, \overline{k},\dots,\bar 2$ from the $A_{2n-1}$ Dynkin diagram \cite{BSch17}.

 In \cite[Proposition 2.3 (ii)]{ba00a}, it is shown that given $b\in   \gls{symptab}$, the $C_n$ crystal length functions $\varepsilon_i^C$,
 $\varphi_i^C$, $1\le i\le n$, on $b$, and the $A_{2n-1}$ crystal length functions $\varepsilon_i^A, \varepsilon_{\overline{i+1}}^A$, $1\le
 i<n$, $\varepsilon_n^A$, and $\varphi_i^A$, $\varphi_{\overline{i+1}}^A$,$1\le i<n$,  $\varphi_n^A$, on $\gls{E}(b)$ are nicely related:
 $$\varepsilon_i^C (b)=\varepsilon_i^A (\gls{E}(b))=\varepsilon_{\overline{i+1}}^A (\gls{E}(b)),\;1\le i<n, \;\text{and}\;\varepsilon_n^C
 (b)= \frac{1}{2}\varepsilon_n^A (\gls{E}(b)),$$
 and similarly for $\varphi_i^C(b)$, $1\le i\le n$, where $ \varepsilon_i (b)=\max\{k\in\mathbb{Z}_{\ge 0}:e_i^k(b)\neq 0\}$ and $ \varphi_i
 (b)=\max\{k\in\mathbb{Z}_{\ge 0}:f_i^k(b)\neq 0\}$. This means that $b$ is the highest weight element of a connected component $U$
 of $\gls{branchedsymptab}$ if and only if, for all $i\in J$,
\[ \varepsilon_i^A (\gls{E}(b))=\varepsilon_{\overline{i+1}}^A (\gls{E}(b))=\varepsilon_i^C (b)=0, \,\text{for all $i\in J\setminus \{n\}$}\]
and
  \[\varepsilon_n^A (\gls{E}(b))=\varepsilon_n^C(b)=0,\;\text{if $n\in J$}.\]

Similarly, in the case  $b$ is the lowest weight element, by replacing appropriately, $\varepsilon_i^C$ with $\varphi_i^C$ and
$\varepsilon_i^A$, $\varepsilon_{\overline{i+1}}^A$ with $\varphi_i^A$, respectively $\varphi_{\overline{i+1}}^A$, and $\varepsilon_n^A$ with
$\varphi_n^A$.

 Henceforth,$$\varepsilon_i^C (b)=0,\,i\in J\Leftrightarrow \varepsilon_i^A (\gls{E}(b))=0,\;i\in J\sqcup\bar J$$and
 $$\varphi_i^C (b)=0,\,i\in J\Leftrightarrow \varphi_i^A (\gls{E}(b))=0,\;i\in J\sqcup\bar J.$$
 In other words, because our crystals are seminormal, $\gls{E}(b)$ is the highest weight element of the connected component $V$ of
 $\textsf{SSYT}_{J\sqcup \bar J}(\lambda, n)$ containing $\gls{E}(b)$ and $\gls{E}(U)$. It is
 therefore unique. A similar statement holds for the lowest weight element. The next proposition now easily follows.

 \begin{proposition} \label{highlow}  Let $J\subseteq [n]$ be a connected sub-diagram of the type $C_n$ Dynkin diagram. Let $U$ be a connected
 component of the Levi branched crystal $\gls{branchedsymptab}$ with highest and lowest weight elements $u^{\textsf{high}}$ and
 $u^{\textsf{low}}$ respectively. Then $\gls{E}(U)$ is contained in a connected component of the Levi branched crystal $\textsf{SSYT}_{J\sqcup
 \bar J}(\lambda, n)$ with
 highest and lowest weight elements $\gls{E}(u^{\textsf{high}})$ and $\gls{E}(u^{\textsf{low}})$ respectively.
 \end{proposition}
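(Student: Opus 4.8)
The plan is to read the statement directly off the length-function comparison between types $C_n$ and $A_{2n-1}$ established in the paragraph preceding the proposition, since that comparison already identifies the images under $E$ of highest and lowest weight elements. Because $U$ is a connected component of the normal crystal $\textsf{KN}_J(\lambda,n)$, it has a unique highest weight element $u^{\textsf{high}}$, characterized by $\varepsilon_i^C(u^{\textsf{high}})=0$ for all $i\in J$, and a unique lowest weight element $u^{\textsf{low}}$, characterized by $\varphi_i^C(u^{\textsf{low}})=0$ for all $i\in J$.

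First I would check that $E(U)$ lies inside a single connected component of $\textsf{SSYT}_{J\cup\bar J}(\lambda,n)$. Every $T\in U$ is obtained from $u^{\textsf{high}}$ by a sequence of type $C_n$ lowering operators $f_i$ with $i\in J$, and the intertwining relation $E\circ f_i=f_i^E\circ E$ rewrites this as a sequence of the composites $f_i^E=f_i^A f_{\overline{i+1}}^A$ (for $i<n$) or $f_n^E=(f_n^A)^2$ (for $i=n$) applied to $E(u^{\textsf{high}})$. By the definition of $\bar J$ as $[\overline{q+1},\overline{p+1}]$ (resp.\ $[\bar n,\overline{k+1}]$), all labels $i$ and $\overline{i+1}$ occurring here lie in $J\cup\bar J$; hence each $E(T)$ is joined to $E(u^{\textsf{high}})$ by a path of arrows labelled in $J\cup\bar J$ in the ambient crystal, possibly passing through intermediate vertices outside $E(\textsf{KN}(\lambda,n))$, which is harmless. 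Thus $E(U)\subseteq V$, where $V$ is the connected component of $\textsf{SSYT}_{J\cup\bar J}(\lambda,n)$ containing $E(u^{\textsf{high}})$.

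Next I would apply the equivalences $\varepsilon_i^C(b)=0,\ i\in J\Leftrightarrow\varepsilon_i^A(E(b))=0,\ i\in J\cup\bar J$ and its $\varphi$-analogue, recorded just above, to the two distinguished vertices. For $b=u^{\textsf{high}}$ the vanishing $\varepsilon_i^C(u^{\textsf{high}})=0$ on $J$ gives $\varepsilon_i^A(E(u^{\textsf{high}}))=0$ on $J\cup\bar J$; since $\textsf{SSYT}_{J\cup\bar J}(\lambda,n)$ is normal and seminormal, $E(u^{\textsf{high}})$ is the unique highest weight element of $V$. For $b=u^{\textsf{low}}$ the vanishing $\varphi_i^C(u^{\textsf{low}})=0$ on $J$ gives $\varphi_i^A(E(u^{\textsf{low}}))=0$ on $J\cup\bar J$, so $E(u^{\textsf{low}})$ is a lowest weight element; as $E(u^{\textsf{low}})\in E(U)\subseteq V$ and a connected component of a normal crystal has a unique lowest weight element, $E(u^{\textsf{low}})$ is the lowest weight element of $V$. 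This yields the claim.

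The argument has essentially no obstacle beyond bookkeeping, because Baker's Proposition~2.3(ii)---quoted above as the comparison of the $C_n$ and $A_{2n-1}$ length functions---does all the substantive work. The one point meriting care is the first step: one must confirm that $E(U)$ does not split across several components of the $(J\cup\bar J)$-branched type $A$ crystal. This is exactly where the factorization of the composite operators $f_i^E$ into $A$-arrows with labels in $J\cup\bar J$ is used, together with the observation that connectivity is tested in the ambient crystal $\textsf{SSYT}_{J\cup\bar J}(\lambda,n)$ rather than inside the image of $E$.
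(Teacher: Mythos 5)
Your proof is correct and follows essentially the same route as the paper's: Baker's length-function comparison (Proposition 2.3(ii)) plus seminormality identifies $\gls{E}(u^{\textsf{high}})$ and $\gls{E}(u^{\textsf{low}})$ as the extremal elements of the component $V$, exactly as in the discussion preceding the proposition. Your explicit verification that $\gls{E}(U)$ lies in a single connected component of $\textsf{SSYT}_{J\cup \bar J}(\lambda,n)$ --- via the factorization of $f_i^E$ into arrows labelled in $J\cup\bar J$ --- is a point the paper leaves implicit, and spelling it out is a welcome addition rather than a deviation.
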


\begin{remark}\label{re:split} Given $ T\in \textsf{SSYT}(\mu, n,\bar
  n)$, with $\mu$ a partition with at most $ 2n$ parts,  $T$ may be decomposed into two disjoint semi-standard tableaux $T^+$ and $T^-$,
  $T= (T^+, T^-)$, where $T^+$ is the semi-standard
  tableau of shape $\mu+$ on the alphabet $[n]$ defined by the  entries of $T$ in $[n]$, that is, $T^+\in
  \textsf{SSYT}(\mu_+,n)$, called the positive part of $T$, and $T^-$ is the semi-standard  tableau  of skew shape
  $\mu/\mu_+$ on the alphabet $[\bar n, \bar 1]$ defined by the  entries of $T$ in $[\bar n,\bar 1]$, that is, $T^-\in
  \textsf{SSYT}(\mu/\mu_+,\bar n)$, called the negative part of $T$. Provided that we only apply  $f^A_i$, $e^A_i$ with $i\in
  J\sqcup  J'$ disconnected such that $J\subseteq [n-1]$ and $J'\subseteq [\bar n,\bar 2]$, respectively, this shape  decomposition is
  preserved.
  Those crystal operators preserve the shape decomposition above because, according to the type $A_{2n-1}$ signature rule, they only change
  positive (resp. negative) letters into positive (resp. negative) letters.
For $J\sqcup J'$ disconnected, $f^A_jf^A_{j'}=f^A_{j'}f^A_j$, with $j\in J$, $j'\in J'$.
  We then write,  for $\{j_1,\dots, j_r\}\subseteq J$ and $\{j'_1,\dots, j'_m\}\subseteq J'$,
  \begin{align}f^A_{j_r}\cdots f^A_{{j_1}}
  f^A_{{j'_m}}\cdots f^A_{{j'_1}}(T)= ( f^A_{j_r}\cdots f^A_{{j_1}}(T^+), f^A_{{j'_m}}\cdots
  f^A_{{j'_1}}(T^-)).\label{decompose1}\end{align}
\end{remark}

 \section{The cactus group and virtualization}\label{sec:cacti}
 Halacheva \cite{ha16,hakarywe20} has defined a more general version of the cactus group $\gls{Jn}$ originally defined by Henriques--Kamnitzer
 \cite{hk06} in terms of generators and relations.
 
If $I$ is the $A_{n-1}$ Dynkin diagram, $\theta_J$ acts on $J$ by reversing the connected interval of nodes $J$, whereas  in the $C_n$ type it
depends on whether $J$  contains the node with label  $n$  or not.
 \begin{definition}[\cite{ha16,hakarywe20} ]
 \label{geberalcacti}
 Let $\gls{g}$ be a finite-dimensional, semi-simple Lie algebra with Dynkin diagram $I$. The \textit{cactus group} $J_{\gls{g}}$ has
 generators $s_J$ where $J$ runs over the connected sub-diagrams of the Dynkin diagram $I$ of $\gls{g}$, and relations:
\begin{itemize}
\item[1$\gls{g}$.]	$s_J^2=1$ ,  for all $J\subseteq I$,
\medskip

      \item[2$\gls{g}$.]  $s_Js_{J'}=s_{J'}s_J$,    for all  $J,J'\subseteq I$ such that $J\sqcup J'$ is disconnected,

      \medskip

        \item[3$\gls{g}$.]  $s_Js_{J'}=s_{\theta_J (J')} s_J$, for all $J'\subseteq J\subseteq I$.
\end{itemize}
\end{definition}

\begin{remark}
\label{levirem}
Note that when $J'\subseteq J$, 3$\gls{g}$ says that $s_J$ commutes
with $s_{J'}$ by reversing $J'$ with respect to $J$. Recalling that $W$ is the Weyl group of $\gls{g}  $, we also have a
group epimorphism $J_{\gls{g}} \rightarrow W$ taking $s_J$ to $w_0^J$
{  \cite{hakarywe20}, \cite[Remark 10.0.1]{ha16}. The kernel is called
the \textit{pure cactus group} (see \cite{bcl22} for examples), the fundamental
group of the real locus of the Deligne–Mumford compactification $\overline{ M_{0,n+1}}(\mathbb{R})$ of the moduli space of rational
curves with $n + 1$ marked points \cite{dev99}.}
\end{remark}

\begin{lemma} \label{Acact} The cactus group $J_{\mathfrak{sl}(n,\mathbb{C})}=\gls{Jn} $ is the group with  generators $s_J$, where $J$ runs
over all connected sub-diagrams of $I=[n-1]$, the $A_{n-1}$ Dynkin diagram,  subject to the relations
\begin{enumerate}

\item[1A.]	$s_J^2=1, J\subseteq [n-1]$,
\medskip

\item[2A.]	
$s_Js_{J'}=s_{J'}s_J,  \hbox{ for all } J,J'\subseteq [n-1] \hbox{ such that } J\sqcup J' \hbox{ is disconnected}.$
\medskip
\item[3A.] $s_{[p,q]}s_{[k,l]}=s_{[p+q-l,p+q-k]} s_{[p,q]}$ for $ [k,l]\subset [p,q]\subseteq [n-1]$.

\end{enumerate}
\end{lemma}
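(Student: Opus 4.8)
The plan is to show that the general cactus group $J_{\mathfrak{sl}(n,\mathbb{C})}$ of Definition~\ref{geberalcacti}, specialized to the $A_{n-1}$ Dynkin diagram $I = [n-1]$, has exactly the relations 1A, 2A, 3A. Relations 1A and 2A are literally relations $1\gls{g}$ and $2\gls{g}$ restricted to $I = [n-1]$, so nothing is to be done there beyond noting that the connected sub-diagrams of the $A_{n-1}$ diagram are exactly the intervals $[p,q]$ with $1 \le p \le q \le n-1$. The entire content of the lemma is therefore to make relation $3\gls{g}$, namely $s_J s_{J'} = s_{\theta_J(J')} s_J$ for $J' \subseteq J$, explicit in type $A$: I must identify $\theta_J([k,l])$ with the interval $[p+q-l,\,p+q-k]$ when $J = [p,q]$ and $J' = [k,l] \subseteq [p,q]$.

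First I would recall that $\theta_J$ is the Dynkin diagram automorphism defined by $\alpha_{\theta_J(j)} = -w_0^J \alpha_j$, where $w_0^J$ is the longest element of the parabolic subgroup $W^J$. For $J = [p,q]$, the Levi sub-algebra is of type $A_{q-p+1}$ with Weyl group $\mathfrak{S}_{[p,q+1]}$, the symmetric group on the letters $\{p, \dots, q+1\}$, and its longest element $w_0^J$ acts by reversing this string, i.e.\ sending the letter $p+i$ to $q+1-i$. The paper has already recorded for the full diagram (the $A_4$ and $A_5$ pictures) that $\theta$ acts on a type $A$ diagram by $i \mapsto n - i$, the reflection of the interval about its centre; the parabolic version is the same computation on the sub-interval. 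Concretely, the node $j \in [p,q]$ corresponds to the simple root $\alpha_j = e_j - e_{j+1}$, and applying $-w_0^J$ reflects the index about the centre $(p+q)/2$ of the interval $[p,q]$, giving $\theta_J(j) = p + q - j$. This is the key computation and I expect it to be entirely routine once the action of $w_0^J$ as a reversal is invoked.

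With $\theta_J(j) = p+q-j$ in hand, applying $\theta_J$ to the sub-interval $J' = [k,l]$ gives $\theta_J([k,l]) = \{p+q-j : k \le j \le l\} = [p+q-l,\, p+q-k]$, since reflection reverses the order of the endpoints. Substituting this into relation $3\gls{g}$ yields exactly $s_{[p,q]} s_{[k,l]} = s_{[p+q-l,\,p+q-k]} s_{[p,q]}$, which is relation 3A. I would also note that $[k,l] \subseteq [p,q]$ ensures $[p+q-l, p+q-k]$ is again a sub-interval of $[p,q]$, so the relation stays within the generating set, as it must.

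The only mild subtlety — and the step I would be most careful about — is the indexing convention for the parabolic longest element: one must check that $w_0^J$ genuinely reverses the string $\{p,\dots,q+1\}$ on which $\mathfrak{S}_{[p,q+1]}$ acts and that this translates, at the level of nodes (simple roots), into the reflection $j \mapsto p+q-j$ rather than an off-by-one shift. Since a type $A$ diagram with nodes $\{p,\dots,q\}$ has $q - p + 1$ nodes and its centre of symmetry is at $(p+q)/2$, the reflection fixing the diagram is $j \mapsto p + q - j$, consistent with the full-diagram formula $i \mapsto n-i$ for the interval $[1,n-1]$ (centre $n/2$). Once this bookkeeping is verified, the lemma follows immediately, so the proof is short and the main (indeed only) obstacle is this indexing verification.
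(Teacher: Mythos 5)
Your proposal is correct and follows essentially the same route as the paper: both reduce the lemma to computing $\theta_{[p,q]}(j) = p+q-j$ from the action of the parabolic longest element $w_0^{[p,q]}$ (the reversal of $\mathfrak{S}_{[p,q+1]}$) on the simple roots, i.e.\ $w_0^{[p,q]}(\alpha_j) = -\alpha_{p+q-j}$, and then substitute into relation $3\mathfrak{g}$ to get 3A. Your version merely spells out the root-level verification that the paper states without detail.
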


\begin{remark}
The first and third relations ensure that the $n-1$ elements of the form
\begin{equation}\label{alterA} s_{[1,k]},\;1\le k\le  n-1,\end{equation}
generate  \gls{Jn},  since any $s_{[i,j]}$ may be written as
\begin{equation}\label{eq:cactusA}
s_{[i,j]} = s_{[1,j]} s_{[1,j-i+1]} s_{[1,j]}.
\end{equation}
\noindent The elements $ s_{[i,n-1]}=s_{[1,n-1]}s_{[1,n-i]}s_{[1,n-1]}, \;1 \leq i \leq n-1$, also form a set of generators.
\end{remark}

\begin{lemma}\label{symplecticact} The cactus group $\gls{Jsp}$ is  the group with  generators $s_J$, where $J$ runs over all connected
sub-diagrams of the $C_n$ Dynkin diagram $I=[n]$,  subject to the relations
\begin{enumerate}

\item[1C.]	$s_J^2=1, J\subseteq [n]$,

\medskip
\item[2C.]	$s_Js_{J'}=s_{J'}s_J,    \hbox{ for all } J,J'\subseteq [n] \hbox{ such that } J\sqcup J' \hbox{ is disconnected}, $

\medskip
\item[3C.]	\begin{enumerate}
\item[(i)] $s_{[p,n]}s_{[k,l]}=s_{[k,l]}s_{[p,n]}$,  $[k,l]\subseteq [p,n]\subseteq [n]$,
\medskip

\item [(ii)]$s_{[p,q]}s_{[k,l]}=s_{[p+q-l,p+q-k]} s_{[p,q]}$, $[k,l]\subseteq [p,q]\subseteq [n-1]$.
\end{enumerate}

\end{enumerate}
\end{lemma}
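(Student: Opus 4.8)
The plan is to mirror the proof of Lemma~\ref{Acact}, translating the three defining relations $1\mathfrak{g}$--$3\mathfrak{g}$ of the general cactus group $J_{\mathfrak{g}}$ from Definition~\ref{geberalcacti} into the explicit form $1C$--$3C$ for $\mathfrak{g}=\mathfrak{sp}(2n,\mathbb{C})$. Relations $1\mathfrak{g}$ and $2\mathfrak{g}$ transcribe verbatim into $1C$ and $2C$, so all the content sits in unravelling $3\mathfrak{g}$, namely $s_J s_{J'} = s_{\theta_J(J')}\,s_J$ for nested connected sub-diagrams $J'\subseteq J$ of the type $C_n$ diagram $[n]$. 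Since connected sub-diagrams of $[n]$ are exactly intervals, I would write $J=[p,q]$ and $J'=[k,l]\subseteq[p,q]$, so that the whole task reduces to computing the Dynkin diagram automorphism $\theta_{[p,q]}$ and its effect on $[k,l]$.

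The decisive move is to split into two cases according to whether the distinguished long-root node $n$ lies in $J$. If $q<n$, then $J=[p,q]$ is a type $A_{q-p+1}$ sub-diagram with parabolic Weyl group $W^{[p,q]}=\mathfrak{S}_{[p,q+1]}$; exactly as in the computation already carried out in Lemma~\ref{Acact}, the longest element acts by $w_0^{[p,q]}(\alpha_j)=-\alpha_{p+q-j}$, whence $\theta_{[p,q]}(d)=p+q-d$ and $\theta_{[p,q]}([k,l])=[p+q-l,\,p+q-k]$. Substituting this into $3\mathfrak{g}$ yields relation $3C$(ii).

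If instead $q=n$, then $J=[p,n]$ is a type $C_{n-p+1}$ sub-diagram with $W^{[p,n]}=B_{[p,n]}$, and here I would invoke the type $C$ fact recorded in Section~\ref{sec:basics}: since $w_0^C$ acts on weights by $v\mapsto -v$, one has $-w_0^{[p,n]}\alpha_j=\alpha_j$ for all $j\in[p,n]$, so $\theta_{[p,n]}$ is the identity. Consequently $\theta_{[p,n]}([k,l])=[k,l]$ and $3\mathfrak{g}$ collapses into the pure commutation relation $s_{[p,n]}s_{[k,l]}=s_{[k,l]}s_{[p,n]}$, which is $3C$(i). To finish, I would observe that these two cases are exhaustive — every nested pair $J'\subseteq J$ of intervals in $[n]$ falls into exactly one of them according as $q=n$ or $q<n$ — and that, conversely, each of $3C$(i) and $3C$(ii) is a particular instance of $3\mathfrak{g}$; hence the two relation sets are equivalent and the presentations agree.

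The step I expect to demand the most care is the automorphism computation in the $C$-case, that is, justifying $-w_0^{[p,n]}=\mathrm{id}$ on the sub-root-system spanned by $\{\alpha_p,\dots,\alpha_n\}$: this is precisely where the special behaviour of type $C$ (versus type $A$) enters and produces a genuine commutation rather than a reversal. One must be slightly careful that $w_0^{[p,n]}$ is the longest element of the \emph{parabolic} $B_{[p,n]}$ and not of the ambient $B_n$; but since $B_{[p,n]}$ is itself of type $C_{n-p+1}$, the identity $-w_0=\mathrm{id}$ established for $C_n$ applies verbatim to it, and no further work is needed.
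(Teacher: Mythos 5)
Your proposal is correct and follows essentially the same route as the paper's proof: relations $1\mathfrak{g}$, $2\mathfrak{g}$ pass directly to 1C, 2C, and the nested case splits according to whether $n\in J$, with the type $A$ computation of $\theta_{[p,q]}(d)=p+q-d$ reused from Lemma~\ref{Acact} to give 3C(ii), and the type $C$ fact $w_0^{[p,n]}\alpha_j=-\alpha_j$ (so $\theta_{[p,n]}=\mathrm{id}$) giving the commutation 3C(i). Your extra remark that $-w_0=\mathrm{id}$ must be applied to the parabolic $B_{[p,n]}$, itself of type $C_{n-p+1}$, is exactly the justification the paper records.
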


\begin{proof}
Relations $1 \gls{g}$ and $2\gls{g}$ translate directly to 1A. and 2A. Consider two nested intervals $[k,l]\subset [p,q]$.
 If $[p,q] \subset [n-1]$, we are in type $A_{q-p}$, hence 3C.(ii) holds, which is just relation 3A.
  If $q = n$, then we are in type $C_{n-p+1}$. The Weyl group $W^{[p,n]}$  is the restriction of the hyperoctahedral  group $\gls{bn}$ to the
  generators $r_p,\dots, r_n$, (as a group of signed permutations,  it is the restriction to the set
\begin{align*}
 [ \pm p,n]=\{p<\cdots< n<\bar n<\cdots<\bar p\}),
 \end{align*}
  and $w_0^J(\alpha_j)=-\alpha_j$ for $j\in J$. Therefore $\theta_{[p,n]}(d) = d$ for $d \in [k,l]$ and Relation 3C. (i) follows directly from
  $3\gls{g}$.
\end{proof}

\begin{remark}
Note that the elements $s_J$, $J\subseteq [n-1]$, subject to the relations above, generate the cactus group  \gls{Jn}. As in
\eqref{eq:cactusA},  the following are alternative $2n-1$ generators of $\gls{Jsp}$:
\begin{eqnarray}
\label{alterC1} s_{[1,j]},&\;\quad \quad 1\le j\le n-1,\\
s_{[j,n]},& \; 1\le j\le n.\label{alterC2}
\end{eqnarray}
\end{remark}

\begin{remark}
\label{widetilde}
 We may observe that  \gls{Jn} is a subgroup of $\gls{Jsp}$ defined by the subset of generators $s_J$, $J\subseteq [n-1]$, indexed by
 connected sub-diagrams of the $A_{n-1}$ connected sub-diagram $[n-1]$ of the $C_n$ Dynkin diagram $I=[n]$,  subject to the relations above
 $1C.$, $2C.$ and $3C.$, $(ii)$.
\end{remark}

\begin{proposition}
If $\gls{g}$ is a finite-dimensional semi-simple Lie algebra, and $\mathfrak{l} \subset \gls{g}$ is a Levi sub-algebra, then
$J_{\mathfrak{l}}$ is a subgroup of $J_{\gls{g}}$.
\end{proposition}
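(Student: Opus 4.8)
The plan is to exhibit an explicit homomorphism $\phi\colon J_{\mathfrak{l}}\to J_{\gls{g}}$ carrying generators to generators, and then to argue that it is injective. Write $I$ for the Dynkin diagram of $\gls{g}$ and let $J_0\subseteq I$ be the Dynkin diagram of $\mathfrak{l}$, obtained by deleting a set of nodes; thus $J_0$ is the \emph{induced} sub-diagram of $I$ on its node set. Since a connected sub-diagram $K$ of $J_0$ is in particular a connected sub-diagram of $I$, the generator $s_K$ of $J_{\mathfrak{l}}$ is also a generator of $J_{\gls{g}}$, and I would set $\phi(s_K)=s_K$. (It is harmless to reduce first to $\gls{g}$ simple: relation $2\gls{g}$ immediately gives $J_{\mathfrak{g}_1\times\mathfrak{g}_2}\cong J_{\mathfrak{g}_1}\times J_{\mathfrak{g}_2}$, since no connected sub-diagram meets two components and generators from different components commute, while a Levi of a product is a product of Levis.)

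The first genuine step is to check that $\phi$ respects the defining relations $1\mathfrak{l},2\mathfrak{l},3\mathfrak{l}$ of $J_{\mathfrak{l}}$, i.e.\ that each is an instance of $1\gls{g},2\gls{g},3\gls{g}$. Relation $1\mathfrak{l}$ is immediate. For $2\mathfrak{l}$ the point is that, because $J_0$ is an \emph{induced} sub-diagram, the edge set spanned by $K\cup K'$ is the same whether computed in $J_0$ or in $I$; hence $K\cup K'$ is disconnected in $J_0$ if and only if it is disconnected in $I$, so every instance of $2\mathfrak{l}$ is exactly an instance of $2\gls{g}$. For $3\mathfrak{l}$ the key observation is that for a connected $K$ the automorphism $\theta_K$ is \emph{intrinsic} to $K$: it is determined by the longest element $w_0^K$ of the Weyl group of the root system of type $K$, which depends only on the isomorphism type of the connected diagram $K$ and not on the ambient algebra. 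Therefore $\theta_K^{\mathfrak{l}}=\theta_K^{\gls{g}}$ for every connected $K\subseteq J_0$, and each instance of $3\mathfrak{l}$ is an instance of $3\gls{g}$. This shows $\phi$ is a well-defined homomorphism onto the subgroup $\langle s_K : K\subseteq J_0\ \text{connected}\rangle\le J_{\gls{g}}$.

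It remains to prove that $\phi$ is injective, equivalently that the $s_K$ with $K\subseteq J_0$ satisfy \emph{no} relations in $J_{\gls{g}}$ beyond those forced by $1\mathfrak{l},2\mathfrak{l},3\mathfrak{l}$; this is the main obstacle. One structural reduction is available through the epimorphism $\pi\colon J_{\gls{g}}\to W$, $s_K\mapsto w_0^K$ of Remark~\ref{levirem}: since the parabolic inclusion $W_{\mathfrak{l}}\hookrightarrow W$ is injective and $\pi\circ\phi$ factors through it, any element of $\ker\phi$ lies in the pure cactus group $\ker\pi_{\mathfrak{l}}$, so it suffices to show $\phi$ is injective on pure cactus groups. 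The approach I would then pursue is crystal-theoretic, in the spirit of this paper: for connected $K\subseteq J_0$ the partial Sch\"utzenberger--Lusztig involution $\xi_K$ depends only on the $\mathfrak{g}_K$-crystal structure, which is identical on a normal $\gls{g}$-crystal $\textsf{B}$ and on its Levi branching $\textsf{B}_{J_0}$; hence the internal action of $\phi(J_{\mathfrak{l}})$ on $\textsf{B}$ coincides with the internal action of $J_{\mathfrak{l}}$ on the $\mathfrak{l}$-crystal $\textsf{B}_{J_0}$. Consequently, if $\phi(g)=1$ then $g$ acts trivially on every $\mathfrak{l}$-crystal arising as a Levi branching, and one concludes $g=1$ provided the $J_{\mathfrak{l}}$-action on a sufficiently rich family of such crystals is faithful.

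I expect establishing this last faithfulness—ruling out extra relations—to be the hard part, and it is where I would expect to need either a careful combinatorial normal form or an appeal to the moduli-space/monodromy realization of the cactus group. It is worth recording why the most tempting elementary shortcut fails: the ``obvious'' retraction $\rho\colon J_{\gls{g}}\to J_{\mathfrak{l}}$, $s_J\mapsto s_{J\cap J_0}$ (with $s_\emptyset=1$), does respect $1\gls{g}$ and $2\gls{g}$ but is \emph{not} compatible with $3\gls{g}$, precisely because $\theta_J$ does not commute with intersection by $J_0$; already in type $A$ one checks that $\theta_J(J')\cap J_0\neq\theta_{J\cap J_0}(J'\cap J_0)$ in general. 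Thus a correct proof of injectivity cannot be a naive intersection retraction and must genuinely exploit the structure of the cactus group.
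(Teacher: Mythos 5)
Your first two paragraphs---the definition of $\phi$ on generators and the verification that the relations of $J_{\mathfrak{l}}$ are instances of relations (1)--(3) of Definition~\ref{geberalcacti} for $J_{\mathfrak{g}}$---coincide with the paper's construction; the paper compresses the relation check into a citation of Remark~\ref{levirem}, and your explicit observation that $\theta_K$ is intrinsic to the connected diagram $K$ is precisely what that citation stands in for. The genuine gap is injectivity, which you leave open, and the repair you sketch cannot be completed: the internal action of a cactus group on its normal crystals by partial Sch\"utzenberger--Lusztig involutions is \emph{not} faithful. Already for $\mathfrak{l}$ of type $A_2$ this fails concretely: $J_3$ is generated by $s_{[1,1]},s_{[2,2]},s_{[1,2]}$, relation (2) is vacuous, and relation (3) gives $s_{[2,2]}=s_{[1,2]}s_{[1,1]}s_{[1,2]}$, so $J_3\cong\langle s_{[1,1]},s_{[1,2]}\mid s_{[1,1]}^2=s_{[1,2]}^2=1\rangle\cong \mathbb{Z}/2\ast\mathbb{Z}/2$ and $s_{[1,2]}s_{[1,1]}$ has infinite order; but under the internal action its image is $q_{[1,2]}q_{[1,1]}=t_1t_2$ (Proposition~\ref{prop:ti_si}, Theorem~\ref{th:jnbk}), and $(t_1t_2)^6=1$ on every $\textsf{SSYT}(\lambda,3)$ while this relation does not follow from the cactus relations (Remark~\ref{re:special}, \cite{cgp16}). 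Hence $(s_{[1,2]}s_{[1,1]})^6$ is a nontrivial element of $J_{\mathfrak{l}}$ acting trivially on every normal $\mathfrak{l}$-crystal; since Levi branchings of normal $\mathfrak{g}$-crystals are themselves normal $\mathfrak{l}$-crystals, no ``sufficiently rich family'' of branched crystals exists. Your reduction to the pure cactus group via $\pi$ is correct but does not touch this obstruction.

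You should know, however, that you have not overlooked a clean argument present in the paper: the paper's entire treatment of injectivity is the assertion that the map is ``clearly injective because the generators of $J_{\mathfrak{g}}$ are all distinct,'' which fails exactly the standard you rightly impose---distinctness of the images of generators does not preclude extra relations among them (the abelianization of a free group is the standard counterexample). Your observation that the obvious retractions $J_{\mathfrak{g}}\to J_{\mathfrak{l}}$ break relation (3)---for instance in type $A_{n-1}$ with $J_0=\{1\}$, the relation $s_{[1,n-1]}s_{[1,1]}=s_{[n-1,n-1]}s_{[1,n-1]}$ forces any retraction to identify the images of $s_{[1,1]}$ and $s_{[n-1,n-1]}$---correctly shows that injectivity is a genuine theorem requiring real input (e.g.\ a normal form for cactus groups), which neither your proposal nor the paper supplies. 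In summary: your homomorphism step matches the paper's and is better justified; your injectivity step is a gap, but it is a gap shared with (and correctly diagnosed in) the paper, and the one concrete route you propose for closing it is demonstrably a dead end.
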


\begin{proof}
Let $I$ be the  Dynkin diagram corresponding to $\gls{g}$ and $J\subset I$ the sub-diagram corresponding to the Levi sub-algebra
$\mathfrak{l}$. Any connected sub-diagram $K$ of $J$ is also a connected sub-diagram of $I$, hence one can define a map on generators by
$s^{J}_{K}\mapsto  s^{I}_{K}$. Here generators of $J_{\gls{g}}$ are denoted by $s^{I}_{K}$, and generators of $J_{\mathfrak{l}}$ by
$s^{J}_{K}$. Remark \ref{levirem} implies that this map is a morphism of groups. The map is clearly injective because the generators of
$J_{\gls{g}}$ are all distinct.
\end{proof}

\subsection{Embedding of \texorpdfstring{\gls{Jsp}}{Jsp} into \texorpdfstring{$J_{2n}$}{J2n}} We have observed that  \gls{Jn} is a subgroup of \gls{Jsp}.
We now show that there is a group embedding  of \gls{Jsp} into  $J_{2n}$  by folding the $A_{2n-1}$ Dynkin diagram through the middle node
$n$:

$$\dynkin[edge length=1cm,labels*={1,2,3,n-1,n}] C{***.**}$$
$$\dynkin[edge length=1cm,fold,labels={1,2,3,n-1,n, n+1, 2n-3, 2n-2, 2n-1}]A{***.***.***}$$

Why should such an embedding exist? Let us consider the following elements of $J_{2n}$:
\[ s'_{[p,q]}:=s_{[p,q]}s_{[2n-q,2n-p]}=s_{[2n-q,2n-p]}s_{[p,q]}, \;\text{for all  $[p,q]\subseteq [n-1]$}.\]
In Lemma \ref{injectionbyfolding1} we show that these elements together with the generators $s_{[p,2n-p]}$ for $p \leq n$ generate a subgroup
of $J_{2n}$ isomorphic to $\gls{Jsp}$. Notice the similarity between this and the construction of $\mathfrak{sp}(2n,\mathbb{C})$ as a
sub-algebra of $\mathfrak{sl}(2n,\mathbb{C})$ by folding \cite[Chapter 8, pp. 89 -- 102]{Kac1983}. Moreover, Lemma \ref{symmetrieslemma} below provides not
only concrete combinatorial motivation for Lemma \ref{injectionbyfolding1}, but will also be the main ingredient in its proof.

\begin{lemma}\label{symmetrieslemma}
 The following relations hold in $J_{2n}$:
\begin{align} \label{cactisymm01}
{{s'}^2_{[p,q]}} &=1 \hbox{,     }1 \leq p \leq  q < n,  \\ \label{cactisymm002}
 s^{2}_{[p,2n-p]} &= 1 \hbox{,     }1 \leq p < n,\\  \label{cactisymm02}
 s'_{[p,q]} s'_{[k,l]} &= s'_{[k,l]}s'_{[p,q]} \hbox{,     }\text{ $[p,q]\sqcup[k,l]\subseteq [n-1]$ disconnected}, \\ \label{cactisymm1}
s_{[p,2n-p]}s_{[k,2n-k]} &= s_{[k,2n-k]} s_{[p, 2n-p]}  \hbox{,     }1 \leq p < k < n, \\ \label{cactisymm2}
s_{[p,2n-p]}s'_{[k,l]}&=  s'_{[k,l]} s_{[p,2n-p]} \hbox{,    }  1 \leq p \le  k \le   l < n, \\ \label{cactisymm3}
s'_{[p,q]}s'_{[k,l]} &= s'_{[p+q-l,p+q-k]}s'_{[p,q]} \hbox{,       }   1 \leq p \le k \le l \le  q < n.
\end{align}
There are no more relations among the elements $s'_{[p,q]}$ and $s_{[k,2n-k]}$, for all $[p,q]\subseteq [n-1]$ and $[k,n]\subseteq
[n]$.
\end{lemma}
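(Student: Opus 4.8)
The plan is to split the statement into two tasks: first, to verify that the six displayed identities hold, which is pure manipulation inside $J_{2n}$ using only the relations 1A., 2A. and 3A. of Lemma~\ref{Acact}; and second, to argue that these are \emph{all} the relations by matching them against the complete presentation of $\gls{Jsp}$ in Lemma~\ref{symplecticact}. Before any computation I would record the two elementary facts that drive everything. For $[p,q]\subseteq[n-1]$ the mirror interval $[2n-q,2n-p]$ lies in $[n+1,2n-1]$, so $[p,q]$ and $[2n-q,2n-p]$ are disconnected; by 2A.\ the two generators commute, which is exactly what makes $s'_{[p,q]}=s_{[p,q]}s_{[2n-q,2n-p]}$ well defined and symmetric in its factors. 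More generally every lower-half interval is disconnected from every upper-half interval, and the mirror map $d\mapsto 2n-d$ sends a disconnected pair of lower intervals to a disconnected pair of upper intervals. The second fact is that the automorphism attached to a symmetric interval is $\theta_{[p,2n-p]}(d)=p+(2n-p)-d=2n-d$, i.e.\ it is precisely the mirror map, so 3A.\ with outer interval $[p,2n-p]$ interchanges $[k,l]$ and $[2n-l,2n-k]$.

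With these in hand the verifications are short. Relation \eqref{cactisymm01} is $s_{[p,q]}^2\,s_{[2n-q,2n-p]}^2=1$ after commuting the factors (1A.); \eqref{cactisymm002} is 1A.\ for the connected interval $[p,2n-p]$; and \eqref{cactisymm02} holds because all four factors commute pairwise by 2A.\ (the two lower intervals by hypothesis, the two upper ones by the mirror of that hypothesis, and each lower--upper pair automatically). Relation \eqref{cactisymm1} is a single application of 3A.\ with outer interval $[p,2n-p]\supset[k,2n-k]$, whose $\theta$ fixes $[k,2n-k]$ setwise. For \eqref{cactisymm2} I expand $s'_{[k,l]}=s_{[k,l]}s_{[2n-l,2n-k]}$ and push $s_{[p,2n-p]}$ to the right: 3A.\ first rewrites $s_{[p,2n-p]}s_{[k,l]}$ as $s_{[2n-l,2n-k]}s_{[p,2n-p]}$, a second 3A.\ rewrites $s_{[p,2n-p]}s_{[2n-l,2n-k]}$ as $s_{[k,l]}s_{[p,2n-p]}$, and a final 2A.-commutation recombines $s_{[2n-l,2n-k]}s_{[k,l]}=s'_{[k,l]}$. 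Relation \eqref{cactisymm3} is the same idea carried out in parallel in the two halves: after using 2A.\ to separate lower from upper factors, 3A.\ reverses $[k,l]$ inside $[p,q]$ to $[p+q-l,p+q-k]$ and simultaneously reverses $[2n-l,2n-k]$ inside $[2n-q,2n-p]$ to the mirror interval $[2n-(p+q-k),2n-(p+q-l)]$; reassembling the two halves yields $s'_{[p+q-l,p+q-k]}s'_{[p,q]}$.

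For completeness I would argue as follows. The assignment $s^C_{[p,q]}\mapsto s'_{[p,q]}$ (for $q\le n-1$) together with $s^C_{[p,n]}\mapsto s_{[p,2n-p]}$ carries every defining relation of $\gls{Jsp}$ in Lemma~\ref{symplecticact} into one already proved: 1C.\ into \eqref{cactisymm01}--\eqref{cactisymm002}, 2C.\ into \eqref{cactisymm02} (the remaining instance of 2C., in which $[p,n]$ is disconnected from a lower interval, maps to a commutation that is itself an immediate consequence of 2A.), 3C.(i) into \eqref{cactisymm1} and \eqref{cactisymm2}, and 3C.(ii) into \eqref{cactisymm3}. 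Hence the assignment extends to a surjective homomorphism $\Phi\colon\gls{Jsp}\twoheadrightarrow H$, where $H\le J_{2n}$ is the subgroup generated by the $s'_{[p,q]}$ and the $s_{[p,2n-p]}$, and the sentence ``there are no more relations'' is exactly the assertion that $\Phi$ is injective.

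I expect this injectivity to be the main obstacle, since nothing in the computations above rules out an \emph{accidental} relation in $J_{2n}$. It cannot be extracted from the Weyl-group quotient $J_{2n}\to\mathfrak S_{2n}$, which is finite, and the naive retraction $J_{2n}\to\gls{Jsp}$ fails outright because the generators $s_{[a,b]}$ of $J_{2n}$ whose interval straddles the central node $n$ \emph{without} being mirror-symmetric have no natural symplectic target. The route I would take is to package \eqref{cactisymm01}--\eqref{cactisymm3} as the abstract virtual symplectic cactus group $\gls{vJ2n}$ (the object of Definition~\ref{foldAcact}); the relation-by-relation correspondence above already yields an abstract isomorphism $\gls{vJ2n}\cong\gls{Jsp}$, so the entire remaining content is that the tautological map $\gls{vJ2n}\to J_{2n}$ is injective. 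To close this I would exhibit a faithful home for $\gls{vJ2n}$ that is visibly compatible with $H$: the most promising candidate is the fixed subgroup of the involution $\iota\colon s_{[a,b]}\mapsto s_{[2n-b,2n-a]}$ of $J_{2n}$ induced by the diagram automorphism $i\mapsto 2n-i$, since each generator of $H$ is manifestly $\iota$-fixed; one then needs a normal-form (or length-function) argument showing that any word in $s'_{[p,q]}$ and $s_{[p,2n-p]}$ trivial in $J_{2n}$ is already reducible to the identity by \eqref{cactisymm01}--\eqref{cactisymm3}. Identifying and carrying out that faithfulness argument is where the real work lies.
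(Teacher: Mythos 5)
Your verification of the six displayed relations is correct and essentially identical to the paper's own computation: the same applications of 1A., 2A.\ and 3A.\ of Lemma~\ref{Acact}, including the observation that $\theta_{[p,2n-p]}$ is the mirror map $d\mapsto 2n-d$, the two-step use of 3A.\ for \eqref{cactisymm2}, and the split-into-lower-and-upper-halves argument for \eqref{cactisymm3}. No issue with that half.

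The gap is the final sentence of the statement, ``there are no more relations among the elements $s'_{[p,q]}$ and $s_{[k,2n-k]}$,'' which you correctly recognize as equivalent to injectivity of the homomorphism from $J_{\mathfrak{sp}(2n,\mathbb{C})}$ to $J_{2n}$ sending $s_{[p,q]}\mapsto s'_{[p,q]}$ and $s_{[p,n]}\mapsto s_{[p,2n-p]}$, but which you then explicitly leave unproved (``where the real work lies''). Since this sentence is part of the statement, and since it is precisely what Lemma~\ref{injectionbyfolding1} later invokes to conclude that the left inverse $\Gamma^{-1}_{left}$ is a group morphism, your proposal establishes only half of the lemma. Note also that the paper does not go anywhere near the machinery you anticipate (fixed subgroups of the involution $i\mapsto 2n-i$, normal forms, length functions). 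Its argument is a short translation argument: any relation $R'=1$ in the elements $s'_{[p,q]}$ and $s_{[k,2n-k]}$ expands to a relation $R=1$ among the generators of $J_{2n}$ in which each lower-half generator $s_{[p,q]}$ occurs paired with its mirror $s_{[2n-q,2n-p]}$; such a relation is a consequence of 1A., 2A., 3A.\ applied to these paired generators, and those consequences are exactly the relations \eqref{cactisymm01}--\eqref{cactisymm3} already listed, so no new relation among the primed elements arises. One may debate how much detail that argument itself leaves to the reader, but it is the paper's proof, and your proposal neither reproduces it nor supplies a complete alternative.
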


\begin{proof}
We have  the relations \eqref{cactisymm01} and \eqref{cactisymm002},
\[  {s'_{[p,q]}}^2  = (s_{[p,q]}s_{[2n-q,2n-p]} )^{2}  = s^{2}_{[p,q]}s^{2}_{[2n-q,2n-p]} \overset{ 1A.}{=}1.\]
\[  s^{2}_{[p,2n-p]} \overset{ 1A.}{=} 1.\]
For $ 1 \leq p \leq q <n$ and $ 1 \leq k \leq  l < n$ such that $[p,q] \sqcup [k,l] $ is disconnected,
the sub-diagrams $[p,q] \sqcup [2n-q, 2n-p]$, and $ [k,l] \sqcup  [2n-l, 2n-k]$ of  $[2n-1]$ are disconnected, hence
\[
  s'_{[p,q]} s'_{[k,l]} \overset{ 2A.}{=} s'_{[k,l]}s'_{[p,q]}.  \] This establishes  \eqref{cactisymm02}.
Additionally, if $q = n$, the sub-diagram $[k,l] \sqcup [p, 2n-p] \sqcup [2n-l, 2n-k]$ in $[2n-1]$ is disconnected, hence
\[ s_{[p,2n-p]} s'_{[k,l]} \overset{ 2A.}{=} s'_{[k,l]}s_{[p,2n-p]}.  \]
{ Moreover,}
$s_{[p,2n-p]}s_{[k,2n-k]} \overset{ 3A.}{=} s_{[2n-(2n-k),2n-k]} s_{[p, 2n-p]} = s_{[k,2n-k]}s_{[p,2n-p]},$
 for $1 \leq p < k <n $, hence relation (\ref{cactisymm1}) holds. Now, for $1 \leq p < k < l <n $ we have:
 \begin{align*}
 s_{[p,2n-p]}s_{[k,l]}s_{[2n-l,2n-k]} \overset{ 3A.}{=}& s_{[2n-l,2n-k]} s_{[p,2n-p]} s_{[2n-l,2n-k]} \\
                                              \overset{ 3A.}{=}& s_{[2n-l,2n-k]} s_{[2n-(2n-k),2n-(2n-l)]}s_{[p,2n-p]} \\
                                             =& s_{[2n-l,2n-k]} s_{[k,l]} s_{[p,2n-p]}
 \end{align*}

 \noindent which establishes relation (\ref{cactisymm2}). Finally, for $1 \leq p < k < l < q < n$ the following holds:
 \begin{align*}
 s'_{[p,q]}s'_{[k,l]}  = & s_{[p,q]} s_{[2n-q,2n-p]}s_{[k,l]} s_{[2n-l,2n-k]} \\
                    \overset{ 2A.}{=}& s_{[p,q]} s_{[k,l]} s_{[2n-q,2n-p]} s_{[2n-l,2n-k]} \\
                     \overset{ 3A.}{=}& s_{[p+q-l,p+q-k]} s_{[p,q]} s_{[2n-(p+q-k), 2n-(p+q - l)]} s_{[2n-q, 2n-p]} \\
                     \overset{ 2A.}{=}& s_{[p+q-l,p+q-k]} s_{[2n-(p+q-k), 2n-(p+q - l)]} s_{[p,q]} s_{[2n-q, 2n-p]} \\
                     =& s'_{[p+q-l,p+q-k]}s'_{[p,q]}.
 \end{align*}
 This establishes relation (\ref{cactisymm3}).
Any relation $R'=1$ with the elements $s'_{[p,q]}=s_{[p,q]}s_{[2n-q,2n-p]}$ and $s_{[k,2n-k]}$, for some $[p,q]\subseteq [n-1]$ and
 $[k,n]\subseteq [n]$, translates to a relation $R=1$ involving  generators of $J_{2n}$, of the form $s_{[p,q]}$,  $s_{[2n-q,2n-p]}$ in pairs,
 and $s_{[k,2n-k]}$, for some $[p,q]\subseteq [n-1]$ and $[k,n]\subseteq [n]$, which satisfy the same kind of relations as $s'_{[p,q]}$ and
 $s_{[k,2n-k]}$. Therefore from $R=1$ we don't get new relations $R'=1$.
\end{proof}

 \begin{definition} \label{foldAcact} The \textit{virtual symplectic cactus group} \gls{vJ2n} is the group with  generators $\tilde s_J$,
 where $J$ runs over all  sub-diagrams of $I=[2n-1]$, the $A_{2n-1}$ Dynkin diagram, of the form $J = [p,2n-p]$ for all $[p,n] \subseteq [n]$,
 or $J = [p,q] \sqcup [2n-q,2n-p]$ for all $[p,q] \subseteq [n-1]$ subject to the relations
\begin{enumerate}
\item[1$\tilde A$.]	$\tilde s_J^2=1, J\subseteq [2n-1]$,
\medskip
\item[2$\tilde A$.]	
$\tilde s_J \tilde s_{J'}= \tilde s_{J'} \tilde s_J,$ such that $J \sqcup J'$ is disconnected with respect to all $[p,q] \subseteq [n]$,
\medskip
\item[3$\tilde A$.]\begin{enumerate}
\item[(i)]
$\tilde s_{[p,2n-p]} \tilde s_{[q,l]\sqcup [2n-l,2n-q]}= \tilde s_{[q,l]\sqcup [2n-l,2n-q]} \tilde s_{[p,2n-p]}, [q,l]\subseteq
[p,n]\subseteq
[n]$,
\medskip
 \item[(ii)] for $[k,l]\subseteq [p,q]\subseteq [n-1]$,
 \begin{gather*}
 \tilde s_{[p,q]\sqcup [2n-q,2n-p]} \tilde s_{[k,l]\sqcup [2n-l,2n-k]}  = \\
 \tilde s_{[q+p-l,q+p-k]\sqcup [2n-p+2n-q-(2n-k),2n-p+2n-q-(2n-l)]} \tilde s_{[p,q]\sqcup [2n-q,2n-p]}= \\
  \tilde s_{[q+p-l,q+p-k]\sqcup [2n-(p+q)+k,2n-(p+q)+l]} \tilde s_{[p,q]\sqcup [2n-q,2n-p]}.
 \end{gather*}
\end{enumerate}
	
\end{enumerate}
\end{definition}

The following are $2n-1$ alternative generators of \gls{vJ2n}:
\begin{eqnarray}
\label{virtualalterC1} \tilde s_{[1,j]\sqcup[2n-j,2n-1]},\; 1\le j\le n-1,\\
\tilde s_{[j,2n-j]}, \; 1\le j\le n.\label{virtualalterC2}
\end{eqnarray}

\begin{proposition}
\label{virtualcactus}
There is an isomorphism $\gls{vJ2n}\simeq \gls{Jsp}$.
\end{proposition}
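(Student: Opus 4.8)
The plan is to construct explicit mutually inverse homomorphisms between $\gls{Jsp}$ and $\gls{vJ2n}$. First I would define a map $\varphi: \gls{Jsp} \to \gls{vJ2n}$ on generators by sending $s_{[p,q]} \mapsto \tilde{s}_{[p,q]\cup[2n-q,2n-p]}$ for $[p,q]\subseteq[n-1]$ and $s_{[p,n]}\mapsto \tilde{s}_{[p,2n-p]}$ for $[p,n]\subseteq[n]$. To show this is a well-defined homomorphism, I must verify that the images satisfy the defining relations 1C, 2C, and 3C of $\gls{Jsp}$ from Lemma~\ref{symplecticact}. The relations 1$\tilde{A}$, 2$\tilde{A}$, and 3$\tilde{A}$ of $\gls{vJ2n}$ are designed precisely to mirror these, so the verification is a careful bookkeeping check: relation 1C maps to 1$\tilde{A}$; relation 2C (disconnectedness in $[n]$) maps to 2$\tilde{A}$; relation 3C(i) maps to 3$\tilde{A}$(i); and relation 3C(ii) maps to 3$\tilde{A}$(ii). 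The index arithmetic in 3$\tilde{A}$(ii) has already been simplified in Definition~\ref{foldAcact} to match the reflection $d\mapsto p+q-d$ on the second component as well, so the correspondence is transparent.

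Next I would define the inverse map $\psi: \gls{vJ2n} \to \gls{Jsp}$ on generators by reversing the above assignment, sending $\tilde{s}_{[p,q]\cup[2n-q,2n-p]} \mapsto s_{[p,q]}$ and $\tilde{s}_{[p,2n-p]} \mapsto s_{[p,n]}$. Again I must check that the relations 1$\tilde{A}$, 2$\tilde{A}$, 3$\tilde{A}$ of $\gls{vJ2n}$ are respected; since these were constructed as mirror images of 1C, 2C, 3C, this verification runs identically in reverse. Since $\varphi$ and $\psi$ are inverse bijections on the generating sets and each preserves relations, they descend to mutually inverse group homomorphisms, establishing the isomorphism $\gls{Jsp}\simeq\gls{vJ2n}$.

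An alternative, and arguably cleaner, route leans directly on Lemma~\ref{symmetrieslemma} and the (presumably already-established) Lemma~\ref{injectionbyfolding1}. Lemma~\ref{symmetrieslemma} shows that the elements $s'_{[p,q]}=s_{[p,q]}s_{[2n-q,2n-p]}$ and $s_{[k,2n-k]}$ inside $J_{2n}$ satisfy exactly the relations \eqref{cactisymm01}--\eqref{cactisymm3}, and its final sentence asserts there are no further relations among them. These elements are precisely the alternative generators $\tilde{s}_{[p,q]\cup[2n-q,2n-p]}$ and $\tilde{s}_{[p,2n-p]}$ of $\gls{vJ2n}$, realized concretely as a subgroup of $J_{2n}$. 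Since the relations \eqref{cactisymm01}--\eqref{cactisymm3} are exactly the images under $\varphi$ of the generating relations of $\gls{Jsp}$, the subgroup of $J_{2n}$ generated by these elements is presented by the same relations as $\gls{Jsp}$, and the \emph{no more relations} clause guarantees there is no collapse. Thus this subgroup, which is by definition $\gls{vJ2n}$, is isomorphic to $\gls{Jsp}$.

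The main obstacle I anticipate is \emph{not} the conceptual structure, which is clean, but rather verifying the two technical claims that make the argument rigorous: first, that the relation set in Definition~\ref{foldAcact} is genuinely complete, i.e.\ that every relation among the virtual generators already follows from 1$\tilde{A}$--3$\tilde{A}$ (this is the content of the \emph{no more relations} assertion in Lemma~\ref{symmetrieslemma} and must be invoked carefully); and second, that the reflection index computations in 3C(ii) and 3$\tilde{A}$(ii) genuinely correspond under $\varphi$ on both the $[p,q]$ block and its mirror block $[2n-q,2n-p]$. The subtlety is that the mirror block transforms under $d\mapsto 2n-d$ composed with the reflection, and one must confirm that this composite equals the reflection $d\mapsto (2n-p)+(2n-q)-d = 4n-p-q-d$ on the mirror interval, which the simplification in Definition~\ref{foldAcact} already records as $[2n-(p+q)+k,\,2n-(p+q)+l]$. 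Checking that these index identities hold uniformly, and that no degenerate cases (such as $J'=J$, or intervals touching the node $n$) break the correspondence, is where the real care is required.
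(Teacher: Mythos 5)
Your main argument is exactly the paper's proof: the paper likewise defines the two generator assignments $s_{[p,q]}\mapsto \tilde s_{[p,q]\cup[2n-q,2n-p]}$, $s_{[p,n]}\mapsto \tilde s_{[p,2n-p]}$ and their reverses, and concludes they are mutually inverse epimorphisms because the defining relations of Lemma~\ref{symplecticact} and Definition~\ref{foldAcact} match term by term (your extra care with the index arithmetic in 3$\tilde{A}$(ii) is the bookkeeping the paper leaves implicit). Your alternative route via Lemma~\ref{symmetrieslemma} and the \emph{no more relations} clause is not needed here; it is essentially what the paper does separately in Lemma~\ref{injectionbyfolding1} and Proposition~\ref{injectionbyfolding2} to embed these groups into $J_{2n}$.
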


\begin{proof}
Clearly $\gls{vJ2n}$ and $ \gls{Jsp}$ satisfy the same relations corresponding to all  connected sub-diagrams $[p,q]\subseteq [n]$.
Furthermore, the maps
%$$\begin{array}{cccc}
\begin{align*}\gls{Jsp}  &\rightarrow \gls{vJ2n} \\
s_{[p,q]} &\mapsto \tilde s_{[p,q]\sqcup [2n-q,2n-p]} \\
 s_{[p,n]} &\mapsto \tilde s_{[p,2n-p]}, \hbox{ and }\\
 \hbox{  } & \\
\gls{vJ2n} &\rightarrow \gls{Jsp} \\
  \tilde s_{[p,q]\sqcup [2n-q,2n-p]} &\mapsto s_{[p,q]} \\
  \tilde s_{[p,2n-p]} & \mapsto s_{[p,n]},
  \end{align*}
%\end{array}$$

\noindent
are  epimorphisms inverse to each other. This follows directly from the definitions of $\gls{vJ2n}$ and $ \gls{Jsp}$ (Definition
\ref{foldAcact} and Lemma \ref{symplecticact} respectively). Therefore,  $\gls{Jsp}\simeq \gls{vJ2n}$.
\end{proof}

\begin{lemma}
\label{injectionbyfolding1}
The following assignment defines a group injection from $\gls{Jsp}$ to
$J_{2n}$:
$$\begin{array} {cccccc}
\Gamma:& \gls{Jsp}&\hookrightarrow&J_{2n}\\
&s_{[p,q]}&\mapsto& s'_{[p,q]},&1\le p\le q<n,\\
&s_{[p,n]}&\mapsto& s_{[p,2n-p]},&1\le p\le n.
\end{array}
$$
\end{lemma}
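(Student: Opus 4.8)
The plan is to factor $\Gamma$ through the virtual symplectic cactus group, thereby reducing injectivity to the final "no more relations" clause of Lemma \ref{symmetrieslemma}. Concretely, I would write $\Gamma$ as the composite
\[
\gls{Jsp} \xrightarrow{\Theta} \gls{vJ2n} \xrightarrow{\iota} J_{2n},
\]
where $\Theta$ is the isomorphism of Proposition \ref{virtualcactus} sending $s_{[p,q]}\mapsto \tilde s_{[p,q]\cup[2n-q,2n-p]}$ and $s_{[p,n]}\mapsto \tilde s_{[p,2n-p]}$, and where $\iota$ sends the generators $\tilde s_{[p,q]\cup[2n-q,2n-p]}\mapsto s'_{[p,q]}$ and $\tilde s_{[p,2n-p]}\mapsto s_{[p,2n-p]}$. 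Unwinding the definition of $\Theta$ and of $s'_{[p,q]}=s_{[p,q]}s_{[2n-q,2n-p]}$, the composite $\iota\circ\Theta$ agrees with $\Gamma$ on generators, so it suffices to show that $\iota$ is a well-defined injective homomorphism.

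For well-definedness I would verify that the elements $s'_{[p,q]}$ and $s_{[p,2n-p]}$ of $J_{2n}$ satisfy the defining relations 1$\tilde A$, 2$\tilde A$, 3$\tilde A$ of $\gls{vJ2n}$ from Definition \ref{foldAcact}. This is precisely the content of Lemma \ref{symmetrieslemma}: \eqref{cactisymm01} and \eqref{cactisymm002} give 1$\tilde A$; \eqref{cactisymm02} gives 2$\tilde A$ for two folded intervals inside $[n-1]$; \eqref{cactisymm1} and \eqref{cactisymm2} give the commutation relation 3$\tilde A$(i) according to whether the nested interval reaches the node $n$ or not; and \eqref{cactisymm3} gives 3$\tilde A$(ii). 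The remaining instances of 2$\tilde A$, which pair $s_{[p,2n-p]}$ with a folded $s'_{[k,l]}$ of disjoint support, hold automatically: after folding, the two supports are disconnected sub-diagrams of $[2n-1]$, whence relation 2A of $J_{2n}$ applies. Thus $\iota$ is a homomorphism.

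For injectivity I would invoke the last sentence of Lemma \ref{symmetrieslemma}: there are no relations among the $s'_{[p,q]}$ and $s_{[k,2n-k]}$ beyond \eqref{cactisymm01}--\eqref{cactisymm3}, which are exactly the defining relations of $\gls{vJ2n}$. Hence $\iota$ has trivial kernel and is an isomorphism onto the subgroup of $J_{2n}$ generated by these elements; composing with the isomorphism $\Theta$ then exhibits $\Gamma$ as an isomorphism onto its image, and in particular as injective. Since Lemma \ref{symmetrieslemma} already supplies the deep input—the absence of further relations—the only step demanding genuine care here is the faithful matching of the two presentations across the folding dictionary $[p,q]\mapsto[p,q]\cup[2n-q,2n-p]$ and $[p,n]\mapsto[p,2n-p]$; in particular one must ensure that every disconnected instance of 2$\tilde A$ is accounted for, either among the listed identities or directly through relation 2A of the ambient group $J_{2n}$.
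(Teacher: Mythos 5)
Your proof is correct, and it ultimately rests on exactly the same two pillars as the paper's: the identities of Lemma \ref{symmetrieslemma} for the homomorphism property, and its closing ``no more relations'' clause for injectivity. The organizational difference is that you factor $\Gamma$ through the virtual group — in effect you prove Proposition \ref{injectionbyfolding2} directly (well-definedness and injectivity of $\iota:\gls{vJ2n}\to J_{2n}$) and then recover Lemma \ref{injectionbyfolding1} by precomposing with the isomorphism $\Theta$ of Proposition \ref{virtualcactus} — whereas the paper argues in the opposite order: it checks that $\Gamma$ preserves the relations 1C--3C of Lemma \ref{symplecticact}, establishes injectivity by showing the left inverse on $\op{im}(\Gamma)$ is a well-defined morphism, and only afterwards deduces Proposition \ref{injectionbyfolding2} as a corollary. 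Since Proposition \ref{virtualcactus} matches the two presentations generator-by-generator and relation-by-relation, your verification of 1$\tilde A$--3$\tilde A$ for $\iota$ is literally the same computation as the paper's verification of 1C--3C for $\Gamma$, just written in the folded labels, so neither route buys extra generality; what your version does buy is a cleaner treatment of one case: you justify the disconnected instances of 2$\tilde A$ pairing $s_{[p,2n-p]}$ with an $s'_{[k,l]}$ of disjoint support by a direct appeal to relation 2A of $J_{2n}$, whereas the paper's proof at the corresponding point cites \eqref{cactisymm2}, a relation stated only for nested intervals $p\le k\le l<n$, so your bookkeeping is the more accurate of the two.
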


\begin{proof} The map induced by $\Gamma$ is indeed a group homomorphism. The relations $1C. - 3C.$ from Lemma \ref{symplecticact} follow from
Lemma \ref{symmetrieslemma}.

To show that it is injective, one needs to show that its left inverse defined by the assignment
$$\begin{array} {cccccc}
\Gamma^{-1}_{left}:& \op{im}(\Gamma) \subset
J_{2n}&\hookrightarrow&\gls{Jsp}\\
&s'_{[p,q]}&\mapsto& s_{[p,q]},&1\le p\le q<n,\\
&s_{[p,2n-p]}&\mapsto& s_{[p,n]},&1\le p\le n.
\end{array}
$$
is also a group morphism. This however follows from Lemma \ref{symmetrieslemma} and the previous calculations: the generators of
$\op{im}(\Gamma)$ satisfy the relations from Lemma \ref{symplecticact}, and there are no more relations between them (all possible cases have
been already covered above).
\end{proof}

\begin{proposition}
\label{injectionbyfolding2}
The group $\gls{vJ2n}$ is isomorphic to a subgroup of $J_{2n}$.
\end{proposition}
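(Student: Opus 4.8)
The plan is to obtain the statement immediately by composing the two results already at hand. Let $\phi\colon\gls{vJ2n}\to\gls{Jsp}$ denote the isomorphism of Proposition~\ref{virtualcactus}, which sends $\tilde s_{[p,q]\cup[2n-q,2n-p]}\mapsto s_{[p,q]}$ and $\tilde s_{[p,2n-p]}\mapsto s_{[p,n]}$, and recall from Lemma~\ref{injectionbyfolding1} that $\Gamma\colon\gls{Jsp}\hookrightarrow J_{2n}$ is an injective group homomorphism. The first step is to form the composite $\Gamma\circ\phi\colon\gls{vJ2n}\to J_{2n}$, which, being the composition of an isomorphism with an injection, is itself an injective group homomorphism.

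The second step is to invoke the elementary fact that an injective group homomorphism identifies its domain with its image. Thus $\gls{vJ2n}$ is isomorphic to $\op{im}(\Gamma\circ\phi)$, a subgroup of $J_{2n}$, which is exactly the assertion to be proved. Tracing the two maps on generators, one checks that this image is precisely the subgroup of $J_{2n}$ generated by the elements $s'_{[p,q]}=s_{[p,q]}s_{[2n-q,2n-p]}$, for $[p,q]\subseteq[n-1]$, together with $s_{[p,2n-p]}$, for $[p,n]\subseteq[n]$; that is, the subgroup analyzed in Lemma~\ref{symmetrieslemma}. The relations collected there, together with the final assertion of that lemma that no further relations hold among these elements, are exactly what make this subgroup a faithful copy of $\gls{Jsp}\simeq\gls{vJ2n}$.

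I do not anticipate any genuine obstacle here: all of the substantive content has already been established in Lemma~\ref{symmetrieslemma}, Lemma~\ref{injectionbyfolding1}, and Proposition~\ref{virtualcactus}. The only point deserving a line of care is to confirm that the target generators of $\phi$ coincide with the source generators of $\Gamma$, so that the two maps are genuinely composable; this is immediate upon comparing the generators \eqref{virtualalterC1}--\eqref{virtualalterC2} of $\gls{vJ2n}$ with the generators $s_{[p,q]}$ and $s_{[p,n]}$ of $\gls{Jsp}$ appearing in Lemma~\ref{injectionbyfolding1}.
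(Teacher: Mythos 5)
Your proof is correct and is essentially the paper's own argument: both obtain the injection $\gls{vJ2n}\hookrightarrow J_{2n}$ by composing the isomorphism of Proposition~\ref{virtualcactus} with the injection $\Gamma$ of Lemma~\ref{injectionbyfolding1}, sending $\tilde s_{[p,q]\cup[2n-q,2n-p]}\mapsto s'_{[p,q]}$ and $\tilde s_{[p,2n-p]}\mapsto s_{[p,2n-p]}$. Your extra remark identifying the image with the subgroup analyzed in Lemma~\ref{symmetrieslemma} is consistent with, and implicit in, the paper's treatment.
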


\begin{proof} After composing the maps from Proposition \ref{virtualcactus} and Lemma
\ref{injectionbyfolding1}, one obtains
the group injection,
\begin{align*}
\gls{vJ2n} &\hookrightarrow J_{2n}& \\
  \tilde s_{[p,q]\sqcup [2n-q,2n-p]} &\mapsto s'_{[p,q]},  1\le p\le q<n,\\
  \tilde s_{[p,2n-p]} & \mapsto s_{[p,2n-p]},   1\le p\le n.
\end{align*}
%\noindent
%is a a group injection.
%This follows directly after composing the maps from Proposition \ref{virtualcactus} and Lemma \ref{injectionbyfolding1}.
\end{proof}

\section{Full Sch\"utzenberger--Lusztig involutions and algorithms}\label{sec:fullschutz}

\subsection{Full Sch\"utzenberger--Lusztig involution } Let $\gls{cncrystal}$ be the  normal $\gls{g}$-crystal with
highest weight $\lambda$. Let $u_{\lambda}$ and $u^{low}_{\lambda}$ be the highest, respectively lowest, weight elements of $\gls{cncrystal}$.
The  \emph{Sch\"utzenberger--Lusztig involution}  $\gls{xi}:\gls{cncrystal}\rightarrow \gls{cncrystal}$  is the unique map of sets
(hence  set involution)
such
that, for all $b\in \gls{cncrystal}$, and $i\in I$,
\begin{itemize}
\item $e_i\gls{xi}(b)=\gls{xi} f_{\theta(i)}(b)$

\item  $f_i\gls{xi}(b)=\gls{xi} e_{\theta(i)}(b)$

\item $\textsf{wt}(\gls{xi}(b))=w_0\textsf{wt(b)}$
\end{itemize}
where $w_0$ is the long element of the Weyl group $W$. (For the existence and uniqueness of $\xi$,  $\xi^2$ is a map of crystals and
hence $\xi^2=1$, see \cite{hk06, BSch17}.) The  involution
$\gls{xi}$ acts by $w_0$ on the weights and
interchanges the action of $e_i$ and $f_{\theta(i)}$. It is an automorphism of the underlying unlabeled, non-oriented and non-weighted graph.
For $A_{n-1}$, $\gls{xi}$ acts
by reversing the weight and interchanges the action of
$e_i$ and $f_{n-i}$; for $C_n$,  $\gls{xi}$ acts by changing the sign of the weight and interchanges the action of $e_i$ and $f_{i}$.

If $\gls{B}$ is a normal $\gls{g}$-crystal, $\gls{B}$ is the disjoint union of  connected components, each of which is a crystal isomorphic to
$\gls{cncrystal}$ for some dominant integral weight $\lambda$. We define  $\gls{xib}$  on $\gls{B}$ by applying $\gls{xi}$ to each one of its
connected components. Each element of $\gls{cncrystal}$ is generated by $u_\lambda$ (resp.  $u_\lambda^{\textsf{low}}$ ) by applying $f_i$'s
(resp. $e_i$'s). Hence the same sequence of $f_i$'s (resp. $e_i$'s) applies to the highest weight (resp. lowest weight) of any connected
component of $\gls{B}$ isomorphic to $\gls{cncrystal}$.

The elements $u_\lambda$ and $u_\lambda^{\textsf{low}}$ are the unique elements of $\gls{cncrystal}$ of  weight  $\lambda$, respectively
$w_0\lambda$.  Hence,  since $\textsf{wt}(\gls{xi}( u_\lambda))=w_0\lambda$, and $\textsf{wt}(\gls{xi}(
u^{\textsf{low}}_\lambda))=\lambda$, $\xi$ interchanges  highest and lowest weight elements of $B(\lambda)$, and so
$u_\lambda^{\textsf{low}}=\gls{xi}(u_\lambda)$, $\gls{xi}(u_\lambda^{\textsf{low}})=u_\lambda$. This implies that,
$u_\lambda=e_{j_r}\cdots e_{j_1} (u_\lambda^{\textsf{low}})$, for some sequence $j_1,\dots,j_r\in I$, and
\[u_\lambda^{\textsf{low}}=\gls{xi}(u_\lambda)=\gls{xi}(e_{j_r}\cdots e_{j_1} (u_\lambda^{\textsf{low}}))=f_{\theta( j_r)}\cdots f_{\theta
(j_1)}(\gls{xi} ( u_\lambda^{\textsf{low}}))=f_{\theta(j_r)}\cdots f_{\theta(j_1)}( u_\lambda).\]

\begin{corollary}
\label{theta}
Let $b\in \gls{cncrystal}$ and $b=f_{ j_r}\cdots f_{ j_1}( u_\lambda)$, { for $j_r,\dots,j_1\in I$}. Then
$$\gls{xi}(b)= e_{\theta(j_r)}\cdots e_{\theta(j_1)}(u_\lambda^{\textsf{low}}),\quad \textsf{wt}(\gls{xi}(b))=w_0\textsf{wt}(b)$$

In particular,
\begin{itemize}
\item in type $A_{n-1}$, $\gls{xi}(b)= e_{n- j_r}\cdots e_{n- j_1}( u_\lambda^{\textsf{low}}),$ and
    $\textsf{wt}(\gls{xi}(b))={rev}\,\textsf{wt}(b)$, where $rev$ is the reverse permutation (long element) of $\mathfrak{S}_n$,
\item in type $C_n$, $\gls{xi}(b)= e_{j_r}\cdots e_{ j_1}(u_\lambda^{\textsf{low}}),$ and $\textsf{wt}(\gls{xi}(b))=-\textsf{wt}(b).$
\end{itemize}
\end{corollary}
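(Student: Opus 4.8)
The plan is to transport $\xi$ across the string of lowering operators that produces $b$, converting each $f_{j_k}$ into a raising operator, and then to use the identity $\xi(u_\lambda)=u_\lambda^{\textsf{low}}$ established just above the statement. First I would rewrite the first defining relation $e_i\,\xi(b)=\xi f_{\theta(i)}(b)$ in a form adapted to lowering operators. Since $\theta$ is a Dynkin diagram automorphism arising from $-w_0$, it is an involution, so setting $i=\theta(j)$ gives $e_{\theta(j)}\,\xi(b)=\xi f_j(b)$ for every $j\in I$ and every $b$; equivalently, $\xi f_j=e_{\theta(j)}\,\xi$ as maps on $\textsf{B}(\lambda)\cup\{0\}$.

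Next I would apply this commutation rule repeatedly. Writing $b=f_{j_r}\cdots f_{j_1}(u_\lambda)$ and applying $\xi$, one step of the rule peels off the outermost operator as $\xi f_{j_r}=e_{\theta(j_r)}\,\xi$, and an induction on $r$ yields $\xi(b)=e_{\theta(j_r)}\cdots e_{\theta(j_1)}\,\xi(u_\lambda)$. Because $b$ is a genuine (nonzero) element of the crystal, every intermediate term in this computation is nonzero, so no degenerate case intervenes. Substituting $\xi(u_\lambda)=u_\lambda^{\textsf{low}}$ then gives the asserted formula $\xi(b)=e_{\theta(j_r)}\cdots e_{\theta(j_1)}(u_\lambda^{\textsf{low}})$, while the weight identity $\textsf{wt}(\xi(b))=w_0\,\textsf{wt}(b)$ is exactly the third defining property of $\xi$.

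Finally I would specialize $\theta$ and $w_0$ to the two types of interest using the facts recorded in Section~\ref{sec:basics}. In type $A_{n-1}$ one has $\theta(i)=n-i$ and $w_0=w_0^A$ acts as the reverse permutation on weights, which gives $\xi(b)=e_{n-j_r}\cdots e_{n-j_1}(u_\lambda^{\textsf{low}})$ and $\textsf{wt}(\xi(b))=\mathrm{rev}\,\textsf{wt}(b)$. In type $C_n$ one has $\theta(i)=i$ and $w_0=w_0^C$ acts by $-1$ on weights, giving $\xi(b)=e_{j_r}\cdots e_{j_1}(u_\lambda^{\textsf{low}})$ and $\textsf{wt}(\xi(b))=-\textsf{wt}(b)$. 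There is no serious obstacle here: the whole argument is a direct consequence of the intertwining relations, and the only point demanding care is recording that $\theta$ is an involution, so that the substitution $i=\theta(j)$ is legitimate and the conversion of every $f_{j_k}$ into $e_{\theta(j_k)}$ is unambiguous.
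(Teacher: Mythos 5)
Your proposal is correct and follows exactly the route the paper takes: the displayed computation preceding the corollary already establishes $\xi(u_\lambda)=u_\lambda^{\textsf{low}}$ and the commutation pattern $\xi f_j=e_{\theta(j)}\xi$ (via $\theta$ being an involution), and the corollary is then obtained, as you do, by iterating this relation along $b=f_{j_r}\cdots f_{j_1}(u_\lambda)$ and reading off the weight identity from the third defining property. The type-specific cases follow from the same specializations of $\theta$ and $w_0$ recorded in the paper's basics section, so there is nothing to add.
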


 \subsection{The full \texorpdfstring{$\mathfrak{sl}(n,\mathbb{C})$}{sln}  reversal}\label{Areversal} For $\gls{g}=\mathfrak{sl}(n,\mathbb{C})$, $\gls{xi}$
 coincides with the
 Sch\"utzenberger involution \cite{lenart, bz96} also known as
 \textsf{evacuation}
 (\textsf{evac} for short) on \gls{ssyt} \cite{Fu, Sta}, and as \gls{rev} on the set $\textsf{SSYT}(\lambda/\mu,n)$ of skew semi-standard tableaux of
 shape $\lambda/\mu$ in the alphabet $[n]$  \cite{ bss}.

Let $T\in \gls{B}=\textsf{SSYT}(\lambda/\mu,n)$ and let $\textsf{B}(T)$ be the connected component of the crystal
$\textsf{SSYT}(\lambda/\mu,n)$  containing $T$. Then $\textsf{B}(T)\simeq \textsf{B}(\nu)$ for some partition $\nu$ and $\textsf{rect} (T)\in
\textsf{B}(\nu)$. Thereby, $\gls{xi}(T)$ is the unique tableau in $\textsf{B}(T)$ such that
\[\textsf{rect}\,\gls{xi}(T)=\textsf{evacuation}(\textsf{rect} (T)),\]
\begin{align}\label{fullformula} \gls{xi}(T)=\textsf{arect}(\textsf{evacuation}(\textsf{rect} (T))),\end{align}
where $\textsf{arectification}$ ($\textsf{arect}$ for short) denotes the inverse process of \textsf{rectification} \cite{bss,
AzenhasConflittiMamede2019}. More precisely, the $\textsf{rectification}$ ($\textsf{rect}$ for short) procedure is recorded by assigning a
standard tableau $S$ to the inner shape $\mu$ of $T$ to form the tableau pair $(S,T)$. The entries of $S$ govern the \emph{jeu de taquin} on
$T$, as we slide out all letters in the filling of $S$, from largest to smallest, to get a new tableau pair $(\textsf{rect}(T), S')$, where
$S'$ is the skew standard tableau consisting of the slid letters from $S$. The anti-rectification procedure, \textsf{arect}, is defined by the
reverse \emph{jeu de taquin} to $\textsf{evacuation}(\textsf{rect} (T)$ and is governed by the slid letters in $S'$ in the tableau pair
$(\textsf{evacuation}(\textsf{rect} (T)), S')$ from smallest to largest. Eventually one obtains
  the tableau pair $(S, \gls{rev}(T))$ where
 \begin{align}\gls{rev}(T):=\textsf{arect}(\textsf{evacuation}(\textsf{rect}(T))). \label{reversalgln}
 \end{align}

Next we will discuss $\gls{g}=\mathfrak{sp}(2n,\mathbb{C})$.

 \subsection{ Lecouvey--Sheats symplectic jeu de taquin and symplectic Knuth equivalence }
 \label{sjdt}

If $T$ is a KN tableau, we consider its word $w(T)$ $ \in \gls{Cstar}$ obtained by reading the columns of $T$ in Chinese/Japanese order, from rightmost
to leftmost, each column read from top to bottom.  Recall Remark \ref{lecouvey2.2.2}.

\subsubsection{Lecouvey--Sheats symplectic jeu de taquin } In this section, which we include for the comfort of the reader, we recall material from \cite{she99,lec02} which is relevant for our purposes. A punctured \textit{skew} KN tableau is a skew KN tableau one of whose boxes, called its \textit{puncture}, instead of being blank or containing a letter in $\gls{Corderedletters}$, is distinguished by being filled in with an asterisk $*$ instead. The \textit{splitting} $\emph{spl}(T)$ of a KN tableau $T$ contains two punctures, in consecutive columns and in the same row, corresponding to the ``splitting'' of the puncture in $T$.

 Let $T$ be a punctured KN tableau with two columns $C_1$ and $C_2$ and split form $spl(T)=l C_1 rC_1 lC_2rC_2$, and let $C_1$ have the
 puncture $\ast$. Let $\alpha$ be the entry under the puncture of $rC_1$ and $\beta$ the entry to the right of the puncture of $rC_1$,
$$spl(T)= l C_1 rC_1 l C_2rC_2=\YT{0.22in}{}{
	{{\dots},{\dots}, {\dots}, {\dots}},
	{{\ast},{\ast},{\beta},{\dots}},
	{{\dots},{\alpha},{\dots},{\dots}},
	{{\dots},{\dots}}},$$
where $\alpha$ or $\beta$ may not necessarily exist. Following the wording in \cite{sa21a}, the elementary steps of the symplectic jeu de
taquin, or SJDT for short, are the following:

{\textbf{A.}} If $\alpha\leq \beta$ or $\beta$ does not exist,  then the puncture of $T$ will change its position with the cell beneath it.
This is a vertical slide.

{\textbf{B.}}	If the slide is not vertical, then it is horizontal. We then have $\alpha> \beta$ or that $\alpha$ does not exist. Let $C_1'$
and $C_2'$ be the columns obtained after the slide. We have two subcases, depending on the sign of $\beta$:

{\boldmath{1.}} If $\beta$ is barred, we are moving a barred letter, $\beta$, from $l C_2$ to the punctured box of $rC_1$, and the puncture
will occupy $\beta$'s place in $l C_2$. Note that $l C_2$ has the same barred part as $C_2$ and that $rC_1$ has the same barred part as
$\Phi(C_1)$. Looking at $T$, we will have a horizontal slide of the puncture, getting $C_2'=C_2\setminus \{\beta\}\sqcup\{\ast\}$ and
$C_1'=\Phi^{-1}(\Phi(C_1)\setminus{\{\ast\}}\sqcup \{\beta\})$. In a sense, $\beta$ went from $C_2$ to $\Phi(C_1)$.

{\boldmath{2.}} If $\beta$ is unbarred, the procedure is similar, but this time $\beta$ will go from $\Phi(C_2)$ to $C_1$; hence
$C_1'= C_1\setminus{\{\ast\}}\sqcup \{\beta\}$ and $ C_2'=\Phi^{-1}(\Phi(C_2)\setminus \{\beta\}\sqcup{\{\ast\}})$. However, in this case it may happen
that $C_1'$ is no longer admissible. In this situation, if $i$ is the lowest entry such that $i, \bar i$ appear in $C_1'$ and $N(i) > i$, we
erase both $i$ and $\overline{i}$ from the column, remove a cell from the bottom and from the top of the column and place all the remaining cells in order.

After applying elementary SJDT slides successively, the puncture will eventually reach a cell such that $\alpha$ and $\beta$ do not exist. In
this case we redefine the shape to not include this cell and the \textit{jeu de taquin} ends. The SJDT when applied to semi-standard tableaux
in the alphabet $[n]$ reduces to the ordinary \emph{jeu de taquin}.

The SJDT is reversible, meaning that we can move $\ast$, the empty cell outside of $\mu$, to the inner shape $\nu$ of a skew tableau $T$ of
shape $\mu / \nu$, simultaneously increasing both the inner and outer shapes of $T$ by one cell. The slides work similarly to the previous
case: the vertical slide means that an empty cell is going up, and a horizontal slide means that an entry goes from $\Phi(C_1)$ to $C_2$ or
from $C_1$ to $\Phi(C_2)$, depending on whether the slid entry is barred or not, respectively.  For an illustration of SJDT, we refer the reader to the first part of Example \ref{ex:sjdt}.

\subsubsection{Symplectic Knuth equivalence }

In this section we gather the necessary tools from \cite{{LLT95}, lec02}. For $w \in \gls{Cstar}$, let $P(w)$ be the Kashiwara--Nakashima
tableau obtained by performing the Baker--Lecouvey insertion algorithm on $w$. We do not need the algorithm in this paper, but refer the
reader to \cite{ba00b,lec02}  for the original descriptions. A detailed account can also be found in \cite{sa21b}. Given $w_1, w_2 \in
\gls{Cstar}$, the relation $w_1 \sim w_2\Leftrightarrow P(w_1)=P(w_2)$ defines an equivalence relation on $\gls{Cstar}$ known as
\emph{symplectic plactic equivalence}. It is the analogous relation defined by Knuth relations in the alphabet $[n]$ \cite{Fu}. The
\emph{symplectic plactic monoid} is the quotient $\gls{Cstar}/\sim$. Each \emph{symplectic plactic class} is uniquely identified with a KN
tableau.

The plactic monoid $\gls{Cstar}/\sim$ can also be described as the quotient of $ \gls{Cstar}$ by the following \emph{symplectic plactic
relations} (we use the notation from \cite{lec02}):

\begin{itemize}
\item[\textbf{R1}] \label{R1}
$yzx \sim yxz \hbox{ for } x \leq y < z\hbox{ with } z \neq \bar x \hbox{ and }
xzy \sim zxy \hbox{ for } x < y \leq z \hbox{ with } z \neq \bar x
$;
\medskip
\item[\textbf{R2}]\label{R2}
$
y \overline{x-1}(x-1) \sim y x \overline{x}  \hbox{ and }
x \overline{x} y \sim \overline{x-1} (x-1)y \hbox{ for }
1 < x \leq n \hbox{ and } x \leq y \leq \bar x
$;
\medskip
\item[$\gls{r3}$] \label{R3} (Symplectic contraction/dilation relation)
$w\sim w\setminus\{z,\overline{z}\}$, where $w\in \gls{Cstar}$ and $z\in [n]$ are such that $w$ is a non-admissible column, $z$ is the
lowest non-barred letter in $w$ such that $N(z) = z+1$ and any proper factor of $w$ is an admissible column.
\end{itemize}

For example, the words $23\overline{2}\overline{3}1$ and $\overline{1}113\overline{3}$ are symplectic Knuth related:
$\overline{1}113\overline{3} \stackrel{\mathbf{R1}}{\sim}\overline{1}131\overline{3}\stackrel{\mathbf{R1}}{\sim}\overline{1}13\overline{3}1
\stackrel{\mathbf{R2}}{\sim}2\overline{2}3\overline{3}1\stackrel{\mathbf{R1}}{\sim}23\overline{2}\overline{3}1.$ Note that to apply the plactic relation
\gls{r3} to a non-admissible column word $w$, we need only check that all proper prefixes of $w$ are admissible, as opposed to all proper
factors \cite{sa21a}. For example, $ 234\bar 4\bar 3\overset {\gls{r3}}\sim 23\bar 3,$ and $1234\bar 4\bar 3\overset {\gls{r3}}\sim 12 3\bar
3\overset
{\gls{r3}}\sim 12.$

When Knuth relations are applied to factors of a word, the weight is preserved while the length may not be. Knuth relations can be seen as
\textit{jeu de taquin} moves on words or diagonally shaped tableaux, and each symplectic \textit{jeu de taquin} slide preserves the Knuth
class of the reading word of a tableau \cite[Theorem 6.3.8]{lec02}.
	%The words $23\overline{2}\overline{3}1$ and $\overline{1}113\overline{3}$ are Knuth related:
	%$\overline{1}113\overline{3} \stackrel{R1}{\sim}\overline{1}131\overline{3}\stackrel{R1}{\sim}\overline{1}13\overline{3}1
	%\stackrel{R2}{\sim}2\overline{2}3\overline{3}1\stackrel{R1}{\sim}23\overline{2}\overline{3}1$.

\subsection {Full symplectic reversal}
\subsubsection{Symplectic evacuation algorithm}
\label{santosevac}
In \cite{sa21a}, Santos introduced a symplectic evacuation algorithm on tableaux in $\gls{symptab}$ denoted by $\textsf{evac}^{C_n}$ which he
proved coincides with the full Lusztig--Sch\"utzenberger involution on a given $U_{q}(\mathfrak{sp}(2n,\mathbb{C}))$-crystal $\gls{cncrystal}$
associated to a representation of highest weight $\lambda$. Santos' evacuation algorithm mimics the Sch\"utzenberger \textsf{evacuation}  on
$\gls{ssyt}$. It replaces the action of the long element of $\mathfrak{S}_n$ with that of the long element of $\gls{bn}$ and performs
symplectic rectification or insertion  using  Lecouvey--Sheats symplectic jeu de taquin  \cite{she99,lec02,lec07}, or Baker--Lecouvey
insertion \cite{ba00b,lec02,lec07}, respectively.  We refer the reader to \cite[Section 5]{sa21a} for detailed examples of the algorithm.

\subsubsection{Full symplectic reversal on KN skew tableaux} \label{susub:fullskew} The set $\textsf{KN}(\lambda/\mu,m)$   is  a normal $C_m$
crystal whose connected components are isomorphic to $\textsf{KN}(\nu,m)$ for some partition $\nu$ whose number of boxes $|\nu|$ is less
than or equal to $|\lambda|- |\mu|$. Let  $n=m+j-1$, where $1\le j-1<n$ is the number of parts of $\mu$ and $J=[j,n]$.

Let $\textsf{B}(\lambda,\mu)$ be the subset of  
$\gls{symptab}$ consisting of the tableaux in $\gls{symptab}$ with entries exclusively in $1<\cdots
<j<j+1<\cdots <j-1+m <\overline{j-1+m}<\cdots<\overline{j}$ outside of the shape $\mu \subset \lambda$ and whose sub-tableau on the alphabet  $\{1,\dots,j-1\}$ is the fixed Yamanouchi
tableau of shape $\mu$. By \cite[Lemma 6.1.3]{lec02}, $\textsf{B}(\lambda,\mu)$ is a normal $C_{m}$ crystal; in particular it is stable under the action of $f_{i+j-1},e_{i+j-1}$, $i=1,\dots,
m$, so it is a sub-crystal of $\gls{branchedsymptab}$. In particular note that each one of its connected components is contained in a connected component of $\gls{branchedsymptab}$. Shifting the entries of the  KN skew
tableaux in $\textsf{KN}(\lambda/\mu,m)$ by $j-1$, we may identify $\textsf{KN}(\lambda/\mu,m)$ with 
$\textsf{B}(\lambda,\mu)\subset \gls{branchedsymptab}$.
That is, the crystal operators, $f_i, e_i$, $i=1,\dots, m$ do not
change the skew-shape of a KN tableau on the alphabet $\mathcal{C}_m$, and $\textsf{KN}(\lambda/\mu,m)$ decomposes into connected components
that can be identified with the connected components of $\textsf{B}(\mu,\lambda)$.

In both type $A_{n-1}$ and type $C_n$, Kashiwara operators $e_i$ and $f_i$ commute with SJDT slides. Let $T\in
\gls{B}=\textsf{KN}(\lambda/\mu,n)$. An inner corner in $T$ is a box of $\mu$ such that the boxes below and to the right are not in $\mu$; an
outer corner in $T$ is a box of $\lambda$ such that the boxes below and to  the right are not in $\lambda$.  Let $c$ be a fixed inner/outer
corner of $T$. A \emph{SJDT slide} or a \emph{complete  SJDT slide} to the inner corner $c$  means a slide of the box $c$ from an inner
corner to an outer corner, or vice-versa.  A SJDT slide to the  inner/outer corner $c$ of $T$  gives a new KN skew tableau
$\text{SJDT}(T,c)$,
possibly with fewer/more boxes. Applying a SJDT slide to the same inner corner $c$ in all vertices of $\textsf{B}(T)$ defines an isomorphic
crystal $\textsf{B}(\text{SJDT}(T,c))$ \cite[Theorem 6.3.8]{lec02}. The images of the KN tableaux in the same connected component of
$\textsf{KN}(\lambda/\mu,m)$ under this crystal isomorphism have the same skew shape \cite[Theorem 6.3.8]{lec02}. Iterating the SJDT to all
inner corners of $T$ rectifies $T$, producing $\textsf{rect}(T)$ \cite[Proposition 9.2]{she99}, \cite[Theorem 6.1.9, Theorem 6.3.9]{lec02}.

At the end of each SJDT slide, the inner corner (outer corner) where the slide started is filled, or the column where the slide started has
two fewer (more) boxes \cite[Proposition 9.2]{she99}, \cite[Theorem 6.1.9]{lec02}. The SJDT step where the tableau loses two boxes in a column
has a previous step where this column is non-admissible but Knuth equivalent to the new column which is admissible. The step in reverse SJDT
where the tableau gains two boxes in a column is $\gls{r3}$ Knuth equivalent to the previous one which is admissible. Therefore, in each step
of SJDT we get crystals which are isomorphic. This allows, in the vein of reversal for $A_{n-1}$ skew semi-standard tableaux, the definition
of symplectic reversal, $\gls{symprev}$, on type $C_n$ skew tableaux as a coplactic extension of $\textsf{evacuation}^{C_n}$.

\begin{lemma}\label{reversal} Let $T\in \gls{B}= \textsf{KN}(\lambda/\mu,n)$.
 Then $\xi^{C_n} (T)$ is the unique KN tableau in $\textsf{B}(T)$ that is symplectic Knuth equivalent to $\textsf{evac}^{C_n}\,
\textsf{rect}(T)$, and
\begin{align}\label{coplactic}
\textsf{rect}\,\xi^{C_n}(T)&=\textsf{evac}^{C_n}(\textsf{rect} (T)).
                                             \end{align}
\end{lemma}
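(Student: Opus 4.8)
The plan is to mirror the proof of the type $A_{n-1}$ reversal formula \eqref{reversalgln}, replacing ordinary \emph{jeu de taquin} and \textsf{evacuation} by their symplectic counterparts. The key structural input is that symplectic rectification restricts to a crystal isomorphism $\textsf{rect}\colon \textsf{B}(T)\xrightarrow{\ \sim\ }\textsf{KN}(\nu,n)$ for some partition $\nu$, where $\textsf{B}(T)$ is the connected component of $\textsf{KN}(\lambda/\mu,n)$ containing $T$. This follows from the earlier discussion: each SJDT slide to a fixed inner corner, applied uniformly to every vertex of $\textsf{B}(T)$, is a crystal isomorphism onto its image \cite[Theorem 6.3.8]{lec02}, and iterating the slides to all inner corners rectifies $T$ while keeping the skew shape constant across each connected component. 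In particular $\textsf{rect}$ intertwines the Kashiwara operators $e_i,f_i$ for $1\le i\le n$ and is a bijection from $\textsf{B}(T)$ onto $\textsf{KN}(\nu,n)$.

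First I would establish naturality of $\xi^{C_n}$ with respect to this isomorphism. Because $\xi^{C_n}$ is the \emph{unique} set map on a normal $C_n$-crystal satisfying $e_i\,\xi^{C_n}=\xi^{C_n} f_{\theta(i)}$, $f_i\,\xi^{C_n}=\xi^{C_n} e_{\theta(i)}$ and $\textsf{wt}\circ\xi^{C_n}=w_0\circ\textsf{wt}$ (existence and uniqueness as recalled after its definition, cf.\ \cite{hk06,BSch17}), the conjugate $\textsf{rect}^{-1}\circ\xi^{C_n}\circ\textsf{rect}$ satisfies these same defining relations on $\textsf{B}(T)$; by uniqueness it equals $\xi^{C_n}$ on $\textsf{B}(T)$, so that
\begin{equation*}
\textsf{rect}\bigl(\xi^{C_n}(T)\bigr)=\xi^{C_n}\bigl(\textsf{rect}(T)\bigr).
\end{equation*}
Here I use $\theta=\mathrm{id}$ in type $C_n$, as in Corollary \ref{theta}. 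Since $\textsf{rect}(T)\in \textsf{KN}(\nu,n)$ is straight-shaped, Santos' identification of the full Sch\"utzenberger--Lusztig involution with symplectic evacuation on straight shapes \cite{sa21a} gives $\xi^{C_n}(\textsf{rect}(T))=\textsf{evac}^{C_n}(\textsf{rect}(T))$, and combining the two displays yields the coplactic formula \eqref{coplactic}.

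It remains to verify the uniqueness characterization. Since $\xi^{C_n}$ maps each connected component of $\textsf{KN}(\lambda/\mu,n)$ to itself, $\xi^{C_n}(T)\in \textsf{B}(T)$. A KN tableau $S$ is symplectic Knuth equivalent to $\textsf{evac}^{C_n}\textsf{rect}(T)$ precisely when $\textsf{rect}(S)=\textsf{evac}^{C_n}\textsf{rect}(T)$, the right-hand side being already straight-shaped and hence its own rectification. As $\textsf{rect}$ is a bijection from $\textsf{B}(T)$ onto $\textsf{KN}(\nu,n)$ and $\textsf{evac}^{C_n}$ preserves the shape $\nu$, there is exactly one such $S$ in $\textsf{B}(T)$, namely $\xi^{C_n}(T)$ by the previous paragraph. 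I expect the only genuinely delicate point to be the naturality step: one must be sure that symplectic rectification is a bona fide crystal isomorphism, not merely shape-preserving, so that the uniqueness of $\xi^{C_n}$ transports correctly. This is exactly where the commutation of SJDT slides with the Kashiwara operators, and the \textbf{R3} contraction/dilation step handling the columns that gain or lose two boxes, do the essential work.
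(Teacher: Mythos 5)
Your proof is correct and follows essentially the same route as the paper's: symplectic rectification is a crystal isomorphism from $\textsf{B}(T)$ onto the straight-shape crystal $\textsf{B}(\nu)$ (crystal operators commute with SJDT, including the \textbf{R3} steps), $\xi^{C_n}$ transports along this isomorphism, and Santos' identification $\xi^{C_n}=\textsf{evac}^{C_n}$ on straight shapes finishes the argument. The paper leaves the naturality step and the uniqueness claim implicit, whereas you make them explicit via the uniqueness characterization of $\xi^{C_n}$ and the injectivity of $\textsf{rect}$ on $\textsf{B}(T)$; this is a worthwhile elaboration but not a different method.
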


\begin{proof}  The crystal $\textsf{B}(T)\simeq\textsf{B}(\nu)$ for some partition $\nu$ and $\textsf{rect} (T)\in \textsf{B}(\nu)$.
The full Sch\"utzenberger--Lusztig involution on KN tableaux of straight shape satisfies
$\xi^{C_n}(\textsf{rect}(T))=\textsf{evac}^{C_n}(\textsf{rect}(T))$, and  crystal
operators commute with SJDT when passing from $\textsf{B}(T)$ to $\textsf{B}(\nu)$. Therefore, \eqref{coplactic} holds.
\end{proof}

In  Subsection \ref{partialsymplecticreversal} we will provide an algorithm for partial symplectic reversal on $\gls{branchedsymptab}$ with
$J=[j,n]$.  An algorithm for full $C_{n}$ reversal on $\textsf{KN}(\lambda/\mu,n)$ will result as a special case by considering the normal
 sub-crystal $\textsf{B}(\mu,\lambda)$ of  $ \gls{branchedsymptab}$. See Remark \ref{re:fullreversalskew}.

  \section{Internal cactus group action on a normal crystal}\label{sec:internalpartialschutzaction}
 \subsection{Partial Sch\"utzenberger--Lusztig   involutions}
Partial Sch\"utzenberger  involutions were first studied in the case $\gls{g}=\mathfrak{sl}(n,\mathbb{C})$ by Berenstein and Kirillov
\cite{bk95} but have been defined by Halacheva  in general: given  $J\subseteq I$ any sub-diagram, the partial
Sch\"utzenberger--Lusztig involution $\gls{xij}$ is defined to be the Sch\"utzenberger--Lusztig  involution $\xi_{\gls{Bj}}$ on the normal
crystal $\gls{Bj}$  \cite{ha16, ha20} (see also \cite{hakarywe20}).
The crystal $\gls{Bj}$ decomposes into connected components, and we apply the Sch\"utzenberger--Lusztig involution to each connected
component.  Let $b\in \gls{B}$, and let $u^{\textsf{high}}$, $u^{\textsf{low}}$ be  the highest  and lowest weight elements of the connected
component of $\gls{Bj}$ containing $b$.  Let $b=f_{ j_r}\cdots f_{ j_1}( u^{\textsf{high}})$, with $j_r\cdots { j_1}\in J$. Then, for $j \in
J$,
 \begin{align} \gls{xij}e_j(b)&=f_{\theta_J(j)}\gls{xij}(b),\label{partialschutz**}\\
  \gls{xij}f_j(b)&=e_{\theta_J(j)}\gls{xij}(b),\label{partialschutz***}\\
 \textsf{wt}_J(\gls{xij}(b))&=w_0^J\textsf{wt}_J(b),\label{partialschutz*} \end{align}
 and $\gls{xij}(b)= e_{\theta_J (j_r)}\cdots e_{\theta_J( j_1)}(u^{\textsf{low}}).$

\begin{remark}  If $J=K\sqcup K'\subseteq I$ is disconnected with $K$ and $K'$ connected sub-diagrams of $I$, we have the sub-type Dynkin
diagram $K\times K'$, and the Weyl group is $W^K\times W^{K'}$ with longest elements $w_0^K$ and $w_0^{K'}$, respectively, such that
$w_0^J=w_0^Kw_0^{K'}=w_0^{K'}w_0^K$.  The weight lattice of $\mathfrak{g}_{K}\oplus \mathfrak{g}_{K'} $ is $\Lambda_{K\sqcup
K'}:=\Lambda_K\oplus \Lambda_{K'}$  (see \cite{BSch17}).
Then, if $\theta_K$ and $\theta_{K'}$ are the graph automorphisms defined by $w_0^K$ and $w_0^{K'}$ in $K$
and $K'$, respectively, $\theta_J=\theta_K\theta_{K'}=\theta_{K'}\theta_K$ is a graph automorphism of the Dynkin graph $K\times K'$ and hence
preserves the connected sub-diagrams $K$ and $K'$ of $I$ as defined in Section~\ref{sec:basics}.
Thanks to \cite[Lemmas 10.1.3, 10.1.4]{ha16}, \cite[2368--2369]{hakarywe20}, the crystal operators act componentwise on the normal
crystal $\textsf{B}_{K\sqcup K'}$ (a normal $\mathfrak{g}_{K}\oplus \mathfrak{g}_{K'} $--crystal) and satisfy the following properties
\begin{align}
& f_kf_{k'} =f_{k'}f_k,\, f_ke_{k'}=e_{k'}f_k,\, e_ke_{k'}
=e_{k'}e_k,\, e_kf_{k'} =f_{k'}e_k, \hbox{ for } k\in K, k'\in
K',\label{disjoint0} \\
&e_{k'}\xi_{\textsf{B}_K}=\xi_{\textsf{B}_K}e_{k'},\,f_{k'}\xi_{\textsf{B}_K}=\xi_{\textsf{B}_K}f_{k'},
\hbox{ for } k\in K, k'\in
K',\label{disjoint1}\\
&e_{k}\xi_{\textsf{B}_{K'}}=\xi_{\textsf{B}_{K'}}e_{k},\,f_{k}\xi_{\textsf{B}_{K'}}=\xi_{\textsf{B}_{K'}}f_{k},
\hbox{ for } k\in K, k'\in
K'\label{disjoint2},
\end{align} and
$\xi_K$ and $\xi_{K'}$ commute:
$\xi_K\xi_{K'}=\xi_{K'}\xi_K.$
\noindent This extends to a disconnected sub-diagram with more  than two connected sub-diagrams.
%It follows from (\ref{disjoint0})-(\ref{disjoint2}) that $\xi_K$ and $\xi_{K'}$ commute
%\[\xi_K\xi_{K'}=\xi_{K'}\xi_K.\]
\end{remark}

\begin{lemma} \label{th:disconnected}
Let $J=K\sqcup K'\subseteq I$ be a disconnected sub-diagram of $I$  with $K$ and $K'$ connected. Then $\textsf{B}_{K\sqcup K'}$ is a normal
crystal, and the Sch\"utzenberger--Lusztig  involution on $\textsf{B}_{K\sqcup K'}$, $\xi_{K\sqcup K'}$, satisfies $\xi_{K\sqcup
K'}=\xi_K\xi_{ K'}=\xi_{ K'}\xi_K.$
\end{lemma}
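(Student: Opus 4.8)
The plan is to identify $\xi_{K\cup K'}$ with the composite $\Xi:=\xi_K\,\xi_{K'}$ by checking that $\Xi$ satisfies the three defining properties of the Schützenberger--Lusztig involution on $\textsf{B}_{K\cup K'}$, and then invoking the uniqueness of that involution. Normality of $\textsf{B}_{K\cup K'}$ requires no work: it is the restriction of the normal crystal $\textsf{B}$ to the sub-diagram $K\cup K'$, and Levi restrictions of normal crystals are again normal (Section~\ref{sec:basicsKN}). The commutativity $\xi_K\xi_{K'}=\xi_{K'}\xi_K$ is already recorded at the end of the preceding remark, so the genuine content of the lemma is the single equality $\xi_{K\cup K'}=\Xi$.

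First I would note that $\Xi$ is a set involution: the commutativity together with $\xi_K^2=\xi_{K'}^2=\mathrm{id}$ gives $\Xi^2=\xi_K\xi_{K'}\xi_K\xi_{K'}=\xi_K^2\xi_{K'}^2=\mathrm{id}$. Next I would verify the intertwining relations \eqref{partialschutz**}--\eqref{partialschutz***} for $\Xi$, splitting into the cases $j\in K$ and $j\in K'$. The two facts doing the work are that $\theta_J=\theta_K\theta_{K'}$ restricts to $\theta_K$ on $K$ and to $\theta_{K'}$ on $K'$, and that by \eqref{disjoint1} the involution $\xi_K$ commutes with the operators $e_{k'},f_{k'}$ for $k'\in K'$ while by \eqref{disjoint2} the involution $\xi_{K'}$ commutes with $e_k,f_k$ for $k\in K$. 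For $j\in K$ one computes $\Xi e_j=\xi_K\xi_{K'}e_j=\xi_K e_j\xi_{K'}=f_{\theta_K(j)}\xi_K\xi_{K'}=f_{\theta_J(j)}\Xi$, where the second step uses \eqref{disjoint2} and the third the defining relation \eqref{partialschutz**} for $\xi_K$; for $j\in K'$ one runs the symmetric computation $\Xi e_j=\xi_K f_{\theta_{K'}(j)}\xi_{K'}=f_{\theta_{K'}(j)}\xi_K\xi_{K'}=f_{\theta_J(j)}\Xi$, pushing $e_j$ through $\xi_{K'}$ first and then commuting the resulting $K'$-operator past $\xi_K$ via \eqref{disjoint1}. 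The relation for $f_j$ follows identically with the roles of $e$ and $f$ interchanged.

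It then remains to check the weight condition $\textsf{wt}_J(\Xi(b))=w_0^J\,\textsf{wt}_J(b)$, where I would use the direct-sum decomposition $\Lambda_{K\cup K'}=\Lambda_K\oplus\Lambda_{K'}$ recalled in the remark. Since $\xi_{K'}$ is assembled from $K'$-operators alone, it preserves the $\Lambda_K$-component of the weight and acts by $w_0^{K'}$ on the $\Lambda_{K'}$-component, and symmetrically for $\xi_K$; composing, $\Xi$ acts by $w_0^K$ on $\Lambda_K$ and by $w_0^{K'}$ on $\Lambda_{K'}$, which is exactly the componentwise action of $w_0^J=w_0^Kw_0^{K'}$ on $\Lambda_{K\cup K'}$. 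Having matched all three properties, uniqueness of the Schützenberger--Lusztig involution on $\textsf{B}_{K\cup K'}$ forces $\xi_{K\cup K'}=\Xi=\xi_K\xi_{K'}$, and combined with the commutativity this yields the stated chain of equalities.

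I do not expect a serious obstacle, since the preceding remark supplies precisely the commutation relations and the weight-lattice decomposition needed. The only points demanding care are the bookkeeping of which partial involution fixes which weight component, and the observation that $\theta_J$ restricts correctly to $\theta_K$ and $\theta_{K'}$, so that the labels $\theta_J(j)$ appearing in the target relation genuinely coincide with the $\theta_K(j)$ or $\theta_{K'}(j)$ produced by the individual involutions.
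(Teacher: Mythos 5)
Your proposal is correct and follows essentially the same route as the paper's proof: both show that $\xi_K\xi_{K'}$ is an involution satisfying the defining relations \eqref{partialschutz**}, \eqref{partialschutz***} and the weight condition \eqref{partialschutz*} via the commutation relations \eqref{disjoint1}, \eqref{disjoint2} and the decomposition $\Lambda_{K\cup K'}=\Lambda_K\oplus\Lambda_{K'}$, then conclude by uniqueness of the Sch\"utzenberger--Lusztig involution. Your write-up merely makes explicit the case-by-case intertwining computation that the paper leaves implicit.
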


 \begin{proof} The result follows from  the previous remark:   $\xi_K\xi_{ K'}=\xi_{ K'}\xi_K$ is an involution, and from
 \eqref{disjoint0}, \eqref{disjoint1} and \eqref{disjoint2},
 it  satisfies
 the conditions \eqref{partialschutz**}, \eqref{partialschutz***} above. In addition, the weight map  $\textsf{wt}_{K\sqcup
 K'}:\gls{B}\overset {\textsf{wt}}\rightarrow \Lambda \overset{can}\rightarrow \Lambda_{K\sqcup K'}=\Lambda_K\oplus \Lambda_{K'}$ and
therefore,  $\textsf{wt}_{K\sqcup K'}(\xi_K\xi_{ K'}(b))=(\textsf{wt}_{K}(\xi_K(b)),\textsf{wt}_{K'}(\xi_{
K'}(b))=w_0^Kw_0^{K'}(\textsf{wt}_{K}(b),\textsf{wt}_{K'}(b))$.
  Since there is only one set involution on  $\textsf{B}_{K\sqcup K'}$
 satisfying \eqref{partialschutz**}, \eqref{partialschutz***}, and \eqref{partialschutz*}, we have that $\xi_{K\sqcup K'}=\xi_K\xi_{ K'}=\xi_{
 K'}\xi_K.$
 \end{proof}

The partial Sch\"utzenberger--Lusztig involutions $\gls{xij}$, for any $J \subseteq I$   a \emph{connected Dynkin sub-diagram} of $I$, satisfy
the $J_{\gls{g}}$ cactus relations.

  \begin{theorem}  [\cite{ha16}] \label{cactusg} The map $s_J\mapsto \gls{xij}$, for all $J \subseteq I$ connected Dynkin sub-diagrams of $I$,
  defines an action of the cactus group $J_{\gls{g}}$ on the set $\gls{B}$; that is, the following is a group homomorphism
  $$\begin{array} {cccccc}
  \Phi_{\gls{g}}:&J_{\gls{g}}&\rightarrow&\mathfrak{S}_{\gls{B}} \\
  &s_J&\mapsto&\gls{xij}.\\
 \end{array}
 $$
   Moreover
  $ \textsf{wt}_J(\gls{xij}(b))=w_0^J\textsf{wt}_J(b)$, $b\in \gls{B}$.
  \end{theorem}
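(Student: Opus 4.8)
The plan is to exploit the presentation of $J_{\mathfrak{g}}$ by the generators $s_J$ and the relations $1\mathfrak{g}$--$3\mathfrak{g}$ of Definition \ref{geberalcacti}: by the universal property of a group presentation, the assignment $s_J\mapsto \xi_J$ extends to a homomorphism $\Phi_{\mathfrak{g}}\colon J_{\mathfrak{g}}\to\mathfrak{S}_{\textsf{B}}$ precisely when the involutions $\xi_J$ satisfy the images of those three relations in $\mathfrak{S}_{\textsf{B}}$. The final weight identity $\textsf{wt}_J(\xi_J(b))=w_0^J\textsf{wt}_J(b)$ is nothing but \eqref{partialschutz*}, holding by the very definition of $\xi_J=\xi_{\textsf{B}_J}$, so only the three relations require work.

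Relation $1\mathfrak{g}$, $\xi_J^2=\mathrm{id}$, is immediate: $\xi_J$ is the Schützenberger--Lusztig involution on the normal crystal $\textsf{B}_J$, and $\xi_J^2$ is a crystal morphism fixing every highest weight element, hence the identity, exactly as recalled for $\xi$ in Section \ref{sec:fullschutz}. Relation $2\mathfrak{g}$, the commutation $\xi_J\xi_{J'}=\xi_{J'}\xi_J$ when $J,J'$ are connected with $J\cup J'$ disconnected, is Lemma \ref{th:disconnected} together with the componentwise formulas \eqref{disjoint0}, \eqref{disjoint1} and \eqref{disjoint2}: there the partial involution along one connected piece commutes with the Kashiwara operators along the other, which forces $\xi_J$ and $\xi_{J'}$ to commute.

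The heart of the argument is relation $3\mathfrak{g}$, $\xi_J\xi_{J'}=\xi_{\theta_J(J')}\xi_J$ for $J'\subseteq J$. Since $\xi_J$ is an involution this is equivalent to $\eta:=\xi_J\xi_{J'}\xi_J=\xi_{\theta_J(J')}$, which I prove via the uniqueness characterization of the Schützenberger--Lusztig involution on the normal crystal $\textsf{B}_{\theta_J(J')}$. First, $\eta$ is an involution because the middle collapses: $\eta^2=\xi_J\xi_{J'}^2\xi_J=\mathrm{id}$. Next I verify the intertwining relations \eqref{partialschutz**}--\eqref{partialschutz***} for $\eta$: for $k\in\theta_J(J')$ write $k=\theta_J(j')$ with $j'\in J'$, and use that $\theta_J$ is an involution on $J$ and that $j',\theta_{J'}(j')\in J'\subseteq J$ to compute
\begin{align*}
\eta\, e_k
&= \xi_J\xi_{J'}\xi_J\, e_{\theta_J(j')}
= \xi_J\xi_{J'}\, f_{j'}\,\xi_J \\
&= \xi_J\, e_{\theta_{J'}(j')}\,\xi_{J'}\xi_J
= f_{\theta_J\theta_{J'}(j')}\,\eta ,
\end{align*}
and symmetrically $\eta\, f_k=e_{\theta_J\theta_{J'}(j')}\,\eta$. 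It remains to match indices: I claim $\theta_{\theta_J(J')}(k)=\theta_J\theta_{J'}(j')$. This I read off from $\alpha_{\theta_{\theta_J(J')}(k)}=-w_0^{\theta_J(J')}\alpha_k$ by substituting $w_0^{\theta_J(J')}=w_0^Jw_0^{J'}w_0^J$ (Remark \ref{levirem}) and $\alpha_k=-w_0^J\alpha_{j'}$, whereupon $(w_0^J)^2=1$ reduces the right-hand side to $-w_0^J(-\alpha_{\theta_{J'}(j')})=\alpha_{\theta_J\theta_{J'}(j')}$. The displayed identities then become exactly \eqref{partialschutz**}--\eqref{partialschutz***} for $\xi_{\theta_J(J')}$.

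I expect the main obstacle to lie in the final bookkeeping step. The intertwining relations alone show that $\eta$ sends a $\theta_J(J')$-highest weight element to one annihilated by every lowering operator $f_{\theta_{\theta_J(J')}(k)}$, i.e. to a lowest weight element of \emph{some} $\theta_J(J')$-component; to invoke uniqueness I must additionally certify that $\eta$ stabilizes each component and realizes the correct weight reversal $\textsf{wt}_{\theta_J(J')}(\eta(b))=w_0^{\theta_J(J')}\textsf{wt}_{\theta_J(J')}(b)$. This is where the three distinct weight lattices $\Lambda_J$, $\Lambda_{J'}$ and $\Lambda_{\theta_J(J')}$ must be tracked carefully; the reversal follows once more from \eqref{partialschutz*} applied to $\xi_J$ and $\xi_{J'}$ together with the identity $w_0^Jw_0^{J'}w_0^J=w_0^{\theta_J(J')}$. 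With this in hand, $\eta$ satisfies \eqref{partialschutz**}, \eqref{partialschutz***} and \eqref{partialschutz*} on $\textsf{B}_{\theta_J(J')}$, so by uniqueness $\eta=\xi_{\theta_J(J')}$, establishing $3\mathfrak{g}$ and completing the verification that $\Phi_{\mathfrak{g}}$ is a homomorphism.
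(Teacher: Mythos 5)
First, a point of reference: the paper contains no proof of Theorem \ref{cactusg} at all --- it is quoted from Halacheva \cite{ha16} --- so your proposal has to stand on its own rather than be measured against an internal argument. Its skeleton is the right one (verify relations 1$\mathfrak{g}$--3$\mathfrak{g}$ of Definition \ref{geberalcacti}; settle 1$\mathfrak{g}$ by involutivity and 2$\mathfrak{g}$ via Lemma \ref{th:disconnected} and \eqref{disjoint1}--\eqref{disjoint2}; reduce 3$\mathfrak{g}$ to $\eta:=\xi_J\xi_{J'}\xi_J=\xi_{\theta_J(J')}$), and both displayed computations --- the intertwining $\eta e_k=f_{\theta_J\theta_{J'}(j')}\eta$ and the index identity $\theta_{\theta_J(J')}(\theta_J(j'))=\theta_J\theta_{J'}(j')$ --- are correct. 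The genuine gap is the closing appeal to uniqueness. The uniqueness recalled in Section \ref{sec:fullschutz} is a statement about the \emph{connected} crystal $\textsf{B}(\lambda)$; on a disconnected normal crystal the three conditions \eqref{partialschutz**}, \eqref{partialschutz***}, \eqref{partialschutz*} do \emph{not} determine the map: if $\textsf{B}_{\theta_J(J')}$ has two isomorphic connected components, composing $\xi_{\theta_J(J')}$ with the crystal isomorphism swapping them produces a different involution satisfying all three conditions. You flag exactly this requirement (``I must additionally certify that $\eta$ stabilizes each component''), but your ``with this in hand'' only disposes of the weight reversal, so component stabilization is never established and the conclusion does not follow as written. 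The repair is short: by \eqref{partialschutz**}--\eqref{partialschutz***}, conjugation by $\xi_J$ sends $e_k\mapsto f_{\theta_J(k)}$ and $f_k\mapsto e_{\theta_J(k)}$, so $\xi_J$ maps each connected component of $\textsf{B}_{\theta_J(J')}$ bijectively onto a component of $\textsf{B}_{J'}$; $\xi_{J'}$ preserves every $J'$-component by its componentwise definition; and since $\xi_J^2=1$, the second application of $\xi_J$ returns the original component. Hence $\eta$ stabilizes each $\theta_J(J')$-component, and only then may per-component uniqueness be invoked.

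The weight step is also thinner than you concede. Applying \eqref{partialschutz*} to $\xi_J$ and to $\xi_{J'}$ yields identities in the distinct quotients $\Lambda_J$ and $\Lambda_{J'}$, and such projected statements cannot be chained into the identity you need in $\Lambda_{\theta_J(J')}$: controlling $\pi_J\,\textsf{wt}$ and $\pi_{J'}\,\textsf{wt}$ of the intermediate elements (here $\pi_J:\Lambda\to\Lambda_J$ denotes the canonical projection) does not determine $\pi_{\theta_J(J')}\textsf{wt}(\eta(b))$. What is needed is the ambient-lattice strengthening $\textsf{wt}(\xi_J(c))=w_0^J\,\textsf{wt}(c)$ in $\Lambda$, which holds for normal crystals but requires its own argument: in the $J$-component of $c$, the difference $\textsf{wt}(u^{\textsf{low}})-w_0^J\textsf{wt}(u^{\textsf{high}})$ lies in the root lattice $Q_J=\bigoplus_{j\in J}\mathbb{Z}\alpha_j$ and projects to $0$ in $\Lambda_J$, and $\pi_J$ is injective on $Q_J$ because the Cartan matrix of $\mathfrak{g}_J$ is invertible; hence $\textsf{wt}(u^{\textsf{low}})=w_0^J\textsf{wt}(u^{\textsf{high}})$, and the statement for general $c$ follows from $w_0^J\alpha_j=-\alpha_{\theta_J(j)}$. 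Granting this, $\textsf{wt}(\eta(b))=w_0^Jw_0^{J'}w_0^J\,\textsf{wt}(b)=w_0^{\theta_J(J')}\textsf{wt}(b)$ --- the identity $w_0^Jw_0^{J'}w_0^J=w_0^{\theta_J(J')}$ is a standard Coxeter-theoretic fact about parabolic longest elements, so citing Remark \ref{levirem} for it is harmless and not circular --- and projecting to $\Lambda_{\theta_J(J')}$ gives \eqref{partialschutz*} for $\eta$. With these two insertions your argument becomes a complete proof; without them, the step ``so by uniqueness $\eta=\xi_{\theta_J(J')}$'' is not justified.
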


The $s_J$ act via $\gls{xij}$ on each connected component of $\textsf{B}_J$ as a graph automorphism of the underlying unlabeled, non-oriented
and non-weighted connected graph by exchanging highest and lowest weight elements.

\begin{remark}\label{cactus} The Weyl group $W$ of $\mathfrak{g}$ acts on the set $\textsf{B}$ by $ r_i.b=\xi_i(b)$, $i\in I$, where
$\xi_i=\xi_{\{i\}}$, see  Section \ref{sec:solenodeweyl} and Theorem \ref{weylactionkas}. The action of $J_{\gls{g}}$ factorizes then through the quotient by the braid relations
of
$W$.
\end{remark}

The following corollary motivates what comes in the next section.

\begin{corollary}\label{Acactusaction}

$(a)$ For the $\mathfrak{sl}(n,\mathbb{C})$-crystal \gls{ssyt}, the map $$s_{[1,j]}\mapsto \xi_{[1,j]}=\textsf{evac}_{j+1}, \;1\le j\le n-1,$$
\noindent where $\textsf{evac}_{j+1}$ denotes the evacuation on the sub-tableaux  of straight shape obtained by restricting the entries to
$\{1,\dots,j+1\}$ and fixing the remaining ones,  defines an action of the cactus group \gls{Jn} on the set \gls{ssyt}.

$(b)$ For the $\mathfrak{sp}(2n,\mathbb{C})$-crystal \gls{symptab}, the map
\begin{eqnarray}\label{cactusCa}s_{[1,j]}&\mapsto& \xi^{C_n}_{[1,j]},\;  \quad 1\le j\le n-1,\\
~~~~~~~~~~~~s_{[j,n]}&\mapsto&\xi^{C_n}_{[j,n]},\quad 1\le j\le n,\label{cactusC}
\end{eqnarray}
defines an action of $\gls{Jsp}$ on the set \gls{symptab}, where $\xi^{C_n}_{[1,n]}=\xi^{C_n}=\textsf{evac}^{C_n}$,
$\xi^{C_n}_{[1,j]},\;  1\le j\le n-1$ is given by the Baker embedding, Theorem \ref{goingandbacR1}, and $\xi^{C_n}_{[j,n]}, 1\le
j\le n-1$ is given either by the partial
symplectic reversal in \eqref{formula}, Subsection \ref{partialsymplecticreversal} or by the Baker embedding, Theorem \ref{goingandbacR2}.
\end{corollary}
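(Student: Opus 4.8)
The plan is to deduce both parts directly from Theorem~\ref{cactusg}, combined with the observation that the listed elements form generating sets of the respective cactus groups. By Theorem~\ref{cactusg}, the assignment $s_J \mapsto \gls{xij}$, ranging over \emph{all} connected sub-diagrams $J \subseteq I$, is already a group homomorphism $\Phi_{\gls{g}}\colon J_{\gls{g}} \to \mathfrak{S}_{\gls{B}}$; in particular every cactus relation is respected by the partial Sch\"utzenberger--Lusztig involutions. Thus there is nothing left to check about the relations. The content of the corollary is rather that one may specify the action on a smaller, more economical generating set, and that these distinguished generators admit explicit combinatorial descriptions.

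For part $(a)$, I would first recall from the remark following Lemma~\ref{Acact} (see \eqref{alterA} and \eqref{eq:cactusA}) that the $n-1$ elements $s_{[1,j]}$, $1\le j\le n-1$, generate \gls{Jn}. Since a homomorphism is determined by its restriction to a generating set, the restriction of $\Phi_{\mathfrak{sl}(n,\mathbb{C})}$ to $\{s_{[1,j]}\}$ agrees with the stated assignment $s_{[1,j]}\mapsto \xi_{[1,j]}$ and hence already defines the action on \gls{ssyt}. It then remains only to identify $\xi_{[1,j]}$ with $\textsf{evac}_{j+1}$: the Levi sub-diagram $[1,j]$ is an $A_j$ diagram corresponding to $\mathfrak{sl}(j+1,\mathbb{C})$ acting on the letters $\{1,\dots,j+1\}$ with the remaining entries frozen, so $\xi_{[1,j]}=\xi_{\textsf{B}_{[1,j]}}$ is the full Sch\"utzenberger--Lusztig involution, namely \textsf{evacuation}, applied to the sub-tableau on $\{1,\dots,j+1\}$, which is exactly $\textsf{evac}_{j+1}$.

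For part $(b)$, I would argue identically, now invoking the remark after Lemma~\ref{symplecticact} (see \eqref{alterC1} and \eqref{alterC2}) that the $2n-1$ elements $s_{[1,j]}$, $1\le j\le n-1$, together with $s_{[j,n]}$, $1\le j\le n$, form a generating set of \gls{Jsp}. Restricting $\Phi_{\mathfrak{sp}(2n,\mathbb{C})}$ to this set yields the asserted assignments \eqref{cactusCa}--\eqref{cactusC} and hence the action on \gls{symptab}. The combinatorial realizations are then imported from the subsequent results: $\xi^{C_n}_{[1,n]}=\xi^{C_n}=\textsf{evac}^{C_n}$ is Santos' symplectic evacuation (Lemma~\ref{reversal}); each $\xi^{C_n}_{[1,j]}$, $1\le j\le n-1$, is computed through the Baker embedding in Theorem~\ref{goingandbacR1}; and each $\xi^{C_n}_{[j,n]}$, $1\le j\le n-1$, is computed either by the partial symplectic reversal of \eqref{formula} or again through the Baker embedding in Theorem~\ref{goingandbacR2}.

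The genuinely substantive work does not reside in this corollary but in the results it cites: the fact that the $\xi_J$ satisfy the cactus relations (Theorem~\ref{cactusg}) and, above all, the combinatorial computations of the symplectic generators via virtualization and symplectic reversal (Theorems~\ref{goingandbacR1} and~\ref{goingandbacR2}). Within the corollary itself the only mild subtlety is to confirm that passing to the smaller generating set loses no information; this is immediate, since $\Phi_{\gls{g}}$ already exists and restricts correctly to the chosen generators, so I do not anticipate any real obstacle here.
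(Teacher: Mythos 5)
Your proposal is correct and follows essentially the same route as the paper, which states this corollary without further proof precisely because it is an immediate consequence of Theorem~\ref{cactusg} (the action of $J_{\gls{g}}$ via all partial Sch\"utzenberger--Lusztig involutions), the fact that $s_{[1,j]}$ (resp.\ $s_{[1,j]}$, $s_{[j,n]}$) generate \gls{Jn} (resp.\ \gls{Jsp}), and the combinatorial identifications $\xi_{[1,j]}=\textsf{evac}_{j+1}$ in type $A$ and Theorems~\ref{goingandbacR1}, \ref{goingandbacR2} and \eqref{formula} in type $C$. The only trivial inaccuracy is attributing $\xi^{C_n}=\textsf{evac}^{C_n}$ to Lemma~\ref{reversal} rather than to Santos' result recalled in Subsection~\ref{santosevac}, which does not affect the argument.
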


\subsection{ The virtual symplectic cactus group action on an \texorpdfstring{$\mathfrak{sl}(2n,\mathbb{C})$}{sl2n}-crystal and the virtualization of an
\texorpdfstring{$\mathfrak{sp}(2n,\mathbb{C})$}{sp2n}-crystal }\label{subsec:internalvirtual}
On $A_{n-1}$ semi-standard tableaux, there is a straightforward algorithm to compute the action of a partial Sch\"utzenberger--Lusztig
involution $\gls{xij}$ with $J$ a  connected $A_{n-1}$  Dynkin sub-diagram. Let $I = [n-1]$ and $J=[p,q]\subset I$,  $1\le p\le q<n$, be a
connected sub-diagram. The \emph{$J$-partial reversal}, \gls{jrev}, is the \gls{rev} on $\gls{branchedssyt}$ which means the \gls{rev} or
Sch\"utzeberger involution $\gls{xi}$ applied to each connected component of $\gls{branchedssyt}$. Let $T \in \gls{ssyt}$, then, from
\eqref{fullformula} and \eqref{reversalgln}:
 \begin{align}
 \label{def:reversal}
\gls{xij}(T)&=\gls{jrev}(T)\nonumber\\
&:=(T_{[1,p-1]},\gls{rev}(T_{[p,q+1]}),T_{[q+2,n]} )\nonumber\\
&=(T_{[1,p-1]},\textsf{arect}(\textsf{evacuation}(\textsf{rect} (T_{[p,q+1]}))), T_{[q+2,n]}),
\end{align}
\noindent where $T=(T_{[1,p-1]},T_{[p,q+1]}, T_{[q+2,n]})$ is such that $T_{[1,p-1]}$ is the tableau obtained by restricting $T$ to the
alphabet $[1,p-1]$, $T_{[p,q+1]}$ is  the skew tableau obtained by restricting to the alphabet $[p,q+1]$, and $T_{[q+2,n]}$ is obtained by
restricting to the alphabet $[q+2,n]$. Indeed, if $J=[1,q]$, $ \textsf{reversal}_{[1,q]}(T)=\textsf{evac}_{q+1}(T)$. The case where $J$ is a
disconnected sub-diagram of $I$ will be a consequence of Lemma \ref{th:disconnected}.

To define an internal action of the \emph{virtual symplectic cactus group} $\gls{vJ2n}$ on a  crystal $\textsf{SSYT}(\mu, n,\bar n)$ with
$\mu$ a partition  with at most $2n$ parts, thanks to Lemma \ref{th:disconnected}, we now explicitly characterize the partial
Sch\"utzenberger--Lusztig involution on a disconnected sub-diagram $J\sqcup J'$ of the $A_{2n-1}$ Dynkin diagram such that $J\subseteq [n-1]$
and $ J' \subseteq [\bar n, \bar 2]$ are  connected sub-diagrams. In the case of the $A_{2n-1}$ Dynkin diagram,  we label its nodes either in
$[2n-1]$ or in $\{1,\dots,n,\bar n,\dots,\bar 2\}$.

\begin{theorem}\label{th:schutzunion} Let $J\sqcup J'$ be a disconnected sub-diagram of the $A_{2n-1}$ Dynkin diagram $I=\{1,\dots, n,\bar
n,\dots, \bar 2\}$ such that $J\subseteq [n-1]$ and $ J' \subseteq [\bar n, \bar 2]$ are  connected sub-diagrams. Then  $\xi_{J\sqcup
J'}^{A_{2n-1}}$, the Sch\"utzenberger--Lusztig involution on $\textsf{SSYT}_{J\sqcup J'}(\mu, n,\bar n)$, with $\mu$ a partition with at most
$
2n$ parts, satisfies
\begin{align} \xi_{J\sqcup J'}^{A_{2n-1}}&=\xi_{J}^{A_{2n-1}}\xi_{ J'}^{A_{2n-1}}=\xi_{ J'}^{A_{2n-1}}\xi_{J}^{A_{2n-1}}\label{schutzunion}\\
&=\textsf{reversal}^{A_{2n-1}}_{J}\textsf{reversal}^{A_{2n-1}}_{J'}=\textsf{reversal}^{A_{2n-1}}_{J'}\textsf{reversal}^{A_{2n-1}}_{J},\label{schutzunion2}
\end{align}
where $\xi_{J}^{A_{2n-1}}=\textsf{reversal}^{A_{2n-1}}_{J}$ and $\xi_{ J'}^{A_{2n-1}}=\textsf{reversal}^{A_{2n-1}}_{J'}$  are the
Sch\"utzenberger--Lusztig involutions on $\textsf{SSYT}_{J}(\mu, n,\bar
n)$ and $\textsf{SSYT}_{ J'}(\mu, n,\bar n)$, respectively.
\end{theorem}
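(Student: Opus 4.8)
The plan is to derive both displayed lines from results already in hand, treating $\textsf{SSYT}(\mu,n,\bar n)$ purely as a type $A_{2n-1}$ crystal in the totally ordered alphabet $\mathcal{C}_n$, so that the $A$-type machinery of Subsection \ref{Areversal} applies without modification. First I would establish \eqref{schutzunion}. The crystal $\textsf{SSYT}(\mu,n,\bar n)$ is a normal $A_{2n-1}$ crystal, and its Levi restriction $\textsf{SSYT}_{J\cup J'}(\mu,n,\bar n)$ to the disconnected sub-diagram $J\cup J'$ (with $J\subseteq [n-1]$ and $J'\subseteq [\bar n,\bar 2]$ connected) is again normal. Hence Lemma \ref{th:disconnected}, applied with $K=J$ and $K'=J'$, gives directly that $\xi_{J\cup J'}^{A_{2n-1}}=\xi_J^{A_{2n-1}}\xi_{J'}^{A_{2n-1}}=\xi_{J'}^{A_{2n-1}}\xi_J^{A_{2n-1}}$. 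This uses only that $J$ and $J'$ are connected and mutually disconnected, so no fresh computation is required at this step.

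Next I would identify each partial involution with the corresponding reversal. Since $\mathcal{C}_n$ is totally ordered and the sub-alphabet attached to a connected $J\subseteq[n-1]$ (namely $\{p,\dots,q+1\}$ for $J=[p,q]$) is an honest interval, and likewise the sub-alphabet attached to a connected $J'\subseteq[\bar n,\bar 2]$ is an interval in the induced order $\bar n<\cdots<\bar 1$, the $A$-type theory of Subsection \ref{Areversal} applies verbatim: on any connected $A$-type Levi branched crystal the Sch\"utzenberger--Lusztig involution is computed by the reversal procedure $\textsf{arectification}\circ\textsf{evacuation}\circ\textsf{rectification}$ restricted to the relevant sub-alphabet, as recorded in \eqref{fullformula}, \eqref{reversalgln} and \eqref{def:reversal}. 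Applying this to $\textsf{SSYT}_J(\mu,n,\bar n)$ and to $\textsf{SSYT}_{J'}(\mu,n,\bar n)$ yields $\xi_J^{A_{2n-1}}=\textsf{reversal}^{A_{2n-1}}_J$ and $\xi_{J'}^{A_{2n-1}}=\textsf{reversal}^{A_{2n-1}}_{J'}$; substituting these into \eqref{schutzunion} produces \eqref{schutzunion2}.

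The one point that requires care — and the main obstacle — is checking that the $J$- and $J'$-reversals genuinely act on disjoint data, so that the combinatorial procedures commute as asserted rather than merely as abstract involutions. Here I would invoke Remark \ref{re:split}: because $J\subseteq[n-1]$, the operators $e_j,f_j$ change only unbarred letters, and since $\xi_J$ is built from these together with $\theta_J$ (which preserves $J\subseteq[n-1]$), it acts solely on the positive part $T^+$ and fixes $T^-$; symmetrically, because $J'\subseteq[\bar n,\bar 2]$, the involution $\xi_{J'}=\textsf{reversal}^{A_{2n-1}}_{J'}$ acts solely on the negative part $T^-$ and fixes $T^+$. As the shape decomposition $T=(T^+,T^-)$ is preserved throughout and $\textsf{reversal}^{A_{2n-1}}_J$ modifies only the unbarred sub-alphabet while $\textsf{reversal}^{A_{2n-1}}_{J'}$ modifies only the barred sub-alphabet, the two operations commute, confirming the final equalities in \eqref{schutzunion2}.
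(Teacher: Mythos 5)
Your proof is correct and is essentially the argument the paper intends: the theorem is stated there without a separate proof precisely because it follows, as you argue, from Lemma \ref{th:disconnected} applied to the normal $A_{2n-1}$-crystal $\textsf{SSYT}(\mu,n,\bar n)$ together with the type-$A$ identification $\xi_J=\textsf{reversal}_J$ of \eqref{def:reversal}. Your final paragraph is not logically needed---once $\xi_J=\textsf{reversal}^{A_{2n-1}}_J$ and $\xi_{J'}=\textsf{reversal}^{A_{2n-1}}_{J'}$ hold as equalities of maps, commutativity of the reversals is automatic from \eqref{schutzunion}---but the disjointness observation via Remark \ref{re:split} is sound and matches the paper's own use of the splitting $T=(T^+,T^-)$ in \eqref{LSbranchvirt}.
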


 \begin{remark}\label{re:disconnectedsubdiag}
 This statement is indeed also valid for the  Sch\"utzenberger--Lusztig involution on $\textsf{SSYT}_{J\sqcup J'}(\mu,n)$ where $J\sqcup J'$
 is a
 disconnected sub-diagram of the $A_{n-1}$ Dynkin diagram with $n$ odd.
 \end{remark}

The cactus group $J_{2n}$ acts on an $A_{2n-1}$-crystal of semi-standard tableaux via partial reversals. We now conclude that the virtual
symplectic cactus $\gls{vJ2n}$  also does. In the next section, Subsection \ref{sec:virtaction}, we establish  that this action preserves the
subset $\gls{E}(\gls{symptab})$.

\begin{theorem}   \label{cactusj2n} For the $\mathfrak{sl}(2n,\mathbb{C})$-crystal of tableaux $\textsf{SSYT}(\mu,2n)$, with $\mu$ a partition
with at most $ 2n$ parts, the map
\begin{align}\tilde s_{[1,q]\sqcup [2n-q,2n-1]}&\mapsto   \xi^{A_{2n-1}}_{[1,q]\sqcup
[2n-q,2n-1]}&=&\xi^{A_{2n-1}}_{[1,q]}\xi^{A_{2n-1}}_{[2n-q,2n-1]}&\nonumber \\
&&=&\textsf{evac}_{q+1}\textsf{evac}_{2n}\textsf{evac}_{q+1}\textsf{evac}_{2n}, &1\le q< n,\label{cactusj2n1}\\
\tilde s_{[q,2n-q]}&\mapsto   \xi^{A_{2n-1}}_{[q,2n-q]}&=&\textsf{reversal}^{A_{2n-1}}_{[q,2n-q]},& 1\le q\le n,\label{cactusj2n2}
\end{align}
\noindent defines an action of the virtual symplectic cactus group
  $\gls{vJ2n}$ on the set $\textsf{SSYT}(\mu,2n)$. That is, the following is a group homomorphism
  $$\begin{array} {cccccc}
  \tilde \Phi_{\mathfrak{sl}(2n,\mathbb{C})}:&\gls{vJ2n}&\rightarrow&\mathfrak{S}_{\gls{B}} \\
  &\tilde s_J&\mapsto&\xi^{A_{2n-1}}_J,\\
 \end{array}
 $$
 where $ \textsf{B} =\textsf{SSYT}(\mu,2n)$ and $J$ as in \eqref{cactusj2n1}or  \eqref{cactusj2n2}. Moreover, the action of
 $\gls{vJ2n}$ on $\gls{ssytv}$
preserves the subset $\gls{E}(\gls{symptab})$.

  \end{theorem}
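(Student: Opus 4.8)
The plan is to deduce both assertions by restricting to $\gls{vJ2n}$ the already-established action of the full cactus group $J_{2n}$ on the normal $\mathfrak{sl}(2n,\mathbb{C})$-crystal $\textsf{SSYT}(\mu,2n)$, and then to intertwine this restricted action with the virtualization map $\gls{E}$ when $\mu=\lambda^A$.

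For the homomorphism statement I would first invoke Theorem~\ref{cactusg} for $\gls{g}=\mathfrak{sl}(2n,\mathbb{C})$: the assignment $s_K\mapsto \xi_K^{A_{2n-1}}$, with $K$ a connected sub-diagram of $A_{2n-1}$, is an action of $J_{2n}$ on $\textsf{SSYT}(\mu,2n)$. Since $\gls{vJ2n}$ embeds into $J_{2n}$ by Proposition~\ref{injectionbyfolding2}, via $\tilde s_{[p,q]\cup[2n-q,2n-p]}\mapsto s_{[p,q]}s_{[2n-q,2n-p]}$ and $\tilde s_{[p,2n-p]}\mapsto s_{[p,2n-p]}$, the composite $\tilde\Phi_{\mathfrak{sl}(2n,\mathbb{C})}$ is a homomorphism as a composition of homomorphisms; it only remains to identify the images of the alternative generators. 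The image of $\tilde s_{[1,q]\cup[2n-q,2n-1]}$ is $\xi_{[1,q]}^{A_{2n-1}}\xi_{[2n-q,2n-1]}^{A_{2n-1}}$, which by Theorem~\ref{th:schutzunion} equals $\xi_{[1,q]\cup[2n-q,2n-1]}^{A_{2n-1}}$. To reach the explicit formula \eqref{cactusj2n1} I would use Corollary~\ref{Acactusaction}(a) to write $\xi_{[1,q]}^{A_{2n-1}}=\textsf{evac}_{q+1}$ and $\xi_{[1,2n-1]}^{A_{2n-1}}=\textsf{evac}_{2n}$, together with the conjugation identity of Remark~\ref{cactus}(2) applied in $A_{2n-1}$, namely $\xi_{[2n-q,2n-1]}^{A_{2n-1}}=\textsf{evac}_{2n}\,\textsf{evac}_{q+1}\,\textsf{evac}_{2n}$; multiplying gives $\textsf{evac}_{q+1}\textsf{evac}_{2n}\textsf{evac}_{q+1}\textsf{evac}_{2n}$.

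For the preservation statement the crux is the intertwining $\xi_{J\cup\bar J}^{A_{2n-1}}\circ \gls{E}=\gls{E}\circ \xi_J^{C_n}$ on each connected component $U$ of $\gls{branchedsymptab}$, where $J=[1,q]$ or $J=[q,n]$ and $J\cup\bar J$ is the corresponding $A_{2n-1}$ sub-diagram $[1,q]\cup[2n-q,2n-1]$ or $[q,2n-q]$. By Proposition~\ref{highlow}, $\gls{E}(U)$ sits in a single connected component $V$ of $\textsf{SSYT}_{J\cup\bar J}(\lambda,n)$ whose highest and lowest weight elements are $\gls{E}(u^{\textsf{high}})$ and $\gls{E}(u^{\textsf{low}})$. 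Writing $T=f_{a_r}^{C}\cdots f_{a_1}^{C}(u^{\textsf{high}})\in U$ with $a_i\in J$, I would push $\xi_V=\xi_{J\cup\bar J}^{A_{2n-1}}$ through the composite virtual operators $f_i^{E}=f_i^{A}f_{\overline{i+1}}^{A}$ ($i<n$) and $f_n^{E}=(f_n^{A})^2$ using $\xi_V f_j^{A}=e_{\theta_{J\cup\bar J}(j)}^{A}\xi_V$. The argument splits into the disconnected case $J=[1,q]$, where $\xi_V=\xi_J^{A_{2n-1}}\xi_{\bar J}^{A_{2n-1}}$ by Theorem~\ref{th:schutzunion}, $\theta_{J\cup\bar J}$ reverses each of $[1,q]$ and $[2n-q,2n-1]$, and the cross-commutations \eqref{disjoint1} and \eqref{disjoint2} (with Remark~\ref{re:split}) separate positive and negative parts; and the connected case $n\in J$, where $\theta_{J\cup\bar J}$ is the reversal $m\mapsto k+(2n-k)-m$ of $[k,2n-k]$, fixing $n$. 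In both cases the identity to verify is $e_{\theta_{J\cup\bar J}(i)}^{A}\,e_{\theta_{J\cup\bar J}(\overline{i+1})}^{A}=e_{\theta_J^{C}(i)}^{E}$, giving $\xi_V f_i^{E}=e_{\theta_J^{C}(i)}^{E}\xi_V$.

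Granting this, iterating yields $\xi_V(\gls{E}(T))=e_{\theta_J^C(a_r)}^{E}\cdots e_{\theta_J^C(a_1)}^{E}(\gls{E}(u^{\textsf{low}}))$, which by $\gls{E}\circ e_i^{C}=e_i^{E}\circ\gls{E}$ equals $\gls{E}(e_{\theta_J^C(a_r)}^C\cdots e_{\theta_J^C(a_1)}^C(u^{\textsf{low}}))=\gls{E}(\xi_J^{C_n}(T))\in \gls{E}(\gls{symptab})$, establishing preservation. The main obstacle I anticipate is precisely the bookkeeping of this last identity: matching the $A_{2n-1}$ automorphism $\theta_{J\cup\bar J}$ with the virtualization of $\theta_J^{C_n}$ (a reversal for the type-$A$ Levi $J=[1,q]$, but the identity for the type-$C$ Levi $n\in J$), and checking that $f_i^{A}$ and $f_{\overline{i+1}}^{A}$ commute so that $\xi_V$ passes cleanly through $f_i^{E}$ — in the connected case this rests on the nodes $i$ and $\overline{i+1}=2n-i$ being non-adjacent, separated by the fixed node $n$.
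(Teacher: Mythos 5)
Your proposal is correct and takes essentially the same route as the paper: the homomorphism statement comes from restricting Halacheva's $J_{2n}$-action (Theorem \ref{cactusg}) along the folding embedding of Proposition \ref{injectionbyfolding2} — equivalently, from the relations of Lemma \ref{symmetrieslemma} — with the generator images identified via Theorem \ref{th:schutzunion} and the evacuation formulas, exactly as the paper does. Your preservation argument is precisely the intertwining $\xi^{A_{2n-1}}_{J\cup \bar J}\circ E = E\circ \xi^{C_n}_{J}$ that the paper delegates to Theorems \ref{goingandbacR1} and \ref{goingandbacR2}, and your highest-weight/$\theta$-bookkeeping derivation (disconnected case via Proposition \ref{highlow}, Remark \ref{re:split} and Theorem \ref{th:schutzunion}; connected case via Corollary \ref{theta} with $\theta(i)=2n-i$ and the commutation of the distant nodes $i$ and $2n-i$) coincides with the paper's own proofs of those two theorems.
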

  \begin{proof}
  Since $J_{2n}$ acts on  $\textsf{SSYT}(\mu,2n)$, the partial Sch\"utzenberger involutions $\gls{xij}$, with $J$ a connected
  sub-diagram of the $A_{2n-1}$ Dynkin diagram $I=[2n-1]$,
  satisfy the $J_{2n}$ cactus relations namely the ones in Lemma \ref{symmetrieslemma} which are the $\gls{vJ2n}$ relations. We
  consider $\gls{vJ2n}$ with generators \eqref{virtualalterC1}, \eqref{virtualalterC2}. In Subsections \ref{sec:embedsymplecticreversal} and
  \ref{sec:virtaction}, \eqref{preserve2},
 we conclude that $\gls{vJ2n}$ acts on the set $\gls{ssytv}$   permuting its elements in a way that
the subset $\gls{E}(\gls{symptab})$ is preserved.
  \end{proof}

  Therefore, the partial Sch\"utzenberger involutions $\gls{xij}$, with $J$ any connected sub-diagram of the $A_{2n-1}$ Dynkin diagram of the
  form $J = [q,2n-q]$, $[q,n] \subseteq [n]$, or $J = [1,q] \sqcup [2n-q,2n-1]$, $ [1,q] \subseteq [n-1]$, satisfy the virtual symplectic
  cactus
  $\gls{vJ2n}$ relations.

 \section{Partial symplectic Sch\"utzenberger--Lusztig involutions and algorithms}\label{sec:partialschutzalg}

For a connected sub-diagram $J$ of the Dynkin diagram $ I=[n-1]$ of type  $A_{n-1}$, the partial Sch\"utzenberger involution $\gls{xij}$
coincides with $J$-\emph{partial reversal}, that is,  $\gls{jrev}$ \eqref{def:reversal}. The case wherein $J$ is a disconnected sub-diagram of
$I$ has been  studied in Theorem \ref{th:schutzunion} and Remark \ref{re:disconnectedsubdiag}.

So far, there is no known form of tableau-switching for KN tableaux. The algorithm  to compute  $J$-\emph{partial symplectic reversal},
$\gls{jsymprev}$, with $J=[p,n]$ a sub-diagram of the Dynkin diagram $I$ of type $C_n$, presented in Subsection
\ref{partialsymplecticreversal0} and summarized in \eqref{formula}, is inspired by this problem and mimics the type $A$ partial reversal
algorithm on type $A_{n-1}$ tableaux summarized in \eqref{def:reversal}. The case $J=[p,q]\subseteq I$, $p<q<n$, is solved by virtualization
in Subsection \ref{sec:embedsymplecticreversal}. In fact, all partial symplectic reversals can be virtualized as shown in Subsection
\ref{sec:embedsymplecticreversal}.

 \subsection{ Dynkin sub-diagram with a sole node and the Weyl group action} \label{sec:solenodeweyl} Let $\gls{B}$ be a normal crystal. If $J$ has a sole node $i$ of
 $I$, $\xi_i:=\xi_{\{i\}}$, the Sch\"utzenberger--Lusztig involution on the $i$-strings (the connected components of  $\textsf{B}_{\{i\}}$),
 agrees with the Kashiwara $\gls{g}$-crystal reflection operator $  S_i$ {  \cite[Section 7]{kash94} originally studied by Lascoux and
 Sch\"utzenberger in the
 $\mathfrak{sl}(n,\mathbb{C})$ case \cite{LaSchu81} and rediscovered by Kashiwara for any Cartan type. Let $b\in \gls{B}$,  and
 $k=\varphi_i(b)-\varepsilon_i(b)$. The  \emph{crystal reflection operator} $S_i$ is defined by
 \begin{equation}S_i(b)=\begin{cases}\; f_i^k(b),& \mbox{if $k\ge 0$},\\
  %b,& if k=0\\
  e_i^{-k}(b),& \mbox{if $k\le 0$}.
  \end{cases}
  \label{crystalreflection}
  \end{equation} 
 % The  crystal reflection operator $S_i$ is defined by
% $S_i(b)=f_i^k(b)$, if $k>0$, $S_i(b)=b$, if $k=0$ and %$S_i(b)=e_i^{-k}(b)$, if $k<0$. 

The following assertion combines the
 Sch\"utzenberger--Lusztig involution $\xi_i$  on the $i$-strings  and the Kashiwara crystal reflection operator $S_i$  \cite[Section 7]{kash94}.

 \begin{theorem}\label{weylactionkas}  The operators $\xi_i$, $i\in I$, define an action of the Weyl group $W$ on the
 underlying set of the normal crystal $\gls{B}$, $r_i.b=  S_i(b)=\xi_i(b)$, $b\in  \textsf{B}$, such that
 \begin{enumerate}
 \item $  S_i^2=1$ and  $  e_iS_i=S_if_i$  $(  S_i(0):=0)$,
   \item $r_i .\textsf{wt}( b)=  \textsf{wt} (S_i(b))$,
   \item $u_\lambda^{\textsf{low}}=w_0.u_\lambda^{\textsf{high}},$  if $\textsf{B}=\gls{cncrystal}$.
   \end{enumerate}

 \end{theorem}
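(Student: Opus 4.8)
The plan is to treat $\xi_i := \xi_{\{i\}}$ as the partial Sch\"utzenberger--Lusztig involution attached to the single node $i$ and to read off all three listed properties, together with the defining relations of $W$, from the general facts about $\xi_J$ recorded in \eqref{partialschutz**}--\eqref{partialschutz*}. First I would specialize to $J=\{i\}$: the Levi is of type $A_1$, so $\theta_{\{i\}}=\mathrm{id}$ and $w_0^{\{i\}}=r_i$, and $\textsf{B}_{\{i\}}$ decomposes into $i$-strings $b_0\to b_1\to\cdots\to b_m$ (with $e_ib_0=0$ and $f_ib_k=b_{k+1}$ for $k<m$), on each of which $\xi_i$ is the reversal $b_k\mapsto b_{m-k}$. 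Relations \eqref{partialschutz**} and \eqref{partialschutz***} then read $\xi_i e_i=f_i\xi_i$ and $\xi_i f_i=e_i\xi_i$; conjugating by $\xi_i$ and using $\xi_i^2=1$ turns the first into $e_i\xi_i=\xi_i f_i$, which is exactly property $(1)$. For property $(2)$, the full-weight refinement of \eqref{partialschutz*} follows from the string picture: since $\textsf{wt}(b_k)=\textsf{wt}(b_0)-k\alpha_i$ and $\langle\textsf{wt}(b_k),\alpha_i^\vee\rangle=m-2k$ (because $\varepsilon_i(b_0)=0$, $\varphi_i(b_0)=m$), one computes $r_i\,\textsf{wt}(b_k)=\textsf{wt}(b_0)-(m-k)\alpha_i=\textsf{wt}(b_{m-k})=\textsf{wt}(\xi_i(b_k))$, and $\xi_i$ alters weights only along $\alpha_i$ exactly as $r_i$ does.

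Next I would show that $r_i\mapsto\xi_i$ extends to a homomorphism $W\to\mathfrak{S}_{\textsf{B}}$, i.e.\ that the $\xi_i$ satisfy the Coxeter relations of $W$. The relation $\xi_i^2=1$ is immediate from the reversal description. For $i\neq j$, writing $m_{ij}$ for the order of $r_ir_j$ in $W$, the braid relation $\underbrace{\xi_i\xi_j\xi_i\cdots}_{m_{ij}}=\underbrace{\xi_j\xi_i\xi_j\cdots}_{m_{ij}}$ involves only $e_i,f_i,e_j,f_j$, so it may be checked on the Levi branched crystal $\textsf{B}_{\{i,j\}}$, which is again normal and hence a disjoint union of irreducible rank-two crystals for the rank-two Levi $\mathfrak{g}_{\{i,j\}}$. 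When $\{i\}\cup\{j\}$ is disconnected (so $m_{ij}=2$) this is the commutation $\xi_i\xi_j=\xi_j\xi_i$, which is precisely Lemma \ref{th:disconnected}. The remaining cases, $\mathfrak{g}_{\{i,j\}}$ of type $A_2$, $B_2/C_2$, or $G_2$, reduce to verifying the braid relation on each irreducible rank-two highest weight crystal, and this rank-two verification, carried out in \cite[Section 7]{kash94}, is the technical heart of the argument. I expect it to be the main obstacle: within a single weight space of a rank-two crystal the two braid words need not be distinguished by weight considerations alone (by property $(2)$ both words move $b$ to an element of weight $w_0^{\{i,j\}}\textsf{wt}_{\{i,j\}}(b)$, but weight spaces may carry multiplicity), so one must argue by induction on the rank-two highest weight or appeal to the explicit rank-two combinatorial models.

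Finally, property $(3)$ falls out once the $W$-action is in place. For a reduced word $w_0=r_{i_1}\cdots r_{i_N}$, the element $w_0.u_\lambda^{\textsf{high}}=\xi_{i_1}\cdots\xi_{i_N}(u_\lambda^{\textsf{high}})$ is well defined by the braid relations, and iterating property $(2)$ gives $\textsf{wt}(w_0.u_\lambda^{\textsf{high}})=w_0\lambda$. Since $\textsf{B}(\lambda)$ has a unique element of weight $w_0\lambda$, namely the lowest weight element $u_\lambda^{\textsf{low}}$, we conclude $w_0.u_\lambda^{\textsf{high}}=u_\lambda^{\textsf{low}}$, as claimed.
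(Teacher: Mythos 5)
The paper offers no proof of this statement at all: it is quoted from Kashiwara, and the citation \cite[Section 7]{kash94} \emph{is} the proof. So the comparison is between your reconstruction and a bare citation. Your derivation of (1) and (2) from the $i$-string picture is correct — in fact (1) is literally \eqref{partialschutz***} with $J=\{i\}$ and $\theta_{\{i\}}=\mathrm{id}$, so no conjugation by $\xi_i$ is needed — and your reduction of the Coxeter relations to the Levi restrictions $\textsf{B}_{\{i,j\}}$ is valid, since $\xi_i$ is computed string-by-string and therefore commutes with Levi restriction and preserves connected components; the disconnected case is indeed Lemma \ref{th:disconnected}. Your deduction of (3) — well-definedness of $w_0.u_\lambda^{\textsf{high}}$ from the braid relations, weight $w_0\lambda$ by iterating (2), then uniqueness of the element of weight $w_0\lambda$ in $\textsf{B}(\lambda)$ — is exactly the style of argument the paper itself uses just before Corollary \ref{theta} for the full involution $\xi$. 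The one thing you do not prove, the braid relations on the connected rank-two crystals of types $A_2$, $B_2/C_2$ and $G_2$, is the entire mathematical content of Kashiwara's theorem, and you defer it to the same source the paper cites; you are also right about why it is genuinely hard (both braid words land in the same weight space, but weight spaces carry multiplicity, so property (2) cannot settle it). Relative to the paper's own standard your proposal is therefore complete, indeed more detailed than the paper; as a standalone proof it has exactly the one hole you flag yourself.
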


 The action of $\xi_i$ on the $i$-string of $b\in \gls{B}$: $\varphi_i(\xi_i(b))=\varepsilon_i (b)$ or
 $\varepsilon_i(\xi_i(b))=\varphi_i (b)$. An illustration with $k>0$:

\begin{center}
\begin{tikzpicture}[xscale=1,yscale=1]
\draw[gray, thick] (-2,2) -- (10,2);
\filldraw[black] (-2,2) circle (2pt) ;
\filldraw[black] (2,2) circle (2pt);
\filldraw[black] (1,2) circle (2pt) ;
\filldraw[black] (3,2) circle (2pt) ;
\filldraw[black] (10,2) circle (2pt) ;
\filldraw[black] (6,2) circle (2pt) ;
\draw (1,1.6) node {\scriptsize$e_i(b)$};
\draw (2,1.6) node {\scriptsize$b$};
\draw (3,1.6) node {\scriptsize$f_i(b)$};
\draw (6,1.6) node {\scriptsize$  \xi_i(b)=f_i^{\varphi_i(b)-\varepsilon_i(b)}(b)$};
\draw (10,1.5) node {\scriptsize$f_i^{\varphi_i(b)}(b)$};
\draw (-2,1.5) node {\scriptsize$ e_i^{\varepsilon_i(b)}(b)$};
\draw [decorate, decoration = {brace, amplitude = 5pt}, yshift = 5pt] (-2,2) -- (2,2) node [black, midway, yshift = 10pt] {{\scriptsize
$\varepsilon_i(b)$}};
\draw [decorate, decoration = {brace, amplitude = 5pt}, yshift = 5pt]
(6,2) -- (10,2) node [black, midway, yshift = 10pt] {{\scriptsize
$\varphi_i(\xi_i(b))=\varepsilon_i(b) $}};
\end{tikzpicture}
\end{center}
}
The propositions below follow from the action of the Weyl group $W$ on $i$-strings, where useful information is gathered in the case of $\mathfrak{sp}(2n,\mathbb{C}) $.

Notice  that  the action  of $S_i=\xi_i $ \eqref{crystalreflection}  on a KN tableau of type $C_n$ is given by the \textit{signature rule} on its
reading word \cite{kn91,lec02} in the alphabet  $\gls{Corderedletters}$. Thus, the first point in the next proposition is just the translation of \eqref{crystalreflection} on words in the alphabet    $\gls{Corderedletters}$.
Let $i\in I$ and let $w$ be a word in the alphabet $\gls{Corderedletters}$ restricted to the alphabet $\{i<i+1<\overline {i+1}<\bar i\}$.  Substitute each letter in $ \{i, \overline{i+1}\}$ by $+$  and by $-$ if in  $\{i+1, \bar{i}\}$. Then    bracket all factors $+-$ and erase all  $(+-)$. The remaining $+,-$ form a subword $-^r +^s$ with  $r=\varepsilon_i(w)$  and $s=\varphi_i(w)$.
%Transform $-^r +^s$ into $-^s +^r$ so that  %$e_i$ successively acts ${r-s}$ times on the letter $w_j$ corresponding  to the rightmost  $-$ (i.e. not erased) whenever $r> s$,  whereas $f_i$ successively acts on the letter $w_j$ associated to the leftmost  $+$,
%if $r<s$
The Kashiwara operator $e_i$ acts on the letter $w_j$ of $w$ associated to the rightmost unbracketed $-$ (i.e., not erased), whereas $f_i$ acts on the letter $w_j$ of $w$ associated to the leftmost unbracketed $+$, and the other letters of $w$ are unchanged:
\begin{align}e_i(w_j)=\begin{cases}
	i\,\,&\text{if } w_j=i+1 \text{ and } i\neq n \\
	\overline{i+1}\,\,&\text{if }  w_j=\overline{i} \text{ and } i\neq n\\
	{n}\,\, & \text{if } w_j=\overline n \text{ and } i=n
	\end{cases},& \quad \mbox{ and  $f_i$ is the inverse map}.\label{signatureC}\end{align}
  If $s=0$ then $f_i(w)=0$ and if $r=0$ then $e_i(w)=0$.

\begin{proposition}\label{weylaction}  Let $W=B_n$.
\begin{enumerate}
\item For $U_q(\mathfrak{sp}(2n,\mathbb{C}))$ and the alphabet $\gls{Corderedletters}$:  given $i\in [n-1]$, let
$u^-$ be a word in the alphabet  $\{\overline i, i+1\}$ with length $\ell(u^-)=r$, and let $v^+$ be a word in the alphabet
$\{i,\overline{i+1}\}$ with length $\ell(v^+)=s$. Then, for all $r_i\in \gls{bn}$, $1\le i\le n-1$,
\begin{equation}\label{Weylaction1}  r_i.(u^-v^+)=\xi_i(u^-v^+)=\begin{cases} u_1^-e_i^{r-s}(u_2^-)v^+,&\,  r>s\\
u^-v^+, &\,r=s\\
u^- f_i^{s-r}(v_1^+)v_2^+ ,& \,r<s\end{cases},\end{equation}
\noindent such that when $r>s$,  $u^-=u_1^-u_2^-$, with $\ell(u_2^-)=r-s$, and when $r<s$, $v=v_1^+v_ 2^+$ with $\ell(v_1^+)=s-r$.
When $i=n$,
\begin{equation}\label{Weylaction2}  r_n.({\overline n}^r{ n}^s)=\xi_n({\overline n}^r{ n}^s)={\overline n}^s{ n}^r .\end{equation}

\item \label{refection c} For $U_q(\mathfrak{sp}(2n,\mathbb{C}))$: the crystal reflection operators $\xi_i$ satisfy the relations of the
    Weyl group $\gls{bn}$: \begin{itemize}
\item  $\xi_i^2=1$, $1\le i\le n$,\; and $\xi_i\xi_j=\xi_j\xi_i$, $|i-j|>1$, $1\le i,j\le n$;
\item $(\xi_i\xi_{i+1})^3=1$, $1 \le i \le n - 2$,
 and $(\xi_{n-1}\xi_n)^4=1.$

\end{itemize}
\end{enumerate}
\end{proposition}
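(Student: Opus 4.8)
The plan is to prove the three parts using part~(1) as the computational engine, then to read off part~(2) from Corollary~\ref{theta}, and to obtain part~(3) as the type $C_n$ specialization of the preceding theorem of Kashiwara (\cite[Section~7]{kash94}). Throughout I would use that $\xi_i$ is the crystal reflection operator on each $i$-string, determined by $\xi_i(b)=f_i^{\,m}(b)$ when $m:=\langle\textsf{wt}(b),\alpha_i^\vee\rangle\ge 0$ and $\xi_i(b)=e_i^{\,-m}(b)$ when $m<0$; this is precisely the flip of the $i$-string drawn before the proposition (it gives $\varphi_i(\xi_i(b))=\varepsilon_i(b)$), and it is what makes each $\xi_i$ an involution.

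For part~(1) I would first record, from the $C_n$ signature rule \cite{kn91,lec02,BSch17}, which letters are active for the $i$-th operators: for $i<n$ the operator $e_i$ raises $i+1\mapsto i$ and $\bar i\mapsto\overline{i+1}$, while $f_i$ lowers $i\mapsto i+1$ and $\overline{i+1}\mapsto\bar i$. Hence the letters of $u^-\in\{\bar i,i+1\}^\ast$ are exactly the $e_i$-active (closing) letters and those of $v^+\in\{i,\overline{i+1}\}^\ast$ the $f_i$-active (opening) letters. The key observation is that in the concatenation $u^-v^+$ every opening letter lies to the right of every closing letter, so the bracketing pairs nothing; therefore $\varepsilon_i(u^-v^+)=r$, $\varphi_i(u^-v^+)=s$, and $m=s-r$, which I would also confirm directly from $\langle\textsf{wt},\alpha_i^\vee\rangle=\#i-\#\bar i-\#(i+1)+\#\overline{i+1}=s-r$. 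The reflection formula then splits into three cases: for $s>r$, $\xi_i=f_i^{\,s-r}$ acts on the successive leftmost openings, hence inside $v^+$, lowering its first $s-r$ letters and giving $u^- f_i^{\,s-r}(v_1^+)v_2^+$; for $r>s$, $\xi_i=e_i^{\,r-s}$ acts on the successive rightmost closings, hence inside $u^-$, giving $u_1^- e_i^{\,r-s}(u_2^-)v^+$; and for $r=s$ it fixes the word. The one point to verify carefully is that after each single slide the modified letter merely changes side (an opening becomes a closing, or vice versa) yet stays between the two blocks, so no new bracketing is created and the iteration stays within the claimed block. The case $i=n$ is identical, with $\alpha_n=2\textbf{e}_n$ and $\alpha_n^\vee=\textbf{e}_n$: on $\overline n^{\,r}n^{\,s}$ one has $m=s-r$ and $\xi_n$ swaps exponents to $\overline n^{\,s}n^{\,r}$, which is \eqref{Weylaction2}.

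Part~(2) is then immediate: by Corollary~\ref{theta} one has $\xi(b)=e_{\theta(j_r)}\cdots e_{\theta(j_1)}(u_\lambda^{\textsf{low}})$, and by item~(3) of the preceding theorem $u_\lambda^{\textsf{low}}=w_0\,.\,u_\lambda$; writing $w_0=r_k\cdots r_ir_j$ as a reduced word and using $r_i\,.\,b=\xi_i(b)$ yields $\xi(b)=e_{\theta(j_r)}\cdots e_{\theta(j_1)}(r_k\cdots r_ir_j\,.\,u_\lambda)$. For part~(3), since $\mathfrak{g}=\mathfrak{sp}(2n,\mathbb{C})$ has Weyl group $W=B_n$, the statement is exactly the type $C_n$ instance of Kashiwara's theorem: $r_i\mapsto\xi_i$ is a $W$-action, so the $\xi_i$ satisfy the defining relations \eqref{hyper1}--\eqref{hyper4} of $B_n$. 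For a self-contained argument I would instead check relation by relation: $\xi_i^2=1$ is the involutivity above; for $|i-j|>1$ the nodes $i,j$ form a disconnected sub-diagram, so Lemma~\ref{th:disconnected} gives $\xi_i\xi_j=\xi_{\{i\}\cup\{j\}}=\xi_j\xi_i$; the relation $(\xi_i\xi_{i+1})^3=1$ for $i\le n-2$ is the $A_2$ braid relation on the $\{i,i+1\}$-Levi crystal (of type $A_2$, since $i+1<n$) and reduces to the classical type $A$ statement of Lascoux--Sch\"utzenberger \cite{LaSchu81}; and $(\xi_{n-1}\xi_n)^4=1$ is the $B_2=C_2$ relation on the $\{n-1,n\}$-Levi crystal. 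The hard part, if one avoids invoking Kashiwara's theorem, is precisely this last relation: unlike the $A_2$ braid it is genuinely symplectic and must be verified on the rank-two $C_2$ Levi sub-crystals, tracking the fourfold composition through the explicit slides of part~(1) via \eqref{Weylaction1} and \eqref{Weylaction2}.
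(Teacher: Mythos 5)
Your proposal is correct and takes essentially the paper's own route: the paper states this proposition without a written proof, presenting it as a follow-up of Kashiwara's theorem (stated immediately before it) together with the $C_n$ signature rule, and these are exactly your ingredients --- the reduced-signature $-^r+^s$ argument on $u^-v^+$ for part (1), Corollary \ref{theta} combined with $u_\lambda^{\textsf{low}}=w_0.u_\lambda$ for part (2), and the fact that a group action of $W=B_n$ forces the defining relations \eqref{hyper1}--\eqref{hyper4} for part (3). Your optional relation-by-relation alternative for part (3) is sound as far as it goes (the disconnected case via Lemma \ref{th:disconnected}, the $A_2$ braid relation via Lascoux--Sch\"utzenberger), and you correctly flag that only $(\xi_{n-1}\xi_n)^4=1$ would require genuinely new type $C_2$ verification if Kashiwara's theorem were avoided.
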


\begin{example} %\cite{kn91,lec02}
 Let $n=4$  and $T=\YT{0.17in}{}{
			{1,2,2,3,\overline{2},\overline{1}},
			{2,\overline{4},\overline{3},\overline{3},\overline{1}},
{4,\overline{2},\overline{1}},
{\overline{4}}}. $ From \eqref{Weylaction1}, \eqref{Weylaction2}, the action  of $\xi_i $  on the KN tableau $T$ is given by the {signature rule} on its
reading word as follows.
%\noindent where we have written in blue %the letters in the alphabet $\{1<2<\bar %2<\bar 1\}$.

\begin{enumerate}

\item  For $i=1$,     the letters $\{1<2<\bar 2<\bar 1\}$ in $T$  are  highlighted in blue. 
 The word of $T$ restricted to the alphabet $\{1<2<\bar 2<\bar 1\}$ is $w=$ $\color{blue}\overline{1}\,\color{blue}\,\overline{2}\,\color{blue}\overline{1}\,{2}	\,
\color{blue}\overline{1}\,2\,
\color{blue}\overline{2}\,1\,2$.  The red color  indicates the iterated action of $e_1$ on $w$:
\begin{comment}
%written in blue, 
%$T=\YT{0.17in}{}{
%{\color{blue}1,\color{blue}2,\color{blue}2,3,%\color{blue}\overline{2},%\color{blue}\overline{1}},
%{\color{blue}2,\overline{4},\overline{3},%\overline{3},\color{blue}\overline{1}},
%{4,\color{blue}\overline{2},%%\color{blue}\overline{1}},
%{\overline{4}}}$
%where we have written in blue the letters in %the alphabet $\{1<2<\bar 2<\bar 1\}$,
\end{comment}
\begin{align*} w&={\color{blue}\overline{1}\,\color{blue}\,\overline{2}\,\color{blue}\overline{1}\,{2}	\,
\color{blue}\overline{1}\,2\,
\color{blue}\overline{2}\,1\,2}\rightarrow {-(+-)---+(+-)}\rightarrow ----+=(-)^4(+)^1
\\&
\stackrel{(e_1)^3}{\rightarrow} -{\color{red} +++}+=(-)^1(+)^4\rightarrow
-(+-){\color{red}+++}+(+-)\rightarrow{\color{blue}\overline{1}\,\color{blue}\overline{2}\,\color{blue}\overline{1}\,\color{red}1	
\color{red}\overline{2}\,\color{red}1\, \color{blue}\overline{2}12}=\xi_1(w).
\end{align*}
Hence,
$$T=\YT{0.17in}{}{
			{\color{blue}1,\color{blue}2,\color{blue}2,3,\color{blue}\overline{2},\color{blue}\overline{1}},
			{\color{blue}2,\overline{4},\overline{3},\overline{3},\color{blue}\overline{1}},
{4,\color{blue}\overline{2},\color{blue}\overline{1}},
{\overline{4}}}\rightarrow \YT{0.17in}{}{
			{\color{blue}+,\color{blue}-,\color{blue}-,3,\color{blue}+,\color{blue}-},
			{\color{blue}-,\overline{4},\overline{3},\overline{3},\color{blue}-},
{4,\color{blue}+,\color{blue}-},
{\overline{4}}}\rightarrow \YT{0.17in}{}{
			{\color{blue}+,\color{red}+,\color{red}+,3,\color{blue}+,\color{blue}-},
			{\color{blue}-,\overline{4},\overline{3},\overline{3},\color{blue}-},
{4,\color{blue}+,\color{red}+},
{\overline{4}}}$$
$$
\rightarrow\YT{0.17in}{}{
			{\color{blue}+,\color{red}1,\color{red}1,3,\color{blue}+,\color{blue}-},
			{\color{blue}-,\overline{4},\overline{3},\overline{3},\color{blue}-},
{4,\color{blue}+,\color{red}\overline 2},
{\overline{4}}}
\rightarrow \xi_1(T)=\YT{0.17in}{}{
			{\color{blue}1,\color{red}1,\color{red}1,3,\color{blue}\bar 2,\color{blue}\bar 1},
			{\color{blue}2,\bar{4},\bar{3},\bar{3},\color{blue}\bar 1},
{4,\color{blue}\bar 2,\color{red}\overline 2},
{\overline{4}}},$$
\noindent where ${\textsf{wt}}(\xi_1(T))=r_1. {\textsf{wt}}(T)=r_1(-2,1,-1,-1)= (1,-2,-1,-1).$

\item For $i=4$,   the letters $\{4,\bar 4\}$ in $T$ are  highlighted in blue. Now, \begin{align*}
    w&={\color{blue} \bar 4\,4\,\bar 4} {\rightarrow } { -}\,(+\,-)\stackrel{e_4}{\rightarrow } {\color{red} +}\,(+\,-)\rightarrow  {\color{red}4}\,{\color{blue}4\,\bar 4}=\xi_4(w) , \mbox{ and}\end{align*}
$$\xi_4(T)=\xi_4\YT{0.17in}{}{
			{1,2,2,3,\overline{2},\overline{1}},
			{2,\color{blue}\overline{4},\overline{3},\overline{3},\overline{1}},
{\color{blue}4,\overline{2},\overline{1}},
{\color{blue}\overline 4}}=\YT{0.17in}{}{
			{1,2,2,3,\overline{2},\overline{1}},
			{2,\color{red}{4},\overline{3},\overline{3},\overline{1}},
{\color{blue}4,\overline{2},\overline{1}},
{\color{blue}\overline 4}}$$
\noindent where $\textsf{wt}(\xi_4(T))=r_4.\textsf{wt}(T)=r_4(-2,1,-1,-1)=(-2,1,-1,1).$ 
\end{enumerate}
\end{example}

%\subsection{$J$ has more than one node}\label{subsec:moreonenode}

\begin{proposition}\label{prop:slandweyl} Let $\gls{cncrystal}$ be a $C_n$ crystal and $\gls{Bj}= \gls{Bj}(\lambda)$ for  $J\subseteq I$. Let
$b\in \gls{cncrystal}$. The connected component of $\gls{Bj}$ containing $b$ has highest weight element $b_J^{\textsf{high}}$ and  lowest
weight element $b_J^{\textsf{low}}$. Then
\begin{enumerate}
\item $b_J^{\textsf{low}}=r_{a}\cdots r_{d}.b_J^{\textsf{high}}=\xi_{a}\cdots \xi_{d}(b_J^{\textsf{high}})$ where $r_a\cdots r_d$ is a
    reduced word for $ w_0^J \in W^J$ with $a,\dots,d\in J$, and   $b=f_{ j_r}\cdots f_{ j_1}( b_J^{\textsf{high}})$ for some $j_r,\dots,{
    j_1}\in J$.
\item If  $J=[p,n]$, $\textsf{B}_{[p,n]}$ is a $C_{n-p+1}$ crystal, then $$\gls{xij}(b)= e_{ j_r}\cdots e_{ j_1}(r_{a}\cdots
    r_{d}.b_J^{\textsf{high}}),\;\textsf{wt}_J(\gls{xij}(b))=-\textsf{wt}_J(b).$$
\item If $J=[p,q]$, $1\le p\le q<n$, $\textsf{B}_{[p,q]}$ is an $A_{q-p+1}$, crystal, and $$\gls{xij}(b)= e_{q-p- j_r+1}\cdots
    e_{q-p-j_1+1}(r_{a}\cdots r_{d}.b_J^{\textsf{high}}),\;\textsf{wt}_J(\gls{xij}(b))= rev(\textsf{wt}_J(b)).$$
\end{enumerate}

\end{proposition}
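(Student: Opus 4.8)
The plan is to obtain all three parts as the Levi-restricted instance of Proposition~\ref{weylaction}(2), combining the general description of $\xi_J$ recorded just above the statement, namely $\xi_J(b)=e_{\theta_J(j_r)}\cdots e_{\theta_J(j_1)}(b_J^{\textsf{low}})$ whenever $b=f_{j_r}\cdots f_{j_1}(b_J^{\textsf{high}})$, with Kashiwara's theorem \cite[Section 7]{kash94} on crystal reflection operators, applied not to $\textsf{B}(\lambda)$ but to the normal crystal $\textsf{B}_J$. Recall that the Levi restriction of a normal crystal is again normal, so each connected component of $\textsf{B}_J$ is isomorphic to some $\textsf{B}_J(\nu)$ and carries a unique highest and a unique lowest weight element.

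For part (1), let $b_J^{\textsf{high}}$ and $b_J^{\textsf{low}}$ be the extreme elements of the component of $\textsf{B}_J$ containing $b$. Since we work with highest weight crystals, $b=f_{j_r}\cdots f_{j_1}(b_J^{\textsf{high}})$ for some $j_1,\dots,j_r\in J$. Kashiwara's theorem, applied to $\textsf{B}_J$, says that the reflection operators $\{\xi_i : i\in J\}$ define an action of $W^J$ on the underlying set via $r_i.x=\xi_i(x)$, and that within each component $b_J^{\textsf{low}}=w_0^J.b_J^{\textsf{high}}$. Writing $w_0^J=r_a\cdots r_d$ as a reduced word with $a,\dots,d\in J$ then gives $b_J^{\textsf{low}}=r_a\cdots r_d.b_J^{\textsf{high}}=\xi_a\cdots\xi_d(b_J^{\textsf{high}})$, which is exactly (1).

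For parts (2) and (3), I would substitute the expression for $b_J^{\textsf{low}}$ from (1) into the defining formula, obtaining $\xi_J(b)=e_{\theta_J(j_r)}\cdots e_{\theta_J(j_1)}(r_a\cdots r_d.b_J^{\textsf{high}})$, and then read off $\theta_J$ and the action of $w_0^J$ from the type of $\textsf{B}_J$ determined in Section~\ref{sec:basics}. When $J=[p,n]$ the Levi crystal is of type $C_{n-p+1}$, where $w_0^J$ acts as $-\mathrm{id}$ on the restricted weights and hence $\theta_J=\mathrm{id}$; this yields the stated formula with the unchanged indices $j_i$, together with $\textsf{wt}_J(\xi_J(b))=w_0^J\,\textsf{wt}_J(b)=-\textsf{wt}_J(b)$ by \eqref{partialschutz*}, proving (2). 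When $J=[p,q]$ with $q<n$ the Levi crystal is of type $A_{q-p+1}$, where $\theta_J$ is the reversal of the interval and $w_0^J$ reverses the weight vector; substituting the reversed indices $\theta_J(j_i)$ and using $\textsf{wt}_J(\xi_J(b))=w_0^J\,\textsf{wt}_J(b)=\textsf{reverse}(\textsf{wt}_J(b))$ proves (3).

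The only genuine content lies in part (1): one must check that Kashiwara's identity $u^{\textsf{low}}=w_0.u^{\textsf{high}}$ transports to the Levi setting with $w_0$ replaced by $w_0^J$, which is legitimate precisely because $\textsf{B}_J$ is itself a normal $\mathfrak{g}_J$-crystal and Kashiwara's theorem holds in arbitrary Cartan type. The remaining steps — identifying $\theta_J$ as the identity in the $C$ case and as the interval reversal in the $A$ case, and reading off the action of $w_0^J$ on the restricted weight — are the bookkeeping already recorded for the Dynkin diagram automorphisms in Section~\ref{sec:basics}, so I expect no real obstacle there.
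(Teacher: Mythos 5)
Your proof is correct and takes essentially the approach the paper intends: the paper gives no explicit proof of this proposition, presenting it (together with Proposition~\ref{weylaction}) as a ``follow-up'' of Kashiwara's theorem on crystal reflection operators, and your argument supplies exactly the two ingredients that derivation needs --- Kashiwara's identity $b_J^{\textsf{low}}=w_0^J.b_J^{\textsf{high}}$ applied to the normal $\mathfrak{g}_J$-crystal $\textsf{B}_J$, and the defining formula $\xi_J(b)=e_{\theta_J(j_r)}\cdots e_{\theta_J(j_1)}(b_J^{\textsf{low}})$ with $\theta_J$ read off type-by-type ($\mathrm{id}$ in type $C$, interval reversal in type $A$). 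One cosmetic caveat: in part (3) your substitution produces the indices $\theta_J(j_i)=p+q-j_i$ in the original labelling of $J$, while the statement's subscripts $q-p-j_i+1$ presuppose relabelling the nodes of $J$ as $1,\dots,q-p+1$; this is a notational discrepancy in the paper's statement, not a gap in your argument.
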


 \subsection{Dynkin sub-diagram \texorpdfstring{$J=[j,n]$}{jn}: J-symplectic reversal}
 \label{partialsymplecticreversal0}

Recall that the partial Sch\"utzenberger--Lusztig involution $\xi^{C_n}_{[j,n]}$ is  the Sch\"utzenberger--Lusztig involution on each
connected component  of $\textsf{KN}_{[j,n]}(\lambda,n)$. On the set \gls{symptab}, $\xi^{C_n}=\xi^{C_n}_{[1,n]}$ coincides with Santos'
symplectic evacuation $\textsf{evac}^{C_n}$ (see Section \ref{santosevac} or \cite[Section 5]{sa21a}).

\subsubsection{The  Knuth  \gls{r3} relation on a skew tableau}
\label{partialsymplecticreversal}
Given $1<j\le n$,  the Levi branched crystal $\textsf{KN}_{[j,n]}(\lambda,n)$  decomposes into connected components. Let $T\in \gls{symptab}$,
which  belongs to some connected component of $\textsf{KN}_{[j,n]}(\lambda,n)$, and let $T_{[\pm j,n]}$ denote the restriction of  $T$ to the
alphabet $[\pm j,n]$. We claim that the connected component containing $T_{[\pm j,n]}$ is symplectic Knuth equivalent to a crystal connected
component of admissible skew tableaux on the alphabet $[\pm j,n]= [\pm 1,n]\setminus \{1<\cdots<j-1<\overline
    {j-1}<\cdots<\bar 2<\bar 1\}$ (of the same skew shape). The following remark gathers the missing
observations we need to verify our claim.

\begin{remark}\label{actionK3}
\begin{itemize}
\item \cite[Proposition 2.3.3]{lec02} Let $C_1,\dots,C_k$ be admissible columns on the alphabet $\gls{Corderedletters}$. Then $T = C_1C_2
    \cdots C_k$ is a KN tableau on the alphabet $\gls{Corderedletters}$ if and only if $ r (C_i) \le l( C_{i+1})$, that is, if $ r (C_i)
    l(C_{i+1})$ is a type $A_{2n-1}$ semi-standard  tableau for $i = 1,\dots, k-1$.

\item \label{skew0} For $T\in \gls{symptab}$, the restriction of $T$ to the alphabet $[\pm j,n]$, $ T_{[\pm j,n]}$, is a KN skew tableau on the alphabet $\gls{Corderedletters}$, but $ T_{[\pm j,n]}$ may have
    non-admissible columns with respect to the alphabet $[\pm j,n]$. In particular, we might produce a non-admissible skew tableau on the
    alphabet $\mathcal{C}_{n-j+1}$ (recall Definition \ref{def:admissible} and Remark \ref{lecouvey2.2.2}).

\item \label{k5skew} Let $C_1$ and $C_2$ be two columns with entries on the alphabet $[\pm j,n]$ such that $C_1C_2$ is a KN skew  tableau on
    the alphabet $\gls{Corderedletters}$. Assume  that $C_1$ and $C_2$  have exactly $m\ge 0$ and $t\ge 0$ pairs of symmetric entries
    $(x,\bar x)$, respectively, with $N(x)>x$ with respect to the alphabet $[\pm j,n]$. Then $C_1$ has at least $m$ boxes strictly below the
    row containing the last box of $C_2$, and $C_2$ has at least $t$ boxes strictly above the row containing the top box of column $C_1$.

\item Let $(\gls{r3})^m$ denote the iteration of the  Knuth relation \gls{r3} $m\ge 0$ times, and let $C_1$ and $C_2$ be two columns with
    conditions as described in the previous point of this remark. Let $C_1\overset{(\gls{r3})^m} \sim X$, where $X$ is an admissible column
    on $[\pm j,n]$, and $ C_2\overset{(\gls{r3})^t} \sim Y$, where $Y$ is an admissible column on $[\pm j,n]$. Then $
    C_1C_2\overset{(\gls{r3})^m}\sim XC_2\overset{(\gls{r3})^t} \sim XY$ is a KN skew tableau on $[\pm j,n]$.

\end{itemize}
\end{remark}

\subsubsection{Reduced symplectic jeu de taquin} Given $T\in \textsf{KN}(\lambda/\mu,n)$ and $j\in [n]$ such that  $T$ has all  its entries  in
$[\pm j,n]$, the following is an algorithm to compute the \textit{reduced symplectic jeu de taquin} on $T$ on the interval $[\pm j,n]$,
denoted $SJDT_j$. The skew tableau $T$ might not be admissible on the alphabet $[\pm j,n]$. This means that we apply the SJDT after shifting
all entries in $T$ by $-(j-1)$ and iterating on $T$ the contraction relation $\gls{r3}$ the needed number of times to get an admissible skew
tableau on the alphabet $\mathcal{C}_{n-j+1}$. When $j=1$, we recover the ordinary SJDT.
\begin{definition}{Reduced SJDT} ($SJDT_j$)\label{def:reduced}
\begin{itemize}
    \item Let $T_j$ be the tableau obtained by replacing each non-barred entry $c$ and barred entry $\bar c$ in $T$ by $c-j+1$ and
        $\overline{ c-j+1}$, respectively.
    \item If $T_j$ is not a KN tableau in $\textsf{KN}(\lambda/\mu,n-j+1)$, we have some columns containing pairs of the form $b,
        \overline{b}$ such that $b \in [n-j+1]$ is lowest in the column and $N(b)> b$. Iteratively, we apply the  Knuth contraction
        $\gls{r3}$  to $T_j$ until we make all columns admissible. Define $T_j$ to be the resulting tableau with all admissible
        columns.
    \item Compute SJDT on $T_j$ as usual.
    \item Replace each non-barred entry $m$ and $ \overline m$ in $SJDT(T_j)$ by $m+j-1$ and $\overline{m-j+1}$, respectively.
\end{itemize}
\end{definition}

The \emph{reduced rectification}  to the alphabet $[\pm j,n]$, denoted $\textsf{rectification}_j$ ($\textsf{rect}_j$ for short), of $T$ is the
iteration of $SJDT_j$ to all inner corners in $T$. Indeed,
$\textsf{rect}_j(T)$ is the shift by $j-1$ of all entries of
$\textsf{rect}(T_j)$. When $j=1$ we recover the ordinary rectification.

\begin{example}\label{ex:sjdt}
For $T\in \textsf{KN}( (2,2,1)/ (1), 3 )$, we compute a complete SJDT slide on the interval $[\pm 1,3]$:
$$T=\YT{0.17in}{}{
	    	{{\ast},{2}},
			{{3},{\overline{2}}},
			{ {\overline{3}} }}
			\overset{\textsf{SJDT}}\rightarrow
\YT{0.17in}{}{
	    	{{1},{\overline{1}}},
			{{3},{\ast}},
			{ {\overline{3}} }}.
			$$
However, if we compute the complete $SJDT_2$ slide, that is, the complete SJDT slide reduced to the interval $[\pm 2,3]$, we get:
$$\YT{0.17in}{}{
	    	{{\ast},{2}},
			{{3},\overline{2}},
			{ \overline{3} }}
			\rightarrow
T_2=\YT{0.17in}{}{
	    	{{\ast},{1}},
			{{2},\overline{1}},
			{ \overline{2} }}
			\overset{\gls{r3}}\rightarrow
\YT{0.17in}{}{
	    	{{\ast}},
			{2},
			{ \overline{2} }}
\overset{\textsf{SJDT}}\rightarrow
\YT{0.17in}{}{
	    	{{2}},			
{\overline{2}},
			{ {\ast} }}
			\Rightarrow \YT{0.17in}{}{
	    	{{\ast},{2}},
			{{3},\overline{2}},
			{ \overline{3} }}
			\overset{SJDT_2}\rightarrow \YT{0.17in}{}{
	    	{{3}},			
{\overline{3}},
			{ {\ast} }}.$$
\end{example}

\subsubsection{Partial symplectic reversal: colorful symplectic tableau switching.} \label{colors}

 We now generalize  the Benkart--Sottile--Stroomer reversal \cite{bss}
on skew semi-standard 
tableaux to symplectic KN skew  tableaux. The procedure consists of the  \emph{colorful symplectic tableau switching}  governed by SJDT and, consequently, also by the Knuth  $\gls{r3}$ relation   due to SJDT B.2 case; symplectic Santos evacuation; and reverse SJDT (RSJDT). More generally, we define   $J=[j,n]$-\textit{partial symplectic reversal} for KN skew tableaux,  where  $J$ is a Dynkin sub-diagram of the ambient Dynkin diagram of type $C_n$ containing the node $n$, $1\le j\le n$. 
\begin{comment}
Step I computes the \textit{symplectic tableau switching} throughout  the sequential  substeps I.1, I.2 and  I.3 which  \textit{switches} the pair  $(U_0, T_{[\pm j,n]})$, Figure \ref{fig:1}, to get the pair $(\textsf{rect}_j(T_{[\pm j,n]}), V)$, where $U_0$ is a standard tableau in ordered green letters \eqref{coloralphabet}, and  $V$ is the standard skew tableau
consisting of all the slid \textit{colored letters} in the \textit{rectification} sequence. The colored letters consist of   the ordered alphabet of
green, purple and red letters \eqref{coloralphabetall}. 
\end{comment}
Supplementary ordered \textit{colored letters} 
 %purple and red letters
will be needed to record the use of the Knuth contractor $\gls{r3}$ relation    when non-admissible columns occur: purple \eqref{coloralphabet} for the possible  non-admissible columns in $T_{[\pm j,n]}$, Figure \ref{fig:colorful}, and red for SJDT B.2 case \eqref{coloralphabetall}. 
\begin{comment}
Step II applies  Santos evacuation to get $(\textsf{evac}^{C_{n-j+1}}(\textsf{rect}_j(T_{[\pm j,n]})), V)$, and finally  Step III  applies  reverse SJDT (RSJDT) to the entries of $V$ from the smallest to the largest,  which includes the Knuth dilation $\gls{r3}$ dictated by the red and purple letters,  to send $
\textsf{evac}^{C_{n-j+1}}(\textsf{rect}_j(T_{[\pm j,n]}))$ to
$\gls{rev}(T_{[\pm j,n]})$. Step IV specifies the formula $\xi_{[j,n]}^{C_n}(T)= \textsf{reversal}^{C_n}_{[j,n]}(T)$ \eqref{formula}.
\end{comment}
For illustrations, we refer to Subsection \ref{ex:partialsymplecticreversal}.

Let $T\in \gls{symptab}$ and  $j\ge 1$. Let $\gls{B}$ be the crystal connected component of $\textsf{KN}_{[j,n]}(\lambda,n)$ containing
$T$. $\gls{B}$ is a highest weight crystal and all vertices of $\gls{B}$ are KN tableaux on the alphabet $\gls{Corderedletters}$, with the
letters in $[\pm 1,j-1]$ frozen, as the crystal operators in $\gls{B}$ are indexed by $[j,n]$ and do not act on the entries filled in $[\pm 1,
j-1]$.

Let $H$ be the highest weight element of $\gls{B}$,  and let  $\textsf{wt}(H_{[\pm j,n]})\in \mathbb{Z}^{n-j+1}$  be its highest weight,
where $H_{[\pm j,n]}$ is the restriction of $H$ to the alphabet $[\pm j,n]$. The restriction of $H$ to the alphabet  $[\pm j,n]$ is a KN skew
tableau on the alphabet $\gls{Corderedletters}$.
The entries of $H$ in $[1,j-1]$ define a semi-standard tableau $ T^+_{[1,j-1]}$ of shape, say $\mu$, and the entries in $[
   \overline{j-1},\bar 1]$ define a
    skew semi-standard tableau $ T^-_{[\overline{j-1},\bar 1]}$ of shape $\lambda/\nu$, where $\mu\subseteq \nu\subseteq \lambda$. Hence
    the cells of $H$ filled in
    $[\pm j,n]=[\pm 1,n]\setminus
   \{1<\cdots<j-1<\overline {j-1}<\cdots<\bar 2<\bar 1\}$  define  the skew shape $\nu/\mu$, and because the crystal operators in $\gls{B}$
   are indexed by $[j,n]$, they do not change the skew shape $\nu/\mu$ either. Therefore, since all the vertices of $\gls{B}$ are connected
   to $H$ through those crystal operators, the vertices of $\gls{B}$ restricted to the alphabet $[\pm j,n]$ have the same skew shape
   $\nu/\mu$
   and the same semi-standard tableaux $ T^+_{[1,j-1]}$ and $ T^-_{[\overline{j-1},\bar 1]}$  \cite[Lemma 6.1.3]{lec02}.\\

\noindent  \textbf{ Step I}. {\bf The  sequence of isomorphic crystals from $T_{[\pm j,n]}$ to its reduced rectification.}
\medskip

 {\noindent \textbf{ I.1} -  {\textsc{ The $C_{n-j+1}$ connected crystal $\gls{B}^0$ containing  $T_{[\pm j,n]}$.}

Erase in the vertices of $\gls{B}$ the entries in $[\pm 1, j-1]$;  that is, erase the semi-standard tableaux $ T^+_{[1,j-1]}$ and $
T^-_{[\overline{j-1},\bar 1]}$. We obtain the connected $C_{n-j+1}$ crystal $\gls{B}^0$ of semi-standard skew  tableaux of shape
$\nu/\mu$
with entries in the alphabet $[\pm j,n]$,
possibly with some non-admissible columns, containing $T_{[\pm j,n]}$. (For short we call to $\mu$ the inner shape of each vertex of $\textsf{B}$ or $\gls{B}^0$.) These KN skew tableaux over $\gls{Corderedletters}$ might have
non-admissible columns over $[\pm j,n]$. More precisely, $\gls{B}^0$ is the  connected crystal of the reading words of the aforesaid semi-standard skew tableaux on the alphabet $[\pm j,n]$, with
highest  weight element the word
 of $H_{[\pm j,n]}$.
  Hence $\gls{B}^0$ and $\gls{B}$  are isomorphic crystals, with a crystal isomorphism given by the reading word map on the alphabet $[\pm j,n]$.

 %The set $\textsf{B}^0$  bijects the set $\gls{B}$ while preserves the  crystal graph structure and the  weight vertices. %as $\gls{B}$. 
% Hence, $\textsf{B}^0$ and $\gls{B}$  are isomorphic crystals.

\medskip
{\noindent \bf I.1.1}-  \textsc{The green inner standard tableau $U_0$ for any vertex of $\textsf{B}^0$.}}

 Fix a standard tableau $U_0$ of shape $\mu$ filled in a completely ordered alphabet of \textit{green letters} $\{{{\green g_1} <\cdots<\green
 g_{|\mu|}}\}$ where $|\mu|$ is the number of boxes of $\mu$.
 Assign the inner standard tableau $U_0$ to the inner shape $\mu$ of each vertex of $\textsf{B}^0$. Recall that $T_{[\pm j,n]}$ is the image of $T$
 in $\textsf{B}^0$; see the tableau pair $(U_0,T_{[\pm j,n]})$ schematically depicted in Figure~\ref{fig:1}.

 \begin{figure}[h]
  \centering
$(U_0,T_{[\pm j,n]})=$\[ \vcenter{\hbox{\begin{tikzpicture}
\node at (.25, -.25) {$\color{green}\bullet $};
\node at (1.25, -.25) {$\color{green}\dots$};
\node at (2.25, -.25) {$\color{green}\bullet  $};
\node at (.5, -.5) {$\color{green}\bullet $};
\node at (1.25, -.5) {$\color{green}\bullet$};
\node at (.25, -.75) {$\color{green}\vdots$};
\node at (1, -.75) {$\color{green}\ddots$};
\node at (1.125, -1.25) {$\color{green}\bullet$};
    \draw[line width=1pt]
    (0,0) -- ++(2.5,0) --++(0,-.5) -- ++(-.5,0) --++(0,-.5) --++(-.5,0) --++(0,-.5) --++(-1.5,0) -- cycle;
    \draw[line width=1pt]
    (2.5,0) -- ++(2.5,0) --++(0,-.5) -- ++(-.5,0) --++(0,-.5) --++(-1,0) --++(0,-1) --++(-2.5,0)--++(0,-1) --++(-1,0)--++(0,1.5)--++(1.5,0)
    --++(0,.5) -- ++(.5,0)--++(0,.5) -- ++(.5,0)-- cycle;
    \node at (2.2, -1.5) {$T_{[\pm j,n]}$};
\end{tikzpicture}}}\]
  \caption{$T_{[\pm j,n]}$ in the crystal $\textsf{B}^0$ and the inner tableau $U_0$ in green.\label{fig:1}}
\end{figure}

\medskip

{\noindent \textbf{ I.2} -  \textsc{The  $C_{n-j+1}$-crystal $\gls{B}^x$ of KN skew tableaux $\gls{r3}$ isomorphic to $\textsf{B}^0$}.}

Let $H^0:=H_{[\pm j,n]}$ be the highest weight element of the $C_{n-j+1}$ crystal $\textsf{B}^0$.
The skew tableau
 $H^0$ of shape $\nu/\mu$ may have non-admissible columns on the alphabet $[\pm j,n]$. Let $q< r<\dots<s< t$
be the non-admissible  columns  of $H^0$. Then exactly the same columns in all  vertices of $\textsf{B}^0$ are non-admissible.
The Knuth contraction $\gls{r3}$ relation,  on Subsection \ref{R3}, defines a crystal isomorphism; it commutes with the crystal
operators and preserves the weight. Moreover, each time $\gls{r3}$ is applied to a column  of some vertex of $\textsf{B}^0$, it is also
applied to the same column in every vertex of $\textsf{B}^0$  (see \cite[Proposition 3.2.4, Corollary 3.2.5]{lec02}).

In each vertex of $\textsf{B}^0$, apply the  contraction $\gls{r3}$ relation to  column $i$, for  $i=q,r,\dots,s,t$,
  until column $i$ becomes  admissible. For $i=q,r,\dots,s,t$, each time we apply $\gls{r3}$ to  column $i$, a pair
   of entries $(k,\bar k)$ is erased (whenever $k \in [n]$ is minimal for $N(k)> k$, $k$ and $\bar k$ appear in the column and all prefixes
   are admissible).
   Then the cells from the top and the bottom of the current column $i$ are emptied; the remaining entries are placed in order in the
   remaining cells between those erased.  We obtain a new crystal of KN skew tableaux on the alphabet
   $[\pm j,n]$ isomorphic
   to the crystal  $\textsf{B}^0$.

Let  $x$ be the total number of times $ \gls{r3}$ has to be applied  to $H^0$,  from column $r$ to column $t$ as explained above, to get  a KN
skew
tableau
on alphabet $[\pm j,n]$. Denote the resulting KN skew tableau by $H^x$. Note that for each
column of any vertex of $\textsf{B}^0$,
 the number of times $\gls{r3}$ is applied is the same. We then obtain the sequence of isomorphic crystals

\begin{align*}
&\textsf{B}^0\overset{\gls{r3}}\simeq\textsf{B}^1\overset{\gls{r3}}\simeq\cdots
\overset{\gls{r3}}\simeq\textsf{B}^{x_r}\overset{\gls{r3}}\simeq
\textsf{B}^{x_r+1}\overset{\gls{r3}}\simeq\cdots\overset{\gls{r3}}\simeq\textsf{B}^{x_r+x_{s}} \\
&\overset{\gls{r3}}\simeq \cdots\overset{\gls{r3}}\simeq\textsf{B}^{x_q+x_r+\cdots+x_s+x_t}=\textsf{B}^x,
\end{align*}

\noindent where $x=x_q+x_r+\cdots+x_s+x_t$ and $x_i$ is the number of times we apply $\gls{r3}$  to column $i$ of $H^0$, for
 $i=q,r,\dots,s,t$.
The crystal $\textsf{B}^x$, isomorphic to $\textsf{B}^0$, is obtained by applying  $\gls{r3}$ $x$ times to each vertex of $\textsf{B}^0$,
namely,
 $x_i$ times to column $i$, for $i=q,\dots,s,t$, of each vertex of $\textsf{B}^x$.
Equivalently, $\textsf{B}^x$ is the  crystal whose highest weight element is the KN skew tableau $H^x$ of shape $\nu^x/\mu^x$, where
$\nu^x\subseteq \nu$, $\mu\subseteq \mu^x$ and
$|\mu^x|-|\mu| =|\nu|-|\nu^x|= x$ is the  number of  times $\gls{r3}$ has been applied to  $H^0$ (or $T_{[\pm j,n]}$).

\medskip
{\noindent{\bf I.2.1} -  \textsc{The pair $(U_x,V_x)$ of green-purple inner and purple outer standard tableaux for any vertex of
$\textsf{B}^x$}.}

 Let  \begin{align}\{{\green \textbf{g}_1<\cdots<\green \textbf{g}_{|\mu|}} < {\pur \textbf{p}_1} <{\pur \textbf{p}_2} <\cdots<{\pur
 \textbf{p}_x}< {\pur \textbf{p}_x'}<\cdots<{\pur \textbf{p}'_2}<{\pur
 \textbf{p}_1'}\}\label{coloralphabet}\end{align}
  be a completely ordered alphabet of $|\mu|+2x$ letters consisting of $|\mu|$ green letters and  $x$ unprimed and $x$ primed \textit{purple
  letters}.

Define  the  standard tableau $U_x$ of shape $\mu^x$,  where $\mu\subseteq \mu^x$ and $|\mu^x|=|\mu| + x$, to be an extension of $U_0$
filled with the $|\mu|$ green letters by filling the extra
$x$ cells,  the  total number of cells made empty at the top of each non-admissible column in a vertex of $\textsf{B}^0$,   with  the
unprimed
purple letters $\{ {\pur \textbf{p}_1}<\cdots<{\pur
\textbf{p}_{x_r}}<\cdots<{\pur \textbf{p}_x}\}$. Define the standard tableau
 $V_x$ of shape $\nu/\nu^x$ by filling the $x$ cells  made empty at the bottom of each non-admissible column  in a vertex of
 $\textsf{B}^0$  with the primed purple
 letters ${ {\pur \textbf{p}_x'}<\cdots< {\pur \textbf{p}_{x_r}'}<\cdots<{\pur \textbf{p}_1'}}$. The filling rule is as follows.

 Successively fill the pair of cells made empty each time $\gls{r3}$ is applied with one unprimed purple letter and one primed purple
 letter,
 ${\pur \textbf{p}_1}<{\pur
 \textbf{p}_1'},\dots,
{\pur \textbf{p}_{x_r}}<{\pur \textbf{p}'_{x_r}},{\pur \textbf{p}_{x_r+1}}<{\pur \textbf{p}'_{x_r+1}},\dots,{\pur \textbf{p}_{x_r+x_s}}<{\pur
\textbf{p}'_{x_r+x_s}},\dots,{\pur \textbf{p}_x}<{\pur \textbf{p}_x'}$, with the unprimed letter at the top of the column  and the primed
letter at the bottom of the column.  We impose the order
$${\green \textbf{g}_1}<\cdots<{\green \textbf{g}_{|\mu|}}<{\pur \textbf{p}_1}<\cdots<{\pur \textbf{p}_{x_r}}<{\pur
\textbf{p}_{x_r+1}}<\cdots<{\pur \textbf{p}_{x_r+x_s}}<\cdots<{\pur \textbf{p}_x}<$$
$$<{\pur \textbf{p}'_x}<\cdots<{\pur \textbf{p}'_{x_r+x_s}}<\cdots<{\pur \textbf{p}'_{x_r+1}}<{\pur \textbf{p}_{x_r}'}<\cdots<{\pur
\textbf{p}'_1}.$$
 That is, each time an unprimed purple letter and a primed purple letter are added to $U_x$ and $V_x$, respectively, the unprimed letter is
 strictly larger than any green letter and any unprimed purple letter already added to $U_x$, and simultaneously, the primed purple letter
 is
 strictly smaller than any primed purple letter already added to $V_x$.

By construction, the pair $(U_x,V_x)$ of inner and outer standard tableaux is the same for any vertex of $\textsf{B}^x$.  More
precisely, $U_x$ of
shape $\mu^x$ is the extension of $U_0$ filled with the alphabet $\{{\green \textbf{g}_1}<\cdots<{\green \textbf{g}_{|\mu|}}<{\pur
\textbf{p}_1}<{\pur \textbf{p}_2}<\cdots<{\pur
\textbf{p}_x}\}$; $V_x$ of skew shape $\nu/\nu^x$ is filled with the alphabet of primed purple letters ${\pur
\textbf{p}'_x}<\cdots<{\pur \textbf{p}'_{x_r+\cdots+x_q}} <\cdots<{\pur \textbf{p}'_{x_r+x_s}}<\cdots<{\pur \textbf{p}'_{x_r+1}}<{\pur
\textbf{p}_{x_r}'}<\cdots<{\pur \textbf{p}'_1}.$  Regarding $U_x$, extend the column $r$ of $U_0$ with  the $x_r$
unprimed
purple letters ${\pur \textbf{p}_{1}}<\cdots<{\pur \textbf{p}_{x_r}}$, the column $s$  with  the $x_s$ unprimed purple letters ${\pur
\textbf{p}_{x_r+1}}<\cdots<{\pur
\textbf{p}_{x_r+x_s}}$, and finally the column $t$ with the $x_t$ unprimed purple letters ${\pur \textbf{p}_{x_r+\cdots+x_q+1}}<\cdots<{\pur
\textbf{p}_{x_r+\cdots+x_q+x_t}}={\pur \textbf{p}_x}$; regarding $V_x$ of skew shape $\nu/\nu^x$, start with  the skew shape $ \nu/\mu^x$, and
fill   the
bottom
$x_r$ boxes of column $r$ with the alphabet of primed purple letters ${\pur \textbf{p}_{x_r}'}<\cdots<{\pur \textbf{p}'_1}$, the bottom $x_s$
of column $s$ with
the alphabet ${\pur \textbf{p}'_{x_r+x_s}}<\cdots<{\pur \textbf{p}'_{x_r+1}}$, and, finally, the bottom $x_t$ boxes of column $t$ with the
alphabet ${\pur
\textbf{p}'_x}<\cdots<{\pur \textbf{p}'_{x_r+x_s+\cdots+x_{q}+1}}$. See the triple $(U_x,H^x,V_x)$ in Figure~\ref{fig:colorful}.

\begin{figure}[h]
  \centering
\[ \vcenter{\hbox{\begin{tikzpicture}
\path[line width=1pt, fill=green]
(0,0) -- ++(3.5,0) --++(0,-.5) -- ++(-1,0) --++(0,-.5) --++(-1,0) --++(0,-.5) --++(-1,0) --++(0,-2) -- ++(-.5,0)  -- cycle;
\path[line width=1pt, fill=violet]
(2.5,-.5) -- ++(.5,0) --++(0,-.5) -- ++(-.5,0)  -- cycle;
\path[line width=1pt, fill=violet]
(.5,-1.5) -- ++(1,0) --++(0,-1) -- ++(-.5,0) --++(0,-.5) -- ++(-.5,0)-- cycle;
\path[line width=1pt, fill=lightgray]
(0,-3.5) -- ++(.5,0) --++(0,.5) -- ++(.5,0) --++(0,.5) -- ++(.5,0)-- ++(0,1.5)-- ++(1.5,0)--++(0,.5) -- ++(.5,0)--++(0,-1.5) --
++(-1,0)--++(0,-1.5) -- ++(-1,0) --++(0,-1)-- ++(-1,0)--++(0,-2)-- ++(-.5,0) --++(0,3)-- cycle;

\path[line width=1pt, fill=violet]
(.5,-4.5) -- ++(1,0) --++(0,-1) -- ++(-.5,0) --++(0,-.5) -- ++(-.5,0)-- cycle;
\path[line width=1pt, fill=violet]
(2.5,-2) -- ++(.5,0) --++(0,-.5) -- ++(-.5,0)  -- cycle;
\end{tikzpicture}}}\]
  \caption{The triple $(U_x, H^x, V_x)$ with $H^x$ in gray, $V_x$ in purple, and $U_0(\subseteq U_x)$ in green. 
$U_x$ consists of the green region together with the purple inner regions of $ V_x$.\label{fig:colorful}}
\end{figure}
 \medskip
{\noindent\noindent{\bf I.3} -  \textsc{Rectification of the $C_{n-j+1}$ crystal $B^x$ and reduced rectification of $T_{[\pm j,n]}$}}

Consider the triple of tableaux $(U_x,H^x, V_x)$ previously defined. Apply complete $SJDT_j$ slides successively to the cells of $U_x$, from
the largest entry to the smallest one, to rectify
$H^x$.  At the end of each complete $SJDT_j$ slide,  we get an outer cell filled with the  letter where the slide started in $U_x$. While
$H^x$ is being rectified, the cells of $U_x$ are slid to end up as outer corners and added to the skew standard tableau $V_x$.

The rectification of $H^x$ does not depend on the choice of the inner corner made in each step during the rectification process
\cite[Corollary 6.3.9]{lec02}. Applying $SJDT_j$ to any corner of $U_x$ in an element of $\textsf{B}^x$ (recall that for all elements of
$\textsf{B}^x$, $U_x$ is the same) gives a crystal isomorphism. This observation is equivalent to the fact that the rectification does not
depend on the filling of $U_x$: $U_x$ is a choice to keep track of the rectification of $H^x$ (or of any other vertex of $\textsf{B}^x$). If
we may apply a complete $SJDT_j$ slide to an inner corner of $H^x$, then we may apply a complete $SJDT_j$ slide to the same inner corner in
every vertex of the crystal $\textsf{B}^x$, and this slide will create the same outer corner filled with the same letter.

 However, if the number of boxes of $H^x$, $|H^x|$,  exceeds  the minimal number  of boxes of its Knuth class, it will be necessary to apply
 $SJDT_j$ more than $|U_x|=|\mu|+x$ times to rectify $H^x$. When $H^x$ has the minimal number of boxes of its Knuth class,  only $x$
 unprimed
 purple letters and $|\mu|$ green letters will slide outwards and join the outer tableau $V_x$.

Let  $2y\ge 0$ be the number of boxes of  $H^x$ that exceeds the minimal number of boxes of its Knuth class, that is,
 $$2y=|H^x|-| \textsf{rect}_j(H^x)|.$$
Necessarily $2y$  boxes of $H^x$ will be lost in the $SJDT_j$ rectification process. Henceforth, the $SJDT_j$ B.2 case will be applied $y$
times, each of which will create a non-admissible column followed by the application of an $\gls{r3}$ contraction relation, each of which will
result in the loss of two boxes.

\begin{remark} Theorem 6.1.9 in Lecouvey's paper \cite{lec02} says: if the B.2 case appears with the creation of a non-admissible column when
applying complete SJDT to an inner corner of a KN skew tableau, it has to be at the initial column where the inner corner was originally
contained. \end{remark}

This observation implies that each of the $y$ mentioned non-admissible columns will only occur in the columns containing the inner corners
where the slide started.

The complete $SJDT_j$ slides applied successively to the entries of $U_x$, as mentioned, will  transform  the crystal $\textsf{B}^x$
into an isomorphic crystal of KN skew tableaux, as long as the $SJDT_j$ B.2 case   does not create a non-admissible column.
Otherwise, one has an isomorphic crystal where each vertex has  a non-admissible column. In this case, we apply the contraction relation
$\gls{r3}$ to that column in each vertex,  erasing a pair $(k,\bar k)$ if  $k \in [\pm j,n]$ is the lowest entry such that $N(k)>k$ .
Then, as in $\bf I.2$ above, the cells from the top and the bottom of the current column are emptied and the remaining entries
are placed in order. We get a new isomorphic crystal of KN skew tableaux where each vertex has two fewer boxes.  As observed above, this may
only happen in the $y$ columns where $SJDT_j$ was applied, specifically, those containing the inner corners where the slides started; no
other
boxes are deleted in the rectification process of $\textsf{B}^x$.

Eventually, $H^x$ is rectified to $\textsf{rect} (H^x)$, as are all vertices of  $\textsf{B}^x$, and  we get $\textsf{B}^0\simeq \textsf{B}^x
\simeq \textsf{R}$, where the crystal $\textsf{R}$ is of straight KN tableaux with highest weight element $\textsf{rect} (H^x)$.
 \medskip

 {\noindent {\bf I.3.1} - \textsc{The green-purple-red standard tableau $V$ of
every vertex in the $C_{n-j+1}$ crystal $\textsf{R}$ containing $
\textsf{rect}_j(T_{[\pm j,n]})$.}}

  Let $$\textsf{B}^{x,1},\textsf{B}^{x,1,\text{-}},\textsf{B}^{x,2},\textsf{B}^{x,2,\text{-}}\dots, \textsf{B}^{x,y},\textsf{B}^{x,y,\text{-}}$$ be the sequence
  of
  $2y$ isomorphic crystals appearing in the  rectification process  from  $\textsf{B}^{x}$ to $\textsf{R}$, tracking each complete $SJDT_j$
  slide which
  triggers a  B.2 case,   and the subsequent application of an $\gls{r3}$ contraction relation to that non-admissible column. Namely, $\textsf{B}^{x,i}$  correspond to the B.2 case and $\textsf{B}^{x,i,\text{-}}$  to the $\gls{r3}$  contraction relation.

  For $i=1,\dots,y$, let ${H}^{x,i}$ and ${H}^{x,i,\text{-}}$ be the  pair of  highest weight elements of the crystal pair
  $\textsf{B}^{x,i}$ and $\textsf{B}^{x,i,\text{-}}$, respectively. Each ${H}^{x,i}$ has exactly one non-admissible column, and ${H}^{x,i,\text{-}}$ has no
  non-admissible columns.

We have to store $2y$ new auxiliary letters to record the $2y$ empty cells created by the $y$ applications of $\gls{r3}$
as a consequence of the creation of $y$ non-admissible columns by the complete $SJDT_j$ slide where the B.2 case appeared and created a
non-admissible column.

Consider the triple of tableaux $(U_x,H^x, V_x)$ corresponding to  the crystal $\textsf{B}^x$. Let  $(U_{x,1},$ $H^{x,1},$ $V_{x,1})$ be the
triple of tableaux  obtained from $(U_x,H^x, V_x)$ by applying  complete $SJDT_j$ slides to the entries of $U_x$ and transforming the KN  skew
tableau  $H^x$ into $H^{x,1}$, where for the first time in the complete $SJDT_j$ slide, the B.2 case appears and creates a non-admissible
column. After the said complete $SJDT_j$ slides to $U_x$, $U_{x,1}$ is the inner standard tableau of $H^{x,1}$, and $V_{x,1}$ is obtained from
$V_x$ by adding the slid entries from $U_x$ to $V_x$.  $V_{x,1}$ is indeed a standard tableau because by construction  the green entries of
$U_x$ are strictly smaller than the primed purple entries of $V_x$. The pair $(U_{x,1},V_{x,1})$ of inner and outer standard tableaux is the
same for every vertex of $\textsf{B}^{x,1}$: $U_{x,1}\subseteq U_x,\quad V_{x,1}\supseteq V_x.$

 We have to apply an $\gls{r3}$ contraction relation  to $H^{x,1}$ (and to every vertex of $\textsf{B}^{x,1}$) to transform the non-admissible
 column into an admissible one: a pair of symmetric entries in each vertex of $\textsf{B}^{x,1}$ will be deleted, the top and bottom cells
 of
 that
 column will be emptied and the remaining entries will be placed in order. Fill the empty entries with \textit{red letters} ${\re
 \textbf{r}_1} <{\re
 \textbf{r}'_1}$,
 with $\re \textbf{r}_1$ on the top and $\re \textbf{r}'_1$ on the bottom, where in the complete  $SJDT_j$ slide, the $\text{B}.2$ case
 appears and has created
 a non-admissible column such that $ \re \textbf{r}_1$ is strictly larger than any entry of $U_{x,1}$, and $\re \textbf{r}'_1$ is strictly
 smaller than any
 entry of $V_{x,1}$. $V_{x,1}$  is filled with the entries of $U_x$ already slid  and with all primed purple letters. The cell with the red
 letter $\re \textbf{r}_1$ was the cell of $U_x$ where the complete $SJDT_j$ slide started and the B.2 case appeared with the
 creation of a non-admissible column.

Let  $U_{x,1,+}$  be the standard tableau obtained by adding the red letter $\re \textbf{r}_1$ to  $U_{x,1}$, and let $V_{x,1,+}$ be the
standard
tableau obtained by adding the primed red letter $\re \textbf{r}'_1$ to $V_{x,1}$ in the manner described,
$$U_{x,1}\subset U_{x,1,+}\subseteq U_x,\quad V_{x,1,+}\supset V_{x,1}\supseteq V_x.$$

We keep applying complete $SJDT_j$ slides to entries of $U_{x,1,+}$, from the largest to the smallest, to rectify $H^{x,1,-}$, so the cell
$\re \textbf{r}_1$ will be the first to slide outwards and become an outer corner.

 Let  $(U_{x,2},H^{x,2},V_{x,2})$ be the triple of tableaux obtained from $(U_{x,1,+},H^{x,1,-}, V_{x,1,+})$ by applying complete $SJDT_j$
 slides to the  entries of $U_{x,1,+}$ and transforming the KN  skew tableau  $H^{x,1,-}$ into  $H^{x,2}$, where for the second time  in
 the
 complete $SJDT_j$ slide, the B.2 case appears with the creation of a non-admissible column; that is, $H^{x,2}$ has a non-admissible column,
 and the
 highest weight elements of all previous crystals obtained from $\textsf{B}^{x,1,-}$ had all columns admissible. After these complete
 $SJDT_j$
 slides to $U_{x,1,+}$, $U_{x,2}$ is the inner standard tableau of $H^{x,2}$; $V_{x,2}$ is obtained from $V_{x,1,+}$ by adding the slid
 entries from $U_{x,1,+}$ to
 $V_{x,1,+}$. $V_{x,2}$ is indeed a standard tableau because by construction the entries of $U_{x,1,+}$ are strictly smaller than the
 entries
 of $V_{x,1,+}$. At this point, the red letter $\re \textbf{r}_1$ has already slid from $U_{x,1,+}$ to
 $V_{x,2}$; that is, $\re \textbf{r}_1$ is no longer in $ U_{x,2}$ and instead belongs to $V_{x,2}$,
 $$U_{x,2}\subseteq U_{x,1}\subset U_{x,1,+}\subseteq U_x,\quad V_{x,2}\supset V_{x,1,+}\supset V_{x,1}\supseteq V_x$$

 Continuing in this fashion for $i=2,\dots,y-1$, we eventually reach a point wherein the $2(y-1)$ red letters have the following relative
 ordering,
 $${\re \textbf{r}_{y-1}}<{\re \textbf{r}'_{y-1}}<\cdots<{\re \textbf{r}_2}<{\re \textbf{r}'_2}<\cdots<{\re \textbf{r}_1}<{\re
 \textbf{r}'_1}$$
and the rectification storing tableaux are such that
$$ U_{x,y}\subset  U_{x,y-1}\subset U_{{x,y-1}+}\subset \cdots\subset  U_{x,2}\subset U_{x,2,+}\subseteq U_{x,1}\subset
U_{x,1,+}\subseteq
U_x,$$
$$ V_{x,y}\supset V_{x,y-1,+}\supset V_{x,y-1}\supset\cdots \supset  V_{x,2,+}\supset V_{x,2}\supset
V_{x,1,+}\supset
V_{x,1}\supseteq V_x.$$

 \medskip
Let  $\textsf{B}^{x,y}$ be the crystal  with highest weight element $H^{x,y}$.
We have to apply the $\gls{r3}$ contractor operator to $H^{x,y}$ (and to every vertex of $\textsf{B}^{x,y}$) to transform the non-admissible
column into an admissible one: a pair of symmetric entries in each vertex of $\textsf{B}^{x,y}$ will be deleted,  the top and bottom cells
of that column will be emptied and the remaining entries will be placed in order. Let $\textsf{B}^{x,y,-}$ be the new crystal of KN skew
tableaux
isomorphic to $\textsf{B}^{x,y}$, and let $H^{x,y,-}$ be its highest weight element (it has two  fewer boxes than $H^{x,y}$). Fill the empty
entries with
red letters ${\re \textbf{r}_y}<{\re \textbf{r}'_y}$, $\re \textbf{r}_y$ on the top and $\re \textbf{r}'_y$ on the bottom  of the column,
where in the complete  $SJDT_j$ slide,
the
$B.2$ case appears and creates a non-admissible column such that $ \re \textbf{r}_y$ is strictly larger than any entry of $U_{x,y}$, and $\re
\textbf{r}'_y$
is strictly smaller than any entry of $U_{x,y}$ already slid. The primed letters are considered to be slid because by the time they are
 created, they are outer corners.

 The cell with the red letter $\re \textbf{r}_y$ was the cell of $U_{x,y-1,+}$ where the complete $SJDT_j$ slide started and the B.2 case
 appeared with
 the creation of a non-admissible column. Let  $U_{x,y,+}$ be the standard tableau obtained by adding the red letter $\re \textbf{r}_y$ to
 $U_{x,y}$,
 and let $V_{x,y,+}$  be the standard tableau obtained by adding the primed red letter $\re \textbf{r}'_y$ to $V_{x,y}$.

 We keep applying  complete $SJDT_j$ slides to the entries of
 $U_{x,y,+}$
 from the largest  to the smallest, and eventually, we rectify $H^{x,y,-}$ without further use of the contractor $\gls{r3}$. We reach
 the
 crystal
 $\textsf{R}$, where every vertex is rectified. The crystal $\textsf{R}$ is called the rectification of $\textsf{B}^0$. At this point, one has
 the following relative ordering among the $2y$ red letters:
 $${\re \textbf{r}_y}<{\re \textbf{r}'_y}<\cdots<{\re \textbf{r}_2}<{\re \textbf{r}'_2}<\cdots<{\re \textbf{r}_1}<{\re \textbf{r}'_1}$$
and the rectification storing tableaux
$$\emptyset\subset U_{x,y}\subset U_{x,y,+}\subseteq U_{x,y-1}\subset \cdots\subset  U_{x,2}\subset U_{x,2,+}\subseteq U_{x,1}\subset
U_{x,1,+}\subseteq
U_x,$$
$$V\supset V_{x,y,+}\supset V_{x,y}\supset V_{x,y-1,+}\supset V_{x,y-1}\supset\cdots \supset  V_{x,2,+}\supset V_{x,2}\supset
V_{x,1,+}\supset
V_{x,1}\supseteq V_x,$$
where $V$ is the standard tableau obtained  by adding to $V_{x,y,+}$ via sliding the  letters from $U_{x,y,+}$.
We have the following ordering of all colored letters, green, purple (primed and unprimed),  and  red (primed and unprimed) in the skew
standard tableau $V$:
\begin{align}&{\green \mathbf{g}_1}<\cdots<{\re \textbf{r}_y}<{\re \textbf{r}'_y}<{\green \textbf{g}_l}<\cdots<{\re \textbf{r}_d}<{\re
\textbf{r}'_d}<\cdots<{\green \textbf{g}_{|\mu|}}<\nonumber\\
&<{\pur \textbf{p}_1}<{\pur \textbf{p}_2}<\cdots<{\re \textbf{r}_k}<{\re \textbf{r}'_k}
<\cdots<{\pur \textbf{p}_i}<\cdots<{\re \textbf{r}_1}<{\re \textbf{r}'_1}<\cdots<{\pur \textbf{p}_x}< {\pur \textbf{p}_x'}<\cdots<{\pur
\textbf{p}_1'}.\label{coloralphabetall}\end{align}

 We have constructed the following sequence of isomorphic crystals, stored in $V$ via the slid colorful letters:
 \begin{eqnarray}\label{sequence1}\textsf{B}^0\overset{\pur \gls{r3}}\simeq\cdots \textsf{B}^{x_r} \overset{\pur \gls{r3}}\simeq\cdots
 \textsf{B}^{x_r+x_{s}}\overset{\pur \gls{r3}}\simeq \cdots \textsf{B}^{x_r+x_s+\cdots+x_t}=\textsf{B}^{x}
  \end{eqnarray}
 \begin{eqnarray}\label{sequence2}
 \textsf{B}^{x}\overset{{ SJDT_j}}\simeq\cdots \textsf{B}^{x,1}\overset{\re \gls{r3}}\simeq \textsf{B}^{x,1,-}\overset{
 SJDT_j}\simeq
 \cdots
 \textsf{B}^{x,2}\overset{\re \gls{r3}}\simeq \textsf{B}^{x,2,-}\overset{SJDT_j}\simeq \cdots \textsf{B}^{x,y}\overset{\re
 \gls{r3}}\simeq
 \textsf{B}^{x,y,-}
  \end{eqnarray}
\begin{eqnarray}\label{sequence3}\textsf{B}^{x,y,-}\overset{SJDT_j}\simeq\cdots \overset{ SJDT_j}\simeq \textsf{R}.
 \end{eqnarray}

\begin{remark} In our construction, purple letters are larger than all green ones \eqref{coloralphabet}. However, for the red ones together
with the two other colors, we just write \eqref{coloralphabetall}.
\end{remark}

{\noindent  {\bf I.4} -  \textsc{The Sch\"utzenberger--Lusztig involution on the $C_{n-j+1}$ crystal $\textsf{B}^0$, its rectification and the
reversal.}}

Let ${L}^0$ be the lowest weight element of the $C_{n-j+1}$ connected normal crystal $\textsf{B}^0$. The crystal
$\textsf{R}$ with highest weight element $\textsf{rect}_j(H^0)$ is the rectification of the crystal $\textsf{B}^0$  and contains
$\textsf{rect}_j(T_{[\pm j,n]})$.
Let
$F$ be the composition of the  sequence of lowering operators connecting $H^0$ to $T_{[\pm j,n]}$ in $\textsf{B}^0$, $F(H^0)=T_{[\pm j,n]}$.
The Sch\"utzenberger--Lusztig involution $\gls{xi}$ in $\textsf{B}^0$ gives $\gls{xi}(T_{[\pm j,n]})=F^{-1}(L^0)$, where $F^{-1}$ means the
sequence
obtained by replacing each lowering operator $f_i$ in $F$ with the corresponding raising operator $e_i$. In each crystal of the sequence
\eqref{sequence1}, \eqref{sequence2}, \eqref{sequence3} above, the same sequence $F$ ($F^{-1}$) connects  the corresponding highest (lowest)
weight element  to the corresponding coplactic image of $T_{[\pm j,n]}$ ($\gls{xi}(T_{[\pm j,n]}$)). In particular, $F$ connects
$\textsf{rect}_j(H^0)$ to $\textsf{rect}_j(T_{[\pm j,n]})$: $F(\textsf{rect}_j
H^0)=\textsf{rect}_j(T_{[\pm
j,n]})$.
By Santos \cite{sa21a}, the Sch\"utzenberger--Lusztig involution in $\textsf{R}$  guarantees that
$\textsf{evac}^{ C_{n-j+1} }(\textsf{rect}_j(T_{[\pm j,n]}))=F^{-1}(\textsf{rect}_j(L^0)$ is in \textsf{R}.
Thanks to the crystal isomorphisms \eqref{sequence1}, \eqref{sequence2}, \eqref{sequence3} and Lemma \ref{reversal},
\begin{align}\label{formula0}
\textsf{\textsf{reversal}}^{C_{n-j+1}}(T_{[\pm
j,n]})=F^{-1}(L^0)=\textsf{arect}_j\,\textsf{evac}^{C_{n-j+1}}(\textsf{rect}_j(T_{[\pm
j,n]})).
\end{align}
To compute  the reversal of $T_{[\pm j,n]}$ in $\textsf{B}^0$ without using the sequence $F$ of crystal operators and the highest/lowest
weight elements $H^0$,
$L_0$ of $\textsf{B}^0$, we use Santos' evacuation on $\textsf{rect}_j(T_{[\pm j,n]})$ and the rectification sequence of crystals backwards
in
\eqref{sequence1}, \eqref{sequence2},\eqref{sequence3} stored in the standard skew tableau $V$.

\medskip
\noindent \textbf{Step II}. \textsc{ Computation of symplectic evacuation of
$\textsf{rect}_j(T_{[\pm j,n]})$ in the ${C_{n-j+1}}$ crystal  $R$.}

The tableau $\textsf{rect}_j(T_{[\pm j,n]})$ is admissible in the alphabet $[\pm j,n]$. Use Santos' algorithm  as follows: take
$\pi$-rotation
and change the sign of $\textsf{rect}_j (T_{[\pm j,n]})$; then, apply $SJDT_j$ to obtain
$\textsf{evac}^{C_{n-j+1}}(\textsf{rect}_j(T_{[\pm j,n]}))$ in
the crystal $\textsf{R}$. Replace the tableau pair  $(\textsf{rect}_j(T_{[\pm j,n]}), V)$ with
$(\textsf{evac}^{C_{n-j+1}}(\textsf{rect}_j(T_{[\pm j,n]})), V).$

\medskip
\noindent \textbf{Step III}. \textsc{Symplectic reversal of $T_{[\pm j,n]}$ in the ${C_{n-j+1}}$ crystal $\textsf{B}^0$}.}

Consider the pair of tableaux $ (\textsf{evac}^{C_{n-j+1}}(\textsf{rect}_j(T_{[\pm j,n]})), V)$, where $V$ is the standard tableau
consisting of all the slid letters in the rectification sequence \eqref{sequence1}, \eqref{sequence2},\eqref{sequence3} on the alphabet of
green, purple and red letters.

Apply the \emph{reverse} $SJDT_j$, $RSJDT_j$, to the entries of $V$ from the smallest to the largest to send $
\textsf{evac}^{C_{n-j+1}}(\textsf{rect}_j(T_{[\pm j,n]}))$ to
$\gls{rev}(T_{[\pm j,n]})=
F^{-1}(L_0)$ in the ${C_{n-j+1}}$ crystal $\textsf{B}^0$.

When the $SJDT_j$ is applied to an unprimed red letter $\re \textbf{r}_i$, $i\in\{1,\dots,y\}$, in $V$, the letter $\re \textbf{r}_i$ slides
to the top of a
column
with the cell $\re \textbf{r}'_i$ on the bottom. At this point, we have reached the crystal $\textsf{B}^{x,i}$. Then we apply the  operator
$\gls{r3}$ to the column containing the pair $({\re \textbf{r}_i},\re \textbf{r}'_i)$ by erasing those entries and adding a pair of symmetric
entries $(k,\bar
k)$
so that we get a non-admissible column on the alphabet $[\pm j,n]$. The $SJDT_j$ applies now to the next  letter bigger  than $\re
\textbf{r}'_i$.
In
this complete reverse slide, the $SJDT_j$ B.2 case  occurs.

When the reverse $SJDT_j$ slides have been applied to all non-primed purple letters, we have reached the crystal $\textsf{B}^x$, where
columns $r<s<\dots<t$ have $x_i$ non-primed  purple  letters ${\pur \textbf{p}_{x_1+\cdots+x_{i-1}+1}}<\cdots<\pur
\textbf{p}_{x_1+\cdots+x_i}$ on the top and the
corresponding primed letters on the bottom for $i=r,s,\dots,t$. Then, for $i=r,\dots,s,t$, we apply $\gls{r3}$ $x_i$ times to each
such column $i$,  and we reach the crystal $\textsf{B}^0$, where each vertex has non-admissible columns $r,s,\dots, t$. In
particular, we obtain $\textsf{reversal}^{C_{n-j+1}}(T_{[\pm j,n]})$.

\medskip
\noindent\textbf{Step IV}. \textsc{Partial symplectic reversal of $T$ computes  $ \xi^{C_n}_{[j,n]}(T)$.}

{ $ \xi^{C_n}_{[j,n]}(T)$ is the Sch\"utzenberger--Lusztig  involution of $T=(T^+_{[j-1]},T_{[\pm j,n]},T^-_{[\overline{j-1},\bar 1]})$
in the crystal connected component $ \textsf{B}\simeq\textsf{B}^0$ of $\textsf{KN}_{[j,n]}(\lambda,n)$.}
 Replace $T_{[\pm j,n]}$ with $\textsf{reversal}^{C_{n-j+1}}(T_{[\pm j,n]})$, \eqref{formula0}, in $T$, which gives the formula
\begin{align}\label{formula}
  \xi_{[j,n]}^{C_n}(T)= \textsf{reversal}^{C_n}_{[j,n]}(T)=(
  T^+_{[j-1]},\textsf{arect}_j\textsf{evac}^{C_{n-j+1}}(\textsf{rect}_j(T_{[\pm
j,n]})), T^-_{[\overline{j-1},\bar 1]}).
\end{align}

\begin{remark} Some consequences of our colorful algorithm.

 \begin{itemize}

 \item If we put $j=1$ in the colorful algorithm, we reduce to Step II of $C_n$ evacuation.

 %\item If $T\in \gls{ssyt}\subseteq \gls{symptab}$, our colorful tableau switching  algorithm reduces to just the green color, that is, to
%     the \emph{ ordinary tableau switching} of the tableau-pair $(U_0,T_{[j,n]})$, with $U_0$ a standard tableau of shape $\mu$ and
%     $T_{[j,n]}$ a semi-standard skew tableau of shape $\lambda/\mu$ on the alphabet $[j,n]$. The semi-standard skew-tableau $T_{[j,n]}$
%     on
%     the alphabet $[j,n]$ is also a $C_{n-j+1}$ admissible tableau. $\gls{r3}$ does not apply, and $SJDT_j$ reduces to the ordinary JDT.
%     Therefore, purple and red colors do not pop up in Step I. This means Step I returns the pair $(\textsf{rect}_j(T_{[j,n]}, V)$, with
%     $\textsf{rect}_j(T_{[j,n]})$ a semi-standard tableau in the alphabet $[j,n]$ with $|\lambda|-|\mu|$ boxes and  $V$ completely green.
%     Step II computes the symplectic $C_{n-j+1}$ evacuation of $\textsf{rect}_j(T_{[j,n]})$; this step produces a semi-standard tableau of
%     the same shape on the alphabet $[\bar n, \bar j]$. Step III applies the ordinary reverse JDT to
%     $\textsf{evac}^{C_{n-j+1}}\textsf{rect}_j(T_{[j,n]})$ governed by the green $V$ and returns a semi-standard skew tableau of  shape
%     $\lambda/\mu$ on the alphabet $[\bar n ,\bar j]$. Step IV computes the partial reversal $\gls{jsymprev}(T)$, $J=[j,n]$, with
%     $T^-_{[\overline{j-1},\bar 1]}$ empty,
%\begin{align}
%  \xi_{[j,n]}^{C_n}(T)= \textsf{reversal}^{C_n}_{[j,n]}(T)=(
%  T^+_{[j-1]},\textsf{arect}_j\textsf{evac}^{C_{n-j+1}}(\textsf{rect}_j(T_{[
%j,n]}))).
%\end{align}

\item \label{re:fullreversalskew}The algorithm for the full $C_{m}$ reversal of a KN skew tableau $T\in \textsf{KN}(\lambda/\mu,m)$ results from our colorful tableau
    switching algorithm by considering the image  of $T$, $( T_\mu,\hat T)$, in the  sub-crystal
    $\textsf{B}(\mu,\lambda)\subseteq
    \textsf{KN}_{[j,n]}(\lambda,n)$, where   $n=m+j-1$ (Subsection \ref{susub:fullskew}). Let $\gls{B}$ be the crystal connected
    component of $\textsf{B}(\mu,\lambda)$
    containing $( T_\mu,\hat T)$, where $ T_\mu$ is the Yamanouchi tableau of shape $\mu$ and $\hat T$ is obtained by increasing
    each of the entries
    of $T$ by $j-1$. Then, restricting $( T_\mu,\hat T)$ to the alphabet $[\pm j,n]$, $\hat T$ is an admissible $C_{n-j+1}$ skew
    tableau in the
    $C_{n-j+1}$ crystal $\textsf{B}^{\,0}$. Our algorithm reduces to Step I with just green and red, Step II and Step III. Finally, we
    subtract
    $j-1$ from the entries of $ \textsf{reversal}^{\,C_{n-j+1}}(\hat T)$ to get $\textsf{reversal}^{\,C_{m}}(T)$. However, subtraction by
    $j-1$
     cancels the last step in the reduced $SJDT_j$ (Definition \ref{def:reduced}), and therefore it is enough to apply
    SJDT.

    This means that the algorithm for the full $C_{m}$ reversal of the KN skew tableau $T$ results from our algorithm with
    $\textsf{B}^{\,0}=\textsf{B}(T)$ a type $C_m$ crystal, $x=0$ and applying SJDT to $U_0$ to get $(\textsf{rect}(T),V)$, where $V$ is a skew
    standard
    tableau without purple letters.  Then RSJDT applied to $V$ gives $$\textsf{reversal}^{\,C_{m}}(
    T)=\textsf{arect}\,\textsf{evac}^{C_{m}}(\textsf{rect}(T)).$$ 
\end{itemize}

\end{remark}

\subsection{Examples of full and partial symplectic reversal}\label{ex:partialsymplecticreversal}
\begin{enumerate}
\item Full reversal of a skew tableau, $J = I$.
 Let $$T=\YT{0.17in}{}{
			{{},{2},{\overline{2}}, {\overline{1}} },
			{{\overline{2}},{\overline{2}},{\overline{1}} },
			{{\overline{1}}}}\in \textsf{KN}((4,3,2)/(1),3).$$ We compute $\xi^{C_3} (T)$ as follows. First, we fill in the empty box in $T$
with
a green letter (it defines the one box standard tableau $U_0$), to which we perform symplectic jeu de taquin until it becomes an outer
corner.
\begin{align*}
    (U_0,T)  &=\quad \;  \YT{0.17in}{}{
			{\color{green} g,{2},{\overline{2}}, {\overline{1}} },
			{{\overline{2}},{\overline{2}},{\overline{1}} },
			{{\overline{1}} }}
\overset{\textsf{SJDT}}\rightarrow	
\YT{0.17in}{}{
			{{1},\color{green} g,{\overline{2}}, {\overline{1}} },
			{{\overline{2}},{\overline{1}},{\overline{1}} },
			{{\overline{1}}, }}
\overset{\textsf{SJDT}}\rightarrow
\YT{0.17in}{}{
			{\mt r,\color{green} g,{\overline{2}}, {\overline{1}} },
			{{\overline{2}},{\overline{1}},{\overline{1}} },
			{\mt r' }}
\overset{\textsf{SJDT}}\rightarrow
\YT{0.17in}{}{
			{\mt r,{\overline{2}},\color{green}g, {\overline{1}} },
			{{\overline{2}},{\overline{1}},{\overline{1}} },
			{\mt r' }}
\\
&\overset{\textsf{SJDT}}\rightarrow
\YT{0.17in}{}{
			{\mt r,{\overline{2}},{\overline{1}}, {\overline{1}} },
			{{\overline{2}},{\overline{1}},\color{green}g  },
			{\mt r' }}
\overset{\textsf{SJDT}}\rightarrow
\YT{0.17in}{}{
			{{\overline{2}},{\overline{2}},{\overline{1}}, {\overline{1}} },
			{\mt r,{\overline{1}},\color{green}g },
			{\mt r'}}
\overset{\textsf{SJDT}}\rightarrow
\YT{0.17in}{}{
			{{\overline{2}},{\overline{2}},{\overline{1}}, {\overline{1}} },
			{{\overline{1}},\mt r,\color{green}g },
			{\mt r'}}\\
&\Rightarrow
\textsf{rect}(T)=\YT{0.17in}{}{
			{{\overline{2}},{\overline{2}},{\overline{1}}, {\overline{1}} },
			{{\overline{1}} }},\quad
			V=\YT{0.17in}{}{
			{{},{},{}, {} },
			{{},\mt r,\color{green}g },
			{\mt r'}},\;\;{\mt r}< {\mt r'}< \green g 
            \end{align*}
Taking $\pi$-rotation and changing the signs of $\textsf{rect}(T)$, we again apply SJDT to compute $\textsf{evac}^{C_3} (\textsf{rect}(T))$:
\begin{align*}
\YT{0.17in}{}{
			{\ast,\ast,\ast, 1 },
			{1,1,2, 2}}
&\overset{\textsf{SJDT}}\rightarrow
\YT{0.17in}{}{
			{\ast,\ast,1, \ast },
			{1,1,2,2}}
\overset{\textsf{SJDT}}\rightarrow
\YT{0.17in}{}{
			{\ast,\ast,1, 2 },
			{1,1,2, \ast}}
\overset{\textsf{SJDT}}\rightarrow
\YT{0.17in}{}{
			{\ast,1,1,2 },
			{1,\ast,2, \ast}} \\           &\overset{\textsf{SJDT}}\rightarrow
\YT{0.17in}{}{
			{\ast,1,1, 1 },
			{1,2,\ast, \ast}}
\overset{\textsf{SJDT}}\rightarrow
\YT{0.17in}{}{
			{1,1,1, 2 },
			{\ast,2, \ast,\ast}}\\
&\overset{\textsf{SJDT}}\rightarrow
\YT{0.17in}{}{
			{1,1,1, 2 },
			{2,\ast, \ast,\ast}} 
            = \textsf{evac}^{C_3} (\textsf{rect}(T)).
\end{align*}
We replace $\textsf{rect}(T)$ with $\textsf{evac}^{C_3} (\textsf{rect}(T))$ in $(\textsf{rect}(T),V)$ and apply reverse SJDT to $V$ to
compute
$\xi^{C_3}(T)=\textsf{reversal}^{C_3}(T)$:
\begin{align*}( \textsf{evac}^{C_3} (\textsf{rect}(T)),V)&=\;\;\;\;\YT{0.17in}{}{
			{1,1,1, 2 },
			{2,\mt r, \color{green}g },
			{ \mt r' }}
			\overset{\textsf{RSJDT}}\rightarrow
			\YT{0.17in}{}{
			{1,1,1, 2 },
			{\mt r,2, \color{green}g },
			{ \mt r' }}
			\overset{\textsf{RSJDT}}\rightarrow
			\YT{0.17in}{}{
			{\mt r,1,1, 2 },
			{1,2, \color{green}g },
			{ \mt r', }}\\
			&\overset{\gls{r3}}\sim \;\;\;\,\YT{0.17in}{}{
			{ 1,1,1, 2 },
			{2,2, \color{green}g },
			{ \overline{2} }}
\overset{\textsf{RSJDT}}\rightarrow
			\YT{0.17in}{}{
			{ 1,1,1, 2 },
			{2, \color{green}g,2 },
			{ \overline{2} }}
	\overset{\textsf{RSJDT}}\rightarrow
			\YT{0.17in}{}{
			{ 1,1,1, 2 },
			{ \color{green}g,2,2 },
			{ \overline{2} }}\\
			&\overset{\textsf{RSJDT}}\rightarrow
			\YT{0.17in}{}{
			{ \color{green}g,1,1, 2 },
			{ 1,2,2 },
			{ \overline{2} }}
            =(U_0,\xi^{C_3}(T))\\
			&\Rightarrow  \xi^{C_3}(T)= \YT{0.17in}{}{
			{ ,1,1, 2 },
			{ 1,2,2 },
			{ \overline{2} }}.
			\end{align*}

%\end{example}

\item
%\begin{example}
 Let $P = \YT{0.17in}{}{
			{1,2 ,2, \bar 1},
			{ 4,4,\overline 3},
			{\overline 4,\bar 2,  \bar 1},
{\bar 3}}\in \textsf{KN}((4,3,3,1),4)$. We have $\textsf{wt}(P)=(-1,1,-2,1)$.  To compute
$\xi_{[2,4]}^{C_4}(P)=\textsf{reversal}_{[2,4]}^{C_4}(P)$, we freeze the letters $1,\bar 1$ in $P$ and consider $P_{[\pm 2,4]}$, which is
not
an admissible $C_3$ tableau in the alphabet $[\pm 2,4]$: the second column $2 4 \bar 2$ is not an admissible $C_3$ column. The column
reading
of $P_{[\pm 2,4]}$ is  $2\bar 324\bar 24\bar 4\bar 3 \overset{\gls{r3}}\sim 2\bar 344\bar 4\bar 3$. We include this non-admissible second
column
in the $SJDT_2$ sequence to rectify $P_{[\pm 2,4]}$.
\medskip

\textbf{1.} Rectification of $P_{[\pm 2,4]}$:
\begin{align*}
    (U_0,P_{[\pm 2,4]})&=\;\;\;\;\;\YT{0.17in}{}{
			{\color{green} g,2 ,2,},
			{ 4,4,\overline 3},
			{\overline 4,\bar 2,},
{\bar 3}}
\overset{\textsf{SJDT}_2}\rightarrow
\YT{0.17in}{}{
			{\color{green} g, \pur p,2,},
			{ 4,4,\bar 3},
			{\overline 4,\pur p',},
{\bar 3}}
\overset{\textsf{SJDT}_2}\rightarrow
\YT{0.17in}{}{
			{\color{green} g,2,\overline 3,},
			{4, 4,\pur p},
			{\overline 4,\pur p',},
{{\bar 3}}}\\
&\overset{\textsf{SJDT}_2}\rightarrow
\YT{0.17in}{}{
			{2,4,\overline 3,},
			{4, \green g,\pur p},
			{\overline 4,\pur p',},
{{\bar 3}}} \overset{\textsf{SJDT}_2}\rightarrow
\YT{0.17in}{}{
			{\mt r,4,\overline 3,},
			{2, \green g,\pur p},
			{\overline 3,\pur p',},
{{\mt r'}}}\overset{\textsf{SJDT}_2}\rightarrow
\YT{0.17in}{}{
			{2,4,\overline 3,},
			{\bar 3, \green g,\pur p},
			{\mt  r,\pur p',},
{{\mt r'}}}\\
&=(\textsf{rect}_2 P_{[\pm 2,4]},V)\\
&\Rightarrow \textsf{rect}_2 P_{[\pm 2,4]}=\YT{0.17in}{}{
			{2,4,\overline 3},
			{\bar 3}},\end{align*}
            \noindent and
\[V= \YT{0.17in}{}{
			{,,,},
			{, \green g,\pur p},
			{\mt  r,\pur p',},
{{\mt r'}}}, {\mt r}<{\mt  r'}<{\green g}<{\pur p}<{\pur p'} .\]

 \textbf{2.} Computation of $\textsf{evac}^{C_3}\,\textsf{rect}_2 (P_{[\pm 2,4]})$. Taking $\pi$-rotation and changing the signs of
 $\textsf{rect}_2
 (P_{[\pm 2,4]})$, we again apply $SJDT_2$:
$$\YT{0.17in}{}{
			{, , 3},
			{ 3,\bar 4, \bar 2}}\overset{\textsf{SJDT}}\rightarrow\YT{0.17in}{}{
			{,  3,\bar 2},
			{ 3,\bar 4}}\overset{\textsf{SJDT}}\rightarrow\YT{0.17in}{}{
			{3,  3,\bar 2},
			{ ,\bar 4}}\overset{\textsf{SJDT}}\rightarrow
\YT{0.17in}{}{
			{3,  3,\bar 2},
			{ \bar 4}}=\textsf{evac}^{C_3}\textsf{rect}_2 P_{[\pm 2,4]}.$$

\medskip

\textbf{3.} Reversal of $P_{[\pm 2,4]}$. Replace  $\textsf{rect}_2(P_{[\pm 2,4]})$ with $\textsf{evac}^{C_3}(\textsf{rect}_2(P_{[\pm
2,4]}))$
in $(\textsf{rect}_2(P_{[\pm 2,4]}),V)$ and  apply $RSJDT_2$ to $V$:
\begin{align*}
    (\textsf{evac}^{C_3}\,(\textsf{rect}_2 P_{[\pm 2,4]}),V)&=\quad\;\;\; \YT{0.17in}{}{
			{3,3 ,\overline 2,},
			{\bar 4,\green g,\pur p},
			{{\mt r},\pur p',},
{{\mt r'}}}\overset{\textsf{RSJDT}_2}\rightarrow
\YT{0.17in}{}{
			{\mt r,3 ,\overline 2,},
			{3,\green g,\pur p},
			{\bar 4,\pur p',},
{{\mt r'}}}\overset{\textsf{RSJDT}_2}\rightarrow
\YT{0.17in}{}{
			{2,3 ,\overline 2,},
			{3,\green g,\pur p},
			{\bar 4,\pur p',},
{{\bar 2}}}\\
&\overset{\textsf{RSJDT}_2}\rightarrow
\YT{0.17in}{}{
			{\green g,2,\overline 2,},
			{3, 3,\pur p},
			{\overline 4,\pur p',},
{{\bar 2}}}\overset{\textsf{RSJDT}_2}\rightarrow
\YT{0.17in}{}{
			{\green g,\pur p, 3,},
			{3, 3,\bar 3,},
			{\overline 4,\pur p',},
{{\bar 2}}}\overset{\textsf{RSJDT}_2}\rightarrow
\YT{0.17in}{}{
			{\green g,2, 3,},
			{3, 3,\bar 3,},
			{\overline 4,\bar 2,},
{{\bar 2}}}\\
&=(U_0, \textsf{reversal}^{C_3}(P_{[\pm 2,4]}))\\ 
&\Rightarrow\textsf{reversal}^{C_3}(P_{[\pm 2,4]})=
\YT{0.17in}{}{
			{,2,3,},
			{3,3,\bar 3,},
			{\bar 4,\bar 2,},
{{\bar 2}}}.
\end{align*}

\textbf{4.} Replace $P_{[\pm 2,4]}$ with $\textsf{reversal}^{C_3}(P_{[\pm 2,4]})$ in $P$ to obtain
$$\textsf{ reversal}_{[ 2,4]}^{C_4}(P)=
\YT{0.17in}{}{
			{\color{gray} 1,2,3,\color{gray}\overline 1},
			{3,3,\bar 3},
			{\bar 4,\bar 2,\color{gray}\overline 1},
{{\bar 2}}}, \textsf{wt}_{[2,4]}( \xi^{C_4}_{[2,4]}(P))=-\textsf{wt}_{[2,4]}(P)=(-1,2,-1).
$$

%\end{example}

\end{enumerate}

\subsection{General Dynkin sub-diagram and virtualization}
  Let $\xi^{C_n}_{[1,j]}$,  $ 1\le j< n$, be  the Sch\"utzenberger--Lusztig involution on $ \textsf{KN}_{[1,j]}(\lambda,n)$.
  Notice that the
  unique
  crystal operators which change the signs of the entries are $f_n$ and $e_n$, which are forgotten. Next  we give a computation of
  $\xi^{C_n}_{[p,q]}$, for any $ 1\le p\le q\le n$,  via virtualization $\gls{E}$ and bring it back to $
 \textsf{KN}_{[p,q]}(\lambda,n)$ by applying $E^{-1 }$.  See below (Theorem \ref{goingandbacR1}) for $ 1\le q< n$, and  Theorem
 \ref{goingandbacR2} otherwise.

\subsubsection{Embedding of a partial symplectic Sch\"utzenberger--Lusztig involution and back.} \label{sec:embedsymplecticreversal}
Let $J\subseteq [n]$ be a
sub-Dynkin diagram of the type $C_n$ Dynkin diagram $I=[n]$. Let $U$ be a connected component of the Levi branched crystal
$\gls{branchedsymptab}$ with $J\subseteq [n]$ and with highest and lowest weight elements $u^{\textsf{high}}$ and $u^{\textsf{low}}$,
respectively.  Recall from Subsection \ref{branchvirt}, Proposition \ref{highlow}, that each connected component $U$ of the Levi branched
crystal  \gls{branchedsymptab} is embedded via $\gls{E}$ into a connected component of the Levi branched  crystal  $\textsf{SSYT}_{J\sqcup
\bar
J}(\lambda, n)$ with highest and lowest weight elements $\gls{E}(u^{\textsf{high}})$ and $\gls{E}(u^{\textsf{low}})$, respectively.

Let $P=(P^+,P^-)\in $ $\textsf{SSYT}(\lambda^A, n,\bar n)$.  The crystals
 $\textsf{SSYT}_{[p,q]}(\lambda^A, n,\bar n)$ and \linebreak  $\textsf{SSYT}_{[\overline{q+1},\overline {p+1}]}(\lambda^A, n,\bar
n)$ are  isomorphic to $\textsf{SSYT}_{[p,q]}(\lambda^A_+, n)$ respectively  to \linebreak $\textsf{SSYT}_{[\overline{q+1},\overline
{p+1}]}(\lambda^A/\lambda^A_+, \bar
n)$, %respectively 
(recall Remark \ref{re:split}).  The corresponding pair of isomorphic crystals has the same multiset of highest
weight vectors  in $\mathbb{Z}^{q-p+1}$ respectively, regarding the sub-Dynkin diagram $[p,q]$.  We may then write
\begin{align}\xi^{A_{2n-1}}_{[p,q]}
 \xi^{A_{2n-1}}_{[\overline{q+1},\overline {p+1}]}( P)=(\xi^{A_{n-1}}_{[p,q]}(P^+), \xi^{A_{n-1}}_{[\overline{q+1},\overline
{p+1}]}(P^-)).
 \label{LSbranchvirt}\end{align}

\begin{theorem}
\label{goingandbacR1}
 Let  $ T\in  \textsf{KN}_{[p,q]}(\lambda,n)$ of type $A_{p-q+1}, 1\le p\le
 q < n$.
 Then
 \[\xi^{A_{2n-1}}_{[p,q]\sqcup [\overline{q+1},\overline{ p+1}]}(\gls{E}(T))=\xi^{A_{2n-1}}_{[p,q]} \xi^{A_{2n-1}}_{[\overline{q+1},\overline{
 p+1}]}(\gls{E}(T)) = \gls{E}(\xi^{C_n}_{[p,q]}(T)).\]
 Moreover,
 \begin{equation}\xi^{C_n}_{[p,q]}(T)= {E^{-1}\textsf{reversal}^{A_{2n-1}}_{[p,q]} \textsf{reversal}^{A_{2n-1}}_{[\overline{q+1},\overline{
 p+1}]}
 E}(T).\label{branchedreversal}
 \end{equation}

 \end{theorem}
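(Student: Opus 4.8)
The first equality is immediate from Theorem~\ref{th:schutzunion}: since $q<n$, the sub-diagrams $[p,q]\subseteq[n-1]$ and $[\overline{q+1},\overline{p+1}]\subseteq[\bar n,\bar 2]$ are connected and their union is a \emph{disconnected} sub-diagram of the $A_{2n-1}$ Dynkin diagram, so that $\xi^{A_{2n-1}}_{[p,q]\cup[\overline{q+1},\overline{p+1}]}$ factors as the commuting product $\xi':=\xi^{A_{2n-1}}_{[p,q]}\,\xi^{A_{2n-1}}_{[\overline{q+1},\overline{p+1}]}=\xi^{A_{2n-1}}_{[\overline{q+1},\overline{p+1}]}\,\xi^{A_{2n-1}}_{[p,q]}$. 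The heart of the theorem is the middle identity $\xi'(E(T))=E(\xi^{C_n}_{[p,q]}(T))$, which I would prove by the uniqueness characterization of the partial Sch\"utzenberger--Lusztig involution rather than by any explicit jeu de taquin computation.

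The plan is to set $\eta:=E^{-1}\,\xi'\,E$ on the connected component $U$ of $\textsf{KN}_{[p,q]}(\lambda,n)$ containing $T$, and to verify that $\eta$ satisfies the three defining relations \eqref{partialschutz**}--\eqref{partialschutz*} of $\xi^{C_n}_{[p,q]}$; uniqueness then forces $\eta=\xi^{C_n}_{[p,q]}$. First I would check $\eta$ is well defined, i.e.\ $\xi'(E(U))\subseteq E(\textsf{KN}(\lambda,n))$. By Proposition~\ref{highlow}, $E(U)$ sits inside a component $V$ of $\textsf{SSYT}_{[p,q]\cup[\overline{q+1},\overline{p+1}]}(\lambda,n)$ with highest and lowest weight vertices $E(u^{\textsf{high}})$ and $E(u^{\textsf{low}})$, so $\xi'$ sends $E(u^{\textsf{high}})$ to $E(u^{\textsf{low}})$, and the remaining vertices of $\xi'(E(U))$ are reached from $E(u^{\textsf{low}})$ by the virtual raising operators $e^E_i$, which preserve $E(\textsf{KN}(\lambda,n))$. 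In particular $\eta(u^{\textsf{high}})=E^{-1}(E(u^{\textsf{low}}))=u^{\textsf{low}}$, giving the highest/lowest interchange.

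For the intertwining I would take $i\in[p,q]$ and compute, using $E f_i=f^E_iE=f^A_if^A_{\overline{i+1}}E$ together with $\xi' f^A_i=e^A_{\theta_{[p,q]}(i)}\xi'$ and $\xi' f^A_{\overline{i+1}}=e^A_{\theta_{[\overline{q+1},\overline{p+1}]}(\overline{i+1})}\xi'$; these follow from \eqref{partialschutz***} applied to each factor of $\xi'$, each commuting with the opposite-colour operators because $[p,q]$ and $[\overline{q+1},\overline{p+1}]$ are disjoint. The decisive point, and the step I expect to be the main obstacle, is matching the two reversal automorphisms. On the $C_n$ side $\theta_{[p,q]}(i)=p+q-i$; on the $A_{2n-1}$ side $\theta_{[p,q]}(i)=p+q-i$ as well, while after passing to the $[2n-1]$ labelling (where $\bar k\leftrightarrow 2n-k+1$, so $[\overline{q+1},\overline{p+1}]=[2n-q,2n-p]$) one computes $\theta_{[\overline{q+1},\overline{p+1}]}(\overline{i+1})=\overline{p+q-i+1}=\overline{(p+q-i)+1}$. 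Hence $\xi'(f^A_if^A_{\overline{i+1}}\,x)=e^A_{p+q-i}e^A_{\overline{p+q-i+1}}\,\xi'(x)=e^E_{p+q-i}\,\xi'(x)$, and applying $E^{-1}$ gives $\eta f_i=e_{p+q-i}\eta=e_{\theta_{[p,q]}(i)}\eta$, which is exactly \eqref{partialschutz***}; \eqref{partialschutz**} is obtained symmetrically. The weight relation $\textsf{wt}_{[p,q]}(\eta(T))=w_0^{[p,q]}\textsf{wt}_{[p,q]}(T)$ of \eqref{partialschutz*} then follows from Remark~\ref{re:virtualweight}, converting the type $A$ virtual weight into the type $C$ weight, combined with the fact that $\xi'$ acts by $w_0$ on the $[p,q]\cup[\overline{q+1},\overline{p+1}]$-weight. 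By uniqueness of the Sch\"utzenberger--Lusztig involution, $\eta=\xi^{C_n}_{[p,q]}$, i.e.\ $\xi'(E(T))=E(\xi^{C_n}_{[p,q]}(T))$.

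Finally, to deduce \eqref{branchedreversal}, I would apply $E^{-1}$ to the middle identity: since $\xi'(E(T))=E(\xi^{C_n}_{[p,q]}(T))$ lies in the image of $E$, the left inverse is defined there and $E^{-1}\,\xi^{A_{2n-1}}_{[p,q]\cup[\overline{q+1},\overline{p+1}]}\,E(T)=\xi^{C_n}_{[p,q]}(T)$. Replacing each type $A$ partial Sch\"utzenberger involution by the corresponding partial reversal, $\xi^{A_{2n-1}}_{[p,q]}=\textsf{reversal}^{A_{2n-1}}_{[p,q]}$ and $\xi^{A_{2n-1}}_{[\overline{q+1},\overline{p+1}]}=\textsf{reversal}^{A_{2n-1}}_{[\overline{q+1},\overline{p+1}]}$ (the identification recalled in \eqref{def:reversal} and at the start of Section~\ref{sec:partialschutzalg}, acting on the positive and negative parts $P^{\pm}$ separately as in \eqref{LSbranchvirt}), yields exactly \eqref{branchedreversal}.
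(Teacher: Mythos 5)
Your proof is correct and follows essentially the paper's route: the paper's (very terse) proof likewise rests on Theorem \ref{th:schutzunion}, Proposition \ref{highlow} and the split decomposition of Remark \ref{re:split}/\eqref{decompose1}, and the substantive content in both cases is the compatibility of the folded Dynkin automorphisms (your computation $\theta_{[p,q]}(i)=p+q-i$ paired with $\theta_{[\overline{q+1},\overline{p+1}]}(\overline{i+1})=\overline{(p+q-i)+1}$, so that $\xi'f^E_i=e^E_{p+q-i}\xi'$) with the virtual operators $f^E_i=f^A_if^A_{\overline{i+1}}$. The only cosmetic difference is that you package the middle identity as a verification of the defining relations \eqref{partialschutz**}--\eqref{partialschutz*} plus uniqueness of the partial Sch\"utzenberger--Lusztig involution, whereas the paper (compare its written-out proof of Theorem \ref{goingandbacR2}) gets it by directly transporting the operator string $T=f_{i_r}\cdots f_{i_1}(u^{\textsf{high}})$ through $E$ and $\xi'$ down to $E(u^{\textsf{low}})$ --- the same computation in a different wrapper.
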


 \begin{proof}  Recall Proposition \ref{highlow}, Remark \ref{re:split}  and \eqref{decompose1}. Then it follows from Theorem
 \ref{th:schutzunion}.
 \end{proof}

 It is now convenient to change the labeling of the $A_{2n-1}$ Dynkin diagram. Instead of $ [k,\overline{k+1}]$, we write $[k,2n-k]$, and
 $\textsf{SSYT}_{[k,2n-k]}(\lambda, n,\bar n)$. This relabelling is illustrated in the picture below.
 $$\dynkin[edge length=1cm,labels*={1,2,3,n-1,n}] C{***.**}$$
$$\\~\\ \dynkin[edge length=1cm,fold,labels={1,2,3,n-1,n, n+1, 2n-3, 2n-2, 2n-1}]A{***.***.***}$$

\begin{theorem}
\label{goingandbacR2}
 Let $ T\in \textsf{KN}_{[k,n]}(\lambda,n)$ of type $C_{n-k+1}$ for some
 $1\le k\le n$. Then
\[ \xi^{A_{2n-1}}_{[k,2n-k]}(\gls{E}(T)) = \gls{E}(\xi^{C_n}_{[k,n]}(T)).\]
Moreover, on  $\gls{ssytv}$, $\xi^{A_{2n-1}}_{[k,2n-k]}=\textsf{reversal}^{A_{2n-1}}_{[k,2n-k]}$, and
\begin{align}\xi^{C_n}_{[k,n]}=E^{-1}\textsf{reversal}^{A_{2n-1}}_{[k,2n-k]} E.\label{branchedreversal2}\end{align}
 \end{theorem}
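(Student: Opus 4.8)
The plan is to deduce this from the defining properties of the partial Sch\"utzenberger--Lusztig involution, exploiting the fact that here $J=[k,n]$ \emph{contains} the node $n$, so that after folding the sub-diagram $J\cup\bar J$ is the \emph{connected} type $A_{2n-2k+1}$ diagram $[k,2n-k]$ rather than a disconnected one. First I would fix the connected component $U$ of $\textsf{KN}_{[k,n]}(\lambda,n)$ containing $T$, with highest and lowest weight elements $u^{\textsf{high}}$ and $u^{\textsf{low}}$, and invoke Proposition~\ref{highlow}: the image $E(U)$ lies in a connected component $V$ of $\textsf{SSYT}_{[k,2n-k]}(\lambda^A,n,\bar n)$ whose highest and lowest weight elements are exactly $E(u^{\textsf{high}})$ and $E(u^{\textsf{low}})$. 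I would also record that $E$ intertwines the raising operators, $E(e_i(T))=e^E_i(E(T))$, which follows from the defining relation $E(f_i(T))=f^E_i(E(T))$ and the injectivity of $E$, since $e_i,f_i$ (respectively $e^E_i,f^E_i$) are partial inverses.

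The heart of the argument is the operator identity
\[
\xi^{A_{2n-1}}_{[k,2n-k]}\circ f^E_j \;=\; e^E_j\circ \xi^{A_{2n-1}}_{[k,2n-k]},\qquad j\in[k,n],
\]
valid on $V$, together with its mirror with $e^E_j$ and $f^E_j$ exchanged. To prove it I would use that the Dynkin automorphism of the connected type-$A$ sub-diagram is $\theta_{[k,2n-k]}(i)=2n-i$, so it swaps $j$ with $2n-j$ and fixes $n$. For $j<n$ one has $f^E_j=f^A_jf^A_{\overline{j+1}}=f^A_jf^A_{2n-j}$, and the two factors commute since $|j-(2n-j)|=2(n-j)\ge 2$; applying \eqref{partialschutz**}--\eqref{partialschutz***} to each factor gives $\xi^{A_{2n-1}}_{[k,2n-k]}f^A_jf^A_{2n-j}=e^A_{2n-j}e^A_j\,\xi^{A_{2n-1}}_{[k,2n-k]}=e^E_j\,\xi^{A_{2n-1}}_{[k,2n-k]}$, where the recombination $e^A_{2n-j}e^A_j=e^A_{\overline{j+1}}e^A_j=e^E_j$ uses the same commutation. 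For $j=n$ one has $f^E_n=(f^A_n)^2$ and $\theta_{[k,2n-k]}(n)=n$, so the identical computation yields $(e^A_n)^2=e^E_n$.

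With this in hand the first identity is immediate. Writing $T=f_{j_r}\cdots f_{j_1}(u^{\textsf{high}})$ with $j_1,\dots,j_r\in[k,n]$ and using $\theta_{[k,n]}=\mathrm{id}$ (type $C_n$, as in Lemma~\ref{symplecticact}), the definition of the partial involution gives $\xi^{C_n}_{[k,n]}(T)=e_{j_r}\cdots e_{j_1}(u^{\textsf{low}})$, whence $E(\xi^{C_n}_{[k,n]}(T))=e^E_{j_r}\cdots e^E_{j_1}(E(u^{\textsf{low}}))$. On the other side $E(T)=f^E_{j_r}\cdots f^E_{j_1}(E(u^{\textsf{high}}))$, and pushing $\xi^{A_{2n-1}}_{[k,2n-k]}$ leftward through this word by the intertwining above, together with $\xi^{A_{2n-1}}_{[k,2n-k]}(E(u^{\textsf{high}}))=E(u^{\textsf{low}})$ (the Sch\"utzenberger involution sends the highest to the lowest weight element of $V$, by Proposition~\ref{highlow}), produces $\xi^{A_{2n-1}}_{[k,2n-k]}(E(T))=e^E_{j_r}\cdots e^E_{j_1}(E(u^{\textsf{low}}))$, which equals $E(\xi^{C_n}_{[k,n]}(T))$.

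Finally, the equality $\xi^{A_{2n-1}}_{[k,2n-k]}=\textsf{reversal}^{A_{2n-1}}_{[k,2n-k]}$ on $\textsf{SSYT}(\lambda^A,n,\bar n)$ is the type-$A$ identification of the partial Sch\"utzenberger--Lusztig involution with partial reversal \eqref{def:reversal}, applied to the connected sub-diagram $[k,2n-k]$ of $A_{2n-1}$. Since the first identity shows $\xi^{A_{2n-1}}_{[k,2n-k]}$ carries $E(\textsf{KN}(\lambda,n))$ into itself, I may apply the left inverse $E^{-1}$ to obtain \eqref{branchedreversal2}. The main obstacle is the operator intertwining of the second paragraph: one must verify the commutation $f^A_jf^A_{2n-j}=f^A_{2n-j}f^A_j$ and check that $\theta_{[k,2n-k]}$ recombines the two single-node factors of $f^E_j$ back into $e^E_j$ --- precisely the feature that distinguishes this connected case from the disconnected one treated in Theorem~\ref{goingandbacR1} --- and to confirm, via Proposition~\ref{highlow}, that $E(u^{\textsf{high}})$ and $E(u^{\textsf{low}})$ are genuinely the extremal elements of the component $V$ on which the $A_{2n-1}$ involution acts.
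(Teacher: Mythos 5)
Your proposal is correct and follows essentially the same route as the paper: both arguments write $T=f_{j_r}\cdots f_{j_1}(u^{\textsf{high}})$, transfer to the $A_{2n-1}$ side via the virtualization property and Proposition~\ref{highlow}, exploit $\theta_{[k,2n-k]}(i)=2n-i$ together with the commutation of $f^A_j$ and $f^A_{2n-j}$, and then deduce \eqref{branchedreversal2} from \eqref{def:reversal}. The only difference is organizational --- you isolate the computation as the operator identity $\xi^{A_{2n-1}}_{[k,2n-k]}f^E_j=e^E_j\,\xi^{A_{2n-1}}_{[k,2n-k]}$ and push it through the word, whereas the paper applies Corollary~\ref{theta} to the whole word at once and rearranges afterwards; these are the same calculation.
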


 \begin{proof}
  Recall Corollary \ref{theta} and that, in the case of the branched crystal  $$\textsf{SSYT}_{[k,2n-k]}(\lambda, n,\bar n),$$
  $\theta(i)=2n-i$, for $i\in [k,2n-k]$. Let $ U$ be the connected component of  $\textsf{KN}_{[k,n]}(\lambda,n)$  containing $T$, and
  let the highest and lowest weight
elements of $U$ be $u^{\textsf{high}}$ and $u^{\textsf{low}}$, respectively.
 $$ T=f_{i_r}\dots f_{i_1}(u^{\textsf{high}}), i_1,\dots,i_r\in[k,n],$$
\begin{align}\gls{E}(T)=f^A_{i_r}f^A_{2n-i_r}\dots f^A_{i_1}f^A_{2n-i_1}(\gls{E}(u^{\textsf{high}})),
i_1,\dots,i_r\in[k,n],\label{eq:here}\end{align}
and $$\xi^{C_n}_{[k,n]}(T)=e_{i_r}\dots e_{i_1}(u^{\textsf{low}}).$$
Then, from  Subsection \ref{bakerembedding}, \eqref{ssytv},
$$ \gls{E}(\xi^{C_n}_{[k,n]}(T))=\gls{E}(e_{i_r}\dots e_{i_1}(u^{\textsf{low}}))=e^A_{i_r}e^A_{2n-i_r}\dots
e^A_{i_1}e^A_{2n-i_1}\gls{E}(u^{\textsf{low}})$$
and, from \eqref{eq:here},\begin{align*}\xi^{A}_{[k,2n-k]}(\gls{E}(T))&=e^A_{\theta (i_r)}e^A_{\theta (2n-i_r)}\dots e^A_{\theta
(i_1)}e^A_{\theta
(2n-i_1)}(\gls{E}(u^{\textsf{low}}))\\
&=e^A_{2n-i_r}e^A_{i_r}\dots e^A_{2n-i_1}e^A_{ i_1}(\gls{E}(u^{\textsf{low}}))\\
&=e^A_{i_r}e^A_{2n-i_r}\dots
e^A_{i_1}e^A_{2n-i_1}\gls{E}(u^{\textsf{low}})=\gls{E}(\xi^{C}_{[k,n]}(T)).\end{align*}

Finally, \eqref{branchedreversal2} follows  from (\ref{def:reversal}).
\end{proof}

Using  a generalized form of Lemma \ref{th:disconnected}, the following corollary is  a generalization of the two  theorems above.
 \begin{corollary}
\label{goingandbacR3}
 Let $T\in \textsf{KN}_{[p,q]\sqcup [k,n]}(\lambda,n)$  of subtype $A_{p-q+1}\times C_{n-k+1}$ for some
 $  1\le p$ $\le q<$ $k-1< n$. Then
\begin{align*} \xi^{A_{2n-1}}_{[p,q]\sqcup[2n-q,2n-p]\sqcup[k,2n-k]}(\gls{E}(T)) &=\xi^{A_{2n-1}}_{[p,q]} \xi^{A_{2n-1}}_{[2n-q,2n-p]}
\xi^{A_{2n-1}}_{[k,2n-k]}(\gls{E}(T)) \\
&=  \gls{E}(\xi^{C_n}_{[p,q]\sqcup [k,n]}(T)).\end{align*}
Moreover, on  $\gls{ssytv}$, 
\begin{align*}
    \xi^{A_{2n-1}}_{[p,q]}&=\textsf{reversal}^{A_{2n-1}}_{[p,q]},\\
\xi^{A_{2n-1}}_{[k,2n-k]}&=\textsf{reversal}^{A_{2n-1}}_{[k,2n-k]}, %\mbox{ and }
\\
\xi^{A_{2n-1}}_{[2n-q,2n-p]}&=\textsf{reversal}^{A_{2n-1}}_{[2n-q,2n-p]}.&\end{align*}
 \end{corollary}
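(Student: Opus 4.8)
The plan is to deduce this corollary from Theorems~\ref{goingandbacR1} and~\ref{goingandbacR2} by exploiting disconnectedness on both the type $C_n$ and the type $A_{2n-1}$ sides, invoking the generalized form of Lemma~\ref{th:disconnected} for disconnected sub-diagrams with more than two connected components.

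First I would record the relevant disconnectedness. On the $C_n$ side, the hypothesis $q<k-1$ forces $[p,q]$ and $[k,n]$ to be separated by at least one node, so $[p,q]\cup[k,n]$ is a disconnected sub-diagram of subtype $A_{q-p+1}\times C_{n-k+1}$; hence by the generalized Lemma~\ref{th:disconnected},
$$\xi^{C_n}_{[p,q]\cup[k,n]}=\xi^{C_n}_{[p,q]}\,\xi^{C_n}_{[k,n]}=\xi^{C_n}_{[k,n]}\,\xi^{C_n}_{[p,q]}.$$
On the $A_{2n-1}$ side, after the relabelling $[\overline{q+1},\overline{p+1}]=[2n-q,2n-p]$ and $[k,\overline{k+1}]=[k,2n-k]$, I would check that the three intervals $[p,q]$, $[2n-q,2n-p]$ and $[k,2n-k]$ are pairwise non-adjacent: $[p,q]$ and $[2n-q,2n-p]$ are separated because $q\le n-1$ forces $2n-q>q+1$; $[p,q]$ and $[k,2n-k]$ are separated because $q<k-1$; and $[2n-q,2n-p]$ and $[k,2n-k]$ are separated for the same reason $q<k-1$. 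Thus their union is a disconnected sub-diagram with three connected components, and again the generalized Lemma~\ref{th:disconnected} gives the first displayed equality,
$$\xi^{A_{2n-1}}_{[p,q]\cup[2n-q,2n-p]\cup[k,2n-k]}=\xi^{A_{2n-1}}_{[p,q]}\,\xi^{A_{2n-1}}_{[2n-q,2n-p]}\,\xi^{A_{2n-1}}_{[k,2n-k]},$$
the three factors commuting with one another.

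For the second equality I would chain the two theorems through the virtualization map. Writing $S:=\xi^{C_n}_{[k,n]}(T)$, which as a set still lies in $\gls{symptab}=\textsf{KN}_{[p,q]}(\lambda,n)$, Theorem~\ref{goingandbacR2} gives $\gls{E}(S)=\xi^{A_{2n-1}}_{[k,2n-k]}(\gls{E}(T))$, while Theorem~\ref{goingandbacR1} applied to $S$ gives $\gls{E}(\xi^{C_n}_{[p,q]}(S))=\xi^{A_{2n-1}}_{[p,q]}\xi^{A_{2n-1}}_{[2n-q,2n-p]}(\gls{E}(S))$. Substituting the former into the latter and using the factorization of $\xi^{C_n}_{[p,q]\cup[k,n]}$ from the first paragraph yields
$$\gls{E}\bigl(\xi^{C_n}_{[p,q]\cup[k,n]}(T)\bigr)=\xi^{A_{2n-1}}_{[p,q]}\xi^{A_{2n-1}}_{[2n-q,2n-p]}\xi^{A_{2n-1}}_{[k,2n-k]}(\gls{E}(T)),$$
which is the right-hand equality. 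Here the shape-decomposition of Remark~\ref{re:split} together with the componentwise commutation~\eqref{decompose1} guarantees that the $A_{2n-1}$ involutions attached to the disconnected pieces act independently and preserve the image $\gls{E}(\gls{symptab})$, so no compatibility is lost when composing the two theorems. The ``Moreover'' assertion is then immediate, since each of the three index sets is a connected sub-diagram of type $A$, and by the opening of Section~\ref{sec:partialschutzalg} together with~\eqref{def:reversal} the partial Sch\"utzenberger--Lusztig involution on such a sub-diagram coincides with the corresponding partial reversal.

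The essential point to get right is the bookkeeping of the relabelling and the verification that the three type $A_{2n-1}$ intervals are genuinely pairwise disconnected; the only genuine structural input beyond this arithmetic is the extension of Lemma~\ref{th:disconnected} to three connected components, which was already announced in the remark preceding that lemma. I do not expect a serious obstacle here, as the content is entirely carried by Theorems~\ref{goingandbacR1} and~\ref{goingandbacR2} and by the factorization of Sch\"utzenberger--Lusztig involutions over disconnected Dynkin diagrams; the mild care needed is in checking that the intermediate tableau $\xi^{C_n}_{[k,n]}(T)$ may legitimately be fed into Theorem~\ref{goingandbacR1} and that the chained involutions never leave $\gls{E}(\gls{symptab})$.
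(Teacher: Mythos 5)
Your proposal is correct and follows exactly the route the paper intends: the paper states this corollary without proof, remarking only that it follows from a generalized form of Lemma~\ref{th:disconnected} (factorization of $\xi$ over disconnected sub-diagrams, extended to three components on the $A_{2n-1}$ side) combined with Theorems~\ref{goingandbacR1} and~\ref{goingandbacR2}, which is precisely your argument. Your write-up simply fills in the details the paper leaves implicit --- the pairwise-disconnectedness arithmetic under $q<k-1<n$ and the chaining of the two theorems through the intermediate tableau $\xi^{C_n}_{[k,n]}(T)$ --- and these details check out.
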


\begin{remark}\label{re:preserve}
Both $\xi^{A_{2n-1}}_{[p,q]\sqcup [2n-q,2n-p]}$ and $\xi^{A_{2n-1}}_{[k,2n-k]}$ act on the set $\gls{ssytv}$ to define a graph automorphism of
the underlying graph such that the subset $\gls{E}(\gls{symptab})$ is preserved. In other words, each of these involutions defines a  graph
automorphism of the underlying graph of $\gls{E}(\gls{symptab})$ when its action is restricted to this subset.
\end{remark}

\begin{corollary}
\label{virtWeylaction}
Let $\textsf{SSYT}(\mu,2n)$ with $\mu$ a partition with at most $2n$ parts, and let $\gls{bn}$ be the Weyl group realized as
$$ \langle r_i=(i\;i+1)(2n-i\;2n-i+1),\, r_n=(n\;n+1): \;1\le i\le n-1\rangle.$$ Then $\xi^{A_{2n-1}}_i\xi^{A_{2n-1}}_{2n-i},\xi^{A_{2n-1}}_n,\; 1\le i\le
n-1,$ define an action of $\gls{bn}$ on $\textsf{SSYT}(\mu,2n)$ by $ r_i.b=\xi^{A_{2n-1}}_i\xi^{A_{2n-1}}_{2n-i}(b)$, $1\le i\le n-1$,
and
$r_n.b=\xi^{A_{2n-1}}_n(b)$, for $b\in \textsf{SSYT}(\mu,2n)$ such that
\begin{enumerate}
\item
    $e^{A_{2n-1}}_ie^{A_{2n-1}}_{2n-i}\xi^{A_{2n-1}}_i\xi^{A_{2n-1}}_{2n-i}=\xi^{A_{2n-1}}_i\xi^{A_{2n-1}}_{2n-i}f^{A_{2n-1}}_if^{A_{2n-1}}_{2n-i}$,
    $1\le i<n$,
    \item $e^{A_{2n-1}}_ne^{A_{2n-1}}_{n}\xi^{A_{2n-1}}_n=\xi^{A_{2n-1}}_nf^{A_{2n-1}}_nf^{A_{2n-1}}_{n}$,
\item $\textsf{wt}(r_i.b)=r_i.\textsf{wt}(b)$, $1\le i\le n$,
\item $w_0.T_\mu=T^{\textsf{low}}_\mu$, $w_0$ the long element of $\gls{bn}$,
\item if $\mu=\lambda^A$,  for some $\lambda$, it preserves the action of $r_i\in \gls{bn}$ on the $i$-strings,
    $1\le i\le n$, of the crystal $\gls{symptab}$ embedded in the crystal $\textsf{SSYT}(\lambda^A,2n)$. \label{BN}
\end{enumerate}
\end{corollary}
\begin{proof}  Recall that $\xi_i$, $1\le i\le 2n-1$, define an action of $\mathfrak{S}_{2n}$ on $\textsf{SSYT}(\mu,2n)$. Thus the
involutions $\xi^{A_{2n-1}}_i\xi^{A_{2n-1}}_{2n-i}$ and $\xi^{A_{2n-1}}_n$, $1\le i<n-1$, satisfy the braid relations of $B_n$. The connected
components of the crystal $\textsf{B}_{\{i\}\sqcup \{2n-i\}}$, the restriction of $\textsf{B}=\textsf{SSYT}(\mu,2n)$ to the Dynkin diagram
$\{i\}\sqcup \{2n-i\}$ of $\mathfrak{sl}_2\times \mathfrak{sl}_2$, are grids of rectangles where we have $(1)$. It  follows from Theorem
\ref{goingandbacR1} and Theorem \ref{goingandbacR2} with $p=q$ and $k=n$, respectively, that
$\xi^{A_{2n-1}}_{p} \xi^{A_{2n-1}}_{\overline{ p+1}}(\gls{E}(T)) =\xi^{A_{2n-1}}_{p} \xi^{A_{2n-1}}_{ 2n-p}(\gls{E}(T))=
\gls{E}(\xi^{C_n}_{p}(T))$ and $\xi^{A_{2n-1}}_{n}(\gls{E}(T)) = \gls{E}(\xi^{C_n}_{n}(T))$. From Proposition \ref{weylaction}  (\ref{refection
c}), the involutions $\xi^{C_n}_{i}$, $1\le i\le n$, define an action of $\gls{bn}$ on $\gls{symptab}$. Therefore $
\xi^{A_{2n-1}}_i\xi^{A_{2n-1}}_{2n-i},\xi^{A_{2n-1}}_n, \,1\le i\le n-1,$ are the translation of this action to the embedded crystal
$\gls{E}(\gls{symptab})$ in $\textsf{SSYT}(\lambda^A,2n)$. This proves \eqref{BN}.
\end{proof}

\subsection{Virtualization of the action  of \texorpdfstring{$\gls{Jsp}$}{Jsp} on the crystal \texorpdfstring{\gls{symptab}}{symptab}}
 \label{sec:virtaction}

We have the following commutative diagram corresponding to the crystal embedding $\gls{E}$ and the partial $C_n$ and $A_{2n-1}$
Sch\"utzenberger--Lusztig involutions, where $[p,q]\subseteq [n-1]$ and $[p,n]\subseteq [n]$ are connected subintervals of the Dynkin diagram
of $C_n$,
\[
\vcenter{\hbox{\begin{tikzpicture}[scale=0.6]
\node(tl) at (0,3){\gls{symptab}};
\node(tr) at (5,3){\gls{ssytv}};
\node(bl) at (0,0){\gls{symptab}};
\node(br) at (5,0){\gls{ssytv}};
\draw[->](tl)--node[above]{$\gls{E}$}(tr);
\draw[->](bl)--node[below]{$\gls{E}$}(br);

\draw[->](tl)--node[left]{$\xi^{C_{n}}_{[p,n]}$}(bl);
\draw[->](tl)--node[right]{$\xi^{C_{n}}_{[p,q]}$}(bl);
\draw[->](tr)--node[left]{$\xi^{A_{2n-1}}_{[p,\overline{ p+1}]}$}(br);
\draw[->](tr)--node[right]{$\xi^{A_{2n-1}}_{[p,q]\sqcup[\overline{ q+1},\overline{ p+1}]}$}(br);
\end{tikzpicture}}}.
\]
Theorem \ref{cactusj2n} and Remark \ref{re:preserve} imply that the action of $\gls{vJ2n}$ on $\gls{ssytv}$
preserves the subset $\gls{E}(\gls{symptab}$, and thus, we have an action of $\gls{vJ2n}$ on the set $\gls{E}(\gls{symptab})$ defined by
\begin{align} \widetilde\Phi^E_{\mathfrak{sl}(2n,\mathbb{C})}&:& \gls{vJ2n}&&\longrightarrow&& \mathfrak{S}_{\gls{E}(
\gls{symptab})}&\label{preserve2}\\
&& \tilde s_{[p,q]\sqcup [\overline{ q+1},\overline{ p+1}]}&&\mapsto&& \xi^{A_{2n-1}}_{[p,q]\sqcup[\overline{ q+1},\overline{
p+1}]}&=\xi^{A_{2n-1}}_{[p, q]}\xi^{A_{2n-1}}_{[\overline{ q+1}, \overline{ p+1}]} \nonumber\\
&&\tilde s_{[p,\overline{ p+1}]}&&\mapsto&&\xi^{A_{2n-1}}_{[p,\overline{ p+1}]}& \nonumber
\end{align}

\noindent such that $\widetilde\Phi^E_{\mathfrak{sl}(2n,\mathbb{C})}(\tilde s_J)=\widetilde\Phi_{\mathfrak{sl}(2n,\mathbb{C})}(\tilde
s_J)_{|\gls{E}(
\gls{symptab})}\in \mathfrak{S}_{\gls{E}(   \gls{symptab})}$.
Let $\tilde \imath:\gls{Jsp}\rightarrow \gls{vJ2n}$ be the group isomorphism defined by $s_{[1,j]}\mapsto \tilde
s_{[1,j]\sqcup [\overline{ j+1} ,\bar 2]}$, $1\le j<n$,   and $s_{[j,n]}\mapsto \tilde s_{[j,\overline{ j+1}]}$, $1\le j< n$, (see
Proposition \ref{virtualcactus}), and $
\imath:\mathfrak{S}_{  \gls{symptab}}\rightarrow \mathfrak{S}_{\gls{E}(  \gls{symptab})}$ the group isomorphism defined by
$\imath(\sigma)=E\sigma
E^{-1}$. The virtualization of the action  of $\gls{Jsp}$ on the crystal $\gls{symptab}$ is then realized from the
following commutative diagram
\begin{align}
\vcenter{\hbox{\begin{tikzpicture}[scale=0.6]
\node(tl) at (0,2){$\gls{Jsp}$};
\node(tr) at (5,2){$\mathfrak{S}_{  \gls{symptab}}$};
\node(bl) at (0,0){$\gls{vJ2n}$};
\node(br) at (5,0){$\mathfrak{S}_{\gls{E}(   \gls{symptab})}$};
\draw[->](tl)--node[above]{$\Phi_{\mathfrak{sp}(2n,\mathbb{C})}$}(tr);
\draw[->](bl)--node[below]{$\widetilde\Phi^E_{\mathfrak{sl}(2n,\mathbb{C})}$}(br);
\draw[->](tl)--node[left]{$\tilde\imath$}(bl);
\draw[->](tr)--node[left]{$\imath$}(br);
\end{tikzpicture}}}& \widetilde\Phi^E_{\mathfrak{sl}(2n,\mathbb{C})}\tilde\imath=\imath\Phi_{\mathfrak{sp}(2n,\mathbb{C})} .\label{diag}
\end{align}
From \eqref{diag}, 
\begin{align*}\widetilde\Phi^E_{\mathfrak{sl}(2n,\mathbb{C})}\tilde\imath(s_{[1,j]})&=\widetilde\Phi^E_{\mathfrak{sl}(2n,\mathbb{C})}(
\tilde s_{[1,j]\sqcup [\overline{j+1},\bar 2]})= \xi^{A_{2n-1}}_{[1,j]\sqcup[\overline{ j+1},\bar
2]}\\
&=\imath\Phi_{\mathfrak{sp}(2n,\mathbb{C})}(s_{[1,j]})=\imath\xi^{C_{n}}_{[1,j]}\\
&=E\xi^{C_{n}}_{[1,j]}E^{-1}
= \xi^{A_{2n-1}}_{[1,j]\sqcup
[\overline{ j+1},\bar 2]},\end{align*} and 
\begin{align*}\widetilde\Phi^E_{\mathfrak{sl}(2n,\mathbb{C})}
\tilde\imath(s_{[j,n]})&=\widetilde\Phi^E_{\mathfrak{sl}(2n,\mathbb{C})}(s_{[j,\overline{j+1}]})
=\xi^{A_{2n-1}}_{[j,\overline{
j+1}]}\\
&=\imath\Phi_{\mathfrak{sp}(2n,\mathbb{C})}(s_{[j,n]})=\imath\xi^{C_{n}}_{[j,n]}\\&=E\xi^{C_{n}}_{[j,n]}E^{-1}
=\xi^{A_{2n-1}}_{[j,\overline{
j+1}]}.\end{align*}

\subsection{Virtualization example}\label{ex:virt}
%\begin{example}

Consider $n=5$, J=[1,4] and the $KN$ tableau $T$ of shape $\lambda=\omega_4+\omega_3$:
\[
T=\YT{0.19in}{}{
	    	{{1},{1}},
			{{3},{\overline{5}}},
			{{\overline{4}},{\overline{3}}},
			{{\overline{3}}}},\quad { \textsf{wt}(T)=(2,0,-1,-1,-1)}.
\]
Following the conventions in Section \ref{virtualization}, and labeling the columns of $T$ from  right to left as $C_1$ and $C_2$, we obtain $E(T)$ with shape
$\lambda^A=\omega_7+\omega_3+\omega_6+\omega_4$:

\begin{align*}
&\psi(C_1)=\YT{0.19in}{}{
	    	{{1},{1}},
			{{2},{\overline{5}}},
			{{4},{\overline{3}}},
			{{\overline{5}}},
			{{\overline{4}}},
			{{\overline{3}}},
			{{\overline{2}}}},
\psi(C_2)=\YT{0.19in}{}{
	    	{{1},{1}},
			{{2},{3}},
			{{5},{\overline{4}}},
			{{\overline{5}}, {\overline{2}} },
			{{\overline{4}}},
			{{\overline{3}}}} \\
            &\Rightarrow
\gls{E}(T)=[\emptyset\leftarrow w(\psi(C_1)) \leftarrow w(\psi(C_2))]=
\YT{0.19in}{}{
	    	{{1},{1},{1},{1}},
			{{2},{2},{4},{\overline{5}}},
			{{3},{\overline{5}},{\overline{4}},{\overline{3}}},
			{{5},{\overline{4}},{\overline{2}}},
			{{\overline{5}},{\overline{3}}},
			{ {\overline{4}},{\overline{2}} },
			{{\overline{3}}}}.
\end{align*}
Considering the barred and unbarred parts of $\gls{E}(T)$ separately, we compute the evacuation, $\textsf{evac}$, of the unbarred part and the
reversal, $\gls{rev}$, of the barred part, yielding:
\[
\textsf{evac} \YT{0.19in}{}{
	    	{{1},{1},{1},{1}},
			{{2},{2},{4}},
			{{3}},
			{{5}}}
= \YT{0.19in}{}{
	    	{{1},{3},{4},{5}},
			{{2},{5},{5}},
			{{4}},
			{{5}}} ,
\gls{rev} \YT{0.19in}{}{
	    	{{\ast},{\ast},{\ast},{\ast}},
			{{\ast},{\ast},{\ast},{\overline{5}}},
			{{\ast},{\overline{5}},{\overline{4}},{\overline{3}}},
			{{\ast},{\overline{4}},{\overline{2}}},
			{{\overline{5}},{\overline{3}}},
			{ {\overline{4}},{\overline{2}} },
			{{\overline{3}}}}
=\YT{0.19in}{}{
	    	{{\ast},{\ast},{\ast},{\ast}},
			{{\ast},{\ast},{\ast},{\overline{3}}},
			{{\ast},{\overline{4}},{\overline{2}},{\overline{2}}},
			{{\ast},{\overline{3}},{\overline{1}}},
			{{\overline{4}},{\overline{2}}},
			{{\overline{3}},{\overline{1}} },
			{{\overline{1}}}}.
\]
Putting these tableaux together, one obtains the $A_{9}$ tableau
$$\xi^{A_{9}}_{[1,4]\sqcup[\bar 5, \bar 2]}(\gls{E}(T))=(\textsf{evac}(\gls{E}(T)^+), \gls{rev}(\gls{E}(T)^-)).$$
Using $Q_\lambda$ to perform the reverse column Schensted insertion on $\xi^{A_{9}}_{[1,4]\sqcup[\bar 5, \bar 2]}(\gls{E}(T))$ provides the
image under $\psi$ of two KN columns $C'_1$ and $C'_2$. Applying $\psi^{-1}$ to each column results in:

\begin{align*}
    &Q_\lambda=\YT{0.19in}{}{
	    	{{1},{4},{11},{15}},
			{{2},{5},{12},{16}},
			{{3},{6},{13},{17}},
			{{7},{14},{18}},
			{{8},{19}},
			{{9},{20} },
			{{10}}} \Rightarrow
\psi(C'_1)=\YT{0.19in}{}{
	    	{{1},{5}},
			{{4},{\overline{3}}},
			{{5},{\overline{2}}},
			{{\overline{4}}},
			{{\overline{3}}},
			{{\overline{2}}},
			{{\overline{1}}}},
\psi(C'_2)=\YT{0.19in}{}{
	    	{{2},{3}},
			{{4},{5}},
			{{5},{\overline{2}}},
			{{\overline{4}}, {\overline{1}}},
			{{\overline{3}}},
			{{\overline{1}}}}\\
            &\Rightarrow
C_2'C_1'=\YT{0.19in}{}{
            {{3},{5}},
			{{5},{\overline{3}}},
			{{\overline{3}},{\overline{2}}},
			{{\overline{1}}}}=\xi^C_{[1,4]}(T).
\end{align*}
We note that $\textsf{wt}(\xi^C_{[1,4]}(T))=rev(\textsf{wt}(T))=(-1,-1,-1,0,2).$

%\end{example}

\section{The type \texorpdfstring{$C_n$}{C}  Berenstein--Kirillov group}
\label{sec:symplecticbk}
\subsection{The type \texorpdfstring{$A$}{A}  Berenstein--Kirillov group}

The type $A$ \emph{Berenstein--Kirillov group} $\gls{BK}$ (or \emph{Gelfand--Tsetlin group}) \cite{bk95} is the free group generated by
the Bender--Knuth involutions \cite{bekn72} $t_i$, $i > 0$, modulo the relations they satisfy on semi-standard Young tableaux of any
(straight) shape.

\begin{definition}\label{def:benderknuth}
 The Bender--Knuth involution $t_i$, $i\ge 1$, is an operation that acts on a  semi-standard tableau $T$ of any shape (skew or straight)
 as follows:

\begin{itemize}
\item pairs $(i,i+1)$ within each column of $T$ are considered fixed, and other occurrences of $i$’s or $i + 1$’s are considered free
\item  if a row within $T$ has $k$ free $i$’s followed by $l$ free $i + 1$’s, then we replace these letters
by $l$ free $i$’s followed by $k$ free $i + 1$’s.
\end{itemize}
\end{definition}

 The $t_i$'s have many known relations in $\gls{BK}$ \cite{bk95, cgp16}:
\begin{align}
 t_i^2 &= 1,& \;
 \text{for\;$i\ge 1$}&\,\text{\cite[Corollary 1.1]{bk95}}\label{eq:bkrelations0}\\
 t_i t_j &= t_j t_i,&\; \text{for\; $|i-j|>1$}&\,\text{\cite[Corollary 1.1]{bk95}}  \label{eq:bkrelations1},\\
(t_1 q_{[1,i]})^4 &= 1,&\; \text{for\; $i > 2$}&\,\text{\cite[Corollary 1.1]{bk95}}  \label{eq:bkrelations2+},\\
 (t_1 t_2)^6 &= 1,&&\,\text{ \cite[Corollary 1.1]{bk95}} \label{eq:bkrelationspecial},\\
(t_i q_{[j,k-1]})^2 &= 1,& \;\qquad \quad \qquad \text{ \qquad \quad   for \;  $i+1<j<k$},& \;\;\text{\cite{cgp16}} \label{eq:bkrelationextra},
\end{align}
where
 \begin{align}\label{evacBK}q_{[1,i]} &:= t_1 (t_2 t_1) \cdots (t_i t_{i-1} \cdots t_1),& \text{for $i \geq 1$},\\ %\label{evacBK2}
  q_{[j,k-1]}&:= q_{[1,k-1]} q_{[1,k-j]} q_{[1,k-1]},&\text{ for $j<k$}.\label{evacBK2}
\end{align}

\begin{remark}\label{re:Abenderknuth10}
%\begin{enumerate}
%\item 

$1.$
It is not known whether the latter set forms a complete set of relations \cite{bgli19}.
%\begin{align*}{}\end{align*}

\noindent $2.$ \cite[Section 2]{bk95} On straight-shaped semi-standard Young tableaux,
  \begin{align}\label{bkschutz}q_{[1,i]}=\xi_{[1,i]},\; i\ge 1,\;&\;\;
 q_{[j,k-1]}=\xi_{[j,k-1]},\; j<k,\end{align}
\noindent and
$q_{[j,j]}= q_{[1,j]} q_{[1,1]} q_{[1,j]}$   computes the crystal reflection operator $\xi_j=\xi_{[j,j]}$,
where $q_{[1,1]}=\xi_{[1,1]}=t_1$, for $j\ge 1$. In particular,  $q_{[1,i]}=\xi_{[1,i]}=\textsf{evac}_{i+1}$, the evacuation restricted to
the alphabet
$[1,i+1]$, and $q_{[j,k-1]}$  computes  the Sch\"utzenberger evacuation restricted to the alphabet $[j,k]$, 
\begin{align*}\xi_{[j,k-1]}=\break
\textsf{evac}_{k}\textsf{evac}_{k-j+1} \textsf{evac}_{k},\;\mbox{ for $j<k$}.
\end{align*}
%\item 

\noindent $3.$
Relation \eqref{eq:bkrelationextra}
    implies that in particular, $(t_i\xi_j)^2=1$, $j>i+1$,  which generalizes the relation $(t_1 q_{[1,i]})^4=1$ .
%\item 

%\medskip

\noindent $4.$
For a generic (straight or skew) shaped semi-standard Young tableau $T$,
    \begin{align*}
        \textsf{wt}(t_i(T))=\textsf{wt}(\xi_i(T))=r_i.\textsf{wt}(T),
    \mbox{ $r_i\in\mathfrak{S}_n$ },\; \mbox{  for all $n\ge 1$}.
    \end{align*}
    However, $t_i\neq\xi_i$, for $i>1$; $t_1$ and $\xi_1$ need only coincide on straight shaped
    semi-standard Young tableaux whereas $t_i$ and 
    $\xi_i$, for $i>1$, do not. Moreover, $t_i$,  $1\le i<n$, do not need to satisfy the braid relations of $\mathfrak{S}_n$, however, they do on  key tableaux, that is, straight shaped tableaux whose weight is a permutation of its shape \cite{Fu}.
%\end{enumerate}
\end{remark}

Let $\gls{BKn}$ be the subgroup of $\gls{BK}$ generated by $t_1,\dots,t_{n-1}$.
\begin{proposition}{\em \cite[Remark 1.3]{bk95}}\label{prop:ti_si}
As elements of $\gls{BK}$,
%$${t}_1 = {q}_{[1,1]}, \qquad 
$${t}_i = {q}_{[1,i-1]} {q}_{[1,i]}{q}_{[1,i-1]} {q}_{[1,i-2]}, \,\text{for}\; i\geq 1,\;\;{q}_{[1,0]}= {q}_{[1,-1]}:= 1.$$
 The  elements ${q}_{[1,1]}, \ldots, {q}_{[1,n-1]}$ are generators of $\gls{BKn}$.

\end{proposition}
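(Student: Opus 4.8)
The plan is to exploit the recursive structure built into the definition $q_{[1,i]} = t_1(t_2 t_1)\cdots(t_i t_{i-1}\cdots t_1)$ together with the fact that each $q_{[1,i]}$ is an involution in $\mathcal{BK}$. First I would read off, directly from this definition, the telescoping identity
$$q_{[1,i]} = q_{[1,i-1]}\,(t_i t_{i-1}\cdots t_1),$$
which holds already at the level of words in the free group on the $t_j$, with the convention $q_{[1,0]} = q_0 = 1$ (so that $i=1$ gives $q_{[1,1]} = t_1$). Next I would record that $q_{[1,i]}^2 = 1$ in $\mathcal{BK}$: by Remark \ref{re:benderknuth}(2), $q_{[1,i]}$ acts on every straight-shaped semi-standard Young tableau as the evacuation $\textsf{evac}_{i+1} = \xi_{[1,i]}$, which is a set-involution, so the word $q_{[1,i]}^2$ acts as the identity on all straight shapes and is therefore trivial in $\mathcal{BK}$.

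With involutivity in hand, the telescoping identity yields, for each $i$, the two relations $q_{[1,i-1]} q_{[1,i]} = t_i t_{i-1}\cdots t_1$ and, after shifting $i\mapsto i-1$, $q_{[1,i-2]} q_{[1,i-1]} = t_{i-1}\cdots t_1$ (here I use $q_{[1,i-1]}^{-1} = q_{[1,i-1]}$). Substituting the second into the first gives $q_{[1,i-1]} q_{[1,i]} = t_i\,(q_{[1,i-2]} q_{[1,i-1]})$, and solving for $t_i$, using once more that the $q$'s are involutions, produces
$$t_i = q_{[1,i-1]} q_{[1,i]}\,(q_{[1,i-2]} q_{[1,i-1]})^{-1} = q_{[1,i-1]} q_{[1,i]} q_{[1,i-1]} q_{[1,i-2]},$$
which is exactly the asserted identity for $i\geq 2$. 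The case $i=1$ is the definitional $t_1 = q_{[1,1]}$, and for $i=2$ I would check that the convention $q_{[1,0]} = 1$ correctly collapses the formula to $t_2 = q_{[1,1]} q_{[1,2]} q_{[1,1]}$.

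For the generation claim I would argue by mutual inclusion inside $\mathcal{BK}$. On the one hand each $q_{[1,i]}$ with $1\leq i\leq n-1$ is, by definition, a product of $t_1,\dots,t_i$ and hence lies in $\mathcal{BK}_n = \langle t_1,\dots,t_{n-1}\rangle$, giving $\langle q_{[1,1]},\dots,q_{[1,n-1]}\rangle \subseteq \mathcal{BK}_n$. On the other hand the displayed formula expresses each $t_i$ with $1\leq i\leq n-1$ through the $q_{[1,j]}$ with $j\leq i\leq n-1$, so that $t_i\in\langle q_{[1,1]},\dots,q_{[1,n-1]}\rangle$ and hence $\mathcal{BK}_n\subseteq\langle q_{[1,1]},\dots,q_{[1,n-1]}\rangle$; equality follows.

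I expect the only genuinely delicate point to be the justification that $q_{[1,i]}^2 = 1$ as an element of $\mathcal{BK}$ — everything afterward is formal word manipulation — together with careful bookkeeping of the boundary conventions $q_0 = 1$ and the small indices $i = 1, 2$.
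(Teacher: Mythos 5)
Your proof is correct. Note that the paper itself gives no argument for this proposition — it is quoted from \cite[Remark 1.3]{bk95} — so your write-up supplies a proof where the paper supplies a citation. The argument you give is the standard one and is sound: the telescoping identity $q_{[1,i]} = q_{[1,i-1]}(t_i t_{i-1}\cdots t_1)$ holds at the level of words by the definition \eqref{evacBK}; the only non-formal input is $q_{[1,i]}^2 = 1$ in $\mathcal{BK}$, which you correctly justify from Remark \ref{re:benderknuth}(2) ($q_{[1,i]}$ acts as $\textsf{evac}_{i+1}=\xi_{[1,i]}$ on every straight shape) together with the definition of $\mathcal{BK}$ as the free group modulo \emph{all} relations holding on straight-shaped tableaux — this is exactly the kind of relation that definition absorbs, even though it is not among the listed relations \eqref{eq:bkrelations0}--\eqref{eq:bkrelationextra}. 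The remaining manipulations, the boundary checks at $i=1,2$, and the two-sided inclusion argument for $\langle q_{[1,1]},\dots,q_{[1,n-1]}\rangle = \gls{BKn}$ are all correct as written.
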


The following result is both a consequence of the combinatorial action of the cactus group \gls{Jn} via partial Sch\"utzenberger involutions
$\xi_{[1,i]}$ on the straight-shape tableau crystal \gls{ssyt}, as defined by Halacheva \cite{ha16}, and the cactus group \gls{Jn} relations
satisfied by the generators $q_{[i,j]}=\xi_{[i,j]}$ of $\gls{BKn}$ when acting on \gls{ssyt}, as studied by Chmutov, Glick and Pylyavskyy
via
the growth diagram approach \cite{cgp16}.

\begin{theorem} \label{th:jnbk}
The  following are group epimorphisms from  \gls{Jn} to $\gls{BKn}$:
\begin{enumerate}
\item $s_{[i,j]} \mapsto q_{[i,j]}$ \cite[Theorem 1.4]{cgp16},
\item $s_{[1,j]} \mapsto q_{[1,j]}$ \cite[Remark 1.3]{bk95}, \cite[Section 10.2]{ha16}, \cite[Remark 3.9]{ha20}.
\end{enumerate}

The group $\gls{BKn}$ is isomorphic to a quotient of  \gls{Jn}. The generators ${q}_{[1,1]}, \ldots, {q}_{[1,n-1]}$  of $\gls{BKn}$
 $($and therefore $q_{[i,j]}$$)$  satisfy the relations of  \gls{Jn}.
\end{theorem}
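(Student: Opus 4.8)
The plan is to exhibit both displayed assignments as a single group epimorphism $\phi\colon\gls{Jn}\twoheadrightarrow\gls{BKn}$ arising from Halacheva's cactus action, read off on straight-shaped tableaux. First I would invoke Theorem~\ref{cactusg}, in the form of Corollary~\ref{Acactusaction}(a): the assignment $s_J\mapsto\gls{xij}$ (equivalently $s_{[1,j]}\mapsto\xi_{[1,j]}=\textsf{evac}_{j+1}$) defines an action of $\gls{Jn}$ on the $\mathfrak{sl}(n,\mathbb{C})$-crystal $\gls{ssyt}$ for every partition $\lambda$. Collecting these over all $\lambda$ gives a single homomorphism $\Phi\colon\gls{Jn}\to\mathfrak{S}_{\textsf{B}}$, where $\textsf{B}=\bigsqcup_\lambda\gls{ssyt}$ consists of all straight-shaped semi-standard tableaux on $[n]$; in particular the $\xi_{[i,j]}$ satisfy the defining relations 1A, 2A, 3A of Lemma~\ref{Acact} as permutations of $\textsf{B}$.

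Next I would identify the image of $\Phi$ with $\gls{BKn}$. By Remark~\ref{re:benderknuth}(2), on straight shapes one has $\xi_{[1,j]}=q_{[1,j]}$ and $\xi_{[i,j]}=q_{[i,j]}$. Since $\gls{BKn}$ is by definition the group of words in $t_1,\dots,t_{n-1}$ modulo the relations these satisfy on \emph{all} straight-shaped tableaux, $\gls{BKn}$ is realized faithfully as the subgroup of $\mathfrak{S}_{\textsf{B}}$ generated by the $t_i$, equivalently by the $q_{[1,j]}$ (Proposition~\ref{prop:ti_si}). The image of $\Phi$ is generated by the $\xi_{[1,j]}=q_{[1,j]}$, hence equals this copy of $\gls{BKn}$; thus $\Phi$ corestricts to an epimorphism $\phi\colon\gls{Jn}\twoheadrightarrow\gls{BKn}$ with $\phi(s_{[1,j]})=q_{[1,j]}$, which is assertion~(2).

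To recover assertion~(1) I would apply $\phi$ to \eqref{eq:cactusA}, $s_{[i,j]}=s_{[1,j]}s_{[1,j-i+1]}s_{[1,j]}$, and compare with the definition $q_{[i,j]}=q_{[1,j]}q_{[1,j-i+1]}q_{[1,j]}$ recorded in \eqref{evacBK}; this gives $\phi(s_{[i,j]})=q_{[i,j]}$, so the assignment $s_{[i,j]}\mapsto q_{[i,j]}$ is the same epimorphism (this is \cite[Theorem~1.4]{cgp16}). The first isomorphism theorem then yields $\gls{BKn}\cong\gls{Jn}/\ker\phi$, so $\gls{BKn}$ is a quotient of $\gls{Jn}$; and the final claim --- that $q_{[1,1]},\dots,q_{[1,n-1]}$, and hence all $q_{[i,j]}$, satisfy the $\gls{Jn}$ relations --- is immediate, since these are the images under the homomorphism $\phi$ of the generators $s_{[1,j]}$, $s_{[i,j]}$ satisfying 1A, 2A, 3A.

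The step I expect to demand the most care is the identification of the image of $\Phi$ with $\gls{BKn}$, because the equalities $q=\xi$ hold only on straight-shaped tableaux: by Remark~\ref{re:benderknuth}(4), $t_i\neq\xi_i$ for $i>1$ on skew shapes. One must therefore check that restricting Halacheva's action to $\textsf{B}$ still displays the full set of cactus relations --- which it does, as every element of $\textsf{B}$ lies in some $\gls{ssyt}$ carrying the complete $\gls{Jn}$-action --- and then rely on the precise definition of $\gls{BKn}$ through relations on straight shapes to ensure these are genuine identities in $\gls{BKn}$, not merely pointwise coincidences on a fixed crystal.
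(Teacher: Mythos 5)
Your proposal is correct and follows essentially the same route as the paper: the paper presents Theorem~\ref{th:jnbk} as a direct consequence of Halacheva's cactus action (Theorem~\ref{cactusg}, Corollary~\ref{Acactusaction}) combined with the coincidence $q_{[i,j]}=\xi_{[i,j]}$ on straight-shaped tableaux (Remark~\ref{re:benderknuth}(2)), deferring details to \cite{bk95,ha16,ha20,cgp16}. Your write-up simply makes that argument explicit, including the reduction of the $s_{[i,j]}$ to the $s_{[1,j]}$ via \eqref{eq:cactusA} and the faithful realization of $\gls{BKn}$ inside $\mathfrak{S}_{\textsf{B}}$.
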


\begin{remark}\label{re:special} It follows from \cite{cgp16} that \eqref{eq:bkrelationspecial}  is the only known relation which does not
follow from the cactus group $\gls{Jn}$ relations. It is in fact equivalent to the braid relations  satisfied  by the crystal reflection
operators $\xi_i=\xi_{[1,i]}t_1\xi_{[1,i]}$, $1\le i<n$, on a $U_q(\mathfrak{sl}(n,\mathbb{C}))$ crystal \cite[Proposition 1.4]{bk95},
\cite{ro21}.
\end{remark}

\begin{remark}\label{re:dualgenerator} We may define the two dual sets of generators of $\gls{BKn}$
\begin{align}\tilde t_{n-i}:={q}_{[1,n-1]}t_i{q}_{[1,n-1]},\;1\le i<n, \label{eq:tildet}
\end{align}
called dual Bender--Knuth involutions, and
\begin{align}{\tilde q}_{[1,i]}:={q}_{[1,n-1]}q_{[1,i]}{q}_{[1,n-1]}={q}_{[n-i,n-1]},\; \;1\le i<n,\label{eq:tildeq}\end{align}
 for $\gls{BKn}$. Indeed, from Proposition \ref{prop:ti_si} and Theorem \ref{th:jnbk}, one has
\begin{align} %{\tilde t}_{n-1} = {q}_{[n-1,n-1]}, 
{\tilde t}_{n-i} = {q}_{[n-i+1,n-1]} {q}_{[n-i,n-1]}{q}_{[n-i+1,n-1]} {q}_{[n-i+2,n-1]},\; \mbox{  for
$1\le i < n$ } \label{eq:tildet2}\end{align}

\noindent with  ${q}_{[n,n-1]}={q}_{[n+1,n-1]} := 1$, and $\textsf{wt}(\tilde t_{n-i}(T))=r_{n-i}.\textsf{wt}(T)$ for $T \in \gls{ssyt}$ and $r_i\in
\mathfrak{S}_n$, $i<n$.

The dual generators satisfy a list of relations similar to \eqref{eq:bkrelations0} \eqref{eq:bkrelations1}, \eqref{eq:bkrelations2+},
\eqref{eq:bkrelationspecial}, \eqref{eq:bkrelationextra}:
\begin{align}
 \tilde t_{n-i}^{\,2} &= 1, \;
 \text{for\;$i\ge 1$}\,\label{eq:bkrelations0tilde}\\
 \tilde t_{n-i} \tilde t_{n-j} &= \tilde t_{n-j} \tilde t_{n-i},\; \text{for\; $|i-j|>1$}  \label{eq:bkrelations1tilde},\\
 (\tilde t_{n-1} \tilde t_{n-2})^6 &= 1,\label{eq:bkrelationspecialtilde}\\
(\tilde t_{n-i} \tilde q_{[j,k-1]})^2 &= (\tilde t_{n-i}  q_{[n-k+1,n-j]})^2\nonumber \\
&=1, \; \text{for \; $n-k<n-j<n-i-1$,}
\label{eq:bkrelationextratilde}
\end{align}
where
\begin{align}\label{tildeevacBK}\tilde q_{[1,i]}& = \tilde t_{n-1} (\tilde t_{n-2} \tilde t_{n-1}) \cdots (\tilde t_{n-i} \tilde t_{n-i+1}
 \cdots \tilde t_{n-1}),\;  \text{for $i \geq 1$},\\
  \tilde q_{[j,k-1]}&:=\tilde q_{[1,k-1]}\tilde q_{[1,k-j]}\tilde q_{[1,k-1]}\nonumber\\
  & \;=q_{[n-k+1,n-j]} \nonumber \\
   & =  q_{[n-k+1,n-1]} q_{[n-k+j,n-1]} q_{[n-k+1,n-1]},\;\text{ for $j<k$}.\label{tildeevacBKlast}
\end{align}
On $ \gls{ssyt}$, $\tilde q_{[j,k-1]}=\xi_{[n-k+1,n-j]}$. 

\end{remark}

\begin{remark}\label{re:tildet}  We note some features of the operators \eqref{eq:tildet} when acting on straight shaped semi-standard tableaux. Set $\textsf{evac}:=\textsf{evac}_n$. Let $1\le i<n$, and $T=(A,B) \in \gls{ssyt}$ where $A$  of straight shape is the restriction of $T$ to the alphabet $[1,n-i-1]$ and $B$, an extension of $A$, is the restriction of $T$ to the alphabet $[n-i, n]$. One has $\textsf{evac}(A,B)=(\textsf{evac}\,\textsf{rect}(B),X)$ with $X$ such that $\textsf{rect}\,(X)=\textsf{evac}\,( A)$. Therefore,
\begin{align}\tilde t_{n-i}(T)&=\tilde t_{n-i}(A,B)=\textsf{evac}\,\, t_i\,\textsf{evac}(A,B) \hbox{ by \eqref{eq:tildet} }\nonumber\\
&=\textsf{evac}\,\,t_i(\textsf{evac}\,\textsf{rect}(B),X),\hbox{ such that } \textsf{rect}(X)=\textsf{evac}(A) \nonumber\\
&=\textsf{evac}\,(t_i\,\,\textsf{evac}\,\textsf{rect}(B),X)\nonumber\\
&=(\textsf{evac}\,\textsf{rect}(X),Z)\nonumber\\
&=(A,Z) \hbox{ such that } \textsf{rect}(Z)=\textsf{evac}\,\,t_i\,\textsf{evac}\,\textsf{rect}(B).
\end{align}

\end{remark}

\subsection{The type \texorpdfstring{$C_n$}{C} Berenstein--Kirillov group and virtualization}
Symplectic Bender--Knuth involutions $ t_i^{C_n}$ are not known for KN tableaux.
Motivated by the fact that for $n\geq 1$, ${q}_{[1,1]}, \ldots, {q}_{[1,n-1]}$  are generators for the Berenstein--Kirillov group
$\gls{BKn}$ in type $A$,  and that on straight shaped semi-standard tableaux, they coincide with the action of the partial
Sch\"utzenberger--Lusztig  involutions $\xi_{[1,i]}$, $1\le i<n$, we use the action of the  partial Sch\"utzenberger--Lusztig involutions
$\xi_{[1,i]}^{C_n}, \;1\le i\le n-1,$ and $\xi_{[i,n]}^{C_n}\; 1\le i\le n,$ on  KN tableaux of any straight shape  on the alphabet
$\gls{Corderedletters}$ to define the type $C_n$ \emph{ Berenstein--Kirillov group}, $\gls{BKc}$.

\begin{definition} \label{def:bksymplectic} Given $n\ge 1$, the \emph{symplectic Berenstein--Kirillov group} $\gls{BKc}$  is the free group
generated by the
$2n-1$  partial Sch\"utzenberger--Lusztig  involutions $$q^{C_n}_{[1,i]}:=\xi^{C_n}_{[1,i]}, \; 1\le i <n,$$ and
$$q^{C_n}_{[i,n]}:=\xi^{C_n}_{[i,n]}, \;1\le i\le n,$$ on straight shaped KN tableaux on the alphabet $\gls{Corderedletters}$ modulo the
relations they
satisfy on those tableaux. We also define ${q}^{C_n}_{[1,-1]}={q}^{C_n}_{[1,0]} ={q}^{C_n}_{[0,n]}=q^{C_n}_{[n+1,n]}:=1$ and $q^{C_n}_{[j,k-1]}:=
q^{C_n}_{[1,k-1]} q^{C_n}_{[1,k-j]} q^{C_n}_{[1,k-1]}$, $1\le j<k\le n$.
\end{definition}

\begin{remark}\label{re:C16(2)}Likewise in type $A$, Remark \ref{re:Abenderknuth10} \eqref{bkschutz}, one also has in type $C_n$, thanks to Theorem \ref{goingandbacR1},
$q^{C_n}_{[j,k-1]}=\xi^{C_n}_{[j,k-1]}=E^{-1}\xi_{[j,k-1]}\xi_{[2n-k+1,2n-j]} E$, $1\le j<k\le n$. 
For  $ 1\le j<k\le n$, 
\begin{align*}
q^{C_n}_{[j,k-1]}&=q^{C_n}_{[1,k-1]} q^{C_n}_{[1,k-j]} q^{C_n}_{[1,k-1]}=\xi^{C_n}_{[1,k-1]}\xi^{C_n}_{[1,k-j]}
\xi^{C_n}_{[1,k-1]}, \mbox{ Definition \ref{def:bksymplectic}}  \\
&=E^{-1}\xi^{A_{2n-1}}_{[1,k-1]} \xi^{A_{2n-1}}_{[2n-k+1, 2n-1]}\xi^{A_{2n-1}}_{[1,k-j]} \xi^{A_{2n-1}}_{[2n-k+j, 2n-1]} \xi^{A_{2n-1}}_{[1,k-1]} \xi^{A_{2n-1}}_{[2n-k+1, 2n-1]}E, \\
&  \quad\mbox{ by Theorem \ref{goingandbacR1}}\\
&=E^{-1}\xi^{A_{2n-1}}_{[1,k-1]}\xi^{A_{2n-1}}_{[1,k-j]} \xi^{A_{2n-1}}_{[1,k-1]} \xi^{A_{2n-1}}_{[2n-k+1, 2n-1]} \xi^{A_{2n-1}}_{[2n-k+j, 2n-1]} \xi^{A_{2n-1}}_{[2n-k+1, 2n-1]}E\\
&=E^{-1}\xi^{A_{2n-1}}_{[j,k-1]}{\xi}_{[2n-k+1,2n-j]}^{A_{2n-1}} E, \\
& \quad\mbox{ by Remark \ref{re:Abenderknuth10} \eqref{bkschutz},  Remark \ref{re:dualgenerator}     \eqref{tildeevacBKlast} in type $A_{2n-1}$}\\
&=E^{-1}q_{[j,k-1]}q_{[2n-k+1,2n-j]} E, \mbox{ by Remark \ref{re:Abenderknuth10}  \eqref{bkschutz} in type $A_{2n-1}$}\\
&=E^{-1}q_{[j,k-1]}\tilde q_{[j,k-1]} E, \mbox{ by \eqref{tildeevacBKlast} in type $A_{2n-1}$}\\
&=\xi^{C_n}_{[j,k-1]},
\:\mbox{ by Theorem \ref{goingandbacR1}}.
\end{align*}
\end{remark}
%{\orra This property is not spelled out in some proofs. Now we call this Remark when needed.}

Thanks to Theorem \ref{cactusg}, \eqref{cactusCa} and \eqref{cactusC},
one has therefore that $\gls{BKc}$ is a quotient of $\gls{Jsp}$. The generators of $\gls{BKc}$ satisfy the cactus group $\gls{Jsp}$ relations.

\begin{theorem}\label{symplecticbk}
The  following  is a group epimorphism from $\gls{Jsp}$ to $\gls{BKc}$:
$$s_{[1,j]} \mapsto q_{[1,j]}^{C_n}, \; 1\le j<n,\qquad s_{[j,n]}\mapsto q_{[j,n]}^{C_n},\; 1\le j\le n.$$
Therefore, $\gls{BKc}$ is isomorphic to a quotient of $\gls{Jsp}$.
\end{theorem}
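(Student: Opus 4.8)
The plan is to realize both $\gls{Jsp}$ and $\gls{BKc}$ as quotients of one common free group and then compare their relation sets, so that the desired map appears as the canonical projection between two successive quotients. Let $F$ denote the free group on the $2n-1$ symbols $s_{[1,j]}$, $1\le j<n$, and $s_{[j,n]}$, $1\le j\le n$; by the remark following Lemma~\ref{symplecticact} these are a generating set of $\gls{Jsp}$, and every $s_{[p,q]}$ is expressible in them via the conjugation identity of the type \eqref{eq:cactusA}. Rewriting the defining relations $1C$--$3C$ of Lemma~\ref{symplecticact} in these generators, I would present $\gls{Jsp}=F/N_{\mathrm{cact}}$, where $N_{\mathrm{cact}}$ is the normal closure of the symplectic cactus relations.

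Next, by Definition~\ref{def:bksymplectic}, $\gls{BKc}=F/N_{\mathrm{BK}}$, where $N_{\mathrm{BK}}$ is the normal closure of all words in the generators $q^{C_n}_{[1,j]}=\xi^{C_n}_{[1,j]}$ and $q^{C_n}_{[j,n]}=\xi^{C_n}_{[j,n]}$ that act as the identity on every straight-shaped KN tableau. The key step is the inclusion $N_{\mathrm{cact}}\subseteq N_{\mathrm{BK}}$. This is precisely Corollary~\ref{Acactusaction}(b) (equivalently Theorem~\ref{cactusg}): the assignment $s_{[1,j]}\mapsto\xi^{C_n}_{[1,j]}$, $s_{[j,n]}\mapsto\xi^{C_n}_{[j,n]}$ defines an action of $\gls{Jsp}$ on $\textsf{KN}(\lambda,n)$ for every dominant $\lambda$, so the partial Sch\"utzenberger--Lusztig involutions satisfy the relations $1C$--$3C$ on every straight-shaped crystal. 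Hence each generator of $N_{\mathrm{cact}}$ acts as the identity on all straight shapes and therefore lies in $N_{\mathrm{BK}}$.

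With $N_{\mathrm{cact}}\subseteq N_{\mathrm{BK}}$ in hand, the identity map on $F$ descends to a group homomorphism $\gls{Jsp}=F/N_{\mathrm{cact}}\twoheadrightarrow F/N_{\mathrm{BK}}=\gls{BKc}$ sending $s_{[1,j]}\mapsto q^{C_n}_{[1,j]}$ and $s_{[j,n]}\mapsto q^{C_n}_{[j,n]}$, which is exactly the stated assignment. Surjectivity is immediate, since the images are by definition the generators of $\gls{BKc}$; this establishes that the map is a group epimorphism. The first isomorphism theorem then gives $\gls{BKc}\cong\gls{Jsp}/\ker$, so $\gls{BKc}$ is isomorphic to a quotient of $\gls{Jsp}$, as claimed.

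The main obstacle is the inclusion $N_{\mathrm{cact}}\subseteq N_{\mathrm{BK}}$, i.e.\ that the symplectic cactus relations genuinely hold among the partial symplectic involutions on straight shapes; but this is exactly the content of the internal cactus action already established in Corollary~\ref{Acactusaction}(b), so no new combinatorial input is needed. The only care required is bookkeeping: translating the relations $1C$--$3C$, originally stated for the full generating family $\{s_J\}$, into the $2n-1$ distinguished generators via \eqref{eq:cactusA}, which is routine.
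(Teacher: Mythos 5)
Your proof is correct and follows essentially the same route as the paper: the paper derives the epimorphism directly from Theorem~\ref{cactusg} (via Corollary~\ref{Acactusaction}(b)), observing that the generators $q^{C_n}_{[1,j]}$, $q^{C_n}_{[j,n]}$ of $\gls{BKc}$ satisfy the $\gls{Jsp}$ relations on straight-shaped KN tableaux, and that by Definition~\ref{def:bksymplectic} the group $\gls{BKc}$ is precisely the quotient by \emph{all} relations holding on such tableaux. Your free-group formalization ($N_{\mathrm{cact}}\subseteq N_{\mathrm{BK}}$, then descend the identity map) is just an explicit write-up of that same argument, with the same key input and no new ideas needed.
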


We next define symplectic Bender--Knuth involutions $t_i^{C_n}$, $1\le i\le 2n-1$, on straight shaped KN tableaux that in turn generate
$\gls{BKc}$.

\begin{definition}\label{def:symplecticbenderknuth} The $2n-1$ symplectic Bender--Knuth involutions $t_i^{C_n}$ on KN tableaux of straight
shape on the alphabet
$\gls{Corderedletters}$ are defined by
\begin{subequations}
   % \label{bkbk1}
\begin{align}
\label{bkbk1}
%{t}_1^{C_n} &:= {q}^{C_n}_{[1,1]},\\ 
 {t}^{C_n}_i &:= {q}^{C_n}_{[1,i-1]} {q}_{[1,i]}^{C_n}{q}^{C_n}_{[1,i-1]}
{q}_{[1,i-2]}^{C_n},  1\le i\leq n-1,\\
\label{symplecticbenderknuth}
%t_n^{C_n}&:=q_{[n,n]}^{C_n}=\xi_n^{C_n},\\
{t}^{C_n}_{n-1+i}& := {q}_{[n-i+1,n]}^{C_n}{q}_{[n-i+2,n]}^{C_n}, \qquad \;\; 1\le i\leq n.
\end{align}
\end{subequations}
\end{definition}

Thanks to the $\gls{Jsp}$ relations satisfied by the generators of $\gls{BKc}$,
\begin{align}q^{C_n}_{[j,j]} =q^{C_n}_{[1,j]} q^{C_n}_{[1,1]}
q^{C_n}_{[1,j]}=q^{C_n}_{[1,j]} t^{C_n}_1 q^{C_n}_{[1,j]}\label{eq:qxi}\end{align}

\noindent
(Definition \ref{def:bksymplectic} with $j=k-1$) computes the symplectic
crystal reflection operator $\xi_j^{C_n}$, for $1\le j\le n$, on KN tableaux (see Proposition \ref{weylaction}  \eqref{refection c}).

 \begin{remark}\label{weightBnaction}  The symplectic Bender--Knuth involutions $t_i^{C_n}$, $1\le i\le n$, act on the
 elements
 of the  set $ \gls{symptab}$ such that $\textsf{wt}(t_i^{C_n}(T))=\textsf{wt}(\xi_i^{C_n}(T))=r_i.\textsf{wt}(T)$,  $r_{i}\in B_{n}$, $1\le i\le
 n$. This induces an action of the Weyl group $\gls{bn}$ on the weights in $\mathbb{Z}^n$, although, as we shall see,  in Subsection \ref{re:nobnactionkn2}, they
 do
 not define an action of the  Weyl group $\gls{bn}$ on  the set $ \gls{symptab}$. Let $T\in \gls{symptab}$  and
 $\textsf{wt}(T)=(v_1,\dots,v_n)$
$\in \mathbb{Z}^n$, then
\begin{align*}
\textsf{wt}(t_i^{C_n}(T))&=r_i\textsf{wt}(T), \;1\le i<n,\\
\textsf{wt}({t}^{C_n}_{n} (T))
&=(v_1,\dots,-v_n)=r_n\textsf{wt}(T),\\
\textsf{wt}({t}^{C_n}_{2n-i} (T))
&=(v_1,\dots,-v_{i},\dots,v_n)=r_{n-1}\cdots r_{n-i}r_nr_{n-i}\cdots
r_{n-1}(v_1,\dots,v_n)\\
&=\textsf{wt}(t^{C_n}_{n-1}\cdots t^{C_n}_{n-i}t^{C_n}_nt^{C_n}_{n-i}\cdots t^{C_n}_{n-1}(T)), \quad
1\le i< n.\\
%&=t_{n-1}\cdots t_{n-i}t_nt_{n-i}\cdots %t_{n-1}%(v_1,\dots,v_n),\quad 1\le i< n.
\end{align*}
\end{remark}

\begin{proposition}\label{prop:orcadolfin}
 The symplectic Bender--Knuth involutions $t_i^{C_n}$, $1\le i\le 2n-1$, generate $\gls{BKc}$. In particular,
 \begin{enumerate}
\item
$q^{C_n}_{[1,i]}=p^{C_n}_1p^{C_n}_2\cdots p^{C_n}_i,$ $\;1\le i< n,$ and
\item  $q^{C_n}_{[n-i,n]}=t^{C_n}_{n+i}\cdots t^{C_n}_n,$ $0\le i\le n-1,$
 \end{enumerate}
 where $p_i^{C_n}:=t^{C_n}_i\cdots t^{C_n}_2t^{C_n}_1$ is the symplectic promotion, $1\le i\le 2n-1$.
\end{proposition}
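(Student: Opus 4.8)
The plan is to read formulas (1) and (2) as an explicit inversion of the defining equations \eqref{bkbk1} and \eqref{symplecticbenderknuth}. Those equations write each $t_i^{C_n}$ as a word in the generators $q^{C_n}_{[1,j]}$, $q^{C_n}_{[j,n]}$ of $\mathcal{BK}^{C_n}$, while (1) and (2) conversely recover every generator as a word in the $t_i^{C_n}$. Once both directions are available the two generating sets are interchangeable, so the $t_i^{C_n}$ generate $\mathcal{BK}^{C_n}$. The only structural fact I would invoke is that each $q^{C_n}_{[1,j]}=\xi^{C_n}_{[1,j]}$ and $q^{C_n}_{[j,n]}=\xi^{C_n}_{[j,n]}$ is an involution, so that adjacent repetitions cancel.

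For formula (1), the generators $t^{C_n}_1,\dots,t^{C_n}_{n-1}$ are defined in \eqref{bkbk1} by exactly the words in the $q^{C_n}_{[1,\bullet]}$ that Proposition \ref{prop:ti_si} uses in type $A$, so the computation mirrors the type $A$ equivalence between \eqref{evacBK} and Proposition \ref{prop:ti_si}. Concretely I would prove by induction on $i$ the auxiliary identity $p^{C_n}_i=q^{C_n}_{[1,i-1]}q^{C_n}_{[1,i]}$ (with $q^{C_n}_{[1,0]}=1$): the base case $p^{C_n}_1=t^{C_n}_1=q^{C_n}_{[1,1]}$ is immediate, and in the inductive step one substitutes the definition of $t^{C_n}_i$ and the inductive value of $p^{C_n}_{i-1}$ into $p^{C_n}_i=t^{C_n}_ip^{C_n}_{i-1}$, whereupon the two adjacent cancellations $q^{C_n}_{[1,i-2]}q^{C_n}_{[1,i-2]}=1$ and $q^{C_n}_{[1,i-1]}q^{C_n}_{[1,i-1]}=1$ leave $q^{C_n}_{[1,i-1]}q^{C_n}_{[1,i]}$. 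Telescoping $p^{C_n}_1\cdots p^{C_n}_i=(q^{C_n}_{[1,0]}q^{C_n}_{[1,1]})(q^{C_n}_{[1,1]}q^{C_n}_{[1,2]})\cdots(q^{C_n}_{[1,i-1]}q^{C_n}_{[1,i]})$ then collapses to $q^{C_n}_{[1,i]}$, which is (1).

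For formula (2), I would set $a_j:=q^{C_n}_{[j,n]}$ with the conventions $a_{n+1}=1$ and $t^{C_n}_n=a_n$ coming from $q^{C_n}_{[n+1,n]}:=1$; then the second group of definitions \eqref{symplecticbenderknuth} takes the uniform shape $t^{C_n}_k=a_{2n-k}\,a_{2n-k+1}$ for $n\le k\le 2n-1$. Substituting this into $t^{C_n}_{2n-i}\cdots t^{C_n}_n$ and noting that, as $k$ descends from $2n-i$ to $n$, the index $2n-k$ ascends from $i$ to $n$, the product becomes $(a_ia_{i+1})(a_{i+1}a_{i+2})\cdots(a_{n}a_{n+1})$; every interior adjacent pair $a_ja_j$ cancels, leaving $a_i\,a_{n+1}=a_i=q^{C_n}_{[i,n]}$. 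This gives (2) for all $1\le i\le n$, the extreme cases $i=n$ and $i=1$ recovering $t^{C_n}_n=q^{C_n}_{[n,n]}$ and $\xi^{C_n}=\textsf{evac}^{C_n}$ respectively.

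Finally, (1) and (2) express all $2n-1$ defining generators $q^{C_n}_{[1,i]}$, $1\le i<n$, and $q^{C_n}_{[i,n]}$, $1\le i\le n$, as words in the $t^{C_n}_j$, so the latter generate $\mathcal{BK}^{C_n}$. I expect no conceptual obstacle: both arguments are pure telescoping in a group whose relevant elements are involutions. The only point that demands care is the index bookkeeping — keeping the degenerate conventions $q^{C_n}_{[1,0]}=q^{C_n}_{[n+1,n]}=1$ consistent and tracking the descending order of the factors in (2) — which is where a boundary-index slip is most likely to hide.
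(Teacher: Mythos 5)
Your proof is correct and takes essentially the same route as the paper: both arguments invert the defining equations \eqref{bkbk1} and \eqref{symplecticbenderknuth} using only that the $q^{C_n}$'s are involutions, and then telescope. The paper's inductive statements---$q^{C_n}_{[1,i]}=q^{C_n}_{[1,i-1]}p^{C_n}_i$ for (1) and $q^{C_n}_{[n-k,n]}=t^{C_n}_{n+k}q^{C_n}_{[n-k+1,n]}$ for (2)---are just left/right-multiplied rearrangements of your auxiliary identities $p^{C_n}_i=q^{C_n}_{[1,i-1]}q^{C_n}_{[1,i]}$ and $t^{C_n}_{n+k}=q^{C_n}_{[n-k,n]}q^{C_n}_{[n-k+1,n]}$, so the content is identical.
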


\begin{proof}
$(1)$ We show by induction on $i$ that ${q}^{C_n}_{[1,i]}={q}^{C_n}_{[1,i-1]}p_i^{C_n}$. Note that $
{q}^{C_n}_{[1,1]}=p_1^{C_n}={t}_1^{C_n}.$
Furthermore, for $i>1$, ${q}_{[1,i]}^{C_n}= {q}^{C_n}_{[1,i-1]}{t}^{C_n}_i {q}^{C_n}_{[1,i-2]} {q}_{[1,i-1]}^{C_n}$.
Assuming that for some fixed positive integer $k$, ${q}^{C_n}_{[1,j]}={q}^{C_n}_{[1,j-1]}p_i^{C_n}$ for all $j\in [1,k-1]$, our
inductive
hypothesis implies

\begin{align*}
{q}_{[1,k]}^{C_n}&= {q}^{C_n}_{[1,k-1]}{t}^{C_n}_k {q}^{C_n}_{[1,k-2]} {q}_{[1,k-1]}^{C_n}\\
                          &={q}^{C_n}_{[1,k-1]}{t}^{C_n}_k {q}^{C_n}_{[1,k-2]}{q}_{[1,k-2]}^{C_n}p_{k-1}^{C_n} \\
                          &={q}^{C_n}_{[1,k-1]}{t}^{C_n}_k p_{k-1}^{C_n} \\
                          &={q}^{C_n}_{[1,k-1]} p_{k}^{C_n}.
\end{align*}

$(2)$ We proceed by induction on $i$. As a base case,
when $i = 0$, we have ${t}^{C_n}_{n} = {q}_{[n,n]}^{C_n}$. As an inductive step, we assume the statement is true for all $j\in [0, k-1]$ for
some
fixed positive integer $k<n-1$, so
\begin{align*}
{t}^{C_n}_{n+k}= {q}_{[n-k,n]}^{C_n}{q}_{[n-(k-1),n]}^{C_n} & \Rightarrow {t}^{C_n}_{n+k} {q}_{[n-(k-1),n]}^{C_n}= {q}_{[n-k,n]}^{C_n} \\
                                                                                              &\Rightarrow {q}_{[n-k,n]}^{C_n}  =t^{C_n}_{n+k}t^{C_n}_{n+k-1}\cdots t^{C_n}_n.
\end{align*}
\end{proof}

 Henceforth, $t_i$ and $q_{[i,j]}$ will be denoted in 
 $\mathcal{BK}_{n}$ by $t^{A_{n-1}}_
i$  and
$q^{A_{n-1}}_{
[i,j]}$ to distinguish from the corresponding symplectic involutions.
By Theorem \ref{th:jnbk}, the involutions $q^{A_{2n-1}}_{[i,j]}\in \mathcal{BK}_{2n}$, $ 1\le i\le j<2n$, satisfy the cactus $J_{2n}$
relations. Consider in $\mathcal{BK}_{2n}$ the involution  $q^{A_{2n-1}}_{[1,i]}$ with its dual $\tilde
q^{A_{2n-1}}_{[1,i]}:=q^{A_{2n-1}}_{[{2n-i},2n-1]}$, for $1\le i<n$, (Remark
 \ref{re:dualgenerator}, \eqref{eq:tildeq}), and $q^{A_{2n-1}}_{[i,2n-i]}$, $1\le i\le n$. Indeed, \begin{align*}q^{A_{2n-1}}_{[1,i]}\tilde q^{A_{2n-1}}_{[1,j]}
=\tilde q^{A_{2n-1}}_{[1,j]}q^{A_{2n-1}}_{[1,i]},\; \mbox{ $1\le i, j<n$}.\end{align*}

\begin{definition}\label{def:virtualbk}The \emph{virtual symplectic Berenstein--Kirillov group} $\gls{vBK2n}$ is the subgroup of
$\mathcal{BK}_{2n}$ generated by the  $2n-1$
involutions
\begin{subequations}
\begin{align} q_{[1,i]\sqcup [{2n-i},2n-1]}^{A_{2n-1}}
&:=q^{A_{2n-1}}_{[1,i]}\tilde q^{A_{2n-1}}_{[1,i]}
=\tilde q^{A_{2n-1}}_{[1,i]}q^{A_{2n-1}}_{[1,i]}\label{bkcvtilde1}\\
 &=
q^{A_{2n-1}}_{[1,i]}q^{A_{2n-1}}_{[{2n-i},2n-1]}
=q^{A_{2n-1}}_{[{2n-i},2n-1]}q^{A_{2n-1}}_{[1,i]}, \;\; \;1\le i<n,\nonumber\\
%\end{align}
%\begin{align}
 q^{A_{2n-1}}_{[i,2n-i]}&{},\qquad\qquad \qquad\qquad\qquad\qquad\qquad\quad\quad\quad\;\; \;\mbox{  $  1\le i\le n$}, \label{bkcvtilde2}
\end{align}
\end{subequations}
modulo the relations they satisfy when acting on semi-standard tableaux of any straight shape.
\end{definition}

By Theorem \ref{th:schutzunion}, $q_{[1,i]\sqcup [{2n-i},2n-1]}^{A_{2n-1}}$ coincides with $\xi_{[1,i]\sqcup [{2n-i},2n-1]}^{A_{2n-1}}$
on  semi-standard tableaux of any straight shape, $1\le i<n$. (In particular, in $\gls{E}(\gls{symptab})$, for any partition
$\lambda$ with at most $n$ parts.)

 Bender--Knuth involutions $t^{A_{2n-1}}_i$ and dual Bender--Knuth involutions $\tilde t^{A_{2n-1}}_{2n-j}$, for $1\le i,j<n$, in $ \mathcal{BK}_{2n}$ commute as the next lemma shows. 
\begin{lemma}\label{benderknuthcommute}  For $1\le i,j<n$,  $
    (t_i^{A_{2n-1}}\tilde t^{A_{2n-1}}_{2n-j})^2=
    1.$ 

\end{lemma}
\begin{proof} By Proposition \ref{prop:ti_si},  Remark \ref{re:dualgenerator}, \eqref{eq:tildet2}, in
${\mathcal{BK}}_{2n}$, Lemma \ref{Acact}, 2A, and Theorem \ref{th:jnbk}, it follows
\begin{align*}t_i^{A_{2n-1}}\tilde t^{A_{2n-1}}_{2n-j}&={q}^{A_{2n-1}}_{[1,i-1]} {q}^{A_{2n-1}}_{[1,i]}{q}^{A_{2n-1}}_{[1,i-1]}
{q}^{A_{2n-1}}_{[1,i-2]}
{q}^{A_{2n-1}}_{[2n-j+1,2n-1]} {q}^{A_{2n-1}}_{[2n-j,2n-1]}\nonumber\\
&\qquad\qquad \qquad\qquad\qquad\qquad
\qquad\qquad \quad {q}^{A_{2n-1}}_{[2n-j+1,2n-1]} {q}^{A_{2n-1}}_{[2n-j+2,2n-1]}\nonumber\\
&={q}^{A_{2n-1}}_{[2n-j+1,2n-1]} {q}^{A_{2n-1}}_{[2n-j,2n-1]}{q}^{A_{2n-1}}_{[2n-j+1,2n-1]} {q}^{A_{2n-1}}_{[1,i-1]}
{q}^{A_{2n-1}}_{[1,i]}{q}_{[1,i-1]} {q}^{A_{2n-1}}_{[1,i-2]} \nonumber\\
&     =\tilde t^{A_{2n-1}}_{2n-j}t_i^{A_{2n-1}},\; 1\le i, j<n.%\label{rino}
\end{align*}
    \end{proof}
%{\orra referee Y asked for the property in this Lemma which is not spelled out (although used) in several proofs. It shortens the proof below.}
%\begin{remark} The action of $q_{[1,i]\sqcup [{2n-i},2n-1]}^{A_{2n-1}}$ on a semi-standard
%tableau is non trivial  only in the entries $\le 2n$. Therefore it is enough to consider the sets $\textsf{SSYT}(\lambda,2n)$ with $\lambda$
%any partition with at most $2n$ parts.
%\end{remark}

\begin{proposition} \label{prop:virtualbender-knuth} For $1\le i<n$, consider the Bender--Knuth
 involution $t_i^{A_{2n-1}}$ with its dual $\tilde t^{A_{2n-1}}_{2n-i}$ in $ \mathcal{BK}_{2n}$. The group $\gls{vBK2n}$ also has
 the $2n-1$ involution generators

\begin{align}
 t^{A_{2n-1}}_{\{i\}\sqcup\{2n-i\}}&:=t_i^{A_{2n-1}}\tilde t^{A_{2n-1}}_{2n-i}=\tilde t^{A_{2n-1}}_{2n-i}t_i^{A_{2n-1}}, 1\le
i<n,\label{bkcvtilde3}\\
\label{bkcvtilde4}
t^{A_{2n-1}}_{[n-i+1,n+i]}&:=q^{A_{2n-1}}_{[n-i+1,n+i-1]}{q}_{[n-i+2,n+i-2]}^{A_{2n-1}}\nonumber\\
& \\
&={q}_{[n-i+2,n+i-2]}^{A_{2n-1}}q^{A_{2n-1}}_{[n-i+1,n+i-1]}, 1\le i\le n.\nonumber
\end{align}

\noindent where ${q}_{[n+1,n-1]}^{A_{2n-1}}:=1$. We call them the virtual symplectic Bender--Knuth involutions.
\end{proposition}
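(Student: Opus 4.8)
The plan is to prove the two inclusions separately: that each virtual symplectic Bender--Knuth involution \eqref{bkcvtilde3}--\eqref{bkcvtilde4} lies in $\gls{vBK2n}$, and conversely that each defining generator \eqref{bkcvtilde1}--\eqref{bkcvtilde2} is a word in the involutions \eqref{bkcvtilde3}--\eqref{bkcvtilde4}. The workhorse throughout is a disjointness-of-supports observation: $t_j^{A_{2n-1}}$ and $q^{A_{2n-1}}_{[1,k]}$ act only on entries $\le n$, while $\tilde t^{A_{2n-1}}_{2n-l}$ and $q^{A_{2n-1}}_{[2n-k,2n-1]}$ act only on entries $\ge n+1$; this is exactly the statement that the sub-diagrams $[1,n-1]$ and $[n+1,2n-1]$ of the $A_{2n-1}$ diagram are disconnected, so every element of the ``low'' family commutes with every element of the ``high'' family.

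For the forward inclusion the second family \eqref{bkcvtilde4} is immediate: since $[n-i+1,n+i-1]=[p,2n-p]$ and $[n-i+2,n+i-2]=[p',2n-p']$ with $p=n-i+1$, $p'=n-i+2$, the element $t^{A_{2n-1}}_{n-i+1,n+i}$ is by definition a product of the two $\gls{vBK2n}$-generators $q^{A_{2n-1}}_{[p,2n-p]}$ and $q^{A_{2n-1}}_{[p',2n-p']}$ from \eqref{bkcvtilde2}. For the first family \eqref{bkcvtilde3} I would substitute Proposition \ref{prop:ti_si}, $t_i^{A_{2n-1}}=q^{A_{2n-1}}_{[1,i-1]}q^{A_{2n-1}}_{[1,i]}q^{A_{2n-1}}_{[1,i-1]}q^{A_{2n-1}}_{[1,i-2]}$, together with its dual from Remark \ref{re:dualgenerator}, $\tilde t^{A_{2n-1}}_{2n-i}=q^{A_{2n-1}}_{[2n-i+1,2n-1]}q^{A_{2n-1}}_{[2n-i,2n-1]}q^{A_{2n-1}}_{[2n-i+1,2n-1]}q^{A_{2n-1}}_{[2n-i+2,2n-1]}$, and then use cross-commutativity to pair the $k$-th low factor with the $k$-th high factor. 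Each pair $q^{A_{2n-1}}_{[1,k]}q^{A_{2n-1}}_{[2n-k,2n-1]}$ is, by \eqref{bkcvtilde1}, the generator $q^{A_{2n-1}}_{[1,k]\cup[2n-k,2n-1]}$, exhibiting $t^{A_{2n-1}}_{[i]\cup[2n-i]}$ as a product of four $\gls{vBK2n}$-generators.

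For the converse I would mirror the promotion arguments of Proposition \ref{prop:orcadolfin}. Setting $T_i:=t^{A_{2n-1}}_{n-i+1,n+i}$, relation \eqref{bkcvtilde4} reads $T_i=q^{A_{2n-1}}_{[n-i+1,n+i-1]}\,q^{A_{2n-1}}_{[n-i+2,n+i-2]}$; since each $q$ is an involution this telescopes, and with the base case $T_1=q^{A_{2n-1}}_{[n,n]}$ (using $q^{A_{2n-1}}_{[n+1,n-1]}=1$) an easy induction gives $q^{A_{2n-1}}_{[n-i+1,n+i-1]}=T_iT_{i-1}\cdots T_1$, so every generator \eqref{bkcvtilde2} is a word in the $T$'s. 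For the first family I would introduce the virtual promotion $P_k:=t^{A_{2n-1}}_{[k]\cup[2n-k]}t^{A_{2n-1}}_{[k-1]\cup[2n-k+1]}\cdots t^{A_{2n-1}}_{[1]\cup[2n-1]}$. Writing $p_k=t_k^{A_{2n-1}}\cdots t_1^{A_{2n-1}}$ and $\tilde p_k=\tilde t^{A_{2n-1}}_{2n-k}\cdots\tilde t^{A_{2n-1}}_{2n-1}$, cross-commutativity unshuffles $P_k$ into its low and high subwords, giving $P_k=p_k\tilde p_k$. The induction in the proof of Proposition \ref{prop:orcadolfin}(1), applied to the low part and (via the dual relations of Remark \ref{re:dualgenerator}) to the high part, yields $q^{A_{2n-1}}_{[1,i]}=p_1\cdots p_i$ and $q^{A_{2n-1}}_{[2n-i,2n-1]}=\tilde p_1\cdots\tilde p_i$, and combining these through commutativity of the $p$'s with the $\tilde p$'s gives
\[
q^{A_{2n-1}}_{[1,i]\cup[2n-i,2n-1]}=\bigl(p_1\cdots p_i\bigr)\bigl(\tilde p_1\cdots\tilde p_i\bigr)=(p_1\tilde p_1)\cdots(p_i\tilde p_i)=P_1P_2\cdots P_i,
\]
so every generator \eqref{bkcvtilde1} is a word in the first family as well. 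The two inclusions together prove the claim. I expect the main obstacle to be the unshuffle step $P_k=p_k\tilde p_k$: one must check that collecting the low letters and then the high letters preserves their respective internal orders, which is valid precisely because every low generator commutes with every high generator while the low generators $t_j$ (and likewise the high generators $\tilde t_l$) among themselves need not commute, and one must carefully confirm that the supports are genuinely $\le n$ versus $\ge n+1$ and that all boundary conventions (the identity $q^{A_{2n-1}}_{[n+1,n-1]}=1$ and the empty intervals occurring at $i=1$) are consistent.
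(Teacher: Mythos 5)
Your argument follows essentially the same route as the paper's own proof: the forward inclusion for \eqref{bkcvtilde3} by expanding $t_i^{A_{2n-1}}$ via Proposition \ref{prop:ti_si} and $\tilde t^{A_{2n-1}}_{2n-i}$ via Remark \ref{re:dualgenerator}, then pairing the $k$-th low factor with the $k$-th high factor into generators \eqref{bkcvtilde1}; and the converse via the telescoping product for the symmetric intervals together with the promotion factorization $q^{A_{2n-1}}_{[1,i]\cup[2n-i,2n-1]}=P_1P_2\cdots P_i$, which is literally the displayed identity in the paper (there $P_k=t^{A_{2n-1}}_{[k]\cup[2n-k]}\cdots t^{A_{2n-1}}_{[1]\cup[2n-1]}$ is written out rather than named). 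Your ``unshuffle'' step $P_k=p_k\tilde p_k$ is sound, for exactly the reason you give: every low generator commutes with every high one, and the low (resp. high) letters are collected without disturbing their internal order.

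There is, however, one assertion of the proposition that your proof never touches and that your stated workhorse cannot deliver: the second equality in \eqref{bkcvtilde4}, i.e.\ that $q^{A_{2n-1}}_{[n-i+1,n+i-1]}$ and $q^{A_{2n-1}}_{[n-i+2,n+i-2]}$ commute. These two intervals are nested and both contain the middle node $n$, so disjointness of supports says nothing here. What does give the commutation is that, by Theorem \ref{th:jnbk}, the involutions $q^{A_{2n-1}}_{[a,b]}$ satisfy the cactus $J_{2n}$ relations, in particular the nested relation $q_{[p,q]}q_{[k,l]}=q_{[p+q-l,\,p+q-k]}q_{[p,q]}$; with $[p,q]=[n-i+1,n+i-1]$ and $[k,l]=[n-i+2,n+i-2]$ one has $p+q=2n$, so $[p+q-l,\,p+q-k]=[k,l]$ and the twisted commutation becomes genuine commutation. (Equivalently, this is relation \eqref{cactisymm1} of Lemma \ref{symmetrieslemma}, which is how the paper disposes of this point in one line.) Your membership and generation arguments survive without it --- your telescoping $T_iT_{i-1}\cdots T_1=q^{A_{2n-1}}_{[n-i+1,n+i-1]}$ uses only the first expression for $T_j$ and involutivity --- but the commutation is part of the statement, so this one-line appeal to the cactus relations must be added.
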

\begin{proof} The group ${\mathcal{BK}}_{2n}$ satisfies the $J_{2n}$ relations and $\gls{vBK2n}\subseteq
\mathcal{BK}_{2n}$.
Hence,
%\begin{align*} q^{A_{2n-1}}_{[1,i]}
%q^{A_{2n-1}}_{[{2n-i},2n-1]}=q^{A_{2n-1}}_{[{2n-i},2n-1]}q^{A_{2n-1}}_{[1,i]}, \,1\le i<n, 
%\end{align*} and  
by Lemma \ref{Acact}, $\mathit{3A.}$, and Theorem \ref{th:jnbk}, one finds %\eqref{bkcvtilde4}
%\eqref{cactisymm2},
\[ {q}_{[n-i+1,n+i-1]}^{A_{2n-1}}{q}_{[n-i+2,n+i-2]}^{A_{2n-1}} ={q}_{[n-i+2,n+i-2]}^{A_{2n-1}}{q}_{[n-i+1,n+i-1]}^{A_{2n-1}}, 1\le i\le
n.\]

The identity $t_i^{A_{2n-1}}\tilde t^{A_{2n-1}}_{2n-i}=\tilde t^{A_{2n-1}}_{2n-i}t_i^{A_{2n-1}},$ $1\le i<n$, follows from Lemma \ref{benderknuthcommute} with $i=j$.
Considering Definition \ref{def:virtualbk} \eqref{bkcvtilde1}, for $1\le i<n$,
\begin{align*} q_{[1,i]\sqcup [{2n-i},2n-1]}^{A_{2n-1}}&=q^{A_{2n-1}}_{[1,i]}
q^{A_{2n-1}}_{[{2n-i},2n-1]}\\
&=p_1^{A_{2n-1}}\cdots p_i^{A_{2n-1}}q^{A_{2n-1}}_{[
1,2n-1]}p_1^{A_{2n-1}}\cdots p_i^{A_{2n-1}}q^{A_{2n-1}}_{[1,2n-1]}, \mbox{ by   \eqref{evacBK}, \eqref{evacBK2}}   \\
&=p_1^{A_{2n-1}}\cdots p_i^{A_{2n-1}}\tilde p_{2n-1}^{A_{2n-1}}\cdots \tilde p_{2n-i}^{A_{2n-1}}, \;\mbox{ $q^{A_{2n-1}}_{[1,2n-1]}$ is an involution}\\
&=t^{A_{2n-1}}_1 { \tilde t^{A_{2n-1}}_{2n-1}}(t^{A_{2n-1}}_2 \tilde t^{A_{2n-1}}_{2n-2}{ t^{A_{2n-1}}_1 \tilde
t^{A_{2n-1}}_{2n-1}})\cdots(t^{A_{2n-1}}_i\tilde t^{A_{2n-1}}_{2n-i} \cdots \\
&
\qquad\qquad\qquad\qquad\qquad \cdots t^{A_{2n-1}}_2 \tilde t^{A_{2n-1}}_{2n-2}t^{A_{2n-1}}_1 { \tilde t^{A_{2n-1}}_{2n-1}}), \;\mbox{ by Lemma \ref{benderknuthcommute}}\\
&=t^{A_{2n-1}}_{\{1\}\sqcup\{2n-1\}}(t^{A_{2n-1}}_{\{2\}\sqcup\{2n-2\}}t^{A_{2n-1}}_{\{1\}\sqcup\{2n-1\}})\cdots(t^{A_{2n-1}}_{\{i\}\sqcup\{2n-i\}}\cdots\\
& \qquad\qquad\qquad\qquad\qquad \qquad\qquad\cdots t^{A_{2n-1}}_{\{2\}\sqcup\{2n-2\}}t^{A_{2n-1}}_{\{1\}\sqcup\{2n-1\}}), \mbox{  by \eqref{bkcvtilde3}}
\end{align*}
where $q^{A_{2n-1}}_{[1,i]}=p_1^{A_{2n-1}}\cdots p_i^{A_{2n-1}}$ with  $p_i^{A_{2n-1}}:=t^{A_{2n-1}}_i\cdots t^{A_{2n-1}}_2t^{A_{2n-1}}_1$,
and
\begin{align*}
\tilde p_{2n-i}^{A_{2n-1}}&:=q^{A_{2n-1}}_{[{1},2n-1]}p_i^{A_{2n-1}}q^{A_{2n-1}}_{[{1},2n-1]}\\
                                       &\;=\tilde t^{A_{2n-1}}_{2n-i}\cdots \tilde t^{A_{2n-1}}_{2n-2}{\tilde t^{A_{2n-1}}_{2n-1}}, 1\le i<n.
\end{align*}

On the other hand, considering Definition \ref{def:virtualbk}, \eqref{bkcvtilde2}, for $1\le i\le n$,
\begin{align*}{q}^{A_{2n-1}}_{[n-i+1,n+i-1]}&=
q^{A_{2n-1}}_{[n,n]}(q^{A_{2n-1}}_{[n,n]}{q}_{[n-1,n+1]}^{A_{2n-1}})({q}_{[n-1,n+1]}^{A_{2n-1}}
{q}_{[n-2,n+2]}^{A_{2n-1}})\cdots\\
&\qquad\qquad\cdots ({q}_{[n-(i-2),n+i-2]}^{A_{2n-1}}{q}_{[n-i+1,n+i-1]}^{A_{2n-1}}), \; \mbox{ $q^{A_{2n-1}}_{[n-i, n+i]}$ is an involution}\\
&=t^{A_{2n-1}}_{[n,n+1]}t^{A_{2n-1}}_{[n-1,n+2]}t^{A_{2n-1}}_{[n-2,n+3]} \cdots t^{A_{2n-1}}_{[n-i+2,n+i-1]}t^{A_{2n-1}}_{[n-i+1,n+i]},\;  \mbox{ by  \eqref{bkcvtilde4}}.
\end{align*}
\end{proof}

\begin{remark} \label{re:weights} If $T$ is an $A_{2n-1}$ semi-standard tableau,
$$\textsf{wt}(t^{A_{2n-1}}_{\{i\}\sqcup\{2n-i\}}(T))=r_ir_{2n-i}.\textsf{wt}(T),$$ where $r_i=(i,\;i+1)$ and $r_{2n-i}=(2n-i,\;\,2n-i+1)$ are simple
transpositions in $\mathfrak{S}_{2n}$, for $1\le i<n$, and
$ \textsf{wt}(t_{[n-i+1,n+i]}(T))=(n-i+1,\;\,n+i)\textsf{wt}(T)$, where $(n-i+1,\;\,n+i)$ is the transposition of $\mathfrak{S}_{2n}$ that swaps
$n-i+1$ and $n+i$, for $1\le i\le n$.The virtual symplectic Bender--Knuth involutions $t^{A_{2n-1}}_{\{i\}\sqcup\{2n-i\}}$, $1\le i< n$, and
$t^{A_{2n-1}}_{[n,n+1]}$ act on  the elements
 in  $ \textsf{SSYT}(\lambda, 2n)$,  inducing an
 action of the Weyl group $\gls{bn}$, realized as $ \langle(i,\;i+1)(2n-i,\;\,2n-i+1),\; (n, n+1):\; 1\le i<n\rangle$, on  the weights  in $\mathbb{Z}^{2n}$.
\end{remark}

Thanks to Theorem \ref{cactusj2n}, we have that
$\gls{vBK2n}$ is a quotient of the virtual symplectic cactus $\gls{vJ2n}$.
The generators \eqref{bkcvtilde1} and \eqref{bkcvtilde2} of the group $\gls{vBK2n}$ satisfy the relations of the cactus group $\gls{vJ2n}$ or
equivalently those of the cactus group $\gls{Jsp}$.
\begin{theorem}\label{virtualsymplecticbk}
The  following  is a group epimorphism from $\gls{vJ2n}$ to $\gls{vBK2n}$:
\begin{align*}
\tilde s_{[1,j]\sqcup [2n-j,2n-1]}  &\mapsto q_{[1,j]\sqcup [2n-j,2n-1]}^{A_{2n-1}}, \; 1\le j<n,\\
\tilde s_{[j,2n-j]}&\mapsto
 q_{[j,2n-j]}^{A_{2n-1}},\; \;\qquad\quad  1\le j\le n.\end{align*} 
 The group $\gls{vBK2n}$ is isomorphic to a quotient of $\gls{vJ2n}$, and via the isomorphism between
$\gls{Jsp}$ and $\gls{vJ2n}$ defined via $s_{[1,j]} \mapsto \tilde s_{[1,j]\sqcup [2n-j,2n-1]}$, $1\le j<n$,
and $ s_{[j,n]}\mapsto \tilde s_{[j,2n-j]}$, $1\le j\le n$, is also isomorphic to a quotient of $\gls{Jsp}$.
\end{theorem}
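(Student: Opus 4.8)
The plan is to realize the asserted map as the corestriction of a composite of two homomorphisms already at our disposal, so that well-definedness is automatic and only the image needs to be identified. First I would invoke Theorem~\ref{th:jnbk}, which furnishes the epimorphism $\Psi\colon J_{2n}\twoheadrightarrow \mathcal{BK}_{2n}$ sending $s_{[i,j]}\mapsto q^{A_{2n-1}}_{[i,j]}$; in particular the involutions $q^{A_{2n-1}}_{[i,j]}$ satisfy the cactus $J_{2n}$ relations. Next I would use the embedding $\iota\colon\gls{vJ2n}\hookrightarrow J_{2n}$ of Proposition~\ref{injectionbyfolding2}, which sends $\tilde s_{[p,q]\cup[2n-q,2n-p]}\mapsto s'_{[p,q]}=s_{[p,q]}s_{[2n-q,2n-p]}$ and $\tilde s_{[p,2n-p]}\mapsto s_{[p,2n-p]}$. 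Composing gives a homomorphism $\Psi\circ\iota\colon \gls{vJ2n}\to \mathcal{BK}_{2n}$.

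The core computation is then to track the images of the generators \eqref{virtualalterC1}, \eqref{virtualalterC2} of $\gls{vJ2n}$:
$$\tilde s_{[1,j]\cup[2n-j,2n-1]}\ \longmapsto\ s_{[1,j]}s_{[2n-j,2n-1]}\ \longmapsto\ q^{A_{2n-1}}_{[1,j]}q^{A_{2n-1}}_{[2n-j,2n-1]}=q^{A_{2n-1}}_{[1,j]\cup[2n-j,2n-1]},$$
$$\tilde s_{[j,2n-j]}\ \longmapsto\ s_{[j,2n-j]}\ \longmapsto\ q^{A_{2n-1}}_{[j,2n-j]}.$$
By Definition~\ref{def:virtualbk} these are exactly the generators \eqref{bkcvtilde1}, \eqref{bkcvtilde2} of $\gls{vBK2n}$. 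Since $\gls{vBK2n}$ is, by construction, the subgroup of $\mathcal{BK}_{2n}$ these elements generate---its presentation imposes \emph{all} relations holding on straight-shaped tableaux, hence identifies it with this concrete subgroup---the image of $\Psi\circ\iota$ is precisely $\gls{vBK2n}$. Corestricting therefore yields the desired epimorphism $\gls{vJ2n}\twoheadrightarrow\gls{vBK2n}$, so $\gls{vBK2n}$ is a quotient of $\gls{vJ2n}$.

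An equivalent, more intrinsic route I would also record is that by Theorem~\ref{cactusj2n} the involutions $\xi^{A_{2n-1}}_{[1,j]\cup[2n-j,2n-1]}$ and $\xi^{A_{2n-1}}_{[j,2n-j]}$ already satisfy the $\gls{vJ2n}$ relations, while on straight shapes $q^{A_{2n-1}}_{[1,j]\cup[2n-j,2n-1]}=\xi^{A_{2n-1}}_{[1,j]\cup[2n-j,2n-1]}$ and $q^{A_{2n-1}}_{[j,2n-j]}=\xi^{A_{2n-1}}_{[j,2n-j]}$ by Theorem~\ref{th:schutzunion} together with \eqref{bkschutz}; this directly shows the generators of $\gls{vBK2n}$ obey the $\gls{vJ2n}$ relations and hence gives the homomorphism. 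Finally, for the statement about $\gls{Jsp}$, I would precompose with the isomorphism $\gls{Jsp}\simeq\gls{vJ2n}$ of Proposition~\ref{virtualcactus}, $s_{[1,j]}\mapsto\tilde s_{[1,j]\cup[2n-j,2n-1]}$ for $1\le j<n$ and $s_{[j,n]}\mapsto\tilde s_{[j,2n-j]}$ for $1\le j\le n$, to conclude that $\gls{vBK2n}$ is also a quotient of $\gls{Jsp}$.

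The argument is essentially an assembly of established results, so no step is genuinely hard; the one point demanding care is the bookkeeping identification that makes surjectivity meaningful---namely that the presented group $\gls{vBK2n}$ coincides with the concrete subgroup of $\mathcal{BK}_{2n}$ generated by \eqref{bkcvtilde1}, \eqref{bkcvtilde2}, so that ``quotient of $\gls{vJ2n}$'' is the correct conclusion rather than merely a morphism onto a possibly larger target. I would also verify that the relations of $\gls{vBK2n}$ include, but may strictly exceed, those of $\gls{vJ2n}$, which is exactly what keeps the epimorphism from being an isomorphism and is consistent with the extra symplectic Berenstein--Kirillov relations announced in the introduction.
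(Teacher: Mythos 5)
Your proposal is correct, and its primary route is genuinely different from the paper's. The paper proves this theorem in one stroke from Theorem~\ref{cactusj2n}: the involutions $\xi^{A_{2n-1}}_{[1,j]\cup[2n-j,2n-1]}$ and $\xi^{A_{2n-1}}_{[j,2n-j]}$ realize an action of $\gls{vJ2n}$ on straight-shaped tableaux, hence satisfy the $\gls{vJ2n}$ relations, and on straight shapes they coincide with the generators \eqref{bkcvtilde1}, \eqref{bkcvtilde2} of $\gls{vBK2n}$ --- this is exactly your secondary, ``intrinsic'' route. Your main argument instead corestricts the composite $\Psi\circ\iota$ of the type-$A$ epimorphism $\Psi\colon J_{2n}\twoheadrightarrow\mathcal{BK}_{2n}$ of Theorem~\ref{th:jnbk} with the folding embedding $\iota\colon\gls{vJ2n}\hookrightarrow J_{2n}$ of Proposition~\ref{injectionbyfolding2}. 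That assembly is legitimate and purely algebraic: well-definedness is automatic, the image is the subgroup of $\mathcal{BK}_{2n}$ generated by $q^{A_{2n-1}}_{[1,j]}q^{A_{2n-1}}_{[2n-j,2n-1]}$ and $q^{A_{2n-1}}_{[j,2n-j]}$, and --- the one delicate point, which you correctly flag --- this concrete subgroup is the presented group of Definition~\ref{def:virtualbk}, because equality in $\mathcal{BK}_{2n}$ is precisely equality of the induced actions on straight-shaped tableaux, so ``subgroup generated by'' and ``free group modulo all relations satisfied on straight shapes'' agree. Both arguments ultimately rest on the same computation, Lemma~\ref{symmetrieslemma} (the relations among $s'_{[p,q]}$ and $s_{[k,2n-k]}$ inside $J_{2n}$): the paper reaches it through the crystal action, which is also what delivers the preservation of $\gls{E}(\gls{symptab})$ exploited elsewhere, whereas your composition needs no crystal-theoretic input beyond Theorem~\ref{th:jnbk} and makes the transfer to $\gls{Jsp}$ via Proposition~\ref{virtualcactus} immediate. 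Your closing observation that the relations of $\gls{vBK2n}$ may strictly exceed those of $\gls{vJ2n}$ (relations \eqref{eq:symplecticbkrelationspecial} and \eqref{eq:symplecticbkrelationspecialbn}) is likewise consistent with the paper and correctly explains why only an epimorphism, not an isomorphism, is claimed.
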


Because the action of $\gls{vJ2n}$ on the set $\gls{ssytv}$ preserves the subset \linebreak $\gls{E}(\gls{symptab})$, {see  Remark  \ref{re:preserve}},
we now relate the virtual symplectic and symplectic Bender--Knuth involutions by embedding the crystal $\gls{symptab}$ into the crystal
$\gls{ssytv}$.
\begin{theorem}\label{thm:virtualbender-knuth2}
 The symplectic Bender--Knuth involutions $ t_i^{C_n}$, $1\le i\le 2n-1$, in $ \gls{BKc}$ can be realized by the virtual symplectic
 Bender--Knuth involutions
 $t_i^{A_{2n-i}}\tilde t^{A_{2n-i}}_{2n-i}$, $1\le i<n$, and $t^{A_{2n-1}}_{[n-i+1,n+i]}$, $1\le i\le n$,  in $\gls{vBK2n}$,
 and \emph{vice-versa},
\begin{align}
\label{isobk1}{t}_i^{C_n} &=E^{-1}t_i^{A_{2n-1}}{\tilde t_{\overline{i+1}}}^{A_{2n-1}}E=E^{-1}t_i^{A_{2n-1}}{\tilde
t_{2n-i}}^{A_{2n-1}}E,&1\le i<n,\\
\label{isobk2} t^{C_n}_{n+i-1} &= E^{-1}t^{A_{2n-1}}_{[n-i+1,n+i]}E,&1\le i\le n.
 \end{align}

\noindent In particular, the map $\gls{BKc} \rightarrow \gls{vBK2n}$ defined on generators by 

\begin{align}
\label{map:isobkvirtual}
t^{C_{n}}_{i}& \mapsto t^{A_{2n-1}}_{i}\tilde{t}^{A_{2n-i}}_{2n-i},&1\le i<n,\\
\label{map:isobkvirtualb}
t^{C_n}_{n+i-1}& \mapsto t^{A_{2n-1}}_{[n-i+1,n+i]},&1\le i\le n,
\end{align}

\noindent
is an isomorphism of groups. 

\end{theorem}

\begin{proof} %By Theorem \ref{goingandbacR1},
For $1\le i<n$,
 \begin{align*}{t}^{C_n}_i& = {q}^{C_n}_{[1,i-1]} {q}_{[1,i]}^{C_n}{q}^{C_n}_{[1,i-1]} {q}_{[1,i-2]}^{C_n}, \,\mbox{by Definition \ref{def:symplecticbenderknuth}}\\
 &=E^{-1}(\xi^{A_{2n-1}}_{[1,i-1]}  \xi^{A_{2n-1}}_{[\overline{i},\bar 2]}) EE^{-1}(\xi^{A_{2n-1}}_{[1,i]}
 \xi^{A_{2n-1}}_{[\overline{i+1},\bar 2]}) EE^{-1}(\xi^{A_{2n-1}}_{[1,i-1]}   \xi^{A_{2n-1}}_{[\overline{i},\bar 2]})
 E\\
 &\qquad \qquad \qquad \qquad \qquad \qquad \qquad \qquad \qquad \quad\, E^{-1}(\xi^{A_{2n-1}}_{[1,i-2]}   \xi^{A_{2n-1}}_{[\overline{i-1},\bar 2]}) E, \\
 & \qquad \qquad  \qquad \qquad \qquad \qquad \qquad \mbox{ by Definition \ref{def:bksymplectic} and Theorem \ref{goingandbacR1} }\\
 &=E^{-1}(\xi^{A_{2n-1}}_{[1,i-1]}\xi^{A_{2n-1}}_{[1,i]}\xi^{A_{2n-1}}_{[1,i-1]}\xi^{A_{2n-1}}_{[1,i-2]}
 \xi^{A_{2n-1}}_{[\overline{i},\bar
 2]} \xi^{A_{2n-1}}_{[\overline{i+1},\bar 2]}\xi^{A_{2n-1}}_{[\overline{i},\bar 2]}\xi^{A_{2n-1}}_{[\overline{i-1},\bar 2]}) E\\
 &=E^{-1}(t_i^{A_{2n-1}}   \tilde t_{\overline{i+1}}^{A_{2n-1}})E.
 \end{align*}

By Definition \ref{def:symplecticbenderknuth}, 
%Theorem \ref{goingandbacR2}, 
 $t^{C_n}_{n+i-1}= {q}_{[n-i+1,n]}^{C_n}{q}_{[n-i+2,n]}^{C_n}$, for $ 2\le i\leq n$, and 
$t_n^{C_n}= q_{[n,n]}^{C_n} =\xi_n^{C_n}$. Then the result follows from Theorem \ref{goingandbacR2}
and Proposition \ref{prop:virtualbender-knuth}.

Note that the map defined by (\ref{map:isobkvirtual}) and (\ref{map:isobkvirtualb}) is an isomorphism as a consequence, since the description (\ref{isobk1}), (\ref{isobk2}) of the generators of \gls{BKc} in terms of those of \gls{vBK2n} implies that both sets of generators will satisfy precisely the same relations.
\end{proof}

\subsection{Symplectic Bender--Knuth involutions and the character of a
KN tableau crystal}\label{re:nobnactionkn2}

In the $C_2$ Weyl group ${B}_2=\langle r_1,r_2: r_i^2=1, (r_1r_2)^4=1\rangle$ with long element $r_2r_1r_2r_1$, the $C_2$ symplectic Bender--Knuth
involutions are $t^{C_2}_1=\xi_1^{C_2}$, $t^{C_2}_2=\xi_2^{C_2}$, $t^{C_2}_3=\xi_{[1,2]}^{C_2}\xi_{2}^{C_2}=\xi_2^{C_2}\xi_{[1,2]}^{C_2}$;
therefore, in this case, $t^{C_2}_1$ and $t^{C_2}_2$ define an action of the Weyl group $B_2$ on $\textsf{KN}(\lambda,2)$. However, in general,
for $n\ge 3$, the symplectic Bender--Knuth involutions $t_1^{C_n},\dots,t_n^{C_n}$ do not define an action of the Weyl group $\gls{bn}$ on the
set $\gls{symptab}$. Recall that the first $n-1$ generators  of the Weyl group $\gls{bn}$ satisfy the braid
relations \eqref{hyper3} of $\mathfrak{S}_n$, but we claim that, in general, $t^{C_n}_it^{C_n}_{i+1}t^{C_n}_i\neq
t^{C_n}_{i+1}t^{C_n}_it^{C_n}_{i+1}$,
i.e., $(t^{C_n}_it^{C_n}_{i+1})^3 \neq 1$ for $1\le i<n$. To show this inequality, note that by Theorem \ref{thm:virtualbender-knuth2}, it is
enough to consider the virtual symplectic Bender--Knuth
involutions and the corresponding virtual inequality
\begin{align}\label{ineqlion}t_i^{A_{2n-1}}{\tilde
    t_{2n-i}}^{A_{2n-1}}t_{i+1}^{A_{2n-1}}{\tilde
    t_{2n-(i+1)}}^{A_{2n-1}}t_i^{A_{2n-1}}{\tilde
    t_{2n-i}}^{A_{2n-1}}\neq  t_{i+1}^{A_{2n-1}}{\tilde
    t_{2n-(i+1)}}^{A_{2n-1}}t_i^{A_{2n-1}}{\tilde
    t_{2n-i}}^{A_{2n-1}}t_{i+1}^{A_{2n-1}}{\tilde
    t_{2n-(i+1)}}^{A_{2n-1}}.
    \end{align}

\noindent From Proposition \ref{prop:ti_si} and Remark \ref{re:dualgenerator},
$t_i^{A_{2n-1}}{\tilde t_{2n-(i+1)}}^{A_{2n-1}}=
{\tilde
t_{2n-(i+1)}}^{A_{2n-1}}t_i^{A_{2n-1}},$ for $1\le i<n$. If we had equality in \eqref{ineqlion}, then
$${\tilde t_{2n-i}}^{A_{2n-1}}{\tilde t_{2n-(i+1)}}^{A_{2n-1}}{\tilde
t_{2n-i}}^{A_{2n-1}}t_i^{A_{2n-1}}t_{i+1}^{A_{2n-1}}t_i^{A_{2n-1}}={\tilde
    t_{2n-(i+1)}}^{A_{2n-1}}{\tilde
    t_{2n-i}}^{A_{2n-1}}{\tilde
    t_{2n-(i+1)}}^{A_{2n-1}}t_{i+1}^{A_{2n-1}}t_i^{A_{2n-1}}t_{i+1}^{A_{2n-1}}$$
  $$\Leftrightarrow ({\tilde t_{2n-(i+1)}}^{A_{2n-1}}
    {\tilde t_{2n-i}}^{A_{2n-1}})^{3}=(t_i^{A_{2n-1}}t_{i+1}^{A_{2n-1}})^{3}.$$
Applying this identity to the $A_{9}$ tableau $E(T)=(P^+,P^-)$ in the virtualization Example \ref{ex:virt} would imply that

\begin{align}
({\tilde t_{\bar 6}}^{A_{11}}{\tilde t_{\bar 5}}^{A_{11}})^3(E(T))&=(t_4^{A_{11}}t_5^{A_{11}})^3 (E(T))  \nonumber\\
\label{tilde} &\Leftrightarrow  \\
 (P^+,({\tilde t_{\bar 6}}^{A_{11}}{\tilde t_{\bar 5}}^{A_{11}})^3 (P^-)) &=((t_4^{A_{11}}t_5^{A_{11}})^3 (P^+),P^-), \nonumber
\end{align}

\noindent but this is impossible, as $(t_2^{A_{9}}t_3^{A_{9}})^3 (P^+)\neq P^+$.  Note that the LHS of \eqref{tilde} follows from $\tilde t_{2n-i}(P^+,P^-)=\textsf{evac}\, t_i\textsf{evac}(P^+,P^-)$ and  Remark \ref{re:tildet}.
%, and the property of the evacuation operation on pairs of semi-standard tableaux $(A,B)$ where $A$ is of straight shape and $B$ is an extension %of $A$: $\textsf{evac}(A,B)=(\textsf{evac}\,\textsf{rect}(B),X)$ with $X$ such that $\textsf{rect}\,(X)=\textsf{evac}\,( A)$.

Though the symplectic Bender--Knuth involutions do not define an action of $\gls{bn}$ on $\gls{symptab}$, they can be used to show that the
character of the crystal $\gls{symptab}$ is a symmetric Laurent polynomial with respect to the action of $\gls{bn}$. Let
$\mathcal{E}:=\mathbb{Z}[x_1^\pm,\dots,x_n^\pm]$ be the ring of Laurent polynomials in $n$ variables over $\mathbb{Z}$, and let
$\mathcal{E}^{\gls{bn}}=\{f\in \mathcal{E}: r_i.f=f, \, r_i\in \gls{bn},\, 1\le i\le n\}$, where $ r_i.x^\alpha:=x^{r_i.\alpha}$, for
$x^\alpha:=x_1^{\alpha_1}\cdots x_n^{\alpha_n}$,  $\alpha\in\mathbb{Z}^n$ and $r_i\in \gls{bn}$, be the subring of symmetric Laurent
polynomials.

The character of $\gls{symptab}$ is the symplectic Schur function $sp_\lambda(x)$ in the sequence of variables $x=(x_1,\dots,x_n)$. Thanks
to
Remark \ref{weightBnaction}, $\textsf{wt}(t_i^{C_n}.b)=r_i.\textsf{wt}(b)$ for any $b\in \gls{symptab}$ and $1\le i\le n$.  Therefore, since
$t_i^{C_n}$, $1\le i\le n$, is an involution on the set $\gls{symptab}$, we obtain a proof that $sp_\lambda(x)$ is a symmetric Laurent
polynomial:

\begin{align*}
sp_{\lambda}(x)&=\sum_{b\in\gls{symptab}}x^{\textsf{wt}(b)}
=\sum_{b\in\gls{symptab}}x^{\textsf{wt}(t_i^{C_n}b)},\,1\le
i\le n,\\
&=\sum_{b\in\gls{symptab}}x^{r_i.\textsf{wt}(b)}=sp_\lambda(r_i.x),\,1\le
i\le n.
\end{align*}

\subsection{Relations for the symplectic Berenstein--Kirillov group}

Thanks to Theorem \ref{symplecticbk} and Theorem
\ref{virtualsymplecticbk}, we now provide the following relations for $\gls{BKc}$ equivalently $\gls{vBK2n}$.
  The relations \eqref{eq:symplecticbkrelationspecial} and \eqref{eq:symplecticbkrelationspecialbn} below  are the only ones known for
  $\gls{BKc}$,  equivalently   $\gls{vBK2n}$, which do not follow from the cactus group $\gls{Jsp}$ relations equivalently
  the
  virtual cactus group  $\gls{vJ2n}$
  relations (see also Remark \ref{re:braid}).

\begin{proposition} \label{prop:bkCrelations}  The symplectic Bender--Knuth involutions $ t_i^{C_n}
$, $i=1,\dots, 2n-1$,
satisfy the following relations:
\label{rels:bkc}
\begin{enumerate}
\item $(t_i^{C_n})^2=1$, $i=1,\dots, 2n-1$.
\item $({t}^{C_n}_{n+i-1}{t}^{C_n}_{n+j-1})^2=1$,  $1\le i,j\le n$.
\item  $(t^{C_n}_it^{C_n}_j)^2=1$, $|i-j|>1$, $1\le i, j<n$.
\item $(t_i^{C_n}t_{n+j-1}^{C_n})^2=1$, $i<n-j$. 
%\label{rels:bkc4}
%\label{eq:symplecticbkrelationspecialbn}
\item $(t^{C_n}_iq_{[j,k-1]}^{C_n})^2=1,$ $i+1<j<k\le n$.
\label{rels:bkc5}
%\label{rels:bkc7}
\item $(t^{C_n}_iq_{[j,n]}^{C_n})^2=1,$ $i+1<j\le n$.
\item $(t^{C_n}_{n+i-1}q_{[j,n]}^{C_n})^2=1,$ $1\le i, j\le n$.
\label{rels:bkc7}
\item $(t^{C_n}_{n+i-1}q_{[j,k-1]}^{C_n})^2=1,$ $n-i+1<j<k\le n$.
\item  $(t_1^{C_n}t_2^{C_n})^6=1$, $ n\ge 3$. \label{eq:symplecticbkrelationspecial}
\item $(t^{C_n}_{n-1}\cdots t^{C_n}_2t^{C_n}_1t^{C_n}_2\cdots t^{C_n}_{n-1}t^{C_n}_n)^4=1$.  \label{eq:symplecticbkrelationspecialbn}
\end{enumerate}
The virtual symplectic Bender--Knuth involutions  $t_i^{A_{2n-i}}\tilde t^{A_{2n-i}}_{2n-i}=E{t}_i^{C_n}E^{-1}$, $1\le i<n$, and
$t^{A_{2n-1}}_{[n-i+1,n+i]}=Et^{C_{n}}_{n-i+1}E^{-1}$, $1\le i\le n$,
in $\gls{vBK2n}$ satisfy the same relations as those of $\gls{BKc}$ by replacing $t^{C_n}_i$ by
$t_i^{A_{2n-i}}\tilde t^{A_{2n-i}}_{2n-i}$, $1\le i<n$,
and $t^{C_n}_{n+i-1}$ by $t^{A_{2n-1}}_{[n-i+1,n+i]}$, $1\le i\le n$.

\end{proposition}

\begin{proof} Recall Definition \ref{def:bksymplectic}, Definition \ref{def:symplecticbenderknuth},   Theorem \ref{symplecticbk},  Theorem \ref{virtualsymplecticbk} and Theorem \ref{thm:virtualbender-knuth2}.

$(1)$  $(t_1^{C_n})^2=(q_{[1,1]}^{C_n})^2=1$,  $(t_n^{C_n})^2=(q_{[1,n]}^{C_n})^2=1$. For $ 2\le i\leq n$, \begin{align*}
    ({t}^{C_n}_{n-1+i} )^2=
({q}_{[n-i+1,n]}^{C_n}{q}_{[n-i+2,n]}^{C_n})^2=1\end{align*} is equivalent to the $\gls{Jsp}$  relation $\mathit{3C. (i)}$ of Lemma \ref{symplecticact}.

For $2\le i\leq n-1,$ $$({t}^{C_n}_i)^2 = {q}_{[1,i-1]}^{C_n} {q}_{[1,i]}^{C_n}{q}_{[1,i-1]}^{C_n} {q}_{[1,i-2]}^{C_n}{q}_{[1,i-1]}^{C_n}
{q}_{[1,i]}^{C_n}{q}_{[1,i-1]}^{C_n} {q}_{[1,i-2]}^{C_n}=1$$ follows from the cactus $\gls{Jsp}$  relation $\mathit{3C. (ii)}$  of Lemma \ref{symplecticact}, and
the observations that
\begin{align*}q_{[1,i-1]}^{C_n} {q}_{[1,i-2]}^{C_n}&=q_{[2,i-1]}^{C_n} {q}_{[1,i-1]}^{C_n},\\
q_{[1,i-1]}^{C_n} {q}_{[2,i-1]}^{C_n}&=q_{[1,i-2]}^{C_n}
{q}_{[1,i-1]}^{C_n},\\
q_{[1,i]}^{C_n} {q}_{[2,i-1]}^{C_n}&=q_{[2,i-1]}^{C_n} {q}_{[1,i]}^{C_n}.
\end{align*}

$(2)$ Let $i\ne j$. From  Lemma \ref{symplecticact} $\mathit{3C. (i)}$,
\begin{align*}{t}^{C_n}_{n+i-1}{t}^{C_n}_{n+j-1}&={q}_{[n-i+1,n]}^{C_n}{q}_{[n-i+2,n]}^{C_n}{q}_{[n-j+1,n]}^{C_n}{q}_{[n-j+2,n]}^{C_n}\\
&={q}_{[n-j+1,n]}^{C_n}{q}_{[n-j+2,n]}^{C_n}{q}_{[n-i+1,n]}^{C_n}{q}_{[n-i+2,n]}^{C_n}\\
&={t}^{C_n}_{n+j-1}{t}^{C_n}_{n+i-1}.
\end{align*}

$(3)$ Recall Theorem \ref{thm:virtualbender-knuth2}, equation \eqref{eq:bkrelations1}, Remark \ref{re:dualgenerator} \eqref{eq:bkrelations1tilde}, and Lemma \ref{benderknuthcommute}. Then
\begin{align*}
(t_i^{C_n}t_j^{C_n})^2 & =(E^{-1}t^{A_{2n-1}}_i\tilde t^{A_{2n-1}}_{2n-i}EE^{-1}t^{A_{2n-1}}_j\tilde t^{A_{2n-1}}_{2n-j}E)^2 \\
                                     & =E^{-1}(t^{A_{2n-1}}_it^{A_{2n-1}}_j)^2(\tilde t^{A_{2n-1}}_{2n-i}\tilde t^{A_{2n-1}}_{2n-j})^2E=1,
                                     \;\text{for} \,|i-j|>1, \, 1\le i,j\le  n-1.
\end{align*}

$(4)$
For $i<n-j$, due to the $\gls{Jsp}$ relation $2C$.
\begin{align}
t_i^{C_n}t_{n+j-1}^{C_n}&= q^{C_n}_{[1,i-1]}q^{C_n}_{[1,i]}q^{C_n}_{[1,i-1]}q^{C_n}_{[1,i-2]}q^{C_n}_{[n-j+1,n]}q^{C_n}_{[n-j+2,n]}\nonumber\\
&= q^{C_n}_{[n-j+1,n]}q^{C_n}_{[n-j+2,n]}q^{C_n}_{[1,i-1]}q^{C_n}_{[1,i]}q^{C_n}_{[1,i-1]}q^{C_n}_{[1,i-2]}\nonumber \\
&=t_{n+j-1}^{C_n}t_i^{C_n}.\nonumber
\end{align}

$(5)$ Recall Theorem \ref{goingandbacR1},  Theorem \ref{thm:virtualbender-knuth2}, Definition \ref{def:bksymplectic}, \eqref{eq:bkrelationextra}  and Remark \ref{re:dualgenerator} \eqref{eq:bkrelationextratilde}.
We first observe the following forms of \eqref{eq:bkrelationextra} respectively \eqref{eq:bkrelationextratilde} in type $A_{2n-1}$.  For  $i+1<j<k\le n$,
\begin{align}\label{eq:generalize59}
(t^{A_{2n-1}}_iq^{A_{2n-1}}_{[2n-k+1,2n-j]})^2&=1, %q^{A_{2n-1}}_{[2n-k+1,2n-j]}t^{A_{2n-1}}_i
\\
\label{eq:generalize69}
    (\tilde t^{A_{2n-1}}_{2n-i}q^{A_{2n-1}}_{[j,k-1]})^2&=1. 
    %q^{A_{2n-1}}_{[j,k-1]}\tildet^{A_{2n-1}}_{2n-i}.   
\end{align}
Since $i+1<j<n<2n-k+1<2n-j+1<2n-i< 2n$,
\begin{align*}t^{A_{2n-1}}_iq^{A_{2n-1}}_{[2n-k+1,2n-j]}&=q^{A_{2n-1}}_{[2n-k+1,2n-j]}t^{A_{2n-1}}_i,\mbox{ by \eqref{eq:bkrelationextra} with $k:=2n-j+1$ and }   \\
&\mbox{  $j:=2n-k+1$, 
for $i+1<j:=2n-k+1<k:=2n-j+1$},\nonumber\\
    \end{align*}
 \noindent which proves \eqref{eq:generalize59}. 
Thus, the identity \eqref{eq:generalize69} is the dual version of \eqref{eq:generalize59} with $k:=2n-j+1$
 and $j:=2n-k+1$.

Henceforth, for $i+1<j<k\le n$,
\begin{align}(t^{C_n}_iq_{[j,k-1]}^{C_n})^2&=(E^{-1}t_i^{A_{2n-1}}{\tilde
t_{2n-i}}^{A_{2n-1}}EE^{-1}{\xi}_{[j,k-1]}^{A_{2n-1}}{\xi}_{[2n-k+1,2n-j]}^{A_{2n-1}}E)^2,\nonumber \\ &\qquad\mbox{ \; \;\;by    Theorem \ref{thm:virtualbender-knuth2} and  Remark     \ref{re:C16(2)}}\nonumber\\
&=E^{-1}(t_i^{A_{2n-1}}{\xi}_{[j,k-1]}^{A_{2n-1}}{\tilde
t_{2n-i}}^{A_{2n-1}}{\xi}_{[2n-k+1,2n-j]}^{A_{2n-1}})^2E,\; \mbox{ by \eqref{eq:generalize69}}\nonumber\\
%&\qquad \qquad \qquad \qquad \;\;\text{for % } 2n-k<2n-j<2n-i-1\nonumber\\
&=E^{-1}t_i^{A_{2n-1}}{\xi}_{[j,k-1]}^{A_{2n-1}}{\tilde
t_{2n-i}}^{A_{2n-1}}{\xi}_{[2n-k+1,2n-j]}^{A_{2n-1}} \nonumber\\
&\qquad \qquad \qquad \qquad t_i^{A_{2n-1}}{\xi}_{[j,k-1]}^{A_{2n-1}}{\tilde
t_{2n-i}}^{A_{2n-1}}{\xi}_{[2n-k+1,2n-j]}^{A_{2n-1}}E \nonumber\\
& =E^{-1}t_i^{A_{2n-1}}{\xi}_{[j,k-1]}^{A_{2n-1}}t_i^{A_{2n-1}}{\tilde
t_{2n-i}}^{A_{2n-1}}{\xi}_{[2n-k+1,2n-j]}^{A_{2n-1}} \nonumber\\
&\qquad \qquad \qquad \qquad{\xi}_{[j,k-1]}^{A_{2n-1}}{\tilde
t_{2n-i}}^{A_{2n-1}}{\xi}_{[2n-k+1,2n-j]}^{A_{2n-1}}E\nonumber\\
&\qquad \mbox{ by \eqref{eq:generalize59}, Lemma \ref{benderknuthcommute}}\nonumber\\
&=E^{-1}t_i^{A_{2n-1}}{\xi}_{[j,k-1]}^{A_{2n-1}}t_i^{A_{2n-1}}{\xi}_{[j,k-1]}^{A_{2n-1}}{\tilde
t_{2n-i}}^{A_{2n-1}}{\xi}_{[2n-k+1,2n-j]}^{A_{2n-1}}\nonumber\\
&\qquad \qquad \qquad \qquad \qquad \qquad{\tilde t_{2n-i}}^{A_{2n-1}}{\xi}_{[2n-k+1,2n-j]}^{A_{2n-1}}E, \nonumber\\
& \qquad \mbox{ by  Lemma \ref{Acact}, $2.A$, $j\le k-1<n<2n-k+1 $, and \eqref{eq:generalize69}} \nonumber\\
&=E^{-1}(t_i^{A_{2n-1}}{\xi}_{[j,k-1]}^{A_{2n-1}})^2({\tilde t_{2n-i}}^{A_{2n-1}}{\xi}_{[2n-k+1,2n-j]}^{A_{2n-1}})^2E \nonumber\\
&=1, \mbox{ by \eqref{eq:bkrelationextra}, \eqref{eq:bkrelationextratilde}.}\nonumber
%\\
%&\qquad \qquad \qquad \qquad \;\;\text{for } 2n-%k<2n-j<2n-i-1.\nonumber
\end{align}

$(6)$  For $i+1<j\le n$, equivalently $2n-i-1>2n-j\ge
n>j-1$,
\begin{align}t^{C_n}_iq_{[j,n]}^{C_n}&=E^{-1}t_i^{A_{2n-1}}{\tilde t_{2n-i}}^{A_{2n-1}}{\xi}_{[j,2n-j]}^{A_{2n-1}}E, \;\;\text{ by Theorem \ref{goingandbacR2}}\nonumber\\
&=E^{-1}t_i^{A_{2n-1}}{\xi}_{[j,2n-j]}^{A_{2n-1}}{\tilde t_{2n-i}}^{A_{2n-1}}E,\mbox{ by \eqref{eq:bkrelationextratilde} with $k:=2n-j+1$}\nonumber\\
&\qquad\mbox{ and  $2n-i-1>2n-j>j-1$ }
\nonumber\\
&=E^{-1}{\xi}_{[j,2n-j]}^{A_{2n-1}}t_i^{A_{2n-1}}{\tilde t_{2n-i}}^{A_{2n-1}}E,\;\; \mbox{by \eqref{eq:bkrelationextra} with $k:=2n-j+1$}\nonumber\\
&\qquad\mbox{ and  \;$2n-j+1>j>i+1$}\nonumber\\
&=q_{[j,n]}^{C_n}t^{C_n}_i.\nonumber
\end{align}

$(7)$  We will prove $(t^{C_n}_{n+i-1}q_{[j,n]}^{C_n})^2=1,$ $1\le i, j\le n$.
Recall  Definition \ref{def:symplecticbenderknuth},  %\eqref{symplecticbenderknuth}, 
  Theorem \ref{goingandbacR2}, and
Theorem \ref{thm:virtualbender-knuth2}   
  \eqref{isobk2}, %and $\eqref{bkcvtilde4}$,
\begin{align*}
t^{C_n}_{n+i-1}q_{[j,n]}^{C_n}&
=E^{-1}t^{A_{2n-1}}_{[n-i+1,n+i]}\xi_{[j,2n-j]}^{A_{2n-1}}E
%{\xi}_{[n-(i-1),n]}^{C_n}{\xi}_{[n-(i-2),n]}^{C_n}\xi_{[j,n]}^{C_n}\\&=
\\
&=E^{-1} {\xi}_{[n-(i-1),n+(i-1)]}^{A_{2n-1}}{\xi}_{[n-(i-2),n+(i-2)]}^{A_{2n-1}}\xi_{[j,2n-j]}^{A_{2n-1}}E,
\;\mbox{ by \eqref{bkcvtilde4} }\\
& =E^{-1}\xi_{[j,2n-j]}^{A_{2n-1}}{\xi}_{[n-(i-1),n+(i-1)]}^{A_{2n-1}}{\xi}_{[n-(i-2),n+(i-2)]}^{A_{2n-1}}E,\;\\ ~&\qquad \mbox{ by Theorem
\ref{virtualsymplecticbk} and Lemma 
\ref{symmetrieslemma} \eqref{cactisymm1} }\nonumber\\
&=q_{[j,n]}^{C_n}t^{C_n}_{n+i-1}.
\end{align*}

$(8)$ We now prove  $ (t^{C_n}_{n+i-1}q_{[j,k-1]}^{C_n})^2=1,$ for $n-i+1<j<k\le n$. It follows from Theorem \ref{goingandbacR1}, Theorem \ref{thm:virtualbender-knuth2}, and Remark \ref{re:C16(2)},
\begin{align}
t^{C_n}_{n+i-1}q_{[j,k-1]}^{C_n}&=E^{-1}
{\xi}_{[n-(i-1),n+(i-1)]}^{A_{2n-1}}{\xi}_{[n-(i-2),n+(i-2)]}^{A_{2n-1}}\xi_{[j,k-1]}^{A_{2n-1}}\xi_{[2n-k+1, 2n-j]}^{A_{2n-1}}E\nonumber\\
&=E^{-1}
{\xi}_{[n-(i-1),n+(i-1)]}^{A_{2n-1}}\xi_{[2n-k+1,
2n-j]}^{A_{2n-1}}{\xi}_{[n-(i-2),n+(i-2)]}^{A_{2n-1}}\xi_{[2n-k+1, 2n-j]}^{A_{2n-1}}E,\nonumber\\
&\qquad\;
\mbox{ %byTheorem\ref{virtualsymplecticbk},         
Lemma 
\ref{Acact} $\mathit{3A.}$ with $n-i+1<j<k\le n$}\nonumber\\
&= E^{-1} \xi_{[j
k-1]}^{A_{2n-1}}{\xi}_{[n-(i-1),n+(i-1)]}^{A_{2n-1}}\xi^{A_{2n-1}}_{[j,k-1]}{\xi}_{[n-(i-2),n+(i-2)]}^{A_{2n-1}}E, \nonumber\\
%\; %\nonumber\\
&\qquad\;
\mbox{ %by Theorem \ref{virtualsymplecticbk},
Lemma 
\ref{Acact} $\mathit{3A.}$ with $n+i-1>2n-j>2n-k\ge n$}\nonumber\\
&=E^{-1}\xi_{[j,k-1]}^{A_{2n-1}}\xi_{[2n-k+1,
2n-j]}^{A_{2n-1}}{\xi}_{[n-(i-1),n+(i-1)]}^{A_{2n-1}}{\xi}_{[n-(i-2),n+(i-2)]}^{A_{2n-1}}E,\; \nonumber\\
&\quad\quad\;\;
\mbox{ by Lemma 
\ref{Acact} $\mathit{3A.}$
%Theorem\ref{virtualsymplecticbk}
}
\nonumber\\
%\end{align}
%\begin{align}
&=q_{[j,k-1]}^{C_n}t^{C_n}_{n+i-1}.\nonumber
\end{align}

$(9)$ Recall  Theorem \ref{thm:virtualbender-knuth2}    \eqref{isobk1} and Remark \ref{re:dualgenerator}. Then, for $n\ge 3$,
\begin{align*}(t_1^{C_n}t_2^{C_n})^6&=E^{-1}(t_1^{A_{2n-1}}\tilde t_{\bar 2}^{A_{2n-1}}t_2^{A_{2n-1}}\tilde t_{\bar
3}^{A_{2n-1}})^6E\nonumber\\
&=E^{-1}(t_1^{A_{2n-1}}t_2^{A_{2n-1}}\tilde t_{\bar 2}^{A_{2n-1}}\tilde t_{\bar 3}^{A_{2n-1}})^6E, \;\mbox{ by Lemma \ref{benderknuthcommute}} \nonumber\\
&=E^{-1}(t_1^{A_{2n-1}}t_2^{A_{2n-1}})^6(\tilde t_{\bar 2}^{A_{2n-1}}\tilde t_{\bar 3}^{A_{2n-1}})^6E,\;\mbox{ by Lemma \ref{benderknuthcommute}}\nonumber\\
&=E^{-1}(\tilde t_{\bar 2}^{A_{2n-1}}\tilde t_{\bar 3}^{A_{2n-1}})^6E, \mbox{ by \eqref{eq:bkrelationspecial}}\\
&=E^{-1}(\tilde t_{2n-1}^{A_{2n-1}}\tilde t_{2n-2}^{A_{2n-1}})^6E
=1, \mbox{ by \eqref{eq:bkrelationspecialtilde}}.
\end{align*}

 $(10)$  From Proposition \ref{weylaction} \eqref{refection c}, we know that $(\xi_{n-1}^{C_n}\xi_{n}^{C_n})^4=1$. We prove that $(t^{C_n}_{n-1}\cdots t^{C_n}_2t^{C_n}_1t^{C_n}_2\cdots t^{C_n}_{n-1}t^{C_n}_n)^4=1$ is equivalent to $(\xi_{n-1}^{C_n}\xi_{n}^{C_n})^4=1$. 
 %From Definition
 %\ref{def:bksymplectic}, %equation    \eqref{symplecticbenderknuth}, and 
Recall  Proposition
\ref{prop:orcadolfin}. Then, from Proposition
\ref{prop:orcadolfin} one has the following

\begin{align}
(\xi_{n-1}^{C_n}\xi_{n}^{C_n})^4&=(q_{[1,n-1]}^{C_n}t_1^{C_n}q^{C_n}_{[1,n-1]}t_n^{C_n})^4, \mbox{  by \eqref{eq:qxi}}\nonumber\\
\label{eq:turtle}&=(q_{[1,n-2]}^{C_n}p_{n-1}^{C_n}t_1^{C_n}q^{C_n}_{[1,n-2]}p_{n-1}^{C_n}t_n^{C_n})^4. \quad %\mbox{by Proposition
%\ref{prop:orcadolfin}.}
\end{align}

From  \cite[Proposition 1.4, (a)]{bk95} and mimicking its proof in conjunction with relation 
\eqref{rels:bkc5}%\eqref{rels:bkc7}
, we may write
\begin{align}q_{[1,n-1]}^{C_n}&=q_{[1,n-2]}^{C_n}p_{n-1}^{C_n}\nonumber\\
&=(p_{n-1}^{C_n})^{-1}q_{[1,n-2]}^{C_n}.\label{mimicbks}\end{align}
Therefore, using \eqref{mimicbks},
\begin{align*}
(\xi_{n-1}^{C_n}\xi_{n}^{C_n})^4&=(q_{[1,n-2]}^{C_n}p_{n-1}^{C_n}t_1^{C_n}(p_{n-1}^{C_n})^{-1}q^{C_n}_{[1,n-2]}t_n^{C_n})^4, \mbox{ by \eqref{eq:turtle}}\\
&=(q_{[1,n-2]}^{C_n}p_{n-1}^{C_n}t_1^{C_n}(p_{n-1}^{C_n})^{-1}t_n^{C_n}q^{C_n}_{[1,n-2]})^4, \mbox{ by relation \eqref{rels:bkc7}}\\
&=q_{[1,n-2]}^{C_n}(p_{n-1}^{C_n}t_1^{C_n}(p_{n-1}^{C_n})^{-1}t_n^{C_n})^4q^{C_n}_{[1,n-2]}\\
&=q_{[1,n-2]}^{C_n}(p_{n-1}^{C_n}t_1^{C_n}(p_{n}^{C_n})^{-1})^4q^{C_n}_{[1,n-2]}.\end{align*}

Since $(q_{[1,n-2]}^{C_n})^2=1$, we get \begin{align}(\xi_{n-1}^{C_n}\xi_{n}^{C_n})^4=1\Leftrightarrow (p_{n-1}^{C_n}t_1^{C_n}(p_{n}^{C_n})^{-1})^4=(t^{C_n}_{n-1}\cdots
t^{C_n}_2t^{C_n}_1t^{C_n}_2\cdots
t^{C_n}_{n})^4=1, \label{equivalentbraidb}\end{align} 
\noindent \mbox{  where $p_0^{C_n}:=1$.} In particular, for $n=2$,
$(t_1^{C_n}t_2^{C_n})^4=1$.
\end{proof}

\begin{remark}\label{re:braid}
%\begin{enumerate}
%\item 
\noindent $1.$ The relation \eqref{eq:symplecticbkrelationspecial},  Proposition \ref{prop:bkCrelations}, in $\gls{BKc}$, respectively
$$(t_1^{A_{2n-1}}t_2^{A_{2n-1}})^6(\tilde t_{ 2n-1}^{A_{2n-1}}\tilde t_{2n-2}^{A_{2n-1}})^6=1\;\text{ in $\gls{vBK2n}$},$$
is equivalent to the braid relations of $\gls{bn}$, $(\xi^{C_n}_i\xi^{C_n}_{i+1})^3=1$, for $1\le i<n-1$, respectively
$$(\xi^{A_{2n-1}}_i\xi^{A_{2n-1}}_{2n-i}\xi^{A_{2n-1}}_{i+1}\xi^{A_{2n-1}}_{2n-i-1})^3=(\xi^{A_{2n-1}}_i\xi^{A_{2n-1}}_{i+1})^3(\xi^{A_{2n-1}}_{2n-i}\xi^{A_{2n-1}}_{2n-i-1})^3=1,$$
the braid relations of $\mathfrak{S}_{2n}$, for $1\le i<n-1$, \cite[Proposition 1.4, (d)]{bk95}.

\medskip 
%\item 
\noindent $2.$ The identity $(\xi_{n-1}^{C_n}\xi_{n}^{C_n})^4=1$ in the group $\gls{BKc}$ translates to the isomorphic group $\gls{vBK2n}$ as
    \begin{align}(\xi_{n-1}^{A_{2n-1}}\xi_{n+1}^{A_{2n-1}} \xi_{n}^{A_{2n-1}})^4=1.\label{relationten}\end{align} Thus relation \eqref{eq:symplecticbkrelationspecialbn},  Proposition \ref{prop:bkCrelations}, in
    $\gls{BKc}$  translates to $\gls{vBK2n}$ as
\begin{align}(t^{A_{2n-1}}_{n-1}\cdots t^{A_{2n-1}}_{1}\tilde t_{n+1}^{A_{2n-1}}\cdots \tilde t_{ 2n-1}^{A_{2n-1}}
\xi_{n}^{A_{2n-1}})^4=1.\label{bkbnaction}\end{align}
Recall  in Corollary \ref{virtWeylaction} we have seen that involutions
$
\xi^{A_{2n-1}}_i\xi^{A_{2n-1}}_{2n-i},\xi^{A_{2n-1}}_n, \,1\le i\le n-1,$ define an action of $B_n$ on the embedded crystal
$\gls{E}(\gls{symptab})$ in $\textsf{SSYT}(\lambda^A,2n)$.
Hence, as \eqref{equivalentbraidb} shows, the relation \eqref{eq:symplecticbkrelationspecialbn} in $\gls{BKc}$ respectively \eqref{bkbnaction} in $\gls{vBK2n}$ is equivalent to the braid relation $(\xi_{n-1}^{C_n}\xi_{n}^{C_n})^4=1$ of $B_n$ respectively the braid relation \eqref{relationten} of $B_n$ realized in    $\mathfrak{S}_{2n}$.
\begin{comment}

{\oo Alternatively, we could have used Corollary \ref{virtWeylaction} to
obtain  that relation \eqref{eq:symplecticbkrelationspecialbn}.  %of Proposition \ref{prop:bkCrelations}.
Recall  in Corollary \ref{virtWeylaction} we have seen that involutions
%From Proposition \ref{weylaction}, (\ref{refectionc}), 
%the involutions $\xi^{C_n}_{i}$, $1\le %i\le n$, define an action of $\gls{bn}$ %on $\gls{symptab}$ and  that
$
\xi^{A_{2n-1}}_i\xi^{A_{2n-1}}_{2n-i},\xi^{A_{2n-1}}_n, \,1\le i\le n-1,$ define an action of $B_n$ on 
%are the translation of this %action to 
the embedded crystal
$\gls{E}(\gls{symptab})$ in $\textsf{SSYT}(\lambda^A,2n)$ .}
%\end{enumerate}
\end{comment}
\end{remark}

\section{Open questions and final remarks}

Similarly to $\gls{BK}$, it remains to establish whether or not $\gls{BKc}$ satisfies additional relations besides those listed in
Proposition
\ref{prop:bkCrelations}.
Chmutov, Glick and Pylyavskyy \cite{cgp16} have determined relationships between subsets of relations in the groups $\gls{BKn}$ and
$\gls{Jn}$
which yield a presentation for the cactus group $\gls{Jn}$ in terms of Bender--Knuth generators. Rodrigues \cite{ro20, ro21, ro23} has also
introduced a shifted Berenstein--Kirillov group with many parallels with the original $\gls{BK}$ group. Following Halacheva she has defined
a
cactus group action of $\gls{Jn}$ via partial shifted Sch\"utzenberger--Lusztig involutions (partial shifted reversal) on the
 Gillespie--Levinson--Purbhoo shifted tableau crystal \cite{glp}. On the other hand, with the shifted tableau switching she has defined
shifted
Bender--Knuth involutions, and following Chmutov, Glick and Pylyavskyy she has obtained a presentation for the cactus group $\gls{Jn}$ in terms
of shifted Bender--Knuth generators.
In the same vein, it is natural to seek precise relationships between subsets of relations in the two groups $\gls{vBK2n}$ and the virtual
symplectic cactus group $\tilde J_n$. It is also natural to seek a presentation of the virtual symplectic cactus group $\gls{vJ2n}$ in terms
of the virtual symplectic Bender--Knuth generators.

\vskip1cm

\section{Conflict of Interest}
The authors state that they have no conflict of interest.

\printglossary
\printglossary[type=acronym]
\printglossary[type=symbols]

\bibliographystyle{unsrt}

\end{document}